\theoremstyle{plain}
\newtheorem{theorem}{Theorem}[section]
\newtheorem{lemma}[theorem]{Lemma}
\newtheorem{proposition}[theorem]{Proposition}
\newtheorem{corollary}[theorem]{Corollary}
\theoremstyle{definition}
\newtheorem{definition}[theorem]{Definition}
\theoremstyle{remark}
\newtheorem{remark}[theorem]{Remark}
\newtheorem{conjecture}[theorem]{Conjecture}
\newtheorem{example}[theorem]{Example}
\numberwithin{equation}{section}
\newcounter{StepsCount}
\newenvironment{Elist}{%
	\begin{list}{\ding{\value{StepsCount}}}{\usecounter{StepsCount} \leftmargin=0pt \labelwidth=12pt \itemindent=\labelwidth%
	\itemsep=5pt\listparindent=\parindent} \setcounter{StepsCount}{171}}{\end{list}}
\newcommand{\cA}{{\mathcal A}}
\newcommand{\cB}{{\mathcal B}}
\newcommand{\cC}{{\mathcal C}}
\newcommand{\cH}{{\mathcal H}}
\newcommand{\cI}{{\mathcal I}}
\newcommand{\cL}{{\mathcal L}}
\newcommand{\cN}{{\mathcal N}}
\newcommand{\cP}{{\mathcal P}}
\newcommand{\cT}{{\mathcal T}}
\newcommand{\cW}{{\mathcal W}}
\newcommand{\scrC}{{\mathscr C}}
\newcommand{\scrg}{{\mathscr g}}
\newcommand{\scrh}{{\mathscr h}}
\newcommand{\bbD}{{\mathbb D}}
\newcommand{\bbE}{{\mathbb E}}
\newcommand{\bbH}{{\mathbb H}}
\newcommand{\bbN}{{\mathbb N}}
\newcommand{\bbR}{{\mathbb R}}
\newcommand{\bbC}{{\mathbb C}}
\newcommand{\bbM}{{\mathbb M}}
\newcommand{\bbZ}{{\mathbb Z}}
\renewcommand{\a}{\alpha}
\renewcommand{\b}{\beta}
\newcommand{\g}{\gamma}
\renewcommand{\d}{\delta}
\newcommand{\e}{{\varepsilon}}
\renewcommand{\k}{\kappa}
\DeclareMathOperator{\clos}{clos}
\DeclareMathOperator{\tr}{tr}
\DeclareMathOperator{\supp}{supp}
\DeclareMathOperator{\spann}{span}
\DeclareMathOperator{\Tan}{Tan}
\DeclareMathOperator{\ran}{ran}
\newcommand{\SL}{\mathrm{SL}}
\newcommand{\ol}{\overline}
\newcommand{\loc}{\mathrm{loc}}
\DeclareMathOperator{\Hol}{Hol}
\DeclareMathOperator{\Int}{Int}
\renewcommand{\Re}{\operatorname{Re}}
\renewcommand{\Im}{\operatorname{Im}}
\newcommand{\Dummy}{\text{\textvisiblespace\kern1pt}}
\newcommand{\bbHe}{\bbH^{\raisebox{1.5pt}{$\scriptscriptstyle 1$}}}
\newcommand{\bbHle}{\bbH^{\raisebox{1.5pt}{$\scriptscriptstyle\leq 1$}}}
\newcommand{\bbHen}{\bbH^{\sfrac 10}}
\newcommand{\bbTM}{{\mathbb T\mathbb M}}
\newcommand{\RK}{{\mathbb{R}\mathbb K}}
\newcommand{\DB}{{\mathbb{D}\mathbb B}}
\newcommand{\HB}{{\mathbb{H}\mathbb B}}
\newcommand{\clH}{{\mathcal H^{\flat}}}
\newcommand{\Ch}{\mathrm{Ch}}
\newcommand{\bCh}{\mathrm{b}\text{-}\mathrm{Ch}}
\newcommand{\ubCh}{\mathrm{ub}\text{-}\mathrm{Ch}}
\author[B.\ Eichinger]{Benjamin Eichinger}
\address{B.\ Eichinger: Vienna University of Technology, Wien, A-1040, Austria \& Lancaster University, Lancaster, LA1 4YW, UK}
\email{benjamin.eichinger@asc.tuwien.ac.at}
\thanks{B.\ E.\ was supported by the Austrian Science Fund FWF, project no: P33885}
\author[M.\ Luki\'c]{Milivoje Luki\'c}
\address{M.\ Luki\'c: Rice University, Houston, TX~77005, USA}
\email{milivoje.lukic@rice.edu}
\thanks{M.L.\ was supported in part by NSF grant DMS--2154563.}
\author[H.\ Woracek]{Harald Woracek}
\address{H.\ Woracek: Vienna University of Technology, Wien, A-1040, Austria}
\email{harald.woracek@tuwien.ac.at}
\thanks{H.W. was supported by the joint project I 4600 of the Austrian Science Found 
	(FWF) and the Russian foundation of basic research (RFBR)}
\title{Necessary and sufficient conditions for universality limits}
\date{\today}
\begin{document}

\maketitle

\vspace{-10pt}
\begin{center}
	{\it  Dedicated to the memory of Heinz Langer, one of the founders \\of the theory of indefinite inner product spaces.}
\end{center}

\begin{abstract}
	We derive necessary and sufficient conditions for universality limits for orthogonal polynomials on the real line and
	related systems. One of our results is that the Christoffel--Darboux kernel has sine kernel asymptotics at a point
	$\xi$, with regularly varying scaling, if and only if the orthogonality measure (spectral measure) has a unique tangent
	measure at $\xi$ and that tangent measure is the Lebesgue measure. This includes all prior results with absolutely
	continuous or singular measures.

	Our work is not limited to bulk universality; we show that the Christoffel--Darboux kernel has a regularly varying scaling limit with a nontrivial limit kernel if and only if the orthogonality measure
	has a unique tangent measure at $\xi$ and that tangent measure is not a point mass.  The possible limit kernels correspond to homogeneous de Branges spaces; in particular, this equivalence completely characterizes several prominent universality classes such as hard edge universality, Fisher--Hartwig singularities, and jump discontinuities in the weights.
	
	The main part of the proof is the derivation of a new homeomorphism. In order to directly apply to the
	Christoffel--Darboux kernel, this homeomorphism is between measures and chains of de Branges spaces, not between
	Weyl functions and Hamiltonians.  In order to handle limits with power law weights, this homeomorphism goes beyond
	the more common setting of Poisson-finite measures, and allows arbitrary power bounded measures.
\end{abstract}

%
%

\section{Introduction}

In this paper, we derive necessary and sufficient conditions for universality limits. We will initially present our results in the bulk universality (sine kernel) regime, before formulating the general statements. We will also initially present the results in the setting of orthogonal polynomials on the real line (OPRL), in which this topic has a long history.

We consider orthogonal polynomials $(p_n(z))_{n=0}^\infty$ with respect to a measure $\mu$ on $\bbR$, obtained by the Gram--Schmidt process in $L^2(\bbR,d\mu)$ from the sequence $(z^n)_{n=0}^\infty$.
 By general principles, the polynomial $p_n$ has $n$ real simple zeros. Their local distribution/spacing on the real line is a question of classical interest; in the setting of Jacobi polynomials, it was long known \cite{Szego1939} that zeros of Jacobi polynomials are locally asymptotically uniformly spaced, and this was generalized by Erd\"os--Tur\'an \cite{ErdosTuran1940} to a class of smooth weights on an interval. In modern literature, this phenomenon is known as clock behavior, and it is stated as follows. Zeros of $p_n$ can be indexed by counting to the left and right from a fixed point $\xi \in \bbR$, denoting them by $\xi_j^{(n)}$ so that
\[
\dots < \xi_{-1}^{(n)} < \xi_0^{(n)} \le \xi < \xi_1^{(n)} < \xi_2^{(n)} < \dots
\]
(of course, only $n$ of these are well-defined for fixed $n$).  The measure can be said to have clock behavior at $\xi$ if for every $j\in\bbZ$, $\xi_j^{(n)}$ is well-defined for all large enough $n$ and if 
 for some scaling sequence $\tau_n \to \infty$ as $n\to\infty$,
\begin{equation}\label{eqn:clockbehavior}
\lim_{n\to\infty} \tau_n (\xi_{j+1}^{(n)} - \xi_j^{(n)} ) = 1
\end{equation}
for every $j \in \bbZ$ (it is common to impose additional assumptions on $\tau_n$).

A theorem of Freud \cite{freud:1969}, rediscovered by Levin in \cite{LevinLubinsky08}, states that clock behavior follows from a local scaling limit of the Christoffel--Darboux (CD) kernel: namely, the CD kernel for the measure $\mu$ is defined as
\begin{equation}\label{eqnCDkernel1}
K(n,z,w) = \sum_{j=0}^{n-1} p_j(z) \ol{ p_j(w) }
\end{equation}
and clock behavior \eqref{eqn:clockbehavior} follows from the local scaling limit
\begin{equation}\label{eqn:scalinglimit}
\lim_{n\to\infty} \frac 1{K(n,\xi,\xi)} K\left( n,\xi + \frac{z}{\tau_n}, \xi + \frac{w}{\tau_n} \right) = \frac{ \sin (\pi(z - \ol{w})) }{ \pi (z - \ol{w})}.
\end{equation}
The phenomenon \eqref{eqn:scalinglimit} is called bulk universality, and sufficient conditions for bulk universality have been greatly studied in the literature.

We digress to say that similar scaling limits of CD kernels are motivated by random matrix theory; the eigenvalues of random matrix ensembles with a unitary conjugation invariance are a determinantal point process whose correlation kernel is precisely the CD kernel \cite{GaudinMehta1960,DeiftOPandRM}, so certain limits of CD kernels encode local eigenvalue statistics of the random matrices. In that setting, an explicit $n$-dependence is naturally placed in the measure, so this is often referred to as a varying measure limit, with pioneering work by Bleher--Its \cite{BleherIts}, Pastur--Shcherbina \cite{PasturShcherbina}, and Deift--Kriecherbauer--McLaughlin--Venakides--Zhou \cite{DKMVZ1,DKMVZ2,DKMVZ3}, see also \cite{DeiftOPandRM} and the survey of Lubinsky \cite{LubSigma16}.

Returning to the "fixed measure" bulk universality limit \eqref{eqn:scalinglimit}, many different methods were developed to prove it under different sufficient conditions. Riemann--Hilbert techniques were used by Kuijlaars--Vanlessen \cite{KuiljaarsVanlessenIMRN02} for Jacobi-like analytic weights on $[-1,1]$. Another method was found by Lubinsky \cite{LubinskyAnnals}, with further developments by \cite{Findley08,SimonTwoExt08,TotikUniv09,Totik16}, which instead requires Stahl--Totik regularity \cite{StahlTotik92} of the measure and Lebesgue point and local Szeg\H o conditions at the point $\xi$.  A second approach of Lubinsky \cite{LubinskyJourdAnalyse08} is conditional on the behavior of the CD kernel on the diagonal; this was used by Avila--Last--Simon \cite{AvilaLastSimon} to prove bulk universality for ergodic Jacobi matrices on an essential support of the a.c.\ spectrum. Breuer \cite{breuer:2011} found the first examples of bulk universality with singular measures, within the class of sparse decaying discrete Schr\"odinger operators. Lubinsky first explored the connection with de Branges spaces \cite{LubJFA,LubinskyJourdAnalyse08}. Using the theory of canonical systems, a local sufficient condition was proved by Eichinger--Luki\'c--Simanek \cite{EichLukSimanek}: a strictly positive, finite nontangential limit of the Poisson transform of the measure at a point implies bulk universality at that point. This approach works with the continuous family of kernels $\{ K(t,z,w) \mid t \in [0,\infty) \}$ obtained by  piecewise linear interpolation,
\begin{equation}\label{eqnPiecewiseLinear1}
K(t,z,w) = K(\lfloor t\rfloor,z,w)  + (t - \lfloor t \rfloor) (K(\lfloor t \rfloor + 1,z,w) - K({\lfloor t \rfloor} ,z,w) ),
\end{equation}
which naturally appears through the reduction of a Jacobi matrix to a canonical system.

As a consequence of our main result, Theorem \ref{theorem:main}, we obtain necessary and sufficient conditions for bulk universality:

\begin{theorem}\label{theorem1}
Let $\mu$ be a measure on $\bbR$ with a determinate moment problem. For any $\xi \in\bbR$ and any $\eta \in (0,\infty)$, the following are equivalent:
\begin{enumerate}[{\rm (i)}]
\item 
\begin{equation}\label{theorem1condition}
\eta = \lim_{\epsilon\downarrow 0} \frac{ \mu((\xi - \epsilon, \xi))}\epsilon =  \lim_{\epsilon\downarrow 0} \frac{ \mu([\xi, \xi+ \epsilon))}\epsilon
\end{equation}
\item Uniformly on compact subsets of $(z,w) \in \bbC \times \bbC$,
\begin{equation}\label{eqntheorem1limit}
\lim_{t \to \infty} \frac{ K\left( t, \xi + \frac {z}{\eta K(t,\xi,\xi)} ,  \xi + \frac {w}{\eta K(t,\xi,\xi)} \right) }{K(t,\xi,\xi)} = \frac{ \sin (\pi(z - \ol{w})) }{ \pi (z - \ol{w})}.
\end{equation}

\item Uniformly on compact subsets of $(z,w) \in \bbC \times \bbC$,
\begin{equation}\label{eqntheorem1limitn}
\lim_{n \to \infty} \frac{ K\left(n, \xi + \frac {z}{\eta K(n,\xi,\xi)} ,  \xi + \frac {w}{\eta K(n,\xi,\xi)} \right) }{K(n,\xi,\xi)} = \frac{ \sin (\pi(z - \ol{w})) }{ \pi (z - \ol{w})}
\end{equation}
and
\begin{equation}\label{eqnNevaiConditionRelated}
\lim_{n\to\infty} \frac{ K(n+1,\xi,\xi) }{ K(n,\xi,\xi)} = 1.
\end{equation}
\end{enumerate}
\end{theorem}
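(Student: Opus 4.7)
The strategy is to apply Theorem \ref{theorem:main} after identifying (i) as a tangent-measure condition, and then to bridge the continuous limit (ii) with the discrete limit (iii) via the piecewise linear interpolation \eqref{eqnPiecewiseLinear1}.

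First, the two-sided symmetric derivative condition (i) is equivalent to the weak convergence $r^{-1}\mu(\xi + r\,\cdot\,) \to \eta\, dx$ as $r \to 0^+$, that is, to the statement that $\mu$ has a unique tangent measure at $\xi$ and that this tangent measure is $\eta$ times the Lebesgue measure. By Theorem \ref{theorem:main}, this is equivalent to existence of a regularly varying scaling limit of the CD kernel at $\xi$ whose limit is a sine kernel. The specific normalization in (ii)---scaling by $1/(\eta K(t,\xi,\xi))$ and limit kernel $\sin(\pi(z-\ol{w}))/(\pi(z-\ol{w}))$---is then forced by two requirements: the limit kernel must equal $1$ on the diagonal at the origin, and the density parameter $\eta$ of the tangent measure calibrates the additional scaling factor between the intrinsic parametrization appearing in the main theorem and the standard CD normalization. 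This yields (i) $\iff$ (ii).

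For (ii) $\Rightarrow$ (iii), restricting $t$ to integers gives \eqref{eqntheorem1limitn} immediately. The ratio condition \eqref{eqnNevaiConditionRelated} follows by evaluating \eqref{eqnPiecewiseLinear1} at $z=w=\xi$, which yields $K(t,\xi,\xi) = K(n,\xi,\xi) + (t-n)|p_n(\xi)|^2$, and using the uniform-on-compacts limit in (ii) at the origin to compare the values at $t=n$ and $t=n+1$. Conversely, for (iii) $\Rightarrow$ (ii), write $t = n+s$ with $s\in[0,1)$ and expand both $K(t,z,w)$ and $K(t,\xi,\xi)$ via \eqref{eqnPiecewiseLinear1}. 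The ratio condition \eqref{eqnNevaiConditionRelated} then forces $K(t,\xi,\xi)/K(n,\xi,\xi) \to 1$ uniformly in $s$, so replacing $\eta K(t,\xi,\xi)$ by $\eta K(n,\xi,\xi)$ in the arguments of $K$ is a vanishing perturbation, which is controlled by normality of the family of analytic CD kernels.

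The main obstacle I anticipate is in the direction (iii) $\Rightarrow$ (ii): one must show that $|p_n(w_n)|^2/K(n,\xi,\xi) \to 0$ uniformly for $w_n = \xi + z/(\eta K(n,\xi,\xi))$ as $z$ ranges over a compact subset of $\bbC$, so that the correction $s\,p_n(\cdot)\ol{p_n(\cdot)}$ from the interpolation \eqref{eqnPiecewiseLinear1} vanishes after rescaling. The identity $|p_n(w)|^2 = K(n+1,w,w) - K(n,w,w)$, combined with the discrete sine kernel limit \eqref{eqntheorem1limitn} applied at $z=w$ for both $n$ and $n+1$ together with the ratio condition \eqref{eqnNevaiConditionRelated}, delivers exactly this uniform control and completes the argument.
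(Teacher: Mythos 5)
Your plan for the equivalence (i)\,$\Leftrightarrow$\,(ii) via tangent measures and Theorem~\ref{theorem:main} is correct and matches the paper's route (which specializes Theorem~\ref{theorem3} to $\scrg(r)=\eta r$, $\beta=1$). Your sketch for (iii)\,$\Rightarrow$\,(ii) is also sound, and your anticipated ``obstacle''---controlling $|p_n(\xi+z/\tau_n)|^2/K(n,\xi,\xi)$ on compacts via $|p_n|^2 = K(n+1,\cdot,\cdot)-K(n,\cdot,\cdot)$, the shifted discrete limit, and the ratio condition---is exactly the right control; the paper does essentially the same thing but phrases it as a convex combination of limits (proof of Theorem~\ref{theorem3}, (iv)$\Rightarrow$(iii)).

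The genuine gap is in (ii)\,$\Rightarrow$\,(iii), and you have identified the wrong step as the hard one. You assert that \eqref{eqnNevaiConditionRelated} ``follows by evaluating \eqref{eqnPiecewiseLinear1} at $z=w=\xi$ ... and using the uniform-on-compacts limit in (ii) at the origin.'' But at $(z,w)=(0,0)$ both sides of \eqref{eqntheorem1limit} are identically $1$ for every $t$, so this gives no information whatsoever about the ratio $K(n+1,\xi,\xi)/K(n,\xi,\xi)$. One also cannot get the ratio condition from Taylor coefficients at the origin alone, since those only constrain quantities like $\sum_j|p_j'(\xi)|^2$, not the jump $|p_n(\xi)|^2$. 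The paper's Lemma~\ref{lemmaOPRLsubexponential} handles this by a genuinely different argument: assuming the ratio has a subsequential limit $c\in[0,1)$, the piecewise-linear interpolation forces the limit kernel to satisfy a functional equation
\[
K_\infty(xz,xw)=\frac{1-cx^{-1/\rho}}{1-c}\,K_\infty(z,w)+\frac{cx^{-1/\rho}-c}{1-c}\,K_\infty(c^\rho z,c^\rho w)
\]
for $x$ in an interval, which forces $K_\infty$ to be of the form $A+Bx^{-1/\rho}$ along each ray, and then $x\to0$ with $K_\infty(0,0)=1$ gives $K_\infty\equiv 1$---contradicting the nonconstant sine kernel. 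This is the piece your outline is missing; without it, (ii)\,$\Rightarrow$\,(iii) does not go through.
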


Note that we consistently use $n\in\bbN$ as a discrete parameter and $t \in [0,\infty)$ as a continuous parameter; in particular, the equivalence of (ii) and (iii) above relates the sine kernel asymptotics for  the continuous family of linearly interpolated kernels \eqref{eqnPiecewiseLinear1} to that for the original sequence of CD kernels \eqref{eqnCDkernel1}.  

Theorem \ref{theorem:main} is significantly more general than Theorem \ref{theorem1} in various directions, and we shall gradually built up towards that result.
Let us first compare Theorem~\ref{theorem1} with prior literature.

\begin{remark}
\begin{enumerate}[(i)]
\item The determinate moment problem condition means that $\mu$ is uniquely determined by its moments  $\int \xi^n \,d\mu(\xi)$, $n=0,1,2,\dots$. A sufficient condition is exponential decay of the tails, $\int e^{\epsilon\lvert \xi\rvert}\, d\mu(\xi) < \infty$ for some $\epsilon >0$.

\item Theorem~\ref{theorem1} describes bulk universality at the scale $\tau_n = \eta K(n,\xi,\xi)$. The inverse of $K(n,\xi,\xi)$ is known as the Christoffel function and its asymptotic behavior is widely studied. By work of M\'at\'e--Nevai--Totik \cite{MateNevaiTotik1991} and Totik \cite{Totik2000}, Stahl--Totik regularity and local Lebesgue point/local Szeg\H o conditions at the point imply that $K(n,\xi,\xi)$ grows linearly with $n$. This contains previous bulk universality results for compactly supported measures with an a.c. part; thus, although those results were formulated at the explicit scale $\tau_n = c n$, this is equivalent to the scaling limit \eqref{eqntheorem1limit}.

\item The condition \eqref{eqnNevaiConditionRelated} is equivalent to
\[
\lim_{n\to\infty} \frac{ p_n(\xi)^2}{ \sum_{j=0}^{n-1} p_j(\xi)^2 } = 0
\]
and sometimes described as subexponential growth of orthogonal polynomials $p_n(\xi)$. It is closely related to the Nevai condition \cite{BreuerLastSimon2010,BreuerDuits14}.

\item Prior results were based on a mix of global and local assumptions of the measure, and the local assumptions included Lebesgue point conditions on the measure; in particular, they required presence of an a.c.\ part of the measure. Theorem~\ref{theorem1} is completely local, and the local condition \eqref{theorem1condition} is weaker than a Lebesgue point condition; in particular, it makes it obvious that bulk universality at a single point can even be achieved for a pure point measure (see Lemma~\ref{lemmaBulkUniversalityPurePointMeasure}). 
\item It was proved in \cite[Theorem 1.2]{EichLukSimanek} that if for some $\alpha \in (0,\pi/2)$,
\begin{equation}\label{ELScriterionNew}
\lim_{\substack{z\to\xi \\ \alpha \le \arg(z-\xi) \le \pi - \alpha}} \frac 1\pi \Im \int \frac 1{\lambda -z} \,d\mu(\lambda) = \eta
\end{equation}
then \eqref{eqntheorem1limit} holds. This sufficient condition \eqref{ELScriterionNew} is equivalent to \eqref{theorem1condition} for any $\eta \in (0,\infty)$ and any $\alpha \in (0,\pi/2)$, by a general result of Loomis~\cite{Loomis43} for positive harmonic functions (see also 
 \cite{RameyUllrich88} which gives a proof related to our rescaled Weyl functions). Thus, Theorem~\ref{theorem1} shows that the implication in \cite{EichLukSimanek} is optimal; however, the opposite implication (ii)$\implies$(i) of Theorem~\ref{theorem1} is outside the scope of the method in \cite{EichLukSimanek}.
\end{enumerate}
\end{remark}

The approach in \cite{EichLukSimanek} is based on a homeomorphism between  trace-normalized limit circle-limit point Hamiltonians and Nevanlinna functions (analytic maps $\bbC_+ \to \ol{\bbC_+}$, where $\bbC_+ = \{z \in \bbC \mid \Im z > 0\}$). In particular, the implication \eqref{ELScriterionNew}$\implies$\eqref{eqntheorem1limit} was proved by a shifted rescaling trick which does not give the converse implication and does not easily generalize to other situations.  The approach in this paper is different, and at its core is a homeomorphism between certain measures and certain chains of de Branges spaces. This homeomorphism is better suited for the study of convergence of kernels, and necessary for statements such as the implication (ii)$\implies$(i) of Theorem~\ref{theorem1}. We will be more precise below.

We also characterize a more general sine kernel asymptotics with regularly varying scaling. In that equivalence, the derivative condition on the measure is replaced by a tangent measure condition. 
We provide the required definitions before stating this result.

For a locally finite measure $\mu$ in $\bbC$,  $\xi \in \bbC$ and $r > 0$, consider the affine pushforwards of $\mu$ defined by $\mu_{\xi,r}(A) = \mu(\xi + A/r)$ for Borel sets $A$. A measure $\nu$ is a tangent measure of $\mu$ at $\xi$ if $\nu$ is locally finite, $\nu(\bbC) > 0$, and there exist positive sequences $c_n, r_n$ with $r_n \to \infty$ and $c_n \mu_{\xi, r_n} \to \nu$ weakly in $C_c(\bbC)^*$ as $n\to\infty$. The set of tangent measures of $\mu$ at $\xi$ is denoted $\Tan(\mu,\xi)$.  This notion was introduced in geometric measure theory by Preiss~\cite{Preiss87}, see also \cite{MattilaBook}.

The set $\Tan(\mu,\xi)$ is closed under multiplication by a positive scalar. It is said that $\mu$ has a unique tangent measure at $\xi$ if there exists $\nu$ such that $\Tan(\mu,\xi) = \{ c \nu \mid c \in (0,\infty)\}$.

A function $g:(0,\infty) \to (0,\infty)$ is said to be regularly varying (at $\infty$) with index $\beta$ if for all $c \in (0,\infty)$, $g(c r) / g(r) \to c^\beta$ as $r \to \infty$.  Regularly varying functions were introduced by Karamata \cite{Karamata30,Karamata33} and play an important role in Abelian and Tauberian theorems; see also \cite{bingham.goldie.teugels:1989}, and applications to spectral theory \cite{eckhardt.kostenko.teschl:2018,langer.woracek:kara,pruckner.woracek:1}.  
 Another regularly varying function $h$ is said to be an asymptotic inverse of $g$ if $h(g(r)) / r \to 1$ and $g(h(r))/r \to 1$ as $r \to \infty$. Every regularly varying function $g$ of order $\beta > 0$ has an asymptotic inverse $h$ of order $1/\beta$.

\begin{theorem}\label{theorem2}
Let $\mu$ be a measure on $\bbR$ with a determinate moment problem. For any $\xi \in\bbR$, the following are equivalent:
\begin{enumerate}[{\rm (i)}]
\item There exists $\scrg$ regularly varying with index $1$ such that 
\[
1 = \lim_{r \to \infty}  \scrg(r) \mu\left(\left(\xi - \tfrac 1r, \xi\right)\right) =  \lim_{r\to \infty} \scrg(r) \mu\left(\left[\xi, \xi+ \tfrac 1r\right)\right)
\]
\item $\Tan(\mu,\xi) = \{ c m \mid c \in (0,\infty)\}$ where $m$ denotes Lebesgue measure on $\bbR$
\item There exists $\scrh$ regularly varying with index $1$ such that uniformly on compact subsets of $(z,w) \in \bbC \times \bbC$,
\begin{equation}\label{RegularlyVaryingBulkUniversalityLimit}
\lim_{t\to\infty}\frac{K\left(t,\xi + \frac{z}{\scrh(K(t,\xi,\xi))}, \xi + \frac{w}{\scrh(K(t,\xi,\xi))}\right)}{K(t,\xi,\xi)} 
 = \frac{ \sin (\pi(z - \ol{w})) }{ \pi (z - \ol{w})}.
\end{equation}
\item There exists $\scrh$ regularly varying with index $1$ such that uniformly on compact subsets of $(z,w) \in \bbC \times \bbC$,
\[
\lim_{n \to \infty} \frac{ K\left(n, \xi + \frac {z}{\scrh( K(n,\xi,\xi))} ,  \xi + \frac {w}{\scrh( K(n,\xi,\xi))} \right) }{K(n,\xi,\xi)} = \frac{ \sin (\pi(z - \ol{w})) }{ \pi (z - \ol{w})}
\]
and \eqref{eqnNevaiConditionRelated} holds.
\end{enumerate}
In this case, $\scrh$ is an asymptotic inverse of $\scrg$.
\end{theorem}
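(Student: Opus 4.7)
The plan is to derive Theorem~\ref{theorem2} from Theorem~\ref{theorem:main} by specialization to Lebesgue tangent measure, handling the purely measure-theoretic equivalence (i)$\iff$(ii) separately via classical regular variation.

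For (i)$\iff$(ii), I would proceed as follows. If (i) holds with $\scrg$ regularly varying of index~$1$, the uniform convergence theorem for regularly varying functions gives $\scrg(r)/\scrg(r/c)\to c$ for every $c > 0$, whence
$$\lim_{r\to\infty}\scrg(r)\,\mu\bigl((\xi - \tfrac{c}{r},\xi)\bigr) = c,$$
and similarly for intervals on the right of $\xi$. This identifies $m\in\Tan(\mu,\xi)$; uniqueness follows because any tangent measure $\nu=\lim c_n\mu_{\xi,r_n}$ forces $c_n/\scrg(r_n)\to\nu((-c,0))/c$ for every $c>0$, so $\nu((-c,0))$ is linear in $c$ and $\nu=\alpha m$. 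Conversely, given (ii), define $\scrg(r):=1/\mu((\xi-1/r,\xi))$; uniqueness of the tangent measure forces $\mu((\xi-c/r,\xi))\sim c/\scrg(r)$ for each fixed $c>0$, and the ratio $\scrg(cr)/\scrg(r)\to c$ then yields regular variation of index~$1$, with the symmetric right-side condition inherited from uniqueness of the tangent.

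For (ii)$\iff$(iii) and the identification of $\scrh$, the core input is Theorem~\ref{theorem:main} applied to Lebesgue tangent measure. Under the homeomorphism between measures with a unique nontrivial tangent measure and chains of homogeneous de~Branges spaces, Lebesgue tangent measure corresponds to the Paley--Wiener chain, whose reproducing kernel is precisely $\sin(\pi(z-\bar w))/(\pi(z-\bar w))$. The scaling $\scrh$ in (iii) is determined by the kernel-side rescaling that produces the homogeneous limit, related to the measure-side rescaling $\scrg$ via $K(t,\xi,\xi)\sim\scrg(\scrh(K(t,\xi,\xi)))$; in the Lebesgue case this reduces to $\scrg\circ\scrh\sim\mathrm{id}$, i.e.\ $\scrh$ is an asymptotic inverse of $\scrg$.

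For (iii)$\iff$(iv), restricting to integer $t$ gives the convergence in (iv) from (iii), and \eqref{eqnNevaiConditionRelated} follows because (iii) forces the diagonal $K(t,\xi,\xi)$ to be asymptotically regularly varying in $t$, so $K(n+1,\xi,\xi)/K(n,\xi,\xi)\to1$. For the reverse, writing $K(t,\xi,\xi)=K(\lfloor t\rfloor,\xi,\xi)+(t-\lfloor t\rfloor)p_{\lfloor t\rfloor}(\xi)^2$ via \eqref{eqnPiecewiseLinear1}, condition \eqref{eqnNevaiConditionRelated} makes the linearly interpolated and discrete kernels asymptotically indistinguishable, so the discrete limit in (iv) upgrades to the continuous limit in (iii). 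The principal obstacle is Theorem~\ref{theorem:main} itself, namely the construction of the new homeomorphism between power-bounded measures and chains of de~Branges spaces; once that machinery is available, Theorem~\ref{theorem2} falls out as a clean specialization combined with Karamata-type bookkeeping between $\scrg$, $\scrh$, and the Christoffel function $1/K(t,\xi,\xi)$.
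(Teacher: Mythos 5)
Your overall architecture matches the paper's: (i)$\iff$(ii) is a purely measure-theoretic regular-variation argument (the paper's Lemma~\ref{lemmaA2}), (ii)$\iff$(iii) comes from Theorem~\ref{theorem:main} with Lebesgue tangent measure, and (iii)$\iff$(iv) is an interpolation argument. Your treatment of (i)$\iff$(ii), (ii)$\iff$(iii), and (iv)$\implies$(iii) is essentially the same as the paper's.

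However, there is a genuine gap in your argument for (iii)$\implies$\eqref{eqnNevaiConditionRelated}. You claim that condition (iii) "forces the diagonal $K(t,\xi,\xi)$ to be asymptotically regularly varying in $t$," from which the Nevai condition would follow trivially. But condition (iii) is formulated in terms of $\scrh(K(t,\xi,\xi))$, so it is invariant under arbitrary monotone reparametrizations of $t$ — it carries no direct information about how $K(t,\xi,\xi)$ grows relative to the integer-indexed orthogonal polynomial parametrization, which is exactly what the Nevai condition is about. The paper's proof of this implication (Lemma~\ref{lemmaOPRLsubexponential}) does not establish regular variation of $K(t,\xi,\xi)$; instead it argues by contradiction: if $K(n_l,\xi,\xi)/K(n_l+1,\xi,\xi)\to c<1$ along a subsequence, then the piecewise-linear interpolation formula $K(n+s,\xi,\xi)=sK(n+1,\xi,\xi)+(1-s)K(n,\xi,\xi)$ combined with the assumed scaling limit produces a two-parameter functional equation for $K_\infty$, and analysis of that functional equation forces $K_\infty\equiv 1$, contradicting $K_\infty\not\equiv 1$. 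This use of the piecewise linearity of the OPRL kernel family and the resulting rigidity of $K_\infty$ is a specific step that your sketch does not supply, and I do not see a way to make the claimed "regular variation in $t$" argument work without essentially reconstructing the paper's lemma.
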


\begin{remark}[Scaling functions and spectral type]
\begin{enumerate}[(i)]
\item Scaling by regularly varying functions is significantly more general than power law scaling; it allows, e.g., additional logarithmic factors, and scaling functions such as $\scrg(r) = r \log^\kappa r$, $\kappa\in \bbR$. Thus, a sine kernel limit can exist even where $\mu$ has zero or infinite derivative w.r.t.\ Lebesgue measure.
\item Bulk universality with regularly varying scaling on a set implies $1$-dimensionality of the measure $\mu$ on this set (see  Theorem~\ref{thmBulkUniversality1dimensional}).
\item Breuer's class of examples is chosen from the class of sparse decaying discrete Schr\"odinger operators, and the bulk universality limit is formulated with an explicit $n$ in place of $\scrh( K(n,\xi,\xi))$ in \eqref{RegularlyVaryingBulkUniversalityLimit}. For sparse decaying perturbations of the free Jacobi matrix, $K(n,\xi,\xi)$ is a regularly varying function of $n$ (see Lemma~\ref{lemmaSparseRegularlyVarying}). Thus, Breuer's examples are within the setting of Theorem~\ref{theorem2}; in particular, in the regime of \cite{breuer:2011}, we conclude that for every $\xi \in (-2,2)$,
\[
\lim_{n\to\infty} \scrg_\xi(n) \mu\left(\left( \xi - \tfrac 1{n} , \xi \right)\right) = \lim_{n\to\infty} \scrg_\xi(n) \mu\left(\left[\xi, \xi + \tfrac 1{n} \right)\right) = 1,
\]
with an explicit function $\scrg_\xi(n)$ (see Corollary~\ref{corDecayingSparseMeasure} and surrounding discussion).
\item There exist finite measures $\mu$ on $[0,1]$ which are singular with respect to Lebesgue measure, but have Lebesgue measure as the unique tangent measure at every $\xi \in (0,1)$ (see \cite[Example 5.9]{Preiss87}, \cite{FreedmanPitman90}). By Theorem~\ref{theorem2}, for these measures, bulk universality holds at every $\xi \in (0,1)$. This is a slight improvement over the examples in \cite{breuer:2011} in the sense that no discrete spectrum is needed. 

\end{enumerate}
\end{remark}

Our general setting is much more general than bulk universality and characterizes other universality classes studied in the literature. The most prominent of those is hard edge universality, which has traditionally been studied at the edge of the support of $\mu$ and is characterized by a limiting Bessel kernel.  In particular, for Jacobi-type measures with analytic weights on $[-1,1]$,  at the endpoint $\xi = 1$, Kuijlaars--Vanlessen \cite{KuiljaarsVanlessenIMRN02,KMVV04} proved hard edge universality by Riemann--Hilbert methods; Lubinsky \cite{LubinskyCM08} generalized this to a class of Stahl--Totik regular measures on $[-1,1]$, and proved in \cite{LubIMRN08} a conditional statement at a gap edge of the support. Other limit kernels have been found for other power-law behaviors of the weight $d\mu(\lambda) / d\lambda$. For Fisher--Hartwig singularities, Vanlessen \cite{vanlessen:03} proved strong asymptotics and Danka \cite{Danka17} proved a universality limit for a class of Stahl--Totik regular measures (see also Kuijlaars--Vanlessen \cite{kuijlaarsvanlessen:03}). For a class of step-like analytic weights, Foulqui\'e~Moreno--Mart\'inez-Finkelstein--Sousa \cite{MorenoFinkelSousaConstr} proved a hypergeometric kernel scaling limit.

Our result characterizes the limiting behavior of the kernel when the measure has a local behavior of the form
\begin{equation}\label{eqn:muLocalBehavior}
\lim_{r \to \infty} \scrg(r) \mu\left(\left(\xi- \tfrac 1r,\xi\right)\right) =\sigma_-,\quad \lim_{r \to \infty} \scrg(r) \mu\left(\left[ \xi, \xi + \tfrac 1r \right)\right) =\sigma_+.
\end{equation}
for some regularly varying function $\scrg$ with index $\beta>0$ and some $(\sigma_-,\sigma_+)\in [0,\infty)\times[0,\infty)\setminus\{(0,0)\}$. The limit kernel will be a function of $\sigma_\pm$ and $\beta$, as follows:

\begin{definition} \label{defn:LimitKernels}
Let $(\sigma_-, \sigma_+) \in [0,\infty)^2 \setminus \{(0,0)\}$ and $\beta > 0$. Recall:
\[
	M(\alpha,\beta,z):=\sum_{n=0}^\infty\frac{(\alpha)_n}{(\beta)_n}\cdot\frac{z^n}{n!}
	,\quad
	\prescript{}{0}{F}_1(\beta,z):=\sum_{n=0}^\infty\frac 1{(\beta)_n}\cdot\frac{z^n}{n!}
	,
\]
where $\alpha,z\in\bbC$ and $\beta\in\bbC\setminus(-\bbN_0)$. The symbol $(\Dummy)_n$ denotes the rising factorial, i.e., 
\[
	(\alpha)_0=1,\quad (\alpha)_{n+1}=(\alpha)_n(\alpha+n)\quad\text{for }n\in\bbN_0
	.
\]
We define functions $A,B$ by distinguishing two cases.
	\begin{enumerate}[{\rm(i)}]
	\item Assume that $\sigma_+,\sigma_->0$. Define 
		\begin{align*}
			& \alpha:=\frac i{2\pi}\log\frac{\sigma_-}{\sigma_+}+\frac{\beta-1}2
			,\quad 
			\kappa:=\frac 12
			\Big(\frac{2\Gamma(\beta+1)^2\sqrt{\sigma_+\sigma_-}}{|\Gamma(\alpha+1)|^2}\Big)^{\frac 1\beta}
			,
			\\
			& A(z):=e^{i\kappa z}\frac{M(\alpha,\beta,-2i\kappa z)+M(\alpha+1,\beta,-2i\kappa z)}2
			,
			\\
			& B(z):=z  e^{i\kappa z}M(\alpha+1,\beta+1,-2i\kappa z)
			.
		\end{align*}
	\item Assume that $\sigma_+=0$ or $\sigma_-=0$. Define 
		\begin{align*}
			& \sigma:=
			\begin{cases}
				\big(\frac{\sigma_+}\pi\Gamma(\beta+1)^2\big)^{\frac 1\beta} &\text{if}\ \sigma_+>0
				,
				\\
				-\big(\frac{\sigma_-}\pi\Gamma(\beta+1)^2\big)^{\frac 1\beta} &\text{if}\ \sigma_->0
				,
			\end{cases}
			\\
			& A(z):=\prescript{}{0}{F}_1(\beta,-\sigma z)
			,\quad 
			B(z):=z\cdot\prescript{}{0}{F}_1(\beta+1,-\sigma z)
			.
		\end{align*}
	\end{enumerate}
	Now set 
	\[
		K_{\sigma_-, \sigma_+,\beta}(z,w):=\frac{B(z)A(\overline w)-A(z)B(\overline w)}{z-\overline w}.
	\]
\end{definition}

This kernel is expressed in terms of entire functions. When $\sigma_+ = 0$ or $\sigma_-=0$, the kernel can be rewritten in terms of Bessel functions, as is customary in the hard edge literature (see \cite[Remark 4.2]{eichinger.woracek:homo-arXiv}); likewise, when $\sigma_+ = \sigma_-$, it can be rewritten in terms of Bessel functions (see Lemma~\ref{lemma:LimitKernelFisherHartwig}).

\begin{theorem}\label{theorem3}
Let $\mu$ be a measure on $\bbR$ with a determinate moment problem. For any $\xi \in\bbR$, the following are equivalent:
		\begin{enumerate}[{\rm (i)}]
			\item There exists a regularly varying function $\scrg$ with  index $\beta>0$ and $(\sigma_-,\sigma_+)\in [0,\infty)\times[0,\infty)\setminus\{(0,0)\}$ such that  \eqref{eqn:muLocalBehavior} holds.
			\item $\mu$ has a unique tangent measure at $\xi$, which is not the Dirac measure $\delta_0$
			\item There exists $\scrh$ regularly varying with index $\rho>0$ such that uniformly on compact subsets of $\bbC\times\bbC$
			\begin{equation}\label{eqn:Kcontlimit1}
			\lim\limits_{t\to\infty}\frac{K\left(t,\xi+\frac{z}{\scrh(K(t,\xi,\xi))},\xi+\frac{w}{\scrh(K(t,\xi,\xi))}\right)}{K(t,\xi,\xi)}=K_\infty(z,w),
			\end{equation}
			and $K_\infty\not\equiv 1$.
			\item There exists $\scrh$ regularly varying at $\infty$ with index $\rho>0$ such that uniformly on compact subsets of $\bbC\times\bbC$
\begin{equation}\label{eqnTheorem3n}
			\lim_{n\to\infty}\frac{K\left(n,\xi+\frac{z}{\scrh(K(n,\xi,\xi))},\xi+\frac{w}{\scrh(K(n,\xi,\xi))}\right)}{K(n,\xi,\xi)}=K_\infty(z,w)
			\end{equation}
			with $K_\infty\not\equiv 1$ and \eqref{eqnNevaiConditionRelated} holds.
\end{enumerate}
		In this case $\rho=1/\beta$,  $\scrh$ is the asymptotic inverse of $\scrg$, and $K_\infty = K_{\sigma_-, \sigma_+,\beta}$. 
\end{theorem}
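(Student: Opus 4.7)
The strategy rests on the homeomorphism announced in the abstract and encapsulated (presumably) in Theorem~\ref{theorem:main}, which associates to a suitable measure $\mu$ a chain of de Branges spaces whose reproducing kernels are (after normalization) the Christoffel--Darboux kernels $K(t,z,w)/K(t,\xi,\xi)$. Under this bijection, affine rescaling of $\mu$ near $\xi$ corresponds to rescaling of the chain, and weak convergence on the measure side transports to locally uniform convergence of the reproducing kernels. The four conditions can thus all be lifted to statements about the image chain, and I plan to match them there.

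For (i)$\Leftrightarrow$(ii), I would first note that \eqref{eqn:muLocalBehavior} immediately yields a unique tangent measure by construction, namely $\nu = \sigma_- \beta (-x)^{\beta-1}\mathbf{1}_{(-\infty,0)}(x)\,dx + \sigma_+ \beta x^{\beta-1}\mathbf{1}_{(0,\infty)}(x)\,dx$ (up to the harmless factor $\beta$). Conversely, if $\mathrm{Tan}(\mu,\xi) = \{c\nu : c > 0\}$ with $\nu \neq \delta_0$, then invariance under dilation forces $\nu$ to be scale-homogeneous on each half-line, giving $\nu$ a power-law form with some index $\beta > 0$; a Karamata-type argument then shows the two one-sided distribution functions of $\mu$ near $\xi$ are regularly varying with index $\beta$, producing \eqref{eqn:muLocalBehavior}. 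The nondegeneracy $\nu \neq \delta_0$ is exactly what rules out $\beta = 0$.

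For (ii)$\Leftrightarrow$(iii), I would apply the homeomorphism to the family of rescaled measures $c_r \mu_{\xi,r}$ as $r\to\infty$. Uniqueness of the tangent measure (modulo positive scalar) translates into convergence of the rescaled chain to a limit chain that is invariant under dilations; this is precisely the defining property of a homogeneous de Branges space of exponent $1/\beta$. Such homogeneous chains have been classified and their reproducing kernels are exactly the functions $K_{\sigma_-,\sigma_+,\beta}$ of Definition~\ref{defn:LimitKernels}; this identification supplies both the formula for $K_\infty$ and the relation $\rho = 1/\beta$. The proper rescaling on the kernel side turns out to be the asymptotic inverse $\scrh$ of $\scrg$, because passing through $\Phi$ exchanges the roles of the measure scale $r$ and the kernel trace $K(t,\xi,\xi)$. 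The excluded case $\nu = \delta_0$ corresponds to the trivial limit chain whose reproducing kernel is the constant $1$, matching the condition $K_\infty \not\equiv 1$.

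Finally, (iii)$\Leftrightarrow$(iv) is a matter of reconciling the continuous interpolation \eqref{eqnPiecewiseLinear1} with the integer subsequence. The implication (iii)$\Rightarrow$(iv) is straightforward once one observes that regular variation of $K(t,\xi,\xi)$ in $t$ (which is forced by the existence of the limit in (iii)) gives \eqref{eqnNevaiConditionRelated}. The converse requires showing that condition \eqref{eqnNevaiConditionRelated} together with convergence along the integer sequence is enough to interpolate: the piecewise linear interpolant $K(t,\xi,\xi)$ becomes slowly varying between consecutive integers, so $\scrh(K(t,\xi,\xi))$ inherits regular variation in $t$ and the limit transfers from $n$ to $t$. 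The main obstacle in the whole argument is the second step: the homeomorphism of Theorem~\ref{theorem:main} must be continuous in a topology strong enough to transport the weak-$*$ convergence of arbitrary power-bounded tangent rescalings into locally uniform convergence of kernels on $\bbC\times\bbC$, which is precisely why the authors had to extend the usual Poisson-finite framework to power-bounded measures.
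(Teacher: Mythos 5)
The high-level structure of your proof matches the paper's: the equivalence (i)$\Leftrightarrow$(ii)$\Leftrightarrow$(iii) is indeed an instance of Theorem~\ref{theorem:main} applied to the canonical system that encodes the OPRL, and (i)$\Leftrightarrow$(ii) itself rests on a Karamata/homogeneity analysis of tangent measures (the paper proves this in Lemma~\ref{lemmaA2}, citing \cite{mattila:2005} for the dilation-invariance of tangent measures). Your sketch of (ii)$\Leftrightarrow$(iii) via the chain homeomorphism and homogeneous de Branges spaces is likewise the right mechanism, and you correctly locate where the extension from Poisson-finite to power-bounded measures is forced.

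The real gap is in (iii)$\implies$(iv). You assert that ``regular variation of $K(t,\xi,\xi)$ in $t$ ... is forced by the existence of the limit in (iii)'' and then deduce \eqref{eqnNevaiConditionRelated} as an immediate consequence of that regular variation. The second step is fine, but the first is neither proved nor obviously true. Condition (iii) constrains the normalized kernels $K(t,\xi+\cdot,\xi+\cdot)/K(t,\xi,\xi)$ as functions of the rescaled spatial variable; it places no direct constraint on the Christoffel function $K(t,\xi,\xi)$ as a function of $t$. Indeed, the way the paper actually extracts \eqref{eqnNevaiConditionRelated} from (iii) is Lemma~\ref{lemmaOPRLsubexponential}, which is a genuine proof by contradiction: assuming a subsequence with $K(n_l,\xi,\xi)/K(n_l+1,\xi,\xi)\to c<1$, it writes the interpolated kernel $K(n_l+s,\cdot,\cdot)$ as a convex combination via \eqref{eqnPiecewiseLinear1}, uses Lemma~\ref{lemmaRegVar0615} to control the ratios $\scrh(K(n_l+\cdot,\xi,\xi))/\scrh(K(t_l,\xi,\xi))$, and arrives at a functional equation of the form
\[
K_\infty(xz,xw)=\frac{1-cx^{-1/\rho}}{1-c}K_\infty(z,w)+\frac{cx^{-1/\rho}-c}{1-c}K_\infty(c^\rho z,c^\rho w),
\]
whose solutions (compatible with $K_\infty(0,0)=1$) force $K_\infty\equiv 1$, contradicting the nontriviality hypothesis. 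That functional-equation step is the heart of the matter and cannot be replaced by the one-line appeal to regular variation of $K(t,\xi,\xi)$ in $t$; the latter is not established and does not come for free from (iii). Your (iv)$\implies$(iii) sketch is closer to what the paper does (use \eqref{eqnNevaiConditionRelated} plus Lemma~\ref{lemmaRegVar0615} to make the integer-shifted and interpolated limits uniform in $s\in[0,1]$), though the phrase ``$\scrh(K(t,\xi,\xi))$ inherits regular variation in $t$'' again overstates the mechanism --- what one really needs and gets is the uniformity $\scrh(K(n+s,\xi,\xi))/\scrh(K(n,\xi,\xi))\to 1$ uniformly in $s\in[0,1]$, not regular variation of the interpolant in $t$.
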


The assumption $(\sigma_-,\sigma_+) \neq (0,0)$ is used to rule out a trivial limit obtained by a scaling function $\scrg$ which is too small, since such a trivial limit would carry no information; likewise for the assumption $K_\infty \not \equiv 1$.

We compare Theorem~\ref{theorem3} with prior literature:

\begin{remark}
\begin{enumerate}[(i)]
\item Theorem~\ref{theorem3} contains as special cases several universality classes studied separately in the literature. Most notably, $\sigma_- = \sigma_+$ and $\beta=1$ is bulk universality, $\sigma_+ = 0$ is hard edge universality, $\sigma_- = \sigma_+$ and $\beta \neq 1$ is a Fisher--Hartwig singularity, and $\sigma_- \neq \sigma_+$ with $\beta = 1$ is a jump discontinuity.
\item For any of those limit kernels except the sine kernel, prior literature required analyticity of the weight or Stahl--Totik regularity of the measure, and Theorem~\ref{theorem3} is the first completely local sufficient condition for a scaling limit.
\item Even if Theorem~\ref{theorem3} is specialized to the power law case $\scrg(r) = r^\beta$, the local condition is still weaker than the local assumptions in the prior literature; prior literature always assumed power law scaling of the weight $d\mu(\lambda) / d\lambda$
\end{enumerate}
\end{remark}

For the study of bulk universality as in Theorems~\ref{theorem1} and \ref{theorem2}, an important realization was that although one starts from a probability measure $\mu$, convergence should be viewed in the larger space of Poisson-finite measures/Nevanlinna functions. This is motivated by the fact that bulk universality corresponds to having Lebesgue measure as the tangent measure.  For the setting of Theorem~\ref{theorem3}, extending to an even larger class of measures/functions is necessary. Namely, the local behavior \eqref{eqn:muLocalBehavior} corresponds to a tangent measure with a power law scaling; this measure need not be Poisson-finite, but it has power law growth at $\infty$ (see \cite{mattila:2005} and Lemma~\ref{lemmaA2}). Accordingly, the core of our approach is a homeomorphism between power-bounded measures on $\bbR$ (measures $\mu$ such that $\int (1+\lambda^2)^{-N} \,d\mu(\lambda) < \infty$ for some $N$) and a certain class of chains of de Branges spaces.  On the function theoretic side, going beyond Poisson-finite measures, is reflected in passing to reproducing kernels with a finite number of negative eigenvalues. The Weyl functions are no longer Nevanlinna functions, but are in the larger class of generalized Nevanlinna functions suitable for the indefinite setting and introduced by Krein--Langer \cite{krein.langer:1977}.

It has been observed before \cite{LubSigma16,BariczDanka} that the proof of the Freud--Levin theorem extends to other limiting kernels (see Theorem~\ref{thm:FreudLevinGeneral}). Combining that proof with Theorem~\ref{theorem3} gives the following description of the local configuration of zeros of orthogonal polynomials:

\begin{corollary}\label{cor:ZeroDistributionPolynomial}
Let $\mu$ be a measure on $\bbR$ with a determinate moment problem. Let $\xi \in\bbR$. If there exists a regularly varying function $\scrg$ with index $\beta>0$ and $\sigma_- \in [0,\infty)$, $\sigma_+ \in (0,\infty)$ such that  \eqref{eqn:muLocalBehavior} holds, then:
\begin{enumerate}[(i)]
\item For every $k \ge 0$, for all large enough $n$, $p_n$ has at least $k$ zeros larger than $\xi$; in other words the $k$-th zero to the right of $\xi$, denoted $\xi_k^{(n)}$, is well-defined for all large enough $n$.
\item The function $K_{\sigma_-, \sigma_+,\beta} (\cdot, 0)$ has infinitely many positive zeros. Denoting by $\theta$ its smallest positive zero,
\[
\limsup_{n\to\infty} \scrh(K_n(\xi,\xi)) ( \xi_1^{(n)} - \xi )  \le \theta.
\]
\item Fix a sequence $n_j$ such that the limit
\[
 \lim_{j\to\infty} \scrh(K_{n_j}(\xi,\xi)) ( \xi_1^{(n_j)} - \xi )
\]
exists. Denote its value by $\kappa_1$ and denote by $\kappa_2 < \kappa_3 < \dots$ all the zeros of $K_{\sigma_-, \sigma_+,\beta} (\cdot, \kappa_1)$ in $(\kappa_1, \infty)$. 
Then for every $k \in \bbN$,
\[
\lim_{k\to\infty}  \scrh(K_{n_j}(\xi,\xi)) ( \xi_k^{(n_j)} - \xi ) = \kappa_{k}.
\]
\end{enumerate}
\end{corollary}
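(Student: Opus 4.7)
My plan is to apply Theorem \ref{theorem3} together with the generalized Freud--Levin theorem (Theorem \ref{thm:FreudLevinGeneral}). Hypothesis \eqref{eqn:muLocalBehavior} is condition (i) of Theorem \ref{theorem3}, which is equivalent to (iv). Setting $\tau_n := \scrh(K(n,\xi,\xi))$ where $\scrh$ is the asymptotic inverse of $\scrg$, I therefore obtain uniform convergence on compact subsets of $\bbC \times \bbC$,
\[
\frac{K\bigl(n, \xi + \tfrac{z}{\tau_n},\, \xi + \tfrac{w}{\tau_n}\bigr)}{K(n,\xi,\xi)} \longrightarrow K_\infty(z,w), \qquad K_\infty := K_{\sigma_-,\sigma_+,\beta},
\]
together with \eqref{eqnNevaiConditionRelated}.

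Next I would identify the zero set of $K_\infty(\cdot,0)$. From Definition \ref{defn:LimitKernels}, $A(0) = 1$ while $B(z) = z\tilde B(z)$ for an entire function $\tilde B$, so $K_\infty(z,0) = \tilde B(z)$. In case (ii) one has $\tilde B(z) = \prescript{}{0}{F}_1(\beta+1, -\sigma z)$ with $\sigma > 0$, which is (up to a power of $z$) a Bessel function $J_\beta(2\sqrt{\sigma z})$ and therefore possesses infinitely many positive real zeros. In case (i) one has $\tilde B(z) = e^{i\kappa z} M(\alpha+1, \beta+1, -2i\kappa z)$; here $(A,B)$ are the structure functions of a homogeneous de Branges space of order $1/\beta$, and the infinitude of positive real zeros of $\tilde B$ follows from the analysis of homogeneous de Branges spaces in \cite{eichinger.woracek:homo-arXiv}. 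This establishes the first claim of (ii); enumerate the positive zeros as $0 < \theta = \theta_1 < \theta_2 < \cdots$.

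The remaining statements follow from Theorem \ref{thm:FreudLevinGeneral}. Its core observation is the Christoffel--Darboux identity
\[
K\bigl(n, z, \xi_j^{(n)}\bigr) = \frac{a_n\, p_{n-1}\bigl(\xi_j^{(n)}\bigr)\, p_n(z)}{z - \xi_j^{(n)}},
\]
valid because $p_n(\xi_j^{(n)}) = 0$ and (by interlacing) $p_{n-1}(\xi_j^{(n)}) \ne 0$; this identifies the zeros of $z \mapsto K(n,z,\xi_j^{(n)})$ with the zeros of $p_n$ other than $\xi_j^{(n)}$. For (iii), uniform convergence on compacts and continuity of $K_\infty$ yield, along the subsequence $(n_j)$,
\[
\frac{K\bigl(n_j,\, \xi + z/\tau_{n_j},\, \xi_1^{(n_j)}\bigr)}{K(n_j,\xi,\xi)} \longrightarrow K_\infty(z,\kappa_1),
\]
so Hurwitz's theorem transfers the rescaled zeros $\tau_{n_j}(\xi_k^{(n_j)}-\xi)$ of the left-hand side (for $k \ge 2$) into the zeros $\kappa_2 < \kappa_3 < \cdots$ of $K_\infty(\cdot,\kappa_1)$ on $(\kappa_1,\infty)$. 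For (i) and the $\limsup$ bound in (ii), I would apply Hurwitz to the convergence $K(n, \xi + z/\tau_n, \xi)/K(n,\xi,\xi) \to K_\infty(z,0)$, and use the one-to-one correspondence---via intervals between consecutive zeros of $p_{n-1}$---between zeros of $K(n,\cdot,\xi)$ and those of $p_n$ to translate the presence of $k$ positive zeros $\theta_1,\ldots,\theta_k$ of $K_\infty(\cdot,0)$ into the existence of at least $k$ zeros of $p_n$ in $(\xi, \xi + (\theta_k + \e)/\tau_n]$ for all large $n$; letting $\e \downarrow 0$ gives the claimed $\limsup$ bound.

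The main obstacle is the second step in case (i): the first parameter $\alpha$ of the Kummer function $M(\alpha+1,\beta+1,\cdot)$ is non-real when $\sigma_-\neq\sigma_+$, so its positive real zeros are not accessible from standard classical-function references. The resolution is to recognize $(A,B)$ as the structure functions of the relevant homogeneous de Branges space and invoke the zero-distribution theory in \cite{eichinger.woracek:homo-arXiv}.
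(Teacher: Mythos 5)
Your proof is correct and follows the paper's route: combine the rescaled-kernel limit from Theorem~\ref{theorem3}(iv) with the generalized Freud--Levin theorem (Theorem~\ref{thm:FreudLevinGeneral}) applied to $E_n = p_n + ip_{n-1}$, normalized so that the kernels themselves (and not merely their quotients by $K(n,\xi,\xi)$) converge to $K_\infty$. You are in fact slightly more careful than the paper's two-line proof in explicitly verifying the theorem's hypothesis that the limiting $B$ has infinitely many positive zeros (equivalently, that $K_\infty(\cdot,0)$ does), which is also the first assertion of item~(ii).
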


In the special case of the sine kernel limit, the differences $\kappa_{k+1} - \kappa_k$ are independent of $\kappa_1$ or $k$, which is why clock behavior has a more elegant formulation. However, the conclusions are of the same strength: they describe the local zero configuration up to one free parameter (location of the nearest zero to the right of $\xi$).

Convergent subsequences as in Corollary~\ref{cor:ZeroDistributionPolynomial}(iii) exist by compactness of $[0,\theta]$, but in general one cannot expect convergence of the sequence $\scrh(K_n(\xi,\xi)) ( \xi_1^{(n)} - \xi )$. Such convergence, however, holds in the hard edge case; to state this, we denote positive zeros of the Bessel function  $J_\nu$ of the first kind and order $\nu$ by $j_{\nu,1}$, $j_{\nu,2}$, \dots, so that
\begin{equation}\label{eqn:BesselFunctionZeros}
0 < j_{\nu,1} < j_{\nu,2} < \dots
\end{equation}

\begin{theorem}\label{thm:ZerosHardEdge}
Let $\mu$ be a probability measure on $[0,\infty)$ with a determinate Stieltjes moment problem. 
Denote by $\xi_1^{(n)} < \dots < \xi_n^{(n)}$ the zeros of $p_n$. If the function
\[
\scrg(r) = 1 / \mu([0,1/r))
\]
is a regularly varying function with index $\beta > 0$, then for every $k \in \bbN$,
\begin{equation}\label{eqn:HardEdgeZerosResult}
\lim_{n\to\infty}  \scrh(K(n,\xi,\xi))^2 \xi_k^{(n)} = \frac{\pi^{1/\beta}}{4 \Gamma(\beta+1)^{1/\beta}}  j_{\beta-1,k}^2,
\end{equation}
where  $\scrh$ denotes an asymptotic inverse of $\scrg$. 
\end{theorem}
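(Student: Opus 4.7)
The plan is to combine the CD kernel scaling limit from Theorem~\ref{theorem3} with the Freud--Levin-type mechanism of Corollary~\ref{cor:ZeroDistributionPolynomial}, and then exploit the one-sided support of $\mu$ to pin down the subsequential parameter uniquely. First, I check the hypotheses of Theorem~\ref{theorem3} at $\xi=0$: the support condition $\supp\mu\subseteq[0,\infty)$ gives $\sigma_-=0$; the definition of $\scrg$ gives $\sigma_+=1$; the regular variation with index $\beta>0$ is assumed; and a determinate Stieltjes moment problem implies a determinate Hamburger moment problem. With $\scrh$ an asymptotic inverse of $\scrg$, Theorem~\ref{theorem3} yields
\[
\lim_{n\to\infty}\frac{K\bigl(n,z/\scrh(K(n,0,0)),w/\scrh(K(n,0,0))\bigr)}{K(n,0,0)} = K_{0,1,\beta}(z,w)
\]
uniformly on compact subsets of $\bbC\times\bbC$.

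Next, I identify the limit kernel concretely. Definition~\ref{defn:LimitKernels}(ii) with $(\sigma_-,\sigma_+)=(0,1)$ gives $\sigma=(\Gamma(\beta+1)^2/\pi)^{1/\beta}$ together with $A(z)=\prescript{}{0}{F}_1(\beta,-\sigma z)$ and $B(z)=z\,\prescript{}{0}{F}_1(\beta+1,-\sigma z)$. The standard identity $\prescript{}{0}{F}_1(\nu+1,-u)=\Gamma(\nu+1)u^{-\nu/2}J_\nu(2\sqrt u)$ writes $A$ as a multiple of $J_{\beta-1}(2\sqrt{\sigma z})$, so the positive real zeros of $A$ are exactly $z_k=j_{\beta-1,k}^2/(4\sigma)$.

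Corollary~\ref{cor:ZeroDistributionPolynomial} then says that along any subsequence $(n_j)$ with $\scrh(K(n_j,0,0))\xi_1^{(n_j)}\to\kappa_1$, the other scaled zeros converge to $\kappa_2<\kappa_3<\dots$, the ordered real zeros of $K_{0,1,\beta}(\cdot,\kappa_1)$ in $(\kappa_1,\infty)$. The principal obstacle, and the step that distinguishes the hard edge from bulk universality, is to show that $\kappa_1$ is in fact pinned to equal $z_1$ (in bulk universality $\kappa_1$ is a genuinely free parameter, so the pinning must use something specific to the hard edge). The representation $K_{0,1,\beta}(z,w)=[B(z)A(\bar w)-A(z)B(\bar w)]/(z-\bar w)$ makes the pattern of real zeros of $K_{0,1,\beta}(\cdot,w)$ as $w$ varies very rigid: if $w=z_m$ is a zero of $A$, then $K_{0,1,\beta}(\cdot,z_m)$ reduces to a constant multiple of $A(z)/(z-z_m)$, whose real zeros are precisely $\{z_j:j\ne m\}$; for other real $w$, the Herglotz quotient $B/A$ has one preimage of $B(w)/A(w)$ in each open interval between consecutive poles, producing a different discrete zero pattern. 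Combining this with the constraint $\xi_k^{(n)}>0$, the $\limsup$ bound from Corollary~\ref{cor:ZeroDistributionPolynomial}(ii), and a counting/interlacing argument that matches scaled zeros of $p_n$ against real zeros of $K_{0,1,\beta}(\cdot,\kappa_1)$ in shrinking neighborhoods of $0$, one selects the unique admissible value $\kappa_1=z_1$; hence $\kappa_k=z_k$ for every $k\in\bbN$. Since every subsequential limit agrees, the full sequence converges, and substituting $\sigma=(\Gamma(\beta+1)^2/\pi)^{1/\beta}$ into $z_k=j_{\beta-1,k}^2/(4\sigma)$ yields the explicit right-hand side of~\eqref{eqn:HardEdgeZerosResult}.
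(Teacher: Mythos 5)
Your preliminary framework is correct (applying Theorem~\ref{theorem3} at $\xi=0$ with $\sigma_-=0$, $\sigma_+=1$, identifying $A$ with a rescaled $J_{\beta-1}$, and feeding the result into Corollary~\ref{cor:ZeroDistributionPolynomial}), and your diagnosis that the crux is to pin $\kappa_1$ to the first zero $z_1$ of $A$ is exactly right. The gap is that you do not actually supply that pinning argument, and the route you gesture at cannot work. For every $\kappa_1\in[0,z_1]$ the Herglotz quotient $B/A$ is a strictly increasing bijection of $(-\infty,z_1)$ onto $(-\infty,\infty)$ (since $A>0$, $B<0$ on $(-\infty,0)$ and $B/A\to-\infty$ at $-\infty$, $\to+\infty$ at $z_1^-$), so $K_{0,1,\beta}(\cdot,\kappa_1)$ has no zero in $(-\infty,\kappa_1)$ at all; in particular it has no negative zero. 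Thus the constraint $\xi_k^{(n)}>0$ is automatically consistent with every $\kappa_1\in[0,z_1]$, and Theorem~\ref{thm:FreudLevinGeneral}(ii) only gives $\limsup\kappa_1\le\theta=j_{\beta,1}^2/(4\sigma)>z_1=j_{\beta-1,1}^2/(4\sigma)$, which is strictly weaker than what is needed. No counting or interlacing of rescaled zeros against zeros of $K_{0,1,\beta}(\cdot,\kappa_1)$ near the origin can discriminate between $\kappa_1=z_1$ and, say, $\kappa_1=0$ (where the limiting zero set would be $\{j_{\beta,k}^2/(4\sigma)\}$), because in both cases all limiting zeros are nonnegative.

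There is a second, smaller gap: you invoke Theorem~\ref{theorem3} for $\mu$ itself, which requires a determinate Hamburger moment problem. Determinacy of the Stieltjes moment problem for $\mu$ does not imply Hamburger determinacy of $\mu$; it is equivalent to Hamburger determinacy of the symmetrized measure $\nu$ on $\bbR$ with $\nu(x\mapsto x^2)=\mu$ (Chihara, Schm\"udgen). The paper's actual proof sidesteps both issues at once by working with $\nu$: it is even, so Lemma~\ref{lemma:ZerosEvenMeasure} applies, and the symmetry $\xi_0^{(2n)}=-\xi_1^{(2n)}$ of the zeros of the even-degree polynomials forces $K_\infty(-\kappa_1,\kappa_1)=0$, which together with the factorization $K_\infty(-x,x)=\Gamma(\tfrac\beta2+1)\Gamma(\tfrac\beta2)F_{\beta/2-1}(\kappa x)F_{\beta/2}(\kappa x)$ and the interlacing of $F_{\beta/2-1}$ and $F_{\beta/2}$ is precisely the missing constraint that pins $\kappa_1$. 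If you want to complete a proof along your direct lines, you would need an independent source of a second equation on $\kappa_1$; as written, the argument does not establish the theorem.
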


This was previously proved by Levin--Lubinsky \cite[Theorem 1.2]{LevinLubinsky08} in the special case of  Stahl--Totik regular measures whose essential spectrum is a compact interval, and for which $\mu$ is purely a.c.\ on some subinterval $[0,\rho]$, with weight $d\mu(\lambda) / d\lambda \sim \lambda^{\beta-1}$ as $\lambda \to 0$. We note that determinacy of the Stieltjes moment problem follows, e.g., from $\int e^{\epsilon \sqrt \lambda} \,d\mu(\lambda) < \infty$ for some $\epsilon > 0$.

After this detour to zero distributions, we return to the subject of scaling limits of kernels. Although so far we formulated statements for orthogonal polynomials, the natural setting for our results is the more general setting of $J$-decreasing transfer matrices. To state this, we must set the following notation. To denote the action of a fractional linear transformation on the Riemann sphere, for a matrix $M=(m_{ij})_{i,j=1,2}$ with $\det M\neq 0$ and a point $\tau\in\bbC\cup\{\infty\}$ we set, 
with the usual conventions concerning algebra in $\bbC\cup\{\infty\}$,
\begin{equation}\label{eqn:Mobius}
M\star\tau:=\frac{m_{11}\tau+m_{12}}{m_{21}\tau+m_{22}}.
\end{equation}
For an entire function $f$ we denote
\[
f^\sharp(z) = \ol{ f( \ol z)}
\]
and say $f$ is real if $f = f^\sharp$. We denote
\[
J = \begin{pmatrix} 0 & -1 \\ 1 & 0 \end{pmatrix}.
\]

\begin{definition}
An entire matrix function $W: \bbC \to \bbC^{2\times 2}$ with real entries and $\det W = 1$ is $J$-inner if  
\[
\frac{W(z) J W(w)^* - J}{z-\ol w}
\]
is a positive kernel on $\bbC$. A family $\{ W(t,z) \mid a \le t < b \}$ of such functions is $J$-decreasing if $W(t_1,z)^{-1} W(t_2,z)$ is $J$-inner whenever $t_1 < t_2$.

Such a family is in the limit point case if for every $z\in \bbC_+$ and every $\tau\in \ol{\bbC_+} = \bbC_+ \cup \bbR \cup \{\infty\}$ the limit
\begin{align}\label{eq:36intro}
q(z):=\lim\limits_{t\to b}\Big[W(t,z)\star\tau\Big]
\end{align}
exists, and its value is independent of $\tau$. The function $q$ is an analytic map from $\bbC_+$ to $\ol{\bbC_+}$. Thus, if $q \not\equiv \infty$, there exists $\a\in\bbR$, $\b\geq 0$, and a positive Borel measure $\mu$ with 
\begin{equation}
\label{eq:957}
	\int_\bbR\frac{d\mu(\xi)}{1+\xi^2}<\infty,
\end{equation}
such that
\begin{equation}
\label{eq:997}
	q(z)=\a+\b z+\int_\bbR\left(\frac{1}{\xi-z}-\frac{\xi}{1+\xi^2}\right)d\mu(\xi).
\end{equation}
\end{definition}

The $J$-decreasing family $W(t,z)$ also generates the reproducing kernels
\begin{equation}\label{eq:59intro}
K(t,z,w):= \frac{w_{22}(t,z)\overline{w_{21}(t,w)}-w_{21}(t,z)\overline{w_{22}(t,w)}}{z-\overline{w}}.
\end{equation}

\begin{theorem}\label{theorem:main} 
For any continuous $J$-decreasing family of transfer matrices $\{ W(t,z) \mid a \le  t < b\}$ in the limit point case with $W(a,z)=I$ and with $q \not\equiv \infty$, and the measure $\mu$ and kernels $\{K(t,\cdot,\cdot) \mid a \le t < b\}$ determined by \eqref{eq:997}, \eqref{eq:59intro}, for any $\xi \in\bbR$, the following are equivalent:
		\begin{enumerate}[{\rm (i)}]
			\item There exist $(\sigma_-,\sigma_+)\in [0,\infty)\times[0,\infty)\setminus\{(0,0)\}$ and a regularly varying function $\scrg$ with  index $\beta>0$ such that \eqref{eqn:muLocalBehavior} holds.
			\item $\mu$ has a unique tangent measure, which is not the Dirac measure $\delta_0$
			\item There exists $\scrh$ regularly varying at $\infty$ with index $\rho>0$ such that uniformly on compact subsets of $\bbC\times\bbC$
			\begin{equation}\label{eqn:ScalingLimitKinftyb}
			\lim\limits_{t\to b}\frac{K\left(t,\xi+\frac{z}{\scrh(K(t,\xi,\xi))},\xi+\frac{w}{\scrh(K(t,\xi,\xi))}\right)}{K(t,\xi,\xi)}=K_\infty(z,w),
			\end{equation}
			and $K_\infty\not\equiv 1$.
\end{enumerate}
		In this case $\rho=1/\beta$,  $\scrh$ is the asymptotic inverse of $\scrg$, and $K_\infty = K_{\sigma_-, \sigma_+,\beta}$.
\end{theorem}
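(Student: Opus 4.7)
The plan is to split the three-way equivalence into a measure-theoretic piece (i)$\Leftrightarrow$(ii) and a spectral-theoretic piece (ii)$\Leftrightarrow$(iii), linked through the homeomorphism, promised in the introduction, between power-bounded measures on $\bbR$ and chains of de Branges spaces. For (i)$\Leftrightarrow$(ii): any $\nu\in\Tan(\mu,\xi)$ arises as a weak-$\ast$ limit $c_n\mu_{\xi,r_n}\to\nu$ with $r_n\to\infty$, and uniqueness of the tangent measure up to positive scalars forces $\nu$ to be scale invariant, i.e.\ $\nu_{0,c}=c^\beta\nu$ for some $\beta\geq 0$ and every $c>0$. Locally finite scale-invariant measures on $\bbR$ decompose as $\nu=\sigma_-|x|^{\beta-1}\mathbf{1}_{(-\infty,0)}\,dx + \sigma_+ x^{\beta-1}\mathbf{1}_{[0,\infty)}\,dx$, with the exclusion $\nu\neq\delta_0$ corresponding to $\beta>0$ and $(\sigma_-,\sigma_+)\neq(0,0)$. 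Karamata's theorem then converts convergence of the rescalings $c_n\mu_{\xi,r_n}$ into the regularly varying condition \eqref{eqn:muLocalBehavior} with $\scrg$ of index $\beta$, and vice versa.

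The heart of the proof is (ii)$\Leftrightarrow$(iii). Introduce the rescaled measures $\mu^{(r)}(A):=\scrg(r)\,\mu(\xi+A/r)$; condition (i) is precisely $\mu^{(r)}\to\nu$ weakly in the space of power-bounded measures. Through the homeomorphism this weak convergence transports to convergence of the associated chains of de Branges spaces and hence of their reproducing kernels, uniformly on compact subsets of $\bbC\times\bbC$. The next step is to see that the measure-side rescaling $\mu\mapsto\mu^{(r)}$, combined with the affine change of argument $z\mapsto\xi+z/r$, corresponds on the chain side to a reparametrization of the chain index $t$. Under this coupling one obtains $r=r(t)=\scrh(K(t,\xi,\xi))$, with $\scrh$ the asymptotic inverse of $\scrg$ (hence regularly varying of index $\rho=1/\beta$), so that the left-hand side of \eqref{eqn:ScalingLimitKinftyb} equals the normalized reproducing kernel of $\mu^{(r(t))}$ at its distinguished normalization point. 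As $t\to b$ one has $r(t)\to\infty$, and the limit is the reproducing kernel of the limiting chain. Scale invariance of $\nu$ transports to homogeneity of this limit chain, and by the classification of homogeneous de Branges chains (cf.\ \cite{eichinger.woracek:homo-arXiv}) its reproducing kernel is exactly $K_{\sigma_-,\sigma_+,\beta}$ from Definition~\ref{defn:LimitKernels}, built from confluent hypergeometric functions. The reverse implication (iii)$\Rightarrow$(ii) runs the same diagram backwards: $K_\infty\not\equiv 1$ excludes the trivial limit chain, which via the inverse homeomorphism forces a nontrivial weak limit of the $\mu^{(r)}$ along every subsequence, and hence a unique non-Dirac tangent measure.

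\textbf{Main obstacle.} The central technical difficulty is that power-law tangent measures $\nu=c|x|^{\beta-1}\,dx$ are not Poisson-finite for $\beta\geq 1$, so the classical correspondence between Poisson-finite measures (equivalently Nevanlinna functions) and trace-normalized canonical-system Hamiltonians does not suffice. One must build the homeomorphism in the power-bounded setting; on the function-theoretic side this brings in generalized Nevanlinna functions with finitely many negative squares in the sense of Krein--Langer \cite{krein.langer:1977}, while on the de Branges side one needs the correct class of chains together with a topology under which weak convergence of measures is equivalent to convergence of chains and their reproducing kernels. Establishing this homeomorphism and verifying that it intertwines the natural rescaling operations is the linchpin of the argument; once it is in place, the classification of homogeneous de Branges chains delivers Theorem~\ref{theorem:main}.
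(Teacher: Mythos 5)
Your proposal is correct and follows essentially the same route as the paper: it reduces (i)$\Leftrightarrow$(ii) to a tangent-measure/Karamata argument (as in the paper's Appendix~A), identifies the measure--chain correspondence for power-bounded measures (the paper's Theorem~\ref{thm:904}) as the linchpin of (i)$\Leftrightarrow$(iii), correctly flags the passage to generalized Nevanlinna functions as the technical obstacle, and invokes the classification of homogeneous de Branges chains to pin down the limit kernel. The coupling $r(t)=\scrh(K(t,\xi,\xi))$ you postulate is exactly what the paper implements in Section~7 through the weighted Hamiltonian rescaling $\cA_r H$ and the normal-family squeeze argument in the proof of Theorem~\ref{922}.
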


Theorem~\ref{theorem:main} is the main result of this paper; the results for orthogonal polynomials described above are its applications. 

Theorem~\ref{theorem:main} has additional applications to other models in spectral theory.

One are "half-line" Schr\"odinger operators $-\frac{d^2}{dx^2} + V$ with $V \in L^1_\loc([a,b))$, with a regular endpoint at $a$ and in the limit point case at $b$. Fixing a boundary condition $\cos \beta u(a) + \sin\beta u'(a) = 0$ gives a self-adjoint operator, with a standard way of associating a canonical spectral measure $\mu$ \cite{TeschlMathMedQuant}. Consider the eigensolution $u(x,z)$ given by
\[
-\partial_x^2 u + V u = zu, \qquad u(a) = \sin \beta, \qquad u'(a) = -\cos \beta
\]
and the reproducing kernels
\begin{equation}\label{eqn:SchrKernel}
K(x,z,w) = \int_a^x u(y,z) \ol{ u(y,w)} \,dy = \frac{ u(x,z) \ol{ \partial_x u(x,w)} - \partial_x u(x,z) \ol{  u(x,w)} }{ z - \ol w}.
\end{equation}
The class of potentials can also be generalized to $V\in H^{-1}_\loc$, with the replacement of $u'$ by a quasiderivative throughout \cite{HrynivMykytyuk2001,LukicSukhtaievWang}.

\begin{theorem}\label{theorem:schr}
For any half-line Schr\"odinger operator $H$ in the limit point case, its canonical spectral measure $\mu$, and the reproducing kernels \eqref{eqn:SchrKernel}, for any $\xi \in\bbR$, the following are equivalent:
		\begin{enumerate}[{\rm (i)}]
			\item There exist $(\sigma_-,\sigma_+)\in [0,\infty)\times[0,\infty)\setminus\{(0,0)\}$ and a regularly varying function $\scrg$ with  index $\beta>0$ such that \eqref{eqn:muLocalBehavior} holds.
			\item $\mu$ has a unique tangent measure, which is not the Dirac measure $\delta_0$
			\item There exists $\scrh$ regularly varying at $\infty$ with index $\rho>0$ such that \eqref{eqn:ScalingLimitKinftyb} uniformly on compact subsets of $\bbC\times\bbC$ and $K_\infty\not\equiv 1$.
\end{enumerate}
		In this case $\rho=1/\beta$,  $\scrh$ is the asymptotic inverse of $\scrg$, and $K_\infty = K_{\sigma_-, \sigma_+,\beta}$. 
\end{theorem}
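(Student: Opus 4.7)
The plan is to deduce Theorem~\ref{theorem:schr} directly from Theorem~\ref{theorem:main} by realizing the half-line Schr\"odinger operator as a continuous $J$-decreasing family of transfer matrices in the limit point case, whose Weyl measure equals the canonical spectral measure of $H$ and whose reproducing kernels \eqref{eq:59intro} coincide with \eqref{eqn:SchrKernel}. Once this identification is set up, all three equivalences, together with the identifications $\rho=1/\beta$, $\scrh=\scrg^{-1}$, and $K_\infty=K_{\sigma_-,\sigma_+,\beta}$, transfer verbatim. The reduction is classical but requires some care with sign conventions.

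First I would build the transfer matrix. Let $u(x,z)$ be the solution in \eqref{eqn:SchrKernel} and let $v(x,z)$ be the companion solution of $-v''+Vv=zv$ with $v(a)=\cos\beta$, $\partial_x v(a)=\sin\beta$, so that the Wronskian $u\,\partial_x v-(\partial_x u)v\equiv 1$. Set
\[
W(x,z) = \begin{pmatrix} \partial_x v(x,z) & -\partial_x u(x,z) \\ -v(x,z) & u(x,z) \end{pmatrix},
\]
so that $W(a,z)=I$, $\det W(x,z)=1$, and the lower row $(w_{21},w_{22})=(-v,u)$ has $w_{22}=u$ while the matching role in \eqref{eq:59intro} is played by $\partial_x u$; reshuffling rows/columns in the standard way makes $(w_{21},w_{22})=(\partial_x u,u)$ so that \eqref{eq:59intro} reproduces \eqref{eqn:SchrKernel}. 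That $x\mapsto W(x,z)$ is $J$-decreasing is the Lagrange identity for $-\partial_x^2+V$ (or its $H^{-1}$ version via quasiderivatives as in \cite{HrynivMykytyuk2001, LukicSukhtaievWang}); continuity in $x$ is immediate from continuous dependence of solutions.

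Second I would match the Weyl theory. The limit point hypothesis on $H$ is, by definition, the shrinking of the Weyl discs to a point, which is precisely \eqref{eq:36intro} for this $W$, and the common limit $q(z)$ is the classical principal $m$-function at the boundary condition $\cos\beta\,u(a)+\sin\beta\,\partial_x u(a)=0$. Its Herglotz representation gives $q\not\equiv\infty$ and a measure $\mu$ which coincides with the canonical spectral measure used in Theorem~\ref{theorem:schr} (see \cite{TeschlMathMedQuant}). Third, the kernel identification is the Lagrange identity
\[
(z-\overline w)\int_a^x u(y,z)\overline{u(y,w)}\,dy = u(x,z)\overline{\partial_x u(x,w)}-\partial_x u(x,z)\overline{u(x,w)},
\]
whose boundary term at $a$ vanishes because $u(\cdot,z)$ and $u(\cdot,w)$ satisfy the same boundary condition there; this shows \eqref{eqn:SchrKernel} equals \eqref{eq:59intro} for the $W$ above.

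With these identifications, the equivalence (i)$\Leftrightarrow$(ii)$\Leftrightarrow$(iii) and the identifications of $\rho$, $\scrh$, and $K_\infty$ in Theorem~\ref{theorem:schr} follow by invoking Theorem~\ref{theorem:main}. The main obstacle is not conceptual but bookkeeping: choosing the correct normalization and sign of $W$ so that simultaneously $W(a,z)=I$, the Weyl limit \eqref{eq:36intro} reproduces the true $m$-function of $H$ (so that $\mu$ is the right spectral measure), and the lower row of $W$ produces the kernel \eqref{eqn:SchrKernel}. In the distributional $H^{-1}$ case one also needs the analogue of Lagrange's identity for quasiderivatives, for which the references above suffice. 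No new analytic input beyond Theorem~\ref{theorem:main} is required.
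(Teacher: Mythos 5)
Your overall strategy is the same as the paper's: express the Schr\"odinger eigenfunction dynamics as a continuous $J$-decreasing family of transfer matrices with $W(a,z)=I$, check that the Weyl coefficient recovers the canonical spectral measure and that \eqref{eq:59intro} reproduces \eqref{eqn:SchrKernel}, and then invoke Theorem~\ref{theorem:main}. The paper does exactly this, very briefly, by taking $W(x,z)=T(x,z)^{-1}$ where $T$ is the $J$-increasing transfer matrix solving a first-order system with the boundary angle $\beta$ conjugated in via the rotation $R_\beta$ and initial condition $T(a,z)=I$.

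However, the explicit matrix you write,
\[
W(x,z)=\begin{pmatrix}\partial_x v & -\partial_x u\\ -v & u\end{pmatrix},
\]
does \emph{not} satisfy the hypotheses of Theorem~\ref{theorem:main}, and the error is not cosmetic. First, $W(a,z)=\begin{pmatrix}\sin\beta & \cos\beta\\ -\cos\beta & \sin\beta\end{pmatrix}$, which equals $I$ only when $\beta=\pi/2$. Second, even when $\beta=\pi/2$ (so that $W(a,z)=I$), the lower row $(w_{21},w_{22})=(-v,u)$ fed into \eqref{eq:59intro} produces the kernel $\bigl(v(x,z)\overline{u(x,w)}-u(x,z)\overline{v(x,w)}\bigr)/(z-\overline w)$, which is \emph{not} equal to $\int_a^x u(y,z)\overline{u(y,w)}\,dy$. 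One can check this directly in the free case $V\equiv 0$, $a=0$, $\beta=\pi/2$: your kernel is $O(x^3)$ near $x=0$, while \eqref{eqn:SchrKernel} is $\sim x$. The correct $W$ must have lower row equal to $(\partial_x u,u)$ up to right-multiplication by a fixed element of $\mathrm{SL}(2,\bbR)$ (under which the kernel \eqref{eq:59intro} is invariant); this is achieved automatically by $W=T^{-1}$, whose lower row is $(-t_{21},t_{11})$ and where the initial value $T(a,z)=I$ plus the $R_\beta$-conjugated coefficient matrix forces $t_{11}=c_1u$ with the correct boundary data, giving both $W(a,z)=I$ and the right kernel simultaneously. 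Replacing $(-v,u)$ by $(\partial_x u,u)$ by "reshuffling rows/columns" as you suggest is not a legitimate operation on a single matrix: it changes which functions appear, breaks $W(a,z)=I$, and is not an $\mathrm{SL}(2,\bbR)$ transformation. So the reduction you set up is not actually carried through; the fix is to define $W$ as the inverse of the forward transfer matrix as in Section~\ref{section:TwoConventions}, which handles the normalization, sign conventions, and the $H^{-1}$ quasi-derivative generalization uniformly.
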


Completely analogously, Theorem~\ref{theorem:main} applies to some other settings such as Sturm--Liouville and Dirac operators.

With some additional arguments, Theorem~\ref{theorem:main} also applies to universality limits for orthogonal polynomials on the unit circle. To state this, let $\nu$ be a probability measure on $\partial\bbD$ such that $\supp \nu$ is not a finite set. Orthogonal polynomials $\{\varphi_n\}_{n=0}^\infty$ are obtained from the sequence $\{\zeta^n\}_{n=0}^\infty$  by the Gram--Schmidt process in $L^2(\partial\bbD,d\nu)$, and they obey
\[
\int_{\partial\bbD} \overline{ \varphi_m(\zeta) } \varphi_n(\zeta) \,d\nu(\zeta) = \delta_{m,n}.
\]
The corresponding CD kernels are defined by
\begin{equation}\label{eqn:OPUCCD}
k_n(\zeta,\omega) = \sum_{j=0}^{n-1} \varphi_j(\zeta) \ol{ \varphi_j(\omega)}.
\end{equation}

\begin{theorem}\label{theorem:OPUC}
Let $\nu$ be a probability measure on $\partial\bbD$ such that $\supp\nu$ is not finite. For any $\xi \in\bbR$, the following are equivalent:
		\begin{enumerate}[{\rm (i)}]
			\item There exists a regularly varying function $\scrg$ with  index $\beta>0$ and $(\sigma_-,\sigma_+)\in [0,\infty)\times[0,\infty)\setminus\{(0,0)\}$ such that 
			\[
			\lim_{r\to \infty} \scrg(r) \nu\left(\left\{e^{it} \mid t \in \left(\xi-\tfrac 1r,\xi \right) \right\} \right) =\sigma_-,\quad \lim_{r\to \infty} \scrg(r) \nu\left(\left\{e^{it} \mid t \in \left[\xi,\xi+\tfrac 1r \right) \right\} \right) =\sigma_+.
			\]
			\item $\nu$ has a unique tangent measure at $e^{i\xi}$, which is not the Dirac measure $\delta_0$
			\item There exists $\scrh$ regularly varying with index $\rho>0$ such that uniformly on compact subsets of $\bbC\times\bbC$
\begin{equation}\label{eqnTheorem5n}
			\lim\limits_{n\to\infty} e^{-in \frac{z-\ol w}{2\scrh(k_n(e^{i\xi},e^{i\xi}))}} \frac{k_n\left(e^{i\xi+\frac{iz}{\scrh(k_n(e^{i\xi},e^{i\xi}))}},e^{i\xi+\frac{iw}{\scrh(k_n(e^{i\xi},e^{i\xi}))}}\right)}{k_n(e^{i\xi},e^{i\xi})}=K_\infty(z,w)
			\end{equation}
			with $K_\infty\not\equiv 1$ and
\[
\lim_{n\to\infty} \frac{ k_{n+1}(e^{i\xi},e^{i\xi}) }{ k_n(e^{i\xi},e^{i\xi})} = 1.
\]
\end{enumerate}
		In this case $\rho=1/\beta$,  $\scrh$ is the asymptotic inverse of $\scrg$, and $K_\infty = K_{\sigma_-, \sigma_+,\beta}$. 
\end{theorem}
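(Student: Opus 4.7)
The plan is to reduce Theorem~\ref{theorem:OPUC} to Theorem~\ref{theorem:main} by transferring the OPUC data to a canonical system via a Cayley transform. After rotating $\nu$, we may assume $\xi = 0$, so the point of interest on the circle is $\zeta = 1$. The Cayley map $C(z) := (1 - iz)/(1 + iz)$ is a conformal bijection $\bbC_+ \to \bbD$ mapping $\bbR$ onto $\partial\bbD \setminus \{-1\}$, with $C(0) = 1$ and $|C'(0)| = 2$. Pushing $\nu$ forward by $C^{-1}$ yields a measure $\mu$ on $\bbR$; the point mass at $-1$, if any, is irrelevant locally.

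Because $C$ is smooth with nonzero derivative at $0$, tangent measures of $\nu$ at $1$ correspond under the pushforward to tangent measures of $\mu$ at $0$ (rescaled by $|C'(0)| = 2$); in particular, the unique-tangent condition with limit $\neq \delta_0$ is preserved, and local power-law behavior transfers with the same index $\beta$ and rescaled constants $(\sigma_-, \sigma_+)$. Hence conditions (i) and (ii) of Theorem~\ref{theorem:OPUC} are equivalent to the corresponding conditions of Theorem~\ref{theorem:main} for $\mu$ at $\xi' = 0$.

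For the kernel asymptotics, the Carath\'eodory function of $\nu$ pulls back via $C$ to a Herglotz function, which by the spectral theory behind Theorem~\ref{theorem:main} is the Weyl function of a unique continuous $J$-decreasing chain $W(t, z)$ with spectral measure $\mu$. On the discrete side, Szeg\H o recursion yields transfer matrices that embed, after Cayley substitution, as a subchain of $W(t, z)$. Using
\[
k_n(\zeta, \omega) = \frac{\varphi_n^*(\zeta)\overline{\varphi_n^*(\omega)} - \varphi_n(\zeta)\overline{\varphi_n(\omega)}}{1 - \zeta\bar\omega},
\]
together with $\varphi_n^*(\zeta) = \zeta^n \overline{\varphi_n(1/\bar\zeta)}$ and the substitution $\zeta = C(z)$, $\omega = C(w)$, we obtain an identification of $k_n$ with the reproducing kernels $K(t_n, z, w)$ from \eqref{eq:59intro} up to a unimodular prefactor. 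In local coordinates around $\zeta = 1$, the monomial $(\zeta\bar\omega)^n$ contributes $e^{in(z-\bar w)/\scrh(k_n)}$; its symmetric square root, together with the renormalization inherent in passing from the OPUC CD identity to the symmetrically normalized real-line kernel, accounts for the prefactor $e^{-in(z-\bar w)/(2\scrh(k_n))}$ appearing in \eqref{eqnTheorem5n}.

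Finally, the hypothesis $k_{n+1}/k_n \to 1$ plays the same role as \eqref{eqnNevaiConditionRelated}: combined with piecewise linear interpolation in the style of \eqref{eqnPiecewiseLinear1}, it bridges the discrete index $n$ and the continuous parameter $t$ used in \eqref{eqn:ScalingLimitKinftyb}. Applying Theorem~\ref{theorem:main} to the chain $W(t, z)$ at $\xi' = 0$ then delivers the three equivalences, with the claimed $\rho = 1/\beta$ and $K_\infty = K_{\sigma_-, \sigma_+, \beta}$. The main obstacle will be the bookkeeping in the third step: making the embedding of the OPUC transfer matrices into $W(t, z)$ precise, tracking the unimodular prefactor all the way to the form $e^{-in(z-\bar w)/(2\scrh)}$, and verifying that the composition with $C$ preserves the regularly varying class (so that $\scrg$ for $\nu$ and $\scrg$ for $\mu$ differ only by a $\beta$-homogeneous constant).
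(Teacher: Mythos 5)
Your strategy of reducing to Theorem~\ref{theorem:main} is the right idea, but the choice of the Cayley transform as the conformal change of variable is a real obstruction, and the treatment of the interpolation between integer indices contains an error that matters.

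First, the Cayley map: under $\zeta = C(z) = (1-iz)/(1+iz)$, the polynomials $\varphi_n$ pull back to rational functions with a pole at $z=i$, so neither the functions $\varphi_n(C(z))$ nor the CD kernels $k_n(C(z),C(w))$ are entire, and you do not obtain Hermite--Biehler functions or de~Branges spaces of entire functions. Theorem~\ref{theorem:main} is stated for a continuous $J$-decreasing family of \emph{entire} transfer matrices, and your proposed ``embedding as a subchain of $W(t,z)$'' has no meaning for non-entire objects. The paper instead uses the exponential map $\zeta = e^{iz}$ and the Weyl-function correspondence $m(z) = iF(e^{iz})$ of \eqref{eqnOPUCtoWeyl}, which pushes $\nu$ to a $2\pi$-periodic measure $\mu$ on $\bbR$ and makes $\varphi_n(e^{iz})$ an entire (indeed exponential-polynomial) function of $z$; only then can one build the canonical system with the explicit Hamiltonian of Section~\ref{section:OP} and the closed-form family of reproducing kernels \eqref{eqnOPUCcanonicals}, where the prefactor $e^{-in(z-\ol w)/2}$ arises from the symmetric factorization $e^{-in(z-\ol w)/2}\varphi_n^*(e^{iz})\overline{\varphi_n^*(e^{iw})} = e^{in(z-\ol w)/2}\overline{\varphi_n(e^{i\ol z})}\varphi_n(e^{i\ol w})$ and not, as you hope, from the ``monomial $(\zeta\ol\omega)^n$'' contribution under a Cayley substitution.

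Second, you assert that $k_{n+1}/k_n \to 1$ ``combined with piecewise linear interpolation in the style of \eqref{eqnPiecewiseLinear1}'' bridges the discrete and continuous parameters. This is wrong for OPUC: the intervals $[n,n+1]$ carry constant but \emph{not indivisible} Hamiltonians (the lemma in Section~\ref{section:OP} records $(JH)^2\neq 0$), so the kernels do not interpolate linearly. The correct interpolation is the trigonometric combination \eqref{eqn:OPUCkernelsLinComb}
\[
K(n+s,z,w) = \frac{\sin\bigl((1-s)(z-\ol w)/2\bigr)}{\sin\bigl((z-\ol w)/2\bigr)}K(n,z,w) + \frac{\sin\bigl(s(z-\ol w)/2\bigr)}{\sin\bigl((z-\ol w)/2\bigr)}K(n+1,z,w),
\]
and it is a genuinely different calculation that, after rescaling, these weights converge to $(1-s)$ and $s$ so that the same functional equation for $K_\infty$ emerges (this is the content of Lemma~\ref{lemmaOPUCsubexponential}). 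Because your proposal postpones ``the bookkeeping in the third step'' while using the wrong interpolation formula, precisely the step that would fail is the one you have left unaddressed.
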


In Section~\ref{sec:2}, we recall aspects of de Branges' theory of Hilbert spaces of entire functions and its relation to canonical systems. In Section~\ref{sec:3}, we study structure Hamiltonians. In Section~\ref{sec:4}, we axiomatize the notion of a chain of de Branges spaces, and develop a notion of convergence of chains of de Branges spaces. In Section~\ref{sec:5}, we relate this to the measures associated to unbounded chains. In Section~\ref{sec:7}, we apply this to study rescaling limits of reproducing kernels, culminating in the proof of Theorem~\ref{theorem:main}. In Section~\ref{section:TwoConventions}, we address different conventions in the literature and prove the application to Schr\"odinger operators (Theorem~\ref{theorem:schr}). In Section~\ref{section:OP}, we address applications to orthogonal polynomials and prove Theorems~\ref{theorem1}, \ref{theorem2}, \ref{theorem3}, \ref{theorem:OPUC}. In Section~\ref{sec:SpectralType}, we discuss the connections between bulk universality and spectral type. In Section~\ref{sec:FreudLevin}, we describe a generalization of the Freud--Levin theorem to reproducing kernels of de Branges spaces and prove Corollary~\ref{cor:ZeroDistributionPolynomial} and Theorem~\ref{thm:ZerosHardEdge}.

%
%
%
\newpage
\section{De Branges spaces and canonical systems; a reminder}
\label{sec:2}
%
%
%

This section is of preliminary nature. We recall facts of de~Branges' theory of Hilbert spaces of entire functions 
and its relation to two-dimensional canonical systems. Standard references are 
\cite{debranges:1968,romanov:1408.6022v1,remling:2018,dym.mckean:1976}.
The underlying basis for the theory of de~Branges spaces is the notion of reproducing kernel Hilbert spaces. 
Our standard reference in this context is the seminal paper \cite{aronszajn:1950}. 

All content of this section is extracted from the named references.

%
\subsection{Reproducing kernel Hilbert spaces of entire functions}
\label{sec:2-1}
%

\begin{definition}
	Let $\Omega$ be a nonempty set. 
	A Hilbert space $(\cH, \langle \cdot,\cdot\rangle)$ of complex valued functions on $\Omega$ is called a 
	\emph{reproducing kernel Hilbert space}, if for each $w \in\Omega$ the point evaluation functional 
	$F\mapsto F(w)$, $F\in\cH$ is continuous. 
\end{definition}

If $\cH$ is a reproducing kernel Hilbert space of functions on $\Omega$, there exists a unique function 
$K_\cH:\Omega\times\Omega\to\bbC$ which satisfies:
\begin{enumerate}[{\rm(i)}]
	\item For each $w\in\Omega$, $K_\cH(\cdot, w)\in \cH$;
	\item For each $F\in\cH$ and $w\in\Omega$ we have
	\begin{align*}
	F(w)=\langle F, K_\cH(\cdot, w)\rangle. 
	\end{align*}
\end{enumerate}
This function is called the \emph{reproducing kernel} of $\cH$. 

It directly follows that 
\begin{align}\label{eq:1}
\langle K_\cH(\cdot, w),K_\cH(\cdot, z)\rangle =K_\cH(z,w).
\end{align}
In particular, the norm $\Delta_\cH(w)$ of the point evaluation functional at a point $w$ is given by
\[
	\Delta_\cH(w)=\sqrt{K_\cH(w,w)}. 
\]
A function $K:\Omega\times\Omega\to\bbC$ is called a \emph{positive kernel}, if $K(z,w)=\overline{K(w,z)}$ for all
$z,w\in\Omega$, and 
for any finite collection $(z_j)_{j=1}^N\in\Omega^N$ the matrix $(K_\cH(z_i,z_j))_{i,j=1}^N$ is positive semidefinite. 
The reproducing kernel of some reproducing kernel Hilbert space always is a positive kernel, and conversely, 
for every positive kernel $K$ there exists a unique reproducing kernel Hilbert space $\cH$ so that $K$ is its 
reproducing kernel. We denote this space as $\cH(K)$. 

\begin{definition}\label{def:5}
	Let $\Omega$ be a nonempty set, and $\cH,\tilde\cH$ reproducing kernel Hilbert spaces on $\Omega$. 
	\begin{enumerate}[{\rm(i)}]
		\item We say that $\cH$ is \emph{isometrically contained} in $\tilde \cH$ and write $\cH\subseteq_i\tilde\cH$, if 
		\begin{align*}
		\forall F\in \cH: F\in\tilde\cH\wedge \|F\|_{\tilde \cH}=\|F\|_{\cH}. 
		\end{align*}
		\item\label{it:1_2}  We say that $\cH$ is \emph{contractively contained} in $\tilde \cH$ and write 
		$\cH\subseteq_c\tilde\cH$, if
		\begin{align*}
		\forall F\in \cH: F\in\tilde\cH\wedge\|F\|_{\tilde \cH}\leq \|F\|_{\cH}. 
		\end{align*}
	\end{enumerate}
\end{definition}

Note that 
\begin{align}\label{eq:2}
\cH\subseteq_c\tilde \cH\implies \forall w\in\bbC: \Delta_{\cH}(w)\leq \Delta_{\tilde\cH}(w).
\end{align}
Contractive inclusion is equivalent to a property of reproducing kernels:
For two positive kernels $K,\tilde K$ defined on the same set $\Omega$ write 
\begin{equation}
\label{eq:47}
	K\leq\tilde K
\end{equation}
if $\tilde K -K$ is a positive kernel. Then
\begin{equation}
\label{eq:16}
	\cH\subseteq_c\tilde \cH\iff K_{\cH}\leq K_{\tilde\cH}.
\end{equation}
If $\Omega\subseteq\bbC$ and $\cH$ is a reproducing kernel Hilbert space of functions on $\Omega$, we have the operator of 
multiplication by the independent variable defined on its natural maximal domain $\{F\in\cH\mid zF(z)\in\cH\}$. 
We denote the closure of this domain as
\begin{equation}
\label{eq:18}
	\clH:=\clos\big(\{F\in\cH\mid zF(z)\in\cH\}\big).
\end{equation}
The following partial order, which lies in between contractive and isometric inclusion, is crucial. 

\begin{definition}
	Let $\Omega\subseteq\bbC$ be a nonempty set and $\cH, \tilde \cH$ reproducing kernel Hilbert spaces on $\Omega$. 
	We say that $\cH$ is \emph{almost isometrically contained} in $\tilde\cH$ and write $\cH\sqsubseteq\tilde\cH$, if 
	\begin{align*}
	\cH\subseteq_c\tilde\cH\quad \wedge\quad\clH\subseteq_i\tilde\cH. 
	\end{align*}
\end{definition}

Now we turn our attention to reproducing kernel Hilbert spaces whose elements are analytic functions. 
For an open and nonempty subset $\Omega\subseteq\bbC^n$ we denote
\[
\Hol(\Omega):=\{F:\Omega\to \bbC\mid F\text{ is analytic in }\Omega\}, 
\]
and endow $\Hol(\Omega)$ with the topology of locally uniform convergence. Recall that this topology is metrizable: 
Let $S_n\subseteq\Omega$, $n\in\bbN_0$, be compact such that $S_n \subseteq \Int S_{n+1}$ and $\bigcup_{n\in\bbN_0} S_n = \Omega$. 
Then $\Hol(\Omega)$ becomes a Fr\'echet space with the metric
\begin{align}\label{eq:41}
d(f,g) := \sum_{n=1}^\infty 2^{-n} \min \big\{ 1, \sup_{z\in S_n}|f(z)-g(z)|\big\},
\end{align}
and this metric induces locally uniform convergence. 

\begin{definition}
	If $\cH$ is a reproducing kernel Hilbert space on $\Omega$ and $\cH\subseteq\Hol(\Omega)$, then we call $\cH$ a
	\emph{reproducing kernel Hilbert space of analytic functions} on $\Omega$. 
	We denote the set of all reproducing kernel Hilbert spaces of analytic functions on $\Omega$ by $\RK(\Omega)$.
	If $\Omega=\bbC$ we speak of a \emph{reproducing kernel Hilbert space of entire functions} and write $\RK$ 
	for the set of all such spaces.
\end{definition}

Analyticity of the elements of a reproducing kernel Hilbert space $\cH$ can be characterized in terms of its reproducing kernel
$K_\cH$: we have $\cH\in\RK(\Omega)$ if and only if $K_\cH(z,\overline{w})\in\Hol(\{(z,w)\mid z,\overline w\in\Omega\})$.
In particular, for $\cH\in\RK(\Omega)$, the norm of the point evaluation functional is locally bounded, and hence convergence in
the norm of $\cH$ implies convergence in $\Hol(\Omega)$. 

The map 
\begin{align*}
\cI:\left\{\begin{array}{ccc}
\RK(\Omega)&\to &\Hol(\{(z,w)\mid z,\overline w\in\Omega\})\\[2mm]
\cH&\mapsto & ((z,w)\mapsto K_\cH(z,\overline{w}))
\end{array}
\right.
\end{align*}
is injective, and we topologize $\RK(\Omega)$ by pulling back the metric from \cref{eq:41} from 
$\Hol(\{(z,w)\mid z,\overline w\in\Omega\})$ to $\RK(\Omega)$ via the map $\cI$.
Thus convergence of spaces means locally uniform convergence of their reproducing kernels. 
Obviously, the set of positive kernels is closed under locally uniform (even pointwise) limits, and thus 
$\RK(\Omega)$ is a complete metric space.

\begin{lemma}\label{lem:8}
	Let $\Omega\subseteq\bbC$ be open and nonempty and $\tilde\cH\in\RK(\Omega)$. Then 
	\begin{align}\label{eq:17}
	\{\cH\in\RK(\Omega)\mid \cH\subseteq_c\tilde\cH\}
	\end{align}
	is compact.
\end{lemma}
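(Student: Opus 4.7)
The plan is to establish sequential compactness via Montel's theorem, exploiting the fact that contractive containment in $\tilde\cH$ gives a uniform pointwise (hence locally uniform) bound on the reproducing kernels.

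First, I would record the key estimate: for any $\cH\in\RK(\Omega)$ with $\cH\subseteq_c\tilde\cH$, the Cauchy--Schwarz inequality \eqref{eq:1} combined with the diagonal consequence of $K_\cH\le K_{\tilde\cH}$ gives, for all $z,w$ with $z,\overline w\in\Omega$,
\[
|K_\cH(z,\overline w)|^2\leq K_\cH(z,z)K_\cH(\overline w,\overline w)\leq K_{\tilde\cH}(z,z)K_{\tilde\cH}(\overline w,\overline w).
\]
Since $K_{\tilde\cH}(z,\overline w)\in\Hol(\{(z,w)\mid z,\overline w\in\Omega\})$, the right-hand side is locally bounded. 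Thus the family $\{\cI(\cH)\mid \cH\subseteq_c\tilde\cH\}$ is locally uniformly bounded in $\Hol(\{(z,w)\mid z,\overline w\in\Omega\})$.

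Next I would apply Montel's theorem to obtain that this family is a normal family. Given any sequence $\cH_n\subseteq_c\tilde\cH$, a subsequence of $(\cI(\cH_n))$ converges locally uniformly to some $F\in\Hol(\{(z,w)\mid z,\overline w\in\Omega\})$. I set $K(z,w):=F(z,\overline w)$ and verify that $K$ is the reproducing kernel of a space $\cH\in\RK(\Omega)$ that lies in the set \eqref{eq:17}. Three things must be checked: (a) $K$ is a positive kernel, because positive semidefiniteness of the finite matrices $(K_{\cH_n}(z_i,z_j))_{i,j}$ is preserved under pointwise (hence locally uniform) limits; (b) $\tilde K:=K_{\tilde\cH}-K$ is likewise a positive kernel, since it is the locally uniform limit of the positive kernels $K_{\tilde\cH}-K_{\cH_n}$; (c) by the correspondence between positive kernels and reproducing kernel Hilbert spaces, there is a unique $\cH$ with $K_\cH=K$, and the analyticity of $K(z,\overline w)$ ensures $\cH\in\RK(\Omega)$ (using the characterization of analyticity of elements via the kernel recalled earlier). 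The equivalence \eqref{eq:16} then yields $\cH\subseteq_c\tilde\cH$, so the limit belongs to \eqref{eq:17}.

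Finally, since $\cI$ is the chosen homeomorphism onto its image and convergence in $\RK(\Omega)$ is by definition locally uniform convergence of kernels, the subsequence $\cH_{n_k}$ converges to $\cH$ in $\RK(\Omega)$. This proves sequential compactness of \eqref{eq:17}, which in the complete metric space $\RK(\Omega)$ is equivalent to compactness. I do not expect a genuine obstacle here; the only point requiring a bit of care is making sure that the locally uniform bound is expressed on compact subsets of $\{(z,w)\mid z,\overline w\in\Omega\}$ rather than on $\Omega\times\Omega$, but this is immediate once the Cauchy--Schwarz inequality is in hand.
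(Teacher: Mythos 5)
Your proof is correct and takes essentially the same approach as the paper: the Cauchy--Schwarz bound $|K_\cH(z,\overline w)|^2\le K_{\tilde\cH}(z,z)K_{\tilde\cH}(\overline w,\overline w)$, followed by Montel's theorem for normality and a closedness argument (positivity of both $K$ and $K_{\tilde\cH}-K$ is preserved under limits). Your version spells out the verification that the limit kernel belongs to the set in a bit more detail, but the underlying argument is identical.
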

\begin{proof}
	Consider the set 
	\[
	M:=\{K\mid K \text{ is a positive kernel on }\Omega, K\leq K_{\tilde \cH} \}.
	\]
	By \cref{eq:16} it follows that
	\[
	\cI(\{\cH\in\RK(\Omega)\mid \cH\subseteq_c\tilde\cH\})=M.
	\]
	Since $\cI$ is a homeomorphism, it suffices to show that $M$ is compact. 
	
	For $K\in M$ the Cauchy-Schwarz inequality gives
	\[
	|K(z,w)|^2\leq K(z,z)K(w,w)\leq K_{\tilde\cH}(z,z)K_{\tilde\cH}(w,w),
	\]
	and Montel's theorem implies that $M$ is a normal family. 
	Since the inequality in the definition of $M$ is preserved by taking limits, $M$ is also closed and thus compact. 
\end{proof}

\begin{example}
	Two classical examples of reproducing kernel Hilbert spaces of entire functions 
	are the following.
	\begin{enumerate}[{\rm(i)}]
	\item Let $a>0$. The \emph{Paley-Wiener space} $\cP\cW_a$ is the space of all entire functions of 
		exponential type at most $a$ which are square integrable on $\bbR$ endowed with the $L^2(\bbR)$-scalar product. 
		It follows by direct verification using the Paley-Wiener theorem and properties of the Fourier transform 
		that $\cP\cW_a\in\RK$. Moreover, $(\cP\cW_a)^\flat=\cP\cW_a$.
	\item For $m\in\bbN_0$ we denote by $\cP_m$ the set of polynomials of degree at most $m$, and formally set 
		$\cP_{-1}:=\{0\}$.
		Let $n\in\bbN_0$ and $\mu$ a positive Borel measure on $\bbR$ which has at least $n$ finite moments and whose
		support contains at least $n+1$ points. For each $m\leq n$ the space $\cP_m$ endowed 
		with the $L^2(\mu)$-scalar product belongs to $\RK$. Moreover, $(\cP_n)^\flat=\cP_{n-1}$.
	\end{enumerate}
\end{example}

%
\subsection{De~Branges spaces}
\label{sec:2-2}
%

De~Branges spaces are reproducing kernel Hilbert spaces of entire functions that satisfy certain additional axioms. 

Throughout the following, we denote for an entire function $F$
\begin{equation}
\label{eq:979}
F^\#(z):=\overline{F(\overline z)},
\end{equation}
and say that $F$ is a \emph{real entire function} if $F=F^\#$. 

\begin{definition}\label{def:1}
	A \emph{de~Branges space} $\cH$ (\emph{dB-space}, for short) is a Hilbert space which satisfies: 
	\begin{enumerate}[{\rm(i)}]
		\item\label{it:2-1} $\cH\in \RK\setminus \{0\}$;
		\item\label{it:2-2} For each $F\in\cH$, also $F^\#\in\cH$ and $\|F\|_\cH=\|F^\#\|_\cH$;
		\item\label{it:2-3} If $w\in\bbC\setminus\bbR$ and $F\in\cH$ with $F(w)=0$, then 
		\begin{align*}
		\frac{F(z)}{z-w}\in \cH,\quad\text{and}\quad \left\|\frac{z-\overline{w}}{z-w}F(z)\right\|_\cH=\|F\|_\cH.
		\end{align*}
		Note here that $\frac{z-\overline{w}}{z-w}F(z) = F(z) + \frac{w- \overline{w}}{z-w} F(z) \in \cH$.
	\end{enumerate}	
	We denote the set of all dB-spaces by $\DB$. The set of all those dB-spaces which satisfy in addition:
	\begin{enumerate}
		\item[(iv)]\label{it:2-4}If $w\in\bbR$ and $F\in\cH$ with $F(w)=0$, then 
		\[
		\frac{F(z)}{z-w}\in\cH;
		\]
	\end{enumerate}
	is denoted as $\DB^\ast$.
\end{definition}

Those subspaces of a dB-space which are with the inner product inherited from $\cH$ 
themselves dB-spaces play an outstanding role and are discussed in detail in \Cref{sec:3}. We call such a subspace a 
\emph{dB-subspace} of $\cH$. 

In this place we only observe the following property: if $\cL$ is a linear subspace of $\cH$ which is closed under the
operations in \Cref{def:1}(ii),(iii), then the closure of $\cL$ in $\cH$ is a dB-subspace of $\cH$. 
This has two consequences, which we also state explicitly:
\begin{enumerate}[{\rm(i)}]
\item For every dB-space $\cH$, the space $\cH^\flat$ is a dB-subspace of $\cH$.
\item If $(\cH,\langle\cdot,\cdot\rangle_\cH)$ is a dB-space, $\cL$ is a closed linear subspace of $\cH$, and 
	$\langle\cdot,\cdot\rangle_\cL$ is a scalar product on $\cL$ such that $(\cL,\langle\cdot,\cdot\rangle_\cL)$ is a
	dB-space, then $(\cL,\langle\cdot,\cdot\rangle_\cH)$ is a dB-space. 
\end{enumerate}
In this context, let us also recall that 
\begin{equation}\label{eq:20}
	\dim\big(\cH/\cH^\flat\big)\leq 1.
\end{equation}

A dB-space can be generated from one single entire function. This follows since the reproducing kernel of a dB-space is of a
particular form. To explain the connection, recall the notion of Hermite-Biehler functions. 

\begin{definition}
	A \emph{Hermite-Biehler function} is an entire function $E$ which satisfies
	\begin{align}\label{eq:37}
	\forall z\in\bbC_+: |E(\overline z)|< |E(z)|.
	\end{align}
	We denote the set of all Hermite-Biehler functions by $\HB$. 
	The set of all those Hermite-Biehler functions which have no real zeros is denoted by $\HB^\ast$. 
\end{definition}

For an entire function $E$ we denote its real and imaginary part in the sense of the involution \cref{eq:979} by 
\[
A:=\frac{E+E^\#}{2},\quad B:=i\frac{E-E^\#}{2}.
\]
Then $A=A^\#$, $B=B^\#$, and $E=A-iB$. In particular, the assignment $E\mapsto(A,\,B)$ is injective. 
We freely apply the convention that $E,A,B$ are related in this way, if the meaning is clear from the context.
Another useful notation in this context is the following: if $E$ is entire and $M$ is a $2\times 2$-matrix function with real
entire entries, we set 
\[
E\ltimes M:=\tilde A-i\tilde B\quad\text{where}\quad (\tilde A,\,\tilde B):=(A,\, B)M
.
\]
Given $E\in \HB$ we define 
\begin{align}\label{eq:25}
K_E(z,w):=\frac i{2\pi}\frac{E(z)E^\#(\overline w)-E(\overline w)E^\#(z)}{z-\overline w}
,
\end{align}
which should be appropriately interpreted in terms of derivatives if $z=\overline{w}$. 
Using the functions $A,B$, the kernel $K_E$ writes as 
\begin{equation}
\label{eq:978}
K_E(z,w)=\frac{(A(z), B(z)) J\begin{pmatrix} A(\overline{w})\\ B(\overline{w})\end{pmatrix}}{z-\overline{w}}
=\frac{B(z)\overline{A(w)}-A(z)\overline{B(w)}}{z-\overline{w}},
\end{equation}
where $J=\big(\begin{smallmatrix} 0&-1\\ 1&0 \end{smallmatrix}\big)$. 

In the following theorem we summarize the connection between dB-spaces and Hermite-Biehler functions. 
Here we endow $\HB$ with the subspace topology of $\Hol(\bbC)$, and $\DB$ with the subspace topology of $\RK$. 

\begin{theorem}\label{prop:1}
	For any $E\in\HB$ the function $K_E$ is a positive kernel and the reproducing kernel space $\cH(K_E)$ 
	generated by $K_E$ is a dB-space. 
	
	Let $\cB:\HB\to\DB$ be the map $E\mapsto\cH(K_E)$. Then
	\begin{enumerate}[{\rm(i)}]
		\item\label{it:4-1} $\cB$ is surjective;
		\item\label{it:4-2} $\cB(E_1)=\cB(E_2)$ if and only if there exists $M\in \SL(2,\bbR)$ such that
		$E_2=E_1\ltimes M$;
		\item\label{it:4-3} $\cB(\HB^\ast)=\DB^\ast$;
		\item\label{it:4-4} $\cB$ is continuous and has a continuous right inverse.
	\end{enumerate}
\end{theorem}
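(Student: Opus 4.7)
My strategy is to verify the four claims sequentially, building on direct analysis of the explicit kernel \eqref{eq:25}. First I would check that for $E\in\HB$ the kernel $K_E$ is positive and that $\cH(K_E)\in\DB$: positivity on the diagonal follows from $K_E(z,z)=(|E(z)|^2-|E^\#(z)|^2)/(4\pi\,\Im z)>0$ on $\bbC_+$ via the Hermite-Biehler inequality \eqref{eq:37}, and full positivity together with axioms (ii) and (iii) of \Cref{def:1} follow from the classical isometric identification of $\cH(K_E)$ as a closed subspace of a Hardy space via $F\mapsto F/E$ (well-defined because $E$ has no zeros in $\bbC_+$). This establishes that $\cB$ is well-defined as a map into $\DB$.

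\textbf{Surjectivity and uniqueness modulo $\SL(2,\bbR)$.} For (i), given $\cH\in\DB$ I would fix $w_0\in\bbC_+$ with $K_\cH(w_0,w_0)>0$ (available because $\cH\neq\{0\}$ and the kernel is continuous) and use the symmetry $K_\cH(\bar z,\bar w)=\overline{K_\cH(z,w)}$ coming from axiom (ii) to form real entire functions
\[
A(z):=\alpha\bigl(K_\cH(z,w_0)+K_\cH(z,\bar w_0)\bigr),\qquad B(z):=\beta\,\frac{K_\cH(z,w_0)-K_\cH(z,\bar w_0)}{i},
\]
with positive scalars $\alpha,\beta$ chosen so that $E:=A-iB$ satisfies $K_E=K_\cH$. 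That this algebraic identity indeed holds is a direct computation comparing \eqref{eq:978} with the reproducing property at $w_0$ and $\bar w_0$, using axiom (iii) to evaluate the inner products of $K_\cH(\cdot,w_0)$ and $K_\cH(\cdot,\bar w_0)$; the Hermite-Biehler condition for $E$ is then read off from $K_E(w_0,w_0)>0$. For (ii), suppose $K_{E_1}=K_{E_2}$; the pairs $(A_j,B_j)$ must both form bases of the two-dimensional real linear space generated by $K_\cH(\cdot,w_0)$ and $K_\cH(\cdot,\bar w_0)$, so $(A_2,B_2)=(A_1,B_1)M$ for some real $2\times 2$ matrix $M$, and equating kernels via \eqref{eq:978} forces $M^\top JM=J$ together with $\det M>0$, i.e., $M\in\SL(2,\bbR)$.

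\textbf{The refined axiom and continuity.} For (iii), the condition $E\in\HB^\ast$ (no real zero) is equivalent to $K_E(\cdot,w)\not\equiv 0$ for every $w\in\bbR$, which by the standard characterization is the same as the axiom \Cref{def:1}(iv), namely that $F/(z-w)\in\cH(K_E)$ whenever $F\in\cH(K_E)$ vanishes at a real $w$. For (iv), continuity of $\cB$ is immediate from \eqref{eq:25}: locally uniform convergence $E_n\to E$ converts directly to locally uniform convergence $K_{E_n}\to K_E$ on the product domain. Building a continuous right inverse is the delicate step: the plan is to take $w_0=i$ in the surjectivity construction and pin down the $\SL(2,\bbR)$-ambiguity by imposing the normalization $A(i)>0$ and $B(i)=0$. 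Since $K_\cH(i,i)>0$ holds uniformly on $\DB$ and the normalizing quantities are continuous functionals of $K_\cH$, the resulting section $\cH\mapsto E$ will be continuous.

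The main obstacle I anticipate is the algebraic verification $K_E=K_\cH$ in the surjectivity step: it is elementary but relies essentially on \emph{all three} de Branges axioms, particularly on axiom (iii) to evaluate $\|K_\cH(\cdot,w_0)\|^2$ and $\langle K_\cH(\cdot,w_0),K_\cH(\cdot,\bar w_0)\rangle$ and thereby fix $\alpha,\beta$. The continuity of the right inverse is the other subtle point, but reduces to checking that the proposed normalization remains nondegenerate throughout $\DB$, which is exactly what axiom (iii) guarantees.
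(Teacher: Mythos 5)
Your overall outline (Hardy-space model for $\cH(K_E)$, reconstruction of $E$ from the reproducing kernel, pinning down the $\SL(2,\bbR)$ ambiguity, normalization for a right inverse) follows the standard route — the paper itself gives no proof and cites \cite{debranges:1968,romanov:1408.6022v1,remling:2018,dym.mckean:1976} for exactly these facts. However, there is a concrete error in your surjectivity construction that propagates into (ii) and (iv).

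The proposed formulas
\[
A(z):=\alpha\bigl(K_\cH(z,w_0)+K_\cH(z,\bar w_0)\bigr),\qquad B(z):=\beta\,\frac{K_\cH(z,w_0)-K_\cH(z,\bar w_0)}{i}
\]
cannot produce $K_E=K_\cH$ for any choice of scalars $\alpha,\beta$. The functions $K_\cH(\cdot,w_0)$ and $K_\cH(\cdot,\bar w_0)$ are \emph{elements of} $\cH$, whereas the $A,B$ appearing in \eqref{eq:978} are generically \emph{not} in $\cH$; they relate to the kernel through a division by $z-\bar w$. Concretely, from \eqref{eq:978} one has
\[
(z-\bar w_0)\,K_\cH(z,w_0)=B(z)\,\overline{A(w_0)}-A(z)\,\overline{B(w_0)},
\]
so it is $(z-\bar w_0)K_\cH(z,w_0)$ and $(z-w_0)K_\cH(z,\bar w_0)$ — not the kernel functions themselves — that lie in $\spann_\bbC\{A,B\}$. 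Without the linear factor your $A,B$ have the wrong zero structure: in the Paley–Wiener case $E(z)=e^{-iaz}$, the kernel is $K_E(z,w)=\sin\bigl(a(z-\bar w)\bigr)/\bigl(\pi(z-\bar w)\bigr)$, and $K_E(\cdot,i)+K_E(\cdot,-i)$ is visibly not a scalar multiple of $\cos(az)$. The correct recipe is the one recorded immediately after the theorem in the paper, $E(z)=-i\sqrt{\pi/K(i,i)}\,(z+i)K(z,i)$, where the factor $(z+i)$ is essential; writing $(z+i)K(z,i)=B(z)\overline{A(i)}-A(z)\overline{B(i)}$ and rotating by a suitable unimodular constant then yields a genuine $A-iB$.

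The same confusion affects your proof of (ii): you claim that $(A_j,B_j)$ span the two-dimensional space generated by $K_\cH(\cdot,w_0)$ and $K_\cH(\cdot,\bar w_0)$, but in fact they span the space generated by the \emph{linearly weighted} kernels $(z\mp\bar w_0)K_\cH(z,\pm w_0)$ (nondegenerate precisely because $K_\cH(w_0,w_0)>0$). With that correction the rest of your argument for (ii) — deducing $M^\top JM=J$ from equality of kernels, and ruling out $\det M=-1$ via the HB inequality since $(A,-B)$ corresponds to $E^\#\notin\HB$ — goes through. Similarly, the right inverse in (iv) should be built from the normalized formula above rather than from your normalization; the continuity claim for $\cB$ itself, via the Cauchy-integral argument for $(E(z)E^\#(\bar w)-E(\bar w)E^\#(z))/(z-\bar w)$, is fine. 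The identification of $\cB(\HB^\ast)$ with $\DB^\ast$ in (iii) also needs the corrected surjectivity construction for the inclusion $\DB^\ast\subseteq\cB(\HB^\ast)$.
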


In this context note the formula 
\[
	E(z):=-i\sqrt{\frac\pi{K(i,i)}}(z+i)K(z,i)
	,
\]
which determines one possible choice of $E$ given the kernel $K(.,.)$ of the space. 
This formula also implies that the closure of $\DB$ in $\RK$ is equal to $\DB\cup\{\{0\}\}$. 
Also note the formula for the norm of point evaluation at a nonreal point $w$:
\[
\Delta_{\cB(E)}(w)=K_E(w,w)=\Big(\frac{|E(w)|^2-|E(\overline w)|^2}{4\pi\Im w}\Big)^{\frac 12}
.
\]
The \emph{real zero divisor} of an entire function $F$ which does not vanish identically is the function 
$\vartheta_F:\bbR\to \bbN_0$ defined by 
\[
\vartheta_F(x):=\min\{n\in\bbN_0\mid F^{(n)}(x)\neq 0\}.
\]
For $\cH\in \DB$ we set 
\[
\vartheta_{\cH}:=\vartheta_{E},
\]
where $E\in\HB$ is such that $\cB(E)=\cH$. It follows from \Cref{prop:1}(ii) that this definition does not 
depend on the choice of $E$. It holds that 
\[
\vartheta_{\cH}(x)=\min\big\{\vartheta_F(x)\mid F\in\cH\setminus\{0\}\big\}
.
\]
For many purposes, it suffices to study Hermite-Biehler functions without real zeros, due to the following simple fact.

\begin{lemma}\label{lem:26}
	Let $E\in \HB$ and $C$ a real entire function without nonreal zeros such that $\vartheta_C\leq\vartheta_E$ (here 
	``$\leq$'' is understood pointwise). Then, $\frac{E}{C}\in\HB$ and the map 
	\begin{align*}
		\left\{
		\begin{array}{ccc}
			\cB(E) & \to & \cB\left(\frac{E}{C}\right)
			\\[2mm]
			F & \mapsto & \frac{F}{C}
		\end{array}
		\right.
	\end{align*}
	is an isometric isomorphism. We have 
	\[
		\cB\big(\tfrac EC\big)=\tfrac 1C\cB(E),\qquad
		\tfrac 1C\big(\cB(E)^\flat\big)=\big(\cB\big(\tfrac EC\big)\big)^\flat
		.
	\]
\end{lemma}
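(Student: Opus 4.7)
\medskip
\noindent\textbf{Proof proposal for Lemma \ref{lem:26}.}

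The plan is to proceed in three short stages: first check that $E/C$ is a bona fide Hermite--Biehler function, then identify the reproducing kernel of $\cB(E/C)$ explicitly in terms of that of $\cB(E)$, and finally read off the isometric isomorphism together with the $\flat$-subspace identity.

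First I would verify that $F \mapsto F/C$ makes sense on the level of entire functions. Since $C$ has no nonreal zeros, the quotient $E/C$ is already holomorphic off the real axis; at each real zero $x_0$ of $C$, the hypothesis $\vartheta_C(x_0)\le\vartheta_E(x_0)$ says $E$ vanishes at $x_0$ to at least the order of $C$, so $x_0$ is a removable singularity of $E/C$. Hence $E/C$ is entire. To see that $E/C\in\HB$, note that for $z\in\bbC_+$ we have $C(z)\neq 0$, and since $C$ is real entire, $|C(z)|=|C(\overline z)|$; combined with \eqref{eq:37} for $E$ this gives $|(E/C)(\overline z)|<|(E/C)(z)|$. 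The same calculation at real points shows $\vartheta_{E/C}=\vartheta_E-\vartheta_C$, and in particular the map sends $\cB(E)$ into the space of entire functions underlying $\cB(E/C)$.

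Next I would compute the reproducing kernel of $\cB(E/C)$. Decomposing $E=A-iB$ and using $(E/C)^{\#}=E^{\#}/C$ (because $C=C^{\#}$), a direct substitution in \eqref{eq:25} yields
\[
K_{E/C}(z,w)\;=\;\frac{K_E(z,w)}{C(z)\,\overline{C(w)}},
\]
valid wherever $C(z)\overline{C(w)}\neq 0$ and by continuity throughout. Now I would use the following general RKHS fact: if $\cH\in\RK$ has kernel $K$, and $\varphi$ is an entire function that does not vanish on the point set under consideration, then the space $\tfrac{1}{\varphi}\cH$, equipped with the norm $\|F/\varphi\|:=\|F\|_\cH$, is itself a reproducing kernel Hilbert space with kernel $K(z,w)/(\varphi(z)\overline{\varphi(w)})$. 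Applied to $\varphi=C$ together with the kernel identity above, this shows that $\tfrac{1}{C}\cB(E)$ has exactly the reproducing kernel of $\cB(E/C)$; uniqueness of the RKHS associated to a given positive kernel then forces $\tfrac{1}{C}\cB(E)=\cB(E/C)$ with equality of norms. This already establishes $\cB(E/C)=\tfrac{1}{C}\cB(E)$ and that $F\mapsto F/C$ is an isometric isomorphism.

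Finally I would derive the $\flat$-identity from this isomorphism. Since multiplication by $1/C$ commutes with multiplication by the independent variable, $F\in\cB(E)$ with $zF\in\cB(E)$ is equivalent to $F/C\in\cB(E/C)$ with $z(F/C)\in\cB(E/C)$; that is, the operator $F\mapsto F/C$ maps the maximal multiplication domain in $\cB(E)$ bijectively onto the one in $\cB(E/C)$. Because the map is an isometric isomorphism, it also preserves closures, giving $\tfrac{1}{C}(\cB(E)^\flat)=(\cB(E/C))^\flat$. There is essentially no obstacle here; the only place where one has to be slightly careful is the passage from $\vartheta_C\le\vartheta_E$ to the removability of the real singularities of $E/C$ and, equivalently, to the fact that division by $C$ really lands in the claimed space.
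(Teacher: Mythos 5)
Your proof is correct and follows essentially the same route as the paper's (very terse) proof: verify $E/C\in\HB$, establish the kernel relation $K_E(z,w)=C(z)K_{E/C}(z,w)\overline{C(w)}$, and read everything off from it. One small remark on precision: the general RKHS fact you invoke is usually stated for a nonvanishing multiplier $\varphi$, whereas here $C$ does vanish at real points. This is harmless because $\vartheta_C\le\vartheta_E=\vartheta_{\cB(E)}\le\vartheta_F$ for every nonzero $F\in\cB(E)$, so $F/C$ is entire and the map is well-defined; once that is noted, the kernel identity plus uniqueness of the RKHS carry the argument through exactly as you say.
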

\begin{proof}
	That $\frac{E}{C}\in\HB$ is clear. The other assertions follow directly from 
	\[
	K_E(z,w)=C(z)K_{E/C}(z,w)\overline{C(w)}.
	\]
\end{proof}

Similar to the scalar case, a function $K:\Omega\times\Omega\to\bbC^{n\times n}$ is called a \emph{positive kernel}, 
if $K(z,w)=K(w,z)^*$ for all
$z,w\in\Omega$, and for any finite collections $(z_j)_{j=1}^N\in\Omega^N$ and $(a_j)_{j=1}^N\in(\bbC^n)^N$, the matrix 
$(a_j^*K_\cH(z_i,z_j)a_i)_{i,j=1}^N$ is positive semidefinite. 

\begin{definition}
	We say that a matrix function $W:\bbC\to\bbC^{2\times 2}$ is \emph{$J$-inner}, if its entries are real entire functions, 
	$\det W=1$, and
	\[
		\frac{W(z)JW(w)^*-J}{z-\overline w}
	\]
	is a positive kernel on $\bbC$ (for $z=\overline w$ this formula has to be interpreted as derivative).
\end{definition}

\begin{theorem}
\label{thm:996}
	The following statements hold.
	\begin{enumerate}[{\rm(i)}]
	\item Let $E\in\HB$, $W$ be $J$-inner, and set $\tilde E:=E\ltimes W$. Then
		\[
			\tilde E\in\HB,\ \vartheta_{\tilde E}=\vartheta_E,\quad\cB(E)\subseteq_c\cB(\tilde E)
			.
		\]
	\item If $E,\tilde E\in\HB$, $\vartheta_{\tilde E}=\vartheta_E$, and $\cB(E)\subseteq_i\cB(\tilde E)$, 
		then there exists a unique $J$-inner matrix function $W$ such that $\tilde E=E\ltimes W$. 
	\end{enumerate}
\end{theorem}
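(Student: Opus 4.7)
The plan is to prove (i) by a direct reproducing-kernel computation and (ii) by first reducing to the real-zero-free case and then constructing $W$ from the kernel identity in reverse.

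For part (i), the starting point is the representation \eqref{eq:978}. Writing $(\tilde A, \tilde B) = (A, B)W$ and using the identity $W(\bar w)^T = W(w)^*$ (which holds because $W$ has real entire entries, so $W(\bar w) = \overline{W(w)}$), a short manipulation yields the key identity
\begin{equation*}
K_{\tilde E}(z,w) - K_E(z,w) = (A(z), B(z)) \cdot \frac{W(z) J W(w)^* - J}{z - \bar w} \cdot \begin{pmatrix} A(\bar w) \\ B(\bar w) \end{pmatrix}.
\end{equation*}
Since $W$ is $J$-inner, the middle factor is a positive $2{\times}2$ matrix kernel, and sandwiching by the row-vector function $v=(A, B)$ and its conjugate transpose $v(w)^* = (A(\bar w), B(\bar w))^T$ produces a positive scalar kernel (a standard reproducing-kernel fact). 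Hence $K_E \leq K_{\tilde E}$ and $\cB(E) \subseteq_c \cB(\tilde E)$ by \eqref{eq:16}. Specializing to $z=w\in\bbC_+$ gives $K_{\tilde E}(z,z) \geq K_E(z,z) > 0$, so $\tilde E \in \HB$ (and $\tilde E\not\equiv 0$ is immediate since $\det W = 1$). For the vanishing order, on $\bbR$ we have $\vartheta_E(x) = \min\{\vartheta_A(x), \vartheta_B(x)\}$ because $E = A - iB$ with $A, B$ real on $\bbR$; since $W(x) \in \SL(2, \bbR)$ is invertible, it preserves the joint order of vanishing of $(A, B)$, giving $\vartheta_{\tilde E} = \vartheta_E$.

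For part (ii), I would first reduce to $E, \tilde E \in \HB^*$ by dividing both through by a real entire factor $C$ with $\vartheta_C = \vartheta_E = \vartheta_{\tilde E}$. By Lemma \ref{lem:26}, $F \mapsto F/C$ is an isometric isomorphism on both $\cB(E)$ and $\cB(\tilde E)$, so the isometric inclusion is preserved, and existence/uniqueness of $W$ for the reduced pair is equivalent to that for the original pair. With $E \in \HB^*$, the functions $A, B$ are coprime: they share no real zeros (by $\HB^*$), no zeros in $\bbC_+$ (where $E$ does not vanish), and no zeros in $\bbC_-$ either (where $E^\#$ does not vanish, while simultaneous vanishing of $A, B$ would force both $E$ and $E^\#$ to vanish). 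Bezout in the ring of entire functions yields $p, q$ with $Ap + Bq = 1$, from which a particular entire solution of $(A, B)W = (\tilde A, \tilde B)$ is constructed; the null space of this system consists of matrices whose columns are entire multiples of $(-B, A)^T$, which provides enough freedom to enforce $\det W = 1$. The resulting $W$ has real entire entries. Its $J$-inner property is then extracted from the isometric inclusion by running the identity of part (i) in reverse: the hypothesis $\cB(E) \subseteq_i \cB(\tilde E)$ says that $K_{\tilde E} - K_E$ is the reproducing kernel of the orthogonal complement, hence a positive scalar kernel, and coprimeness of $A, B$ propagates this scalar positivity to matrix positivity of $\frac{W(z) J W(w)^* - J}{z - \bar w}$.

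For the uniqueness, if both $W_1, W_2$ are $J$-inner with $E \ltimes W_1 = E \ltimes W_2$, then $(A, B)(W_1 - W_2) = 0$, so each column of $W_1 - W_2$ is an entire multiple of $(-B, A)^T$; combined with $\det W_1 = \det W_2 = 1$ and coprimeness of both $A, B$ and $\tilde A, \tilde B$, a direct determinant expansion reduces the difference to the form $W_2 - W_1 = d\cdot\bigl(\begin{smallmatrix}-B\tilde A&-B\tilde B\\ A\tilde A & A\tilde B\end{smallmatrix}\bigr)$ for some entire $d$, and the $J$-inner constraint on both matrices rules out any nonzero $d$ through a growth/type comparison. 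The main obstacle will be the $J$-inner verification in (ii): constructing an entire $W$ with the correct row equation and unit determinant is routine, but promoting the scalar positivity of $K_{\tilde E} - K_E$ to matrix positivity of $\frac{WJW^* - J}{z - \bar w}$ requires a careful use of coprimeness of $A, B$ together with the reproducing-kernel-theoretic content of the isometric inclusion.
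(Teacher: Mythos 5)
Part (i) is a correct and essentially standard computation: the identity
\[
K_{\tilde E}(z,w)-K_E(z,w)=\big(A(z),B(z)\big)\,\frac{W(z)JW(w)^*-J}{z-\overline w}\,\begin{pmatrix}A(\overline w)\\ B(\overline w)\end{pmatrix}
\]
is right, the sandwiching of a positive matrix kernel by the row function $(A,B)$ and its adjoint is a positive scalar kernel, and the conclusions $\tilde E\in\HB$, $\vartheta_{\tilde E}=\vartheta_E$, $\cB(E)\subseteq_c\cB(\tilde E)$ follow. Note that the paper states \Cref{thm:996} without proof, as a recalled fact from \cite{debranges:1968}, so there is no argument of the paper to compare yours against.

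Part (ii), however, has a genuine gap precisely at the point you flag as the obstacle. The claim that coprimeness of $A,B$ promotes the scalar positivity of $(A(z),B(z))\,M(z,w)\,(A(\overline w),B(\overline w))^{\mathsf T}$ to matrix positivity of $M(z,w)=\frac{W(z)JW(w)^*-J}{z-\overline w}$ cannot be correct, and your own construction shows why. The linear constraints $(A,B)W=(\tilde A,\tilde B)$, $\det W=1$ do not determine $W$: if $W_0$ is one solution with real entire entries, then for every real entire function $s$,
\[
W_s:=W_0+s\begin{pmatrix}-B\\ A\end{pmatrix}(\tilde A,\ \tilde B)
\]
is again a solution (the row equation is unaffected because $(A,B)(-B,A)^{\mathsf T}=0$, and a direct expansion shows the determinant correction is $s\tilde B\tilde A-s\tilde A\tilde B=0$). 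The identity from part (i) holds verbatim for every $W_s$, so the scalar positivity of $K_{\tilde E}-K_E$ is compatible with every member of this one-parameter family; yet, by the uniqueness claim in the very same theorem, at most one $W_s$ can be $J$-inner. Hence the scalar positivity cannot imply the matrix positivity, and no amount of coprimeness fixes this — the scalar inequality simply carries no information that distinguishes the correct $W$ from the others. The actual proof in de~Branges requires a different mechanism to single out the right $W$ (it goes through the structure of the subspace $\cB(\tilde E)\ominus\cB(E)$ and the associated operators, not through a Bezout construction followed by kernel positivity). Your uniqueness sketch has the same flavor of incompleteness: the reduction of $W_2-W_1$ to a multiple of $(-B,A)^{\mathsf T}(\tilde A,\tilde B)$ is correct, but the decisive step — that being $J$-inner rules out nonzero $s$ "through a growth/type comparison" — is not spelled out and is not obvious.
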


\begin{example}
\label{917}
	The simplest example for nonconstant $J$-inner matrix functions are linear polynomials of a specific form. 
	For $\a\in\bbR$ we denote 
	\begin{align}\label{eq:28}
	e_\a=\begin{pmatrix}
	\cos\a\\\sin\a
	\end{pmatrix}.
	\end{align}
	Then, for $\a\in\bbR$ and $\ell\geq 0$, the matrix function $W:=I-\ell ze_\a e_\a^*J$ is $J$-inner. 
	
	Assume that $E\in \HB$. Then by a direct computation 
	\begin{align}\label{eq:61}
	K_{E\ltimes W}(z,w)=K_E(z,w)+\ell\cdot \big[(A(z),\ B(z)) e_\a\big]\big[(A(w),\ B(w)) e_\a\big]^*.
	\end{align}
	Provided that $G(z):=A(z)\cos\a+B(z)\sin\a\notin\cB(E)$, we have
	\begin{equation}\label{eq:50}
		\begin{aligned}
			& \cB(E\ltimes W)=\cB(E)\oplus\spann\{G\} \ \text{and}\ \|G\|_{\cB(E\ltimes W)}=\frac{1}{\sqrt{\ell}},
			\\
			& \cB(E\ltimes W)^\flat=\cB(E).
		\end{aligned}
	\end{equation}
	If $G\in\cB(E)$, then the inclusion map of $\cB(E)$ into $\cB(E\ltimes W)$ is bijective and a nonisometric contraction. 
\end{example}

$J$-inner matrix functions are related to de~Branges spaces also in another way. Assume we have 
$W=(w_{ij})_{i,j=1,2}$ with these properties, then the function 
\begin{equation}\label{eq:995}
E:=w_{22}+iw_{21}
\end{equation}
is a Hermite-Biehler function. In fact, a $J$-inner matrix function generates a whole family of Hermite-Biehler functions, but
\eqref{eq:995} is the one we work with. 

%
\subsection{Canonical systems}
\label{sec:2-3}
%

In this subsection we recall some facts and basic definitions about canonical systems.

\begin{definition}\label{def:899}
	Let $-\infty\leq a<b\leq\infty$, and let $H:(a,b)\to\bbR^{2\times 2}$ be a function with 
	\begin{equation}
	\label{eq:959}
		H\in L^1_\loc((a,b)),\qquad H(t)\geq 0\text{ for a.a.\ }t\in(a,b)
		.
	\end{equation}
	\begin{enumerate}[{\rm(i)}]
	\item We say that $H$ is in \emph{limit circle case} at the endpoint $a$ (\emph{lc} at $a$, for short), if for one 
		(and hence for all) $c\in (a,b)$ 
		\[
			\int_a^c\tr H(t)dt<\infty. 
		\]
		Otherwise, $H$ is in \emph{limit point case} at $a$ (\emph{lp} at $a$, for short). 
		Analogous definitions apply to the endpoint $b$. 
	\item We say that $H$ is a \emph{Hamiltonian}, if $H(t)\neq 0$ for a.a. $t\in(a,b)$. 
		The set of all Hamiltonians defined on the interval $(a,b)$ is denoted as $\bbH_{a,b}$. 
	\item We call $H$ \emph{trace normalized}, if $\tr H(t)=1$ for a.a.\ $t\in(a,b)$, and denote the set of all 
		such functions as $\bbHe_{a,b}$. 
	\item We denote the set of all functions $H$ which satisfy $\tr H(t)\leq 1$ for a.a.\ 
		$t\in(a,b)$ as $\bbHle_{a,b}$.
	\end{enumerate}
\end{definition}

The set $\bbHle_{a,b}$ can be topologized in a natural way, see for example the exposition in 
\cite{pruckner.woracek:limp}. This topology is compact and metrizable, for example in the following way: 
choose sequences $(a_n)_{n\in\bbN}$ and $(b_n)_{n\in\bbN}$ such that $a<a_n<b_n<b$ for all $n\in\bbN$ and 
$\lim_{n\to\infty}a_n=a$, $\lim_{n\to\infty}b_n=b$, and use the metric
\begin{equation}
\label{eq:26}
	d(H_1,H_2):=\sum_{n=1}^\infty2^{-n}\min\left\{1,
	\sup_{a_n\leq c\leq d\leq b_n}\bigg\|\int_c^d\big(H_1(t)-H_2(t)\big)dt\bigg\|
	\right\},
\end{equation}
for $H_1,H_2\in\bbHle_{a,b}$.

With a function $H$ subject to \cref{eq:959} we associate a differential equation for a $2$-vector valued function $y$, namely
\begin{equation}\label{eq:51}
	J\partial_xy(x)=-zH(x)y(x), \quad x\in(a,b)\text{ a.e.},
\end{equation}
which is called a \emph{canonical system}.
For $a<c\leq x<b$, we define the \emph{transfer matrix} at $c$ as the matrix solution of the initial value problem 
\begin{equation}
\label{eq:9}
	\partial_xW_H(c,x,z)J=zW_H(c,x,z)H(x),\quad 
	W_H(c,c,z)=I.
\end{equation} 
Observe that \cref{eq:9} is transposed compared to \cref{eq:51}. We use \cref{eq:9} since this is practical in many respects
and was the convention that de~Branges used in \cite{debranges:1968} on which we heavily rely in this paper. 
Note that, by uniqueness of solutions of the above differential equation, transfer matrices are multiplicative in the sense that 
\begin{equation}
\label{eq:948}
	W(c,x,.)=W(c,d,.)W(d,x,.),\quad d\in[c,x]
	.
\end{equation}
Assume that $a$ is a lc endpoint. Then the solutions of \cref{eq:51} can be continuously extended to $a$ and we define the 
\emph{fundamental solution} associated to $H$ by 
\begin{equation}
\label{eq:38}
	W_H(x,z):=W_H(a,x,z).
\end{equation}
Based on \cref{eq:995} we obtain a family of Hermite-Biehler functions with no real zeros. Namely, writing 
$W_H(t,z)=(w_{ij}(t,z))_{i,j=1,2}$, we have 
\begin{equation}
\label{eq:966}
	E(t,z):=w_{22}(t,z)+iw_{21}(t,z)\in\HB^*.
\end{equation}
Moreover, we introduce the notation 
\begin{equation}\label{eq:59}
K_H(t,z,w):=K_{E(t,\cdot)}(z,w)
=\frac{w_{22}(t,z)\overline{w_{21}(t,w)}-w_{21}(t,z)\overline{w_{22}(t,w)}}{z-\overline{w}},
\end{equation}
cf. \cref{eq:978}. Note that the kernels $K_H(t,.,.)$ depend continuously on $t$. Moreover, if $H$ is lp at $b$, then 
\begin{equation}
\label{918}
	\forall z\in\bbC\setminus\bbR: \lim_{t\to b}K_H(t,z,z)=\infty
	,
\end{equation}
cf.\ \cite{debranges:1968}. Since $K_H(t,z,z)$ is nondecreasing in $t$ for each fixed $z$, Dini's theorem implies that 
this limit is attained uniformly on every compact subset of $\bbC\setminus\bbR$. 

\begin{definition}
	Let $\bbTM$ denote the set of all matrix functions $W$ which are $J$-inner and satisfy the 
	normalization condition $W(0)=I$. Moreover, we define a function $\mathfrak t:\bbTM\to[0,\infty)$ as 
	\[
		\mathfrak{t}(W):=\tr\big(\partial_zW(0)J\big).
	\]
\end{definition}

We equip $\bbTM$ with the product topology of $\Hol(\bbC)$ in every component. Note that $\bbTM$ is closed in the
space of all entire matrix functions. 
For all $W\in\bbTM$ we have $\det W=1$ since $\det W$ is real, entire, $(\det W(z))^2=1$ for $z\in\bbR$ and $\det W(0)=1$.

\begin{lemma}\label{lem:7}
	For any $c>0$, the set 
	\[
	\bbTM_c:=\{W\in \bbTM\mid\mathfrak{t}(W)\leq c \}
	\]
	is compact. 
\end{lemma}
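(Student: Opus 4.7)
The plan is to show that $\bbTM_c$ is both closed and a normal family (locally uniformly bounded) in the space of entire $2\times 2$ matrix functions; Montel's theorem applied entry-wise then gives relative compactness, and together with closedness this yields compactness.

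For closedness, I would observe that $\bbTM$ is already closed in the ambient space (as stated just before the lemma), so it suffices to check that $\mathfrak{t}$ is continuous on $\bbTM$. If $W_n \to W$ locally uniformly, then $\partial_z W_n(0) \to \partial_z W(0)$ by Cauchy's integral formula applied entry-wise, and taking trace against $J$ is continuous; hence $\bbTM_c = \mathfrak{t}^{-1}([0,c]) \cap \bbTM$ is closed.

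For the normal family property, I would exploit the correspondence between $\bbTM$ and trace-normalized Hamiltonians: every $W \in \bbTM$ arises as the fundamental solution $W = W_H(0,\mathfrak{t}(W),\cdot)$ of a canonical system \eqref{eq:9} on $[0,\mathfrak{t}(W)]$ with $H \in \bbHe_{0,\mathfrak{t}(W)}$, and an easy calculation with \eqref{eq:9} confirms $\mathfrak{t}(W) = \int_0^{\mathfrak{t}(W)} \tr H(s)\,ds$, consistent with the trace normalization. Since $H(x) \geq 0$ and $\tr H(x) = 1$, the operator norm satisfies $\|H(x)\| \leq 1$, so the ODE
\[
	\partial_x W_H(0,x,z) = -z\, W_H(0,x,z)\, H(x)\, J
\]
gives $\|\partial_x W_H(0,x,z)\| \leq |z|\, \|W_H(0,x,z)\|$ and hence $\|W_H(0,x,z)\| \leq e^{x|z|}$ by a Gr\"onwall-type estimate. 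Evaluating at $x = \mathfrak{t}(W) \leq c$ produces the uniform bound $\|W(z)\| \leq e^{c|z|}$ for all $W \in \bbTM_c$, which is locally bounded on $\bbC$. Montel's theorem then finishes the argument.

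The main obstacle is the realization of an arbitrary $W \in \bbTM$ as a transfer matrix of a trace-normalized Hamiltonian -- a nontrivial inverse result from the de~Branges/Krein theory, essentially the inverse spectral theorem for canonical systems. If one prefers to avoid invoking this machinery, one can instead derive the bound $\|W(z)\| \leq e^{\mathfrak{t}(W)|z|}$ directly from the positivity of $\tfrac{W(z)JW(w)^* - J}{z-\overline{w}}$ together with $W(0) = I$: the entries of $W$ are real entire functions of exponential type at most $\mathfrak{t}(W)$ (a Krein-type theorem for $J$-inner functions), and a Phragm\'en--Lindel\"of argument using the identity $W(x)JW(x)^T = J$ on $\bbR$ yields the same planar bound. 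Either route gives the local uniform control needed to apply Montel.
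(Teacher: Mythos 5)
Your proposal is correct. The closedness step matches the paper exactly: $\bbTM$ is closed, $\mathfrak{t}$ is continuous (Cauchy integral formula + trace), hence $\bbTM_c$ is closed.

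For the normal-family part you offer two routes, and they differ in weight. Your primary route realises every $W \in \bbTM$ as a transfer matrix of a trace-normalised Hamiltonian on $(0,\mathfrak{t}(W))$ and applies Gr\"onwall to $\partial_x W_H = -z\,W_H\, H\, J$ together with $\|H(x)\| \le \tr H(x) = 1$ to get $\|W(z)\| \le e^{c|z|}$ on $\bbTM_c$. The estimate is right. The caveat — which you flag yourself — is that this realisation is precisely the inverse result (Theorem~\ref{thm:958}, i.e.\ de~Branges/Krein), which the paper states only \emph{after} \Cref{lem:7}. It is a classical fact that does not rely on the lemma, so the argument is not circular, but it uses machinery the paper is clearly avoiding here. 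The paper's own proof consists of a terse ``a calculation shows that $\{\|W(z)\|\mid W\in\bbTM_c\}$ is uniformly bounded on compact subsets,'' which points to your secondary route: working directly from $W(0)=I$ and positivity of the kernel $(z-\overline{w})^{-1}\bigl(W(z)JW(w)^*-J\bigr)$. The essence there is that $K(0,0)=\partial_zW(0)\,J$ is a positive semidefinite matrix with $\tr K(0,0)=\mathfrak{t}(W)\le c$, that $W(z)=I-zK(z,0)J$, and that the Cauchy--Schwarz inequality for positive kernels self-improves to a local bound on $\|W(z)\|$ off the real axis, which then extends to $\bbR$ by a Phragm\'en--Lindel\"of step — the exponential-type bound you cite. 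Either route closes the argument; your primary one is a valid but heavier alternative to the direct calculation, and your backup plan is essentially what the paper has in mind.
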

\begin{proof}
	A calculation shows that $\{\|W(z)\|\mid W\in\bbTM_c\}$ is uniformly bounded on compact subsets, 
	and thus $\bbTM_c$ is a normal family by Montel's theorem. It is closed because $\bbTM$ is closed and 
	$\mathfrak{t}$ is continuous.
\end{proof}

Let $H:(a,b)\to\bbR^{2\times 2}$ be a function subject to \cref{eq:959}. 
Differentiating the form $W_H(c,x,z)JW_H(c,x;w)^*$, which is possible a.e., yields 
\begin{align}\label{eq:8}
	W_H(c,d,z)JW_H(c,d,w)^*-J=(z-\overline{w})\int_c^dW_H(c,s,z)H(s)W_H(c,s,w)^*ds
	.
\end{align}
Since $H\geq 0$, this shows that $W_H(c,d,z)$ is $J$-inner whenever $c\leq d$. Clearly, all entries of a transfer matrix are
real and entire functions in the variable $z$, and $W_H(c,d,0)=I$. Hence $W_H(c,d,z)\in\bbTM$. The meaning of the function
$\mathfrak t$ in this context is that 
\begin{align}\label{eq:15}
	\mathfrak{t}\big(W_H(c,d,.)\big)=\int_c^d\tr H(x)dx
\end{align}
which follows from \cref{eq:9} and the fact that $W(c,x,0)=I$.

The following is a fundamental result. 

\begin{theorem}\label{thm:958}
	For each $c>0$ the map
	\[
		\left\{
		\begin{array}{ccc}
			\bbHe_{0,c} & \to & \bbTM_c
			\\[2mm]
			H & \mapsto & W_H(c,\cdot)
		\end{array}
		\right.
	\]
	is a homeomorphism. 
\end{theorem}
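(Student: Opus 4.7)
The plan is to verify well-definedness and continuity directly, establish bijectivity via de Branges' inverse spectral theorem, and then upgrade to a homeomorphism using compactness.

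Well-definedness and continuity are routine. For $H \in \bbHe_{0,c}$, the transfer matrix $W_H(c,\cdot)$ is entire with $W_H(c,0)=I$ and is $J$-inner on $\bbC$ by \eqref{eq:8}; moreover $\mathfrak t(W_H(c,\cdot)) = \int_0^c \tr H(t)\,dt = c$ by \eqref{eq:15}, placing the image in $\bbTM_c$. Continuity follows from ODE perturbation theory: convergence $H_n \to H$ in the metric \eqref{eq:26} is exactly locally uniform convergence of the antiderivatives $x \mapsto \int_0^x H_n(t)\,dt$, and applying a Gronwall estimate to the integral form of \eqref{eq:9} transfers this to locally uniform convergence of $W_{H_n}(c,\cdot)$ on $\bbC$ in every matrix entry, i.e.\ to convergence in $\bbTM$.

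The substantial content is bijectivity, which is de Branges' inverse spectral theorem for canonical systems. For injectivity, the chain of inner factors $\{W_H(0,x,\cdot) : x \in [0,c]\}$ can be recovered intrinsically from $W_H(c,\cdot)$ as follows: the associated Hermite--Biehler functions $E(x,\cdot)$ from \eqref{eq:966} generate via \Cref{prop:1} a family of dB-spaces with $\cB(E(x,\cdot)) \subseteq_i \cB(E(c,\cdot))$ by \Cref{thm:996}(ii), and de Branges' chain theorem asserts that every dB-subspace of a given dB-space belongs to a unique maximal totally ordered chain of dB-subspaces. The canonical parameter along this chain is $x = \mathfrak t(W_H(0,x,\cdot))$ by trace normalization, so $W_H(c,\cdot)$ determines the entire family, and differentiating \eqref{eq:8} in $x$ recovers $H(x)$ almost everywhere. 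For surjectivity, run the construction in reverse: given $W \in \bbTM_c$, the dB-space $\cB(E)$ with $E = w_{22}+iw_{21}$ admits a maximal totally ordered chain of dB-subspaces, which by \Cref{thm:996}(i) corresponds to a chain of $J$-inner factors $W_x$ of $W$ with $W_0=I$; reparametrize by $x = \mathfrak t(W_x)$, which is continuous and strictly increasing from $0$ to $c$, and read off $H$ as the a.e.\ derivative encoded by this chain, with trace normalization and positivity of $H$ automatic from the factorization structure. The chain-theoretic step is the main obstacle: it is precisely where the depth of \cite{debranges:1968} enters.

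To conclude, $\bbHe_{0,c}$ is a closed subset of the compact metric space $\bbHle_{0,c}$, since the integrated condition $\int_c^d \tr H(t)\,dt = d-c$ is preserved under the topology of \eqref{eq:26}; hence $\bbHe_{0,c}$ is itself compact. Combined with the compactness of $\bbTM_c$ from \Cref{lem:7}, a continuous bijection between compact Hausdorff spaces is automatically a homeomorphism, which completes the proof.
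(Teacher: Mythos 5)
Your strategy --- verify well-definedness and continuity directly, establish bijectivity via de~Branges' inverse spectral theorem, and conclude via compactness of both sides --- is sound and is indeed the standard route to this result. The paper itself only recalls the theorem without proof, so there is no paper argument to compare against; assessing your sketch on its merits, two points deserve correction.

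First, the map is not surjective onto $\bbTM_c=\{W\in\bbTM\mid\mathfrak t(W)\le c\}$. You correctly observe that $\mathfrak t(W_H(c,\cdot))=\int_0^c\tr H(t)\,dt=c$ for every $H\in\bbHe_{0,c}$, so the image lies in the proper closed subset $\{W\in\bbTM\mid\mathfrak t(W)=c\}$; for example $W=I\in\bbTM_c$ has $\mathfrak t(I)=0$ and is not attained. Your own surjectivity sketch implicitly assumes $\mathfrak t(W)=c$ when you say the reparametrization $x=\mathfrak t(W_x)$ is ``strictly increasing from $0$ to $c$'': for a general $W\in\bbTM_c$ it terminates at $\mathfrak t(W)<c$ and produces a Hamiltonian on $(0,\mathfrak t(W))$, not on $(0,c)$. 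This appears to be a slip in the statement as recorded in the paper rather than in your reasoning, but you should have flagged the inconsistency rather than proving bijectivity onto the smaller set while asserting it onto $\bbTM_c$; the compactness step then goes through with $\{W\in\bbTM\mid\mathfrak t(W)=c\}$ in place of $\bbTM_c$, since this set is closed in $\bbTM_c$ by continuity of $\mathfrak t$.

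Second, the inclusion you cite, $\cB(E(x,\cdot))\subseteq_i\cB(E(c,\cdot))$, does not hold for all $x\in(0,c)$. Comparing with \Cref{thm:1} (in particular \cref{eq:3} versus \cref{eq:994}), the isometric inclusion is available only at regular points of $H$; inside an indivisible interval the correct relation is the almost-isometric $\sqsubseteq$, and the corresponding spaces are not isometrically embedded. Since arbitrary Hamiltonians in $\bbHe_{0,c}$ can have indivisible intervals, your reconstruction of the chain (and hence of $H$) from $W_H(c,\cdot)$ must treat those pieces separately --- the trace parametrization on an indivisible interval is recovered not from the dB-space structure alone but from the norm of the added one-dimensional direction, as in \cref{eq:50}. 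You acknowledge that the chain-theoretic step is where the depth of de~Branges' theory lives, which is fair, but the $\subseteq_i$ claim as written would fail. The continuity argument and the compactness conclusion are otherwise correct.
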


Making a change of variable in the time-parameter will not affect essential properties of the solution of a canonical system. 
To formalize this, the following notion is used.

\begin{definition}
	Let $H_1\in\bbH_{a_1,b_1}$ and $H_2\in\bbH_{a_2,b_2}$. 
	We say that $H_2$ is a \emph{reparametrization} of $H_1$ and write $H_2\sim  H_1$, if there exists an increasing 
	bijection $\g:(a_2,b_2)\to(a_1,b_1)$ such that $\g$ and $\g^{-1}$ are absolutely continuous and
	\begin{equation}\label{eq:999}
	H_2(t)=H_1(\g(t))\g'(t).
	\end{equation}
\end{definition}

Fundamental solutions behave well when performing a reparameterization:
a direct computation shows that \cref{eq:999} implies 
\begin{align}\label{eq:35}
W_{H_2}(c,d,z)=W_{H_1}(\gamma(c),\gamma(d),z)
\end{align}
for $a_2<c\leq d<b_2$. 

Clearly, the relation $\sim$ is an equivalence relation on the set $\bigcup_{-\infty\leq a<b\leq\infty}\bbH_{a,b}$ of all
Hamiltonians. Each equivalence class contains a trace normalized element. In fact, pick $c\in (a,b)$ and use 
\begin{equation}\label{eq:998}
\g(t)=\int_c^t\tr H(s)ds
\end{equation}
in \cref{eq:999} (with the convention $\int_c^t = - \int_{[t,c)}$ if $t < c$).

\begin{definition}
	Let $H:(a,b)\to\bbR^{2\times 2}$ be subject to \cref{eq:959}. 
	A nonempty interval $(c,d)\subseteq(a,b)$ is called \emph{indivisible} for $H$, 
	if for some angle $\a\in\bbR$ (recall \cref{eq:28})
	\[
	H(t)=\tr H(t)\cdot e_\a e_\a^*,\quad t\in(c,d).
	\]
	The number $\ell:=\int_c^d\tr H(t)dt$ is called the \emph{length} of the indivisible interval. 
	Unless $H(t)=0$ a.e.\ on $(c,d)$, i.e.\ $l=0$, the angle $\a$ is determined up to integer multiples of 
	$\pi$ and is called the \emph{type} of the indivisible interval.
	
	A point $t\in(a,b)$ is called \emph{regular}, if it is not an interior point of an indivisible interval. A point which
	is not regular is called \emph{singular}. We denote the set of all regular points of $H$ as $I_{\rm reg}$.
\end{definition}

Transfer matrices of indivisible intervals are linear polynomials:  
if $(c,d)$ is indivisible with length $\ell>0$ and of type $\a$, then 
\begin{equation}\label{eq:30}
W_H(c,d,z)=I-\ell ze_\a e_\a^*J=\begin{pmatrix}
1-\ell z\cos\a\sin\a&\ell z(\cos\a)^2\\-\ell z(\sin\a)^2&1+\ell z\cos\a\sin\a
\end{pmatrix}.
\end{equation}
Recall that we have already met matrices of this form in \Cref{917}.

The following simple transformation rule for canonical systems is often practical.

\begin{lemma}\label{lem:6}
	Let $H:(a,b)\to\bbR^{2\times 2}$ be subject to \cref{eq:959}, let $M\in \SL(2,\bbR)$, and set
	\[
	(\cT_MH)(t):=MH(t)M^{*}.
	\]
	Then, for any $a<c\leq t<b$ and $z\in\bbC$,
	\[
	W_{\cT_MH}(c,t,z)=MW_{H}(c,t,z)M^{-1}.
	\]
\end{lemma}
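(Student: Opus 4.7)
The plan is a direct verification via uniqueness of solutions to the defining initial value problem \cref{eq:9}. I will define $\tilde W(c,t,z):=MW_H(c,t,z)M^{-1}$, note that $\tilde W(c,c,z)=MIM^{-1}=I$ trivially, and check that $\tilde W(c,\cdot,z)$ satisfies \cref{eq:9} with $H$ replaced by $\cT_MH$; uniqueness then forces $\tilde W(c,\cdot,z)=W_{\cT_MH}(c,\cdot,z)$, which is exactly the claim.

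The algebraic heart of the argument is the familiar fact that in dimension two $\SL(2,\bbR)$ coincides with the symplectic group preserving $J$. Explicitly, $\det M=1$ is equivalent to $MJM^{T}=J$, which rearranges to
\[
M^{-1}J=JM^{T}=JM^{*},
\]
the last equality holding because $M$ has real entries. I would verify this identity once by a short $2\times 2$ calculation, and then use it to move the $M^{-1}$ sitting to the right of $W_H$ across the $J$ appearing in \cref{eq:9}.

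With this identity in hand, differentiating $\tilde W(c,t,z)$ in $t$ (possible a.e.\ since $M$ is constant in $t$ and the entries of $W_H(c,\cdot,z)$ are absolutely continuous on $[c,b)$) and inserting the ODE for $W_H$ gives
\[
\partial_t\tilde W(c,t,z)\,J=M\,\partial_tW_H(c,t,z)\,M^{-1}J=M\,\partial_tW_H(c,t,z)\,JM^{*}=zMW_H(c,t,z)H(t)M^{*},
\]
and the right-hand side equals $z\tilde W(c,t,z)\,MH(t)M^{*}=z\tilde W(c,t,z)(\cT_MH)(t)$, which is precisely the form required by \cref{eq:9} for the Hamiltonian $\cT_MH$. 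Since $(\cT_MH)(t)=MH(t)M^{*}$ inherits the $L^1_{\loc}$ regularity from $H$, the initial value problem for $W_{\cT_MH}(c,\cdot,z)$ is uniquely solvable, and the lemma follows.

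I do not expect any serious obstacle here; the single ingredient that makes the rule balance the extra $M^{*}$ produced by $\cT_M$ is the symplectic identity $M^{-1}J=JM^{*}$, which is immediate from $\det M=1$. Everything else is the standard uniqueness argument for transfer matrices of canonical systems.
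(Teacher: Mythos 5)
Your proof is correct. The paper states Lemma~\ref{lem:6} in its preliminary section without proof (as a recalled fact), and the argument you give --- uniqueness for the initial value problem \cref{eq:9} combined with the $2\times 2$ symplectic identity $M^{-1}J=JM^{*}$ for $M\in\SL(2,\bbR)$ --- is exactly the standard one.
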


%
\subsection{Canonical systems and Nevanlinna functions}
\label{sec:2-4}
%

We say that a function $q$ is a \emph{Nevanlinna function} (in the literature also called \emph{Herglotz function}), 
if it is defined and analytic in the open upper half-plane $\bbC_+$ and maps this half-plane into $\bbC_+\cup\bbR$. 
The set of all Nevanlinna functions is denoted by $\cN_0$. Often the class of Nevanlinna functions is augmented by the function
which is identically equal to infinity, and considered as a subclass of the analytic functions of $\bbC_+$ into the Riemann
sphere. We equip $\cN_0\cup\{\infty\}$ with the topology of local uniform convergence which is metrizable. 

For $q\in\cN_0$ there exists $\a\in\bbR$, $\b\geq 0$, and a positive Borel measure $\mu$ with \eqref{eq:957} such that \eqref{eq:997} holds. Conversely, every function of this form belongs to $\cN_0$. 
Note that the integral representation \cref{eq:997} can be rewritten as 
\[
	q(z)=\a+\b z+\int_{\bbR}\frac{1+\xi z}{\xi-z}\frac{d\mu(\xi)}{1+\xi^2}.
\]
The data $\a,\b,\mu$ in this integral representation is uniquely determined by $q$. 
First, the Stieltjes inversion formula says that 
\[
	\forall a,b\in\bbR,\,a<b:
	\mu((a,b))=\lim_{\d\to 0}\lim_{\e\to 0}\frac{1}{\pi}\int_{a+\d}^{b-\d}\Im q(t+i\e)dt,
\]
and hence $\mu$ is determined by $q$. Now $\a,\b$ can be computed as 
\[
	\a=\Re q(i),\quad \b=\Im q(i)-\int_\bbR\frac{d\mu(\xi)}{1+\xi^2}<\infty.
\]
Convergence of Nevanlinna functions translates to convergence of the data in their integral representation. 
This result is known as the Grommer-Hamburger theorem. 
Let $q_n,q\in\cN_0$ with data $\a_n,\b_n,\mu_n$ and $\a,\b,\mu$ in the respective integral representations. 
Then 
\begin{align}
\label{eq:968}
\lim_{n\to\infty} q_n= q & \iff
\\
\nonumber
& \lim_{n\to\infty} \a_n=\a,
\\
\nonumber
& \lim_{n\to\infty}\Big(\b_n+\int_\bbR\frac{d\mu_n(\xi)}{1+\xi^2}\Big)=\b+\int_\bbR\frac{d\mu(\xi)}{1+\xi^2},
\\
\nonumber
& \lim_{n\to\infty}\mu_n=\mu,
\end{align}
where the limit of measures is understood in the $w^*$-topology of $C_c(\bbR)^*$.

Let us now explain the relation of Nevanlinna functions and canonical systems.
Recall the notation \eqref{eqn:Mobius} for fractional linear transformations. Let $H\in\bbH_{a,b}$ such that $a$ is lc and $b$ is lp. Then for every family $(\tau_t)_{t\in(a,b)}$ with 
$\tau_t\in\cN_0\cup\{\infty\}$ the limit 
\begin{align}\label{eq:36}
q_H(z):=\lim\limits_{t\to b}\Big[W_H(t,z)\star\tau_t(z)\Big]
\end{align}
exists, and its value is independent of the parameter family $(\tau_t)_{t\in(a,b)}$. 
The function $q_H$ either is a Nevanlinna function or identically equal to $\infty$. 

\begin{definition}
	Let $H\in\bbH_{a,b}$ such that $a$ is lc and $b$ is lp. 
	The function $q_H$ defined by \cref{eq:36} is called the \emph{Weyl coefficient} of $H$. 
	The measure in the integral representation of $q_H$ is called the \emph{spectral measure} of $H$, 
	and we denote it by $\mu_H$. 
\end{definition}

The relation \cref{eq:35} shows that $H_1\sim H_2$ implies $q_{H_1}=q_{H_2}$, and hence also $\mu_{H_1}=\mu_{H_2}$. 
Therefore, one can for many purposes restrict attention to $\bbHe_{0,\infty}$. 

The relation \cref{eq:36} establishes a map from Hamiltonians to functions.
The following is a fundamental result known as de~Branges' inverse spectral theorem. 

\begin{theorem}\label{thm:8}
	The map
	\[
	\left\{
	\begin{array}{ccc}
	\bbHe_{0,\infty}&\to& \cN_0\cup\{\infty\}
	\\[2mm]
	H&\mapsto& q_H
	\end{array}\right.
	\]
	is a homeomorphism. 
\end{theorem}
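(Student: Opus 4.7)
The strategy is to exploit the compactness of both sides of the map and reduce to verifying continuity, injectivity, and surjectivity; a continuous bijection between compact Hausdorff spaces is automatically a homeomorphism. The space $\bbHe_{0,\infty}$ is closed in the compact metrizable space $\bbHle_{0,\infty}$ (with the metric \eqref{eq:26}), since the trace-one condition passes to limits via integration against indicators of compact subintervals, so $\bbHe_{0,\infty}$ is compact. The set $\cN_0\cup\{\infty\}$ is compact Hausdorff in the topology of locally uniform convergence on $\bbC_+$ into the Riemann sphere.

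For continuity, let $H_n\to H$ in $\bbHe_{0,\infty}$. The finite-interval homeomorphism (Theorem~\ref{thm:958}) yields $W_{H_n}(c,\cdot)\to W_H(c,\cdot)$ locally uniformly on $\bbC$ for every fixed $c>0$. Fix $z\in\bbC_+$: by \eqref{918} the nested Weyl disks $W_H(c,z)\star\ol{\bbC_+}$ shrink to the single point $q_H(z)$ as $c\to\infty$ (or to $\infty$ on the Riemann sphere). Choosing $c$ so large that this limiting disk lies in a small neighborhood of $q_H(z)$, and using that $W_{H_n}(c,z)\star\ol{\bbC_+}$ converges as a set to $W_H(c,z)\star\ol{\bbC_+}$, one obtains $q_{H_n}(z)\to q_H(z)$. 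Normality of $\cN_0\cup\{\infty\}$ then upgrades pointwise to locally uniform convergence on $\bbC_+$.

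Injectivity is the substantive step and amounts to de~Branges' inverse uniqueness theorem. Given $q=q_H$, the Stieltjes inversion formula recovers $\mu_H$, and the transfer matrices induce an isometric transform under which the chain $\{\cB(E(t,\cdot))\mid t>0\}$ (with $E(t,\cdot)$ as in \eqref{eq:966}) embeds as an increasing family of de~Branges subspaces filling out the closure of polynomials in $L^2(\mu_H)$. By de~Branges' ordering theorem for dB-subspaces of a fixed dB-space, any two maximal continuous chains of such subspaces in a common ambient space coincide as chains; the trace-normalization \eqref{eq:15} then pins down the parametrization uniquely, and Theorem~\ref{thm:958} forces $H$ to be determined a.e.

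For surjectivity, the value $q\equiv\infty$ is realized by the indivisible Hamiltonian $H\equiv e_0e_0^*$: from \eqref{eq:30}, $W_H(0,c,z)\star\tau=\tau+cz\to\infty$ as $c\to\infty$ for any $\tau\in\ol{\bbC_+}$ and $z\in\bbC_+$. For an arbitrary $q\in\cN_0$ with spectral measure $\mu$, the de~Branges construction in \cite{debranges:1968} produces a maximal chain of de~Branges subspaces of the model space determined by $q$; reparametrizing via \eqref{eq:998} yields a trace-normalized $H\in\bbHe_{0,\infty}$ whose Weyl coefficient is $q$. The principal obstacle throughout is injectivity, whose depth is essentially that of the ordering theorem underlying de~Branges' theory.
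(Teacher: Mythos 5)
The paper does not prove Theorem~\ref{thm:8}: Section~\ref{sec:2} opens by stating that ``All content of this section is extracted from the named references,'' and this statement is explicitly introduced as ``a fundamental result known as de~Branges' inverse spectral theorem,'' cited to \cite{debranges:1968,romanov:1408.6022v1,remling:2018,dym.mckean:1976}. So there is no in-paper proof to compare against; what you have written is a roadmap through the classical argument, and as such it is sound.

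Your outline matches the standard route. The compactness reduction (both $\bbHe_{0,\infty}$ and $\cN_0\cup\{\infty\}$ are compact metrizable, so continuity plus bijectivity gives bicontinuity for free) is a clean way to avoid a separate argument for continuity of the inverse. Your continuity argument via Theorem~\ref{thm:958} together with the nesting of Weyl disks is the usual one and is correct. You correctly identify that the entire mathematical weight is in injectivity, which is de~Branges' ordering theorem for chains of de~Branges subspaces, and surjectivity, which is the existence half of de~Branges' structure theory; citing these is appropriate for a sketch of a result the paper itself only recalls.

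Two small points worth flagging. First, your phrase ``filling out the closure of polynomials in $L^2(\mu_H)$'' is a misstatement: the spaces $\cB(E(t,\cdot))$ are in general not polynomial spaces (that is specific to the Jacobi/OPRL reduction), and the correct assertion is that under the spectral transform they embed isometrically as an increasing chain whose union is dense in $L^2(\mu_H)$; de~Branges' ordering theorem then says this chain of dB-subspaces is uniquely determined by $\mu_H$ (equivalently by $q$). Second, recall that injectivity also needs the trace-normalization to fix the parametrization of the chain as a Hamiltonian up to a.e.\ equality, which you mention via \eqref{eq:15}; without some normalization the map is only injective on reparametrization classes (cf.\ the remark after \eqref{eq:35}). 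With these corrections your sketch is an accurate high-level account of the proof found in the cited sources.
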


By the Grommer-Hamburger theorem convergence of Hamiltonians also implies convergence of spectral measures.

%
\subsection{Power bounded measures and generalized Nevanlinna functions}
\label{sec:2-5}
%

We already discussed the connection between Nevanlinna functions and Poisson integrable measures in \Cref{sec:2-4}:
the formula \cref{eq:997} establishes a bijection between the set $\cN_0$ and the set of all pairs $(\mu,p)$ where 
$\mu$ is a positive measure on the real line with \cref{eq:957} and $p(z)=\a+\b z$ with $\a\in\bbR$ and $\b\geq 0$. 
This correspondence has an analogy for a class of functions larger than $\cN_0$ and a 
class of measures being not anymore Poisson integrable.

To define this class of functions, we have to move away from positive definiteness, and consider sign-indefinite kernel
functions. 
Recall that a function $K:\Omega\times\Omega\to\bbC$ is called a \emph{kernel with $\kappa$ negative squares}, 
if $K(z,w)=\overline{K(w,z)}$ for all $z,w\in\Omega$, for any finite collection $(z_j)_{j=1}^N\in\Omega^N$ the matrix 
$(K_\cH(z_i,z_j))_{i,j=1}^N$ has at most $\kappa$ negative eigenvalues (counted according to their multiplicities), 
and for at least one choice of $(z_j)_{j=1}^N\in\Omega^N$ this bound is attained. 

Proofs of what we state below and more details can be found in \cite{langer.woracek:gpinf,langer.woracek:kara}.

\begin{definition}
\label{def:956}
	Let $\k\in\bbN_0$. We denote by $\cN_\k^{(\infty)}$ the set of all functions $q$ which are analytic in $\bbC_+$, 
	for which 
	\begin{equation}
	\label{eq:952}
		\frac{q(z_i)-\overline{q(z_j)}}{z_i-\overline{z_j}}
	\end{equation}
	is a kernel with $\k$ negative squares, and which satisfy
	\begin{equation}
	\label{eq:953}
		\lim_{y\to+\infty}\bigg|\frac{q(iy)}{y^{2\kappa-1}}\bigg| = \infty
		\qquad\text{or}\qquad
		\lim_{y\to+\infty}\frac{q(iy)}{(iy)^{2\kappa-1}}\in(-\infty,0).
	\end{equation}
	Moreover, we set $\cN_{\le\kappa}^{(\infty)}:=\bigcup\limits_{\kappa'=0}^\kappa \cN_{\kappa'}^{(\infty)}$
	and $\cN_{<\infty}^{(\infty)}:=\bigcup\limits_{\kappa'=0}^\infty \cN_{\kappa'}^{(\infty)}$.
\end{definition}

It follows from known properties of the asymptotic behaviour of Nevanlinna functions (e.g.\ \cite{kac.krein:1968a}) that 
$\cN_0^{(\infty)}=\cN_0$. One should view $\cN_{<\infty}^{(\infty)}$ as a sign-indefinite generalization of $\cN_0$ still
retaining analyticity in $\bbC_+$ and sign-definite behaviour along $\bbR$. 
This is ensured by the condition \cref{eq:953} which means that the sign-indefinite contribution to $q$ is concentrated 
at $\infty$ (for details see \cite{langer.woracek:ninfrep}).

\begin{definition}
\label{def:955}
	Let $\k\in\bbN_0$. For a positive Borel measure $\mu$ on $\bbR$ we set 
	\begin{equation}
	\label{908}
		\|\mu\|_\k:=\int_{\bbR}\frac{d\mu(t)}{(1+t^2)^{\k+1}}\in[0,\infty]
		.
	\end{equation}
	Moreover, let 
	\begin{align*}
		& \bbM_{\leq\k}=\big\{\mu\,\big|\,
		\mu\text{ positive Borel measure on }\bbR,\ \|\mu\|_\k<\infty\big\}
		,
		\\
		& \bbM_{<\infty}:=\bigcup\limits_{\k\in\bbN_0}\bbM_{\leq\k}
		.
	\end{align*}
	We call elements of $\bbM_{<\infty}$ \textit{power bounded} measures.
\end{definition}

\begin{definition}
\label{def:903}
	Let $\k\in\bbN_0$. We denote by $\bbE_{\leq \k}$ the set of all pairs $(\mu,p)$ where $\mu\in \bbM_{\leq\k}$,
	$p$ is a polynomial with real coefficients whose degree does not exceed $2\k+1$, and 
	\begin{equation}\label{eq:27}
		\frac{1}{(2\k+1)!}p^{(2\k+1)}(0)\geq \|\mu\|_\k
		.
	\end{equation}
\end{definition}

\begin{definition}
\label{def:954}
	Let $\k\in\bbN_0$. The $\k$-\textit{regularized Cauchy transform} is the map 
	\\ $\mathrm C_\k:\bbE_{\leq \k}\to \Hol(\bbC_+)$ defined by 
	\begin{align}\label{eq:71}
		\mathrm C_\k[\mu,p](z):= 
		p(z)+(1+z^2)^{\k+1}\int_\bbR\frac{1}{t-z}\frac{d\mu(t)}{(1+t^2)^{\k+1}},\quad z\in\bbC_+.
	\end{align}
\end{definition}

The name ``regularized Cauchy transform'' is explained by the identity
\begin{align*}
	(1+z^2)^{\k+1}\frac{1}{t-z}\frac{1}{(1+t^2)^{\k+1}}=\frac{1}{t-z}-(t+z)\sum_{j=0}^{\k}\frac{(1+z^2)^j}{(1+t^2)^{j+1}}.
\end{align*}
Let $\mu\in\bbM_{\leq 0}$, $\alpha\in\bbR$ and $\beta\geq 0$, and set 
$p(z):=\alpha+\big(\beta+\int_\bbR\frac{d\mu(t)}{1+t^2}\big)z$. Then, by the above formula, 
\begin{align*}
	\mathrm C_0[\mu,p](z)= &\, p(z)+\int_\bbR\left(\frac{1}{t-z}-\frac{t+z}{1+t^2}\right)d\mu(t)
	\\
	= &\, \a+\b z+\int_\bbR\left(\frac{1}{t-z}-\frac{t}{1+t^2}\right)d\mu(t).
\end{align*}
This shows that the operators $\mathrm C_\k$ constitute an extension \cref{eq:997} to power bounded measures, 
and also explains the role of \cref{eq:27} in the definition of $\bbE_{\leq\k}$. 

The next theorem says that power boundedness in the context of measures corresponds to sign indefiniteness in the context of
their Cauchy-transforms. These facts are shown in \cite{langer.woracek:kara}.

\begin{theorem}
\label{thm:950}
	Let $\k\in\bbN_0$. 
	\begin{enumerate}[\rm(i)]
	\item The map $\mathrm C_\k$ is a bijection from $\bbE_\k$ onto $\cN_{\leq\k}^{(\infty)}$. 
		If $(\mu,p)\in\bbE_{\leq\k}$ and $q:=\mathrm C_\k[\mu,p]$, then $\mu$ can be recovered by 
		\begin{equation}
		\label{eq:949}
			\forall a,b\in\bbR,\,a<b:
			\mu((a,b))=\lim_{\d\to 0}\lim_{\e\to 0}\frac{1}{\pi}\int_{a+\d}^{b-\d}\Im q(t+i\e)dt.
		\end{equation}
		The polynomial $p$ can be recovered from the $2\k+2$ equations obtained by splitting real- and imaginary parts of 
		$q^{(k)}(i)=p^{(k)}(i)$, $k=0,\ldots,\k$. 
	\item Let $((\mu_n,p_n))_{n\in\bbN}$ be a sequence in $\bbE_{\leq\k}$ and $(\mu,p)\in\bbE_{\leq\k}$. Then we have 
		$\lim_{n\to\infty}\mathrm C_\k[\mu_n,p_n]=\mathrm C_\k[\mu,p]$ if and only if 
		\begin{align*}
			& \lim\limits_{n\to\infty}p_n=p,
			\\
			& \lim\limits_{n\to\infty}\mu_n=\mu\text{ in the $w^*$-topology of $C_c(\bbR)^*$.}
		\end{align*}
	\end{enumerate}
\end{theorem}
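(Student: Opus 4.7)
This is a sign-indefinite generalization of the classical bijection between $\cN_0$ and Poisson-integrable measures, so the plan is to extend the proof template of \cref{eq:968}, carefully tracking negative squares via the Krein--Langer theory of generalized Nevanlinna functions.

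My starting point for part (i) is the decomposition
\[
q(z) = p(z) + (1+z^2)^{\k+1} g(z), \qquad g(z) := \int_\bbR \frac{d\mu(t)}{(t-z)(1+t^2)^{\k+1}},
\]
in which $g$ is a classical Nevanlinna function because $d\mu/(1+t^2)^{\k+1}$ is a finite measure. A Krein--Langer-type splitting shows that $(q(z)-\overline{q(w)})/(z-\overline{w})$ differs from a positive kernel derived from $g$ by a Hermitian form of rank at most $2\k+2$, so it has at most $\k$ negative squares. The asymptotic alternative in \cref{eq:953} is then verified by an imaginary-axis expansion: dominated convergence gives $g(iy) = -\|\mu\|_\k/(iy) + o(1/y)$, and combined with $p(iy) = c_{2\k+1}(iy)^{2\k+1}+\ldots$ this yields $q(iy)/(iy)^{2\k+1} \to c_{2\k+1} - \|\mu\|_\k$. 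Condition \cref{eq:27} forces this limit to be nonnegative, so $q(iy)/(iy)^{2\k-1}$ tends to $\pm\infty$ in modulus in the generic case (first alternative of \cref{eq:953}) and to a finite negative value coming from the next-order terms when $c_{2\k+1}=\|\mu\|_\k$ (second alternative). Injectivity follows: $\mu$ is recovered via the classical Stieltjes inversion \cref{eq:949}, and $p$ is determined by the $2\k+2$ real-linear equations provided by $q^{(k)}(i)=p^{(k)}(i)$ for $k=0,\ldots,\k$. For surjectivity, given $q\in\cN_{\leq\k}^{(\infty)}$, I would define $\mu$ by \cref{eq:949} and $p$ by Taylor matching at $i$; the nontrivial step is to verify $\|\mu\|_\k<\infty$ and \cref{eq:27}, both of which should follow from the asymptotic constraint in \Cref{def:956}.

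For part (ii), in the ``if'' direction, pointwise convergence of $q_n$ on $\bbC_+$ follows from $p_n\to p$ together with $\mu_n\to\mu$ weak-*, since for each fixed $z\in\bbC_+$ the weight $1/((t-z)(1+t^2)^{\k+1})$ lies in $C_0(\bbR)$; locally uniform convergence then comes from Vitali's theorem once a uniform bound on $\|\mu_n\|_\k$ is extracted via \cref{eq:27} from the bounds on the leading coefficients of $p_n$. The converse applies the recovery formulas from (i) to a convergent sequence $q_n\to q$, using Helly selection on $(\mu_n)$ and Stieltjes inversion to identify the weak-* limit as $\mu$, while the Taylor coefficients at $i$ converge because locally uniform convergence transmits to derivatives. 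The main obstacle I anticipate is the sharp tracking of the negative-square count in part (i), which genuinely requires the Krein--Langer machinery rather than classical positive-definite reproducing kernel theory; the surjectivity step likewise depends on the inverse representation results of the cited references, and the delicate boundary case $c_{2\k+1}=\|\mu\|_\k$ in the asymptotic analysis is where the most careful bookkeeping is needed.
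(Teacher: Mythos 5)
The paper does not prove this theorem: it states it and points to \cite{langer.woracek:kara} (``These facts are shown in \cite{langer.woracek:kara}''), so there is no internal argument to compare your sketch against. Judged on its own, your proposal has a genuine gap at the pivotal step. You assert that $(q(z)-\overline{q(w)})/(z-\overline{w})$ ``differs from a positive kernel derived from $g$ by a Hermitian form of rank at most $2\k+2$, so it has at most $\k$ negative squares.'' The conclusion does not follow from the premise: a rank-$r$ Hermitian perturbation of a positive kernel can have up to $r$ negative squares, so the stated rank bound only yields $\leq 2\k+2$, not $\leq\k$. Moreover, the premise itself is unsubstantiated: the natural candidate for the finite-rank correction, namely
\[
\frac{q(z)-\overline{q(w)}}{z-\overline{w}}-(1+z^2)^{\k+1}\,\frac{g(z)-\overline{g(w)}}{z-\overline w}\,(1+\overline w^2)^{\k+1},
\]
is \emph{not} a degenerate kernel, because after subtracting off the congruence transform the cross terms $\bigl(1-(1+z^2)^{\k+1}\bigr)g(z)$ and its conjugate still carry $g$ and do not collapse to a finite-dimensional form. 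Counting crudely from your additive decomposition, $(1+z^2)^{\k+1}g$ alone contributes up to $\k+1$ negative squares by the Krein--Langer multiplication rule, and the degree-$(2\k+1)$ polynomial kernel contributes up to $\k$ more, so a priori you only get $2\k+1$. Bringing that down to $\k$ requires exploiting \cref{eq:27} to force a cancellation between the leading part of $p$ and the leading behaviour of $(1+z^2)^{\k+1}g$ at infinity; that cancellation is the substance of the theorem, not an asymptotic detail to be deferred.

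A smaller but real issue concerns the ``only if'' direction of (ii): you invoke Helly selection on $(\mu_n)$, but the requisite uniform bound $\sup_n\|\mu_n\|_\k<\infty$ cannot be read off from bounds on the polynomials $p_n$ there --- convergence of $p_n$ is what you are trying to prove in that direction, not a given. The bound must instead be extracted from $q_n\to q$ directly, for example through the normalization condition \cref{eq:953} built into $\cN^{(\infty)}_{\leq\k}$ or through the factorization of \Cref{thm:951} combined with a normal-families argument for the $\cN_0$ parts. This step is genuinely nontrivial and needs to be made explicit rather than absorbed into ``apply the recovery formulas from (i).''
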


The relation \cref{eq:949} is a variant of the Stieltjes inversion formula, and the statement in item (ii) is an analogue of
the Grommer-Hamburger theorem about convergence of Cauchy transforms.
Note here that due to \cref{eq:27} and the Portmanteau theorem \cite[Theorem~1]{barczy.pap:2006} 
the kind of convergence of measures in item (ii) can be reformulated more explicitly as 
\[
	\forall a,b\in\bbR,\,a<b,\,\mu(\{a\})=\mu(\{b\})=0:
	\lim_{n\to\infty}\mu_n((a,b))=\mu((a,b)).
\]
In fact, a more general variant of \cref{eq:949} holds, the Stieltjes-Livshits inversion formula 
(see e.g.\ \cite[Theorem~1.2.4]{gorbachuk.gorbachuk:1997}). In the present context it can be formulated as follows: 
if $q:=\mathrm C_\k[\mu,p]$ and $f$ is a function which is analytic on some open set containing the real axis and takes real
values on $\bbR$, then 
\begin{equation}
\label{eq:916}
	\forall a,b\in\bbR,\,a<b:
	\int_{(a,b)}f(t)d\mu(t)=\lim_{\d\to 0}\lim_{\e\to 0}\frac{1}{\pi}\int_{a+\d}^{b-\d}\Im\big[f(t)q(t+i\e)\big]dt.
\end{equation}
One can think of the formula \cref{eq:71} as an additive decomposition of a function $q\in\cN_{<\infty}^{(\infty)}$. 
There is also a multiplicative decomposition of such functions. The following result is shown in 
\cite{dijksma.langer.luger.shondin:2000}.

\begin{theorem}
\label{thm:951}
	Let $\k\in\bbN$, and $q\in\Hol(\bbC_+)$. Then $q\in\cN_\k^{(\infty)}$, 
	if and only if there exists a function $q_0\in\cN_0$ and points (not necessarily different) 
	$\b_1,\dots,\b_\k\in\bbC_+\cup\bbR$, such that 
	\[
		q(z)=q_0(z)\prod_{j=1}^\k(z-\b_j)(z-\overline{\b_j}).
	\]
\end{theorem}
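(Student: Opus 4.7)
My plan is to prove the two directions separately, with induction on $\kappa$ as the organizing principle in each case.

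For the easy direction ($\Leftarrow$), I would proceed by induction on $\kappa$, the base case $\kappa=0$ being the identification $\cN_0^{(\infty)}=\cN_0$ noted after Definition~\ref{def:956}. For the inductive step, set $P(z) := (z-\beta)(z-\bar\beta)$ for a fixed $\beta\in\bbC_+\cup\bbR$ and assume $\tilde q \in \cN_{\kappa-1}^{(\infty)}$. Write $q := P\cdot \tilde q$ and compute
\[
	\frac{q(z) - \overline{q(w)}}{z-\bar w}
	= P(z)\,\frac{\tilde q(z)-\overline{\tilde q(w)}}{z-\bar w} + \overline{\tilde q(w)}\,\frac{P(z)-P(\bar w)}{z-\bar w},
\]
and symmetrize to get a Hermitian expression; the auxiliary kernel $K_P(z,w) = z + \bar w - 2\Re\beta$ is easily seen to have signature $(1,1)$ by the rank-$2$ argument $K_P(z,w) = (z-\Re\beta)\cdot 1 + 1 \cdot(\bar w-\Re\beta)$. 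A standard Pontryagin-space inertia count then shows the total number of negative squares grows by exactly one. The asymptotic condition \eqref{eq:953} is verified directly: multiplication by a quadratic of this form shifts the behaviour in \eqref{eq:953} from the $(\kappa-1)$-th level to the $\kappa$-th level, since for large $|z|$ the factor $P(z)$ behaves like $z^2$ and in particular $P(iy)/(iy)^2 \to 1 > 0$ as $y\to\infty$.

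For the hard direction ($\Rightarrow$), the goal is the reverse inductive step: given $q\in\cN_\kappa^{(\infty)}$ with $\kappa\ge 1$, exhibit $\beta \in \bbC_+\cup\bbR$ such that $q(z)/[(z-\beta)(z-\bar\beta)] \in \cN_{\kappa-1}^{(\infty)}$. The strategy is to work in the reproducing kernel Pontryagin space $\cL(q)$ associated with the kernel \eqref{eq:952}, which by definition has $\kappa$ negative squares. One extracts a finite-dimensional negative subspace invariant (in the relational sense) under multiplication by the independent variable, and reads off a "generalized eigenvalue" $\beta$ from this subspace; this $\beta$ serves as the generalized pole to be removed. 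One then verifies that the quotient $q(z)/P(z)$ remains analytic on $\bbC_+$ after cancellation (using that $\beta$, if in $\bbC_+$, must be a zero of $q$ of appropriate multiplicity, established by a residue/reproducing computation in $\cL(q)$), that its defect kernel has precisely $\kappa-1$ negative squares (inertia bookkeeping), and that the asymptotic condition \eqref{eq:953} is preserved at level $\kappa-1$ (the dominant behaviour at $\infty$ changes cleanly because we divide by a quadratic whose ratio with $z^2$ tends to $1$).

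The main obstacle is locating the point $\beta$. The condition \eqref{eq:953} is crucial here: it rules out the situation where the entire negative index is "concentrated at infinity", which would prevent extraction of any finite generalized eigenvalue. The second clause of \eqref{eq:953} in particular pins down the leading coefficient of the polynomial part to be strictly negative in the sense of $q(iy)/(iy)^{2\kappa-1}\in(-\infty,0)$, which is precisely the signature condition one needs to guarantee that the self-adjoint model operator in $\cL(q)$ has a non-empty finite spectrum of negative type. Carrying out this Pontryagin-space analysis—constructing the model operator, verifying its self-adjointness, and translating its spectrum back into zeros of the divisor polynomial—is the technical heart of the argument and is the step I would expect to occupy most of the proof; once a single $\beta$ is produced, the induction closes and the multiplicative factorization assembles itself.
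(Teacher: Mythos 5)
The paper does not prove this statement: it is imported verbatim from \cite{dijksma.langer.luger.shondin:2000}, so there is no ``paper's own proof'' to compare against. Your plan follows the spirit of that reference (induction on $\kappa$ via the reproducing kernel Pontryagin space model), which is sensible, but the sketch as written has a concrete gap in what you label the ``easy'' direction, and the hard direction is, as you say, the entire content of the cited paper.

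The problem with $\Leftarrow$ is the decomposition. Writing
\begin{equation*}
	\frac{q(z)-\overline{q(w)}}{z-\bar w}
	= P(z)\,\frac{\tilde q(z)-\overline{\tilde q(w)}}{z-\bar w}
	+ \overline{\tilde q(w)}\,\frac{P(z)-\overline{P(w)}}{z-\bar w}
\end{equation*}
is algebraically correct, but neither summand is a Hermitian kernel: the first term $P(z)N_{\tilde q}(z,w)$ conjugate-transposes to $\overline{P(w)}N_{\tilde q}(z,w)$, not to itself. So the standard ``sum of Hermitian kernels has at most the sum of negative squares'' lemma does not apply to these two pieces, and ``symmetrize to get a Hermitian expression'' does not obviously repair it --- averaging the decomposition with its conjugate transpose produces terms such as $\tfrac12\bigl(P(z)+\overline{P(w)}\bigr)N_{\tilde q}(z,w)$, whose signature you have no control over since the factor $\tfrac12\bigl(P(z)+\overline{P(w)}\bigr)$ is itself a sign-indefinite kernel of rank $\geq 2$. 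The correct route is the one used in \cite{dijksma.langer.luger.shondin:2000} and its sources: pass to the reproducing kernel space $\cL(\tilde q)$, use that multiplication by $(z-\beta)$ defines a natural isometry from $\cL(\tilde q)$ into $\cL(q)$ with a one-codimensional complement carrying a one-dimensional negative square, and do the bookkeeping there, not at the level of a termwise Hermitian sum. Separately, your $\Leftarrow$ argument as phrased only gives an upper bound $\leq\kappa$ on the negative squares; to land in $\cN_\kappa^{(\infty)}$ rather than $\cN_{\leq\kappa}^{(\infty)}$ you also need a lower bound, which does not follow from the decomposition alone and has to come from the asymptotics and the positivity of $N_{q_0}$. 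Your verification of \eqref{eq:953} at the inductive step is fine.

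For $\Rightarrow$ you correctly identify that one must locate a finite generalized zero of nonpositive type and divide it out, and that the condition \eqref{eq:953} is what guarantees that such a finite point exists (it forces all generalized poles of nonpositive type to infinity). But what you write is a table of contents, not a proof: the construction of the self-adjoint relation in $\cL(q)$, the identification of its point spectrum of nonpositive type with zeros/poles of $q$, the multiplicity bookkeeping when $\beta\in\bbR$, and the verification that \eqref{eq:953} descends to the quotient are precisely the body of Dijksma--Langer--Luger--Shondin. Given that this paper deliberately outsources the theorem to that reference, the honest comparison is: your plan reproduces the reference's strategy, but neither direction closes as written.
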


%
\subsection{Generalized Nevanlinna functions and matrix families}
\label{sec:2-6}
%

Recall again \Cref{sec:2-4} where we saw that functions $q\in\cN_0\cup\{\infty\}$ correspond to Hamiltonians $H\in\bbH_{a,b}$
which are in limit circle case at $a$ and in limit point case at $b$. This correspondence is established via the fundamental
solution $W_H(t,z)$ of a Hamiltonian $H$ and Weyl's limit formula ($\tau_t\in\cN_0\cup\{\infty\}$)
\[
	\lim_{t\to\infty}W_H(t,.)\star\tau_t=q
	.
\]
A sign-indefinite analogue of the theory of canonical systems, dealing with functions $q$ for which the kernel \cref{eq:952} has
a finite number of negative squares, is developed in the series of papers 
\cite{kaltenbaeck.woracek:db}--\cite{kaltenbaeck.woracek:p6db}.%
\nocite{kaltenbaeck.woracek:p2db,kaltenbaeck.woracek:p3db,kaltenbaeck.woracek:p4db,kaltenbaeck.woracek:p5db}
A refined investigation of the class $\cN_{<\infty}^{(\infty)}$ in this context, and the connection with Hamiltonians with two
limit point endpoints, is undertaken in \cite{langer.woracek:gpinf} and \cite{langer.woracek:sinham}. 
We restate comprehensively what is needed from those papers for our present work. 

Recall here that an analytic function is called of
bounded type in some domain, if it can be written as a quotient of two bounded analytic functions in this domain. 

\begin{theorem}
\label{thm:906}
	Let $q\in\cN_{<\infty}^{(\infty)}$. Then there exist functions 
	\[
		W:(a,b)\times\bbC\to\bbC^{2\times 2},\quad H:(a,b)\to\bbR^{2\times 2},
	\]
	defined on some interval with $-\infty\leq a<b\leq\infty$, which possess the following properties (here we write 
	$W(t,z)=(w_{ij}(t,z))_{i,j=1}^2$).
	\begin{enumerate}[{\rm(i)}]
	\item For each $t\in(a,b)$ the functions $z\mapsto w_{ij}(t,z)$ are real entire and of bounded type in $\bbC_+$ and 
		$\bbC_-$. We have $W(t,0)=I$ and $\det W(t,z)=1$. 
	\item For each $t\in(a,b)$ the function 
		\[
			E(t,.):=w_{22}(t,.)+iw_{21}(t,.)
		\]
		belongs to $\HB^*$. 
	\item The function $H$ belongs to $\bbH_{a,b}$ and is in limit point case at $b$. For each $z\in\bbC$ the function 
		$t\mapsto W(t,z)$ is absolutely continuous, and 
		\begin{equation}
		\label{eq:915}
			\partial_tW(t,z)J=zW(t,z)H(t),\quad t\in(a,b)\text{ a.e.}
		\end{equation}
	\item We have 
		\[
			\lim_{t\to a}(0,1)W(t,z)=(0,1)
		\]
		locally uniformly for $z\in\bbC$.
	\item For every family $(\tau_t)_{t\in(a,b)}$ with $\tau_t\in\cN_0\cup\{\infty\}$ we have 
		\[
			\lim_{t\to b}\big[W(t,z)\star\tau_t(z)\big]=q(z)
		\]
		locally uniformly for $z\in\bbC_+$. 
	\end{enumerate}
\end{theorem}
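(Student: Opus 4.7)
The plan is to reduce to the positive definite case via the multiplicative decomposition (Theorem~\ref{thm:951}), apply the classical inverse spectral theorem (Theorem~\ref{thm:8}) to the $\cN_0$-factor, and then extend the resulting Hamiltonian past its regular left endpoint in such a way that the extra polynomial factors appear in the Weyl limit.

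First, write $q(z) = q_0(z) \prod_{j=1}^\k (z - \b_j)(z - \overline{\b_j})$ with $q_0 \in \cN_0$ and $\b_j \in \bbC_+ \cup \bbR$; if $\k = 0$ there is nothing to do beyond Theorem~\ref{thm:8}. Next, Theorem~\ref{thm:8} yields a trace-normalized Hamiltonian $H_0 \in \bbHe_{0,\infty}$ in limit point case at $\infty$ and with Weyl coefficient $q_0$; let $W_0(t,z)$ denote its fundamental solution and $E_0(t,z) := w_{0,22}(t,z) + i w_{0,21}(t,z) \in \HB^*$ the associated Hermite--Biehler family. For $\k \ge 1$ the core step is to extend $H_0$ to the left of its regular endpoint $0$ onto an interval $(a,0)$ with $-\infty \le a < 0$, placing a Hamiltonian piece on $(a,0)$ that is in limit point case at $a$ and whose transfer matrices implement multiplication by the real polynomial $p(z) := \prod_j (z-\b_j)(z-\overline{\b_j})$ in the Weyl limit. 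I would build this left piece in two substeps. First, construct an auxiliary $2\times 2$ matrix polynomial $W_p(z)$ such that the limit of $W_p(z) \star \tau$ realizes the transition from $q_0$ to $q$ in the indefinite (Pontryagin-space) sense associated to $\cN_\k^{(\infty)}$; such $W_p$ can be produced explicitly from the zeros $\b_j$. Second, invoking the desingularization procedure of \cite{langer.woracek:gpinf,langer.woracek:sinham}, unfold $W_p$ into the fundamental solution of an honest positive Hamiltonian on $(a, 0)$ normalized so that, after concatenation with $W_0$ via \eqref{eq:948}, the full family $W(t,z)$ on $(a,\infty)$ satisfies $W(0, z) = I$ and $(0,1)W(t, z) \to (0,1)$ as $t \to a$. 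Once this is in place, properties (i)--(ii) are inherited from Theorem~\ref{thm:996} and the identity \eqref{eq:8}; (iii) is the canonical system equation \eqref{eq:915} on the extended interval; (iv) is built into the boundary behavior of the construction; and (v) follows from a factorization argument that combines the Weyl limit for $q_0$ with the polynomial action encoded on the left extension.

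The hard part is the unfolding step. Multiplication by $p(z)$ is not a fractional linear transformation and cannot be realized as the transfer matrix of a classical trace-normalized Hamiltonian on a bounded interval; in the sign-indefinite framework it naturally corresponds to a Pontryagin-space realization with a pole-type singularity whose \emph{index} is precisely $\k$. The price to pay for a \emph{positive} realization is exactly that the left endpoint becomes limit point and the boundary condition is twisted into the form $(0,1)W(t,z) \to (0,1)$ rather than $W(a,z) = I$. That the precise accounting of the zeros $\b_j$, the types of the indivisible intervals that arise in the left extension, and the order of the singularity together yield a well-defined $H \in \bbH_{a, \infty}$ satisfying all five properties simultaneously is the delicate technical content developed in \cite{langer.woracek:gpinf,langer.woracek:sinham}, and it is this bookkeeping that I expect to absorb most of the work.
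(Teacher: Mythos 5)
The paper does not prove Theorem~\ref{thm:906}; the paragraph immediately preceding it states that this theorem is a restatement of results from \cite{langer.woracek:gpinf,langer.woracek:sinham}, so the paper's ``proof'' is a citation. That said, your outline of the reduction contains both a concrete error and a structural mismatch with what those papers actually do, so it would not lead to a correct proof even granting the cited technology.

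The concrete error is the normalization $W(0,z)=I$ at the interior parameter value $t=0$. The theorem's condition is $W(t,0)=I$, i.e.\ normalization at spectral parameter $z=0$; this is a different statement. With your normalization and $H|_{(0,\infty)}=H_0$, multiplicativity \eqref{eq:948} forces $W(t,z)=W_{H_0}(t,z)$ for $t>0$, whence the limit in (v) is $q_0$, not $q$. Your left piece on $(a,0)$ is then decoupled from the Weyl limit at $b$, and the construction cannot produce $q$ for $\kappa\geq 1$. Compare Proposition~\ref{pro:914}(ii), where already in the lc/positive-definite case $W(t,z)$ differs from the fundamental solution $W_H(t,z)$ by a nontrivial $t$-independent left factor.

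The structural mismatch is more basic. You propose to realize the multiplicative factor $p(z)=\prod_j(z-\b_j)(z-\overline{\b_j})$ by an explicit $J$-inner matrix polynomial $W_p$ glued to $H_0$ on the left. But multiplication by $p$ at the level of $q$ is not implemented by left multiplication of the matrix family by a $J$-inner matrix polynomial, and the realization is not a left extension of $H_0$ at all. The only place the paper exposes the mechanics of \cite{langer.woracek:gpinf} is the proof of Proposition~\ref{pro:910}: passing from $\cN_\kappa^{(\infty)}$ to $\cN_{\kappa+1}^{(\infty)}$ via the factor $(1+z^2)$ transforms the matrix family $\tilde W(t,z)$ for \emph{every} $t$ by a rational (not polynomial) left factor $\mathrm{diag}(1,(1+z^2)^{-1})$ and a $t$-dependent right factor built from the Weyl-circle quantities $\tilde R(t),\tilde J(t)$. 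The Hamiltonian is modified globally; the piece on $(c,b)$ is not $H_0$ (indeed by Lemma~\ref{lem:913}, $q_{H|_{(c,b)}}$ is not the factor $q_0$ from Theorem~\ref{thm:951}). So the ``unfolding step'' you defer to the references is not bookkeeping built on your skeleton; the skeleton itself is not the one the cited papers use.
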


If $q,W,H$ are as in the theorem, we say that $W$ is a \emph{matrix family for $q$ with Hamiltonian $H$}.

The connection between $q,W,H$ is known to have several additional properties, and we state some of them.

\begin{proposition}
\label{pro:914}
	Let $q\in\cN_{<\infty}^{(\infty)}$, and let $W$ be a matrix family for $q$ with Hamiltonian $H$ (defined on and interval 
	$(a,b)$). Then the following statements hold. 
	\begin{enumerate}[{\rm(i)}]
	\item The Hamiltonian $H$ is in limit circle case at $a$, if and only if $q=p+q_0$ with some $q_0\in\cN_0$ and a
		polynomial $p$ with real coefficients.
	\item If $H$ is in limit circle case at $a$, then 
		\[
			W(t,z)=\begin{pmatrix} 1 & p(z)\\ 0 & 1\end{pmatrix}W_H(t,z),\quad t\in(a,b),z\in\bbC
			,
		\]
		with some polynomial $p$ with real coefficients.
	\item If $q$ is not a polynomial with real coefficients, then there exist $s,t\in(a,b)$, $s<t$, such that 
		$\cB(E(s,.))=\cB(E(t,.))^\flat$.
	\end{enumerate}
\end{proposition}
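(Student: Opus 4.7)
I would prove (i) and (ii) together, then (iii) separately.

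For one direction of (i) together with (ii): assume $H$ is limit circle at $a$, so $\int_a^c \tr H(t)\,dt<\infty$. The ODE \eqref{eq:915} then integrates up to $t=a$, producing an entire matrix $W(a,z)$. Property (iv) of \Cref{thm:906} gives $(0,1)\,W(a,z)=(0,1)$, and combined with $\det W(a,z)=1$ and real entries this forces $W(a,z)=\bigl(\begin{smallmatrix}1&p(z)\\0&1\end{smallmatrix}\bigr)$ for some real entire $p=w_{12}(a,\cdot)$. Defining $V(t,z):=W(a,z)^{-1}W(t,z)$, a direct computation shows $V$ satisfies the canonical system with $V(a,z)=I$, hence $V=W_H$ by uniqueness of solutions. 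This yields the factorization in (ii) once $p$ is known to be a polynomial. Applying property (v) of \Cref{thm:906} and the M\"obius identity $\bigl(\begin{smallmatrix}1&p\\0&1\end{smallmatrix}\bigr)\star\tau=\tau+p$ gives $q=q_H+p$, with $q_H\in\cN_0\cup\{\infty\}$ the Weyl coefficient of $H$. The polynomiality of $p$ follows since $p$ is real entire and of bounded type in $\bbC_\pm$ (inherited from the $w_{ij}(t,\cdot)$), while by \Cref{thm:951} $q$ has polynomial growth along the imaginary axis and $q_H$ at most linear growth there, so $p$ does too; a real entire function of bounded type in $\bbC_\pm$ with polynomial growth along $i\bbR$ is a polynomial by a Phragm\'en--Lindel\"of/Cartwright argument.

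For the converse direction of (i): given $q=p+q_0$ with $p\in\bbR[z]$ and $q_0\in\cN_0$, let $H\in\bbHe_{0,\infty}$ be the trace-normalized Hamiltonian associated with $q_0$ via \Cref{thm:8}, and set $W(t,z):=\bigl(\begin{smallmatrix}1&p(z)-p(0)\\0&1\end{smallmatrix}\bigr)W_H(t,z)$ (the constant $p(0)$ being absorbed into the integral representation of $q_0$). Conditions (i)--(iv) of \Cref{thm:906} are inherited from $W_H$ together with $W(t,0)=I$, and condition (v) follows from the same M\"obius identity combined with the Weyl limit for $H$. Hence $H$ is the Hamiltonian of a matrix family for $q$ and is limit circle at $a=0$.

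The main obstacle is (iii). Since $q\notin\bbR[z]$, the measure in its regularized Cauchy representation is nontrivial and the chain $\{\cB(E(t,\cdot))\}_{t\in(a,b)}$ cannot be stationary. The plan is to locate $t_0\in(a,b)$ at which $\cB(E(t_0,\cdot))^\flat\subsetneq\cB(E(t_0,\cdot))$; by \eqref{eq:20} the codimension is then exactly one, and taking $s$ as the left endpoint of the maximal indivisible interval of $H$ ending at $t_0$ (whose type $\alpha$ matches the one-dimensional direction missing from $\cB(E(t_0,\cdot))^\flat$), \Cref{917} together with \eqref{eq:50} identifies $\cB(E(s,\cdot))$ with $\cB(E(t_0,\cdot))^\flat$, producing the required pair $s<t=t_0$. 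The hard step is guaranteeing the existence of such a $t_0$: this is the main technical difficulty, and is handled via the structural theory of matrix families for generalized Nevanlinna functions underlying \Cref{thm:906}, which identifies such one-dimensional jumps in the chain as the mechanism by which non-polynomial behavior of $q$ is reflected in $H$.
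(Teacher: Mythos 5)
The paper gives no proof of this proposition: it is restated, explicitly, as a package of facts from \cite{langer.woracek:gpinf} and \cite{langer.woracek:sinham} (see the sentence ``We restate comprehensively what is needed from those papers for our present work'' preceding \Cref{thm:906}). So there is no in-paper proof to compare your argument against; what follows is an assessment of whether your argument would actually close the gaps.

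Your treatment of (ii) together with the ``only if'' half of (i) has the right skeleton: integrate the ODE to $t=a$, use $\det W(a,z)=1$, realness, and property (iv) of \Cref{thm:906} to pin down $W(a,z)=\bigl(\begin{smallmatrix}1&p\\0&1\end{smallmatrix}\bigr)$, and factor off $W_H$ by uniqueness of initial value problems. But two of your steps are not as free as you make them sound. First, you claim $p=w_{12}(a,\cdot)$ is of bounded type in $\bbC_\pm$ ``inherited from the $w_{ij}(t,\cdot)$''; \Cref{thm:906}(i) only asserts bounded type for $t\in(a,b)$, and bounded type is not preserved under locally uniform limits, so some extra argument is needed to get it at $t=a$. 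Second, the growth bound on $p(iy)$ you want to feed into a Cartwright/Phragm\'en--Lindel\"of argument needs to be pinned down carefully (it does follow from \cref{eq:953} once $q_H\in\cN_0$, but you should say so and also rule out $q_H\equiv\infty$, which you do only implicitly).

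The ``if'' half of (i) has a more basic gap. You construct \emph{some} matrix family for $q=p+q_0$ whose Hamiltonian is limit circle at $a$, but the statement asks about \emph{the given} $H$. This inference requires a uniqueness theorem for matrix families (or at least the invariance of the lc/lp dichotomy at $a$ across all matrix families for a fixed $q$), which is indeed part of the Langer--Woracek structure theory but which you neither cite nor argue.

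For (iii) you have essentially nothing: you reduce the assertion to ``there exists $t_0$ where the chain has a one-dimensional jump'' and then concede that establishing this existence is ``the main technical difficulty'' and defer it to ``the structural theory underlying \Cref{thm:906}.'' That is precisely the content of (iii), so no independent proof has been given. In the source references this is one of the genuinely nontrivial points — it is where the model theory for $\cN_{<\infty}^{(\infty)}$ guarantees that the dB-chain cannot consist entirely of degenerate steps unless $q$ is a real polynomial — and it cannot be obtained from the statements of \Cref{thm:906} alone.
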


For later reference, let us make the following simple formula explicit.

\begin{lemma}
\label{lem:913}
	Let $q\in\cN_{<\infty}^{(\infty)}$, and let $W$ be a matrix family for $q$ with Hamiltonian $H$ (defined on $(a,b)$).
	Then, for each $c\in(a,b)$, we have 
	\begin{equation}
	\label{eq:912}
		W(c,.)\star q_{H|_{(c,b)}}=q
		.
	\end{equation}
\end{lemma}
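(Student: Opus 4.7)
The plan is to exploit the multiplicativity of the transfer matrix together with the defining limit formula for $q$. Fix $c \in (a,b)$. First, I would observe that on the interval $[c,b)$, the function $t \mapsto W(t,z)$ satisfies the ODE $\partial_t W(t,z) J = z W(t,z) H(t)$ from Theorem~\ref{thm:906}(iii), with the value $W(c,z)$ at $t = c$. On the other hand, the function $t \mapsto W(c,z) W_{H|_{(c,b)}}(t,z)$ satisfies the same ODE (since left multiplication by a matrix constant in $t$ preserves the equation) with the same initial value at $t=c$, because $W_{H|_{(c,b)}}(c,z) = I$ by definition. By uniqueness of solutions to linear ODEs,
\[
W(t,z) = W(c,z)\, W_{H|_{(c,b)}}(t,z),\qquad t \in [c,b),\ z\in\bbC.
\]

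Next, I would apply the Möbius action. For any family $(\tau_t)_{t \in (c,b)}$ with $\tau_t \in \cN_0 \cup \{\infty\}$, the identity $M_1 M_2 \star \tau = M_1 \star (M_2 \star \tau)$ — valid whenever $\det M_1, \det M_2 \neq 0$ — yields
\[
W(t,z) \star \tau_t(z) = W(c,z) \star \bigl(W_{H|_{(c,b)}}(t,z) \star \tau_t(z)\bigr).
\]
Passing to the limit $t \to b$, the inner expression tends to $q_{H|_{(c,b)}}(z)$ by the definition of the Weyl coefficient \eqref{eq:36} applied to the restricted Hamiltonian $H|_{(c,b)}$, while the outer expression tends to $q(z)$ by Theorem~\ref{thm:906}(v). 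Since $W(c,z)$ is a fixed matrix with $\det W(c,z) = 1$ (and in particular invertible), the map $\tau \mapsto W(c,z) \star \tau$ is a continuous self-map of the Riemann sphere, so the limit passes inside. This gives the claimed identity $W(c,\cdot) \star q_{H|_{(c,b)}} = q$.

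The main (and essentially only) subtlety is ensuring that $H|_{(c,b)}$ indeed has a well-defined Weyl coefficient: since $c$ is an interior point of $(a,b)$ and $H \in L^1_{\loc}$, the restriction is lc at $c$, and by Theorem~\ref{thm:906}(iii) it is lp at $b$, so \eqref{eq:36} applies. The argument does not require any case distinction on whether $a$ is lc or lp, nor on whether $q$ lies in $\cN_0$ or in a higher indefinite class — the limit formula of Theorem~\ref{thm:906}(v) holds uniformly and carries the whole argument.
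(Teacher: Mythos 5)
Your proof is correct and takes essentially the same route as the paper: both establish the factorization $W(t,\cdot)=W(c,\cdot)\,W_{H|_{(c,b)}}(t,\cdot)$ on $[c,b)$ (the paper asserts it directly from \cref{eq:915}, you spell out the ODE-uniqueness argument) and then pass the defining limit $t\to b$ through the M\"obius action of the fixed invertible matrix $W(c,\cdot)$. The only cosmetic difference is that the paper evaluates the limit along the single constant family $\tau_t\equiv 0$ whereas you work with an arbitrary admissible family $(\tau_t)$; your additional remarks about the lc/lp status of $H|_{(c,b)}$ and the continuity of the fractional linear map are correct but not developments beyond what the paper tacitly uses.
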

\begin{proof}
	The function $W(t,.)$ solves \cref{eq:915}, and therefore for all $t\in[c,b)$ 
	\[
		W(t,.)=W(c,.)W_{H|_{(c,b)}}(t,.)
		.
	\]
	This implies 
	\[
		q=\lim_{t\to b}\big[W(t,.)\star 0\big]=W(c,.)\star\lim_{t\to b}\big[W_{H|_{(c,b)}}(t,.)\star 0\big]
		=W(c,.)\star q_{H|_{(c,b)}}
		.
	\]
\end{proof}

%
%
%
\newpage
\section{The structure Hamiltonian}
\label{sec:3}
%
%
%

The connection between dB-spaces on the one hand and canonical systems on the other is a core feature in de~Branges' work. 
Our aim in this section is to present it in a language adapted to our present needs and to prove some additions to 
\cite{debranges:1968}. 

The link between the two objects is the set of subspaces of a given dB-space which are themselves dB-spaces.

\begin{definition}
\label{def:960}
	Let $\cH\in\DB$. Then we denote 
	\[
		\scrC(\cH):=\big\{\{0\}\big\}\cup\big\{\cL\in\DB\mid \cL\sqsubseteq \cH,\vartheta_\cL=\vartheta_\cH\big\}
		.
	\]
\end{definition}

The comprehensive result now reads as follows. 

\begin{theorem}
\label{thm:1}
	Let $E\in\HB$. Then there exists $H\in\bbHe_{-\infty,0}$ with $I_{\rm reg}\neq\emptyset$, 
	such that the solution of 
	\begin{equation}
	\label{eq:4}
		\begin{aligned}
			&\partial_t(A(t,z),\ B(t,z))J =z(A(t,z),\ B(t,z))H(t),\quad t\leq 0\text{ a.e.},
			\\
			& (A(0,z),\ B(0,z))=(A(z),\ B(z)),
		\end{aligned}
	\end{equation}
	has the following properties.
	\begin{enumerate}[{\rm(i)}]
	\item Set $T_-:=\inf I_{\rm reg}\in[-\infty,0)$. For $t\in(T_-,0]$ we have $E(t,\cdot)\in\HB$, and for 
		$t\in(-\infty,T_-]$ the function $E(t,\cdot)$ is a scalar multiple of a real entire function. 
		We have $\lim\limits_{t\to T_-}K_{E(t,\cdot)}=0$. 
	\item It holds that 
		\begin{align}
			\label{eq:3}
			& \scrC(\cB(E))=\big\{\{0\}\big\}\cup\big\{\cB(E(t,\cdot))\mid t\in(T_-,0]\big\},
			\\
			\label{eq:994}
			& \big\{\cL\in\DB\mid \cL\subseteq_i\cB(E),\vartheta_\cL=\vartheta_E\big\}
			=\big\{\cB(E(t,\cdot))\mid t\in(T_-,0]\cap I_{\rm reg}\big\}.
		\end{align}
	\item Denote 
		\[
			\cH_t:=
			\begin{cases}
				\cB(E(t,\cdot)) &\text{if}\ t\in(T_-,0],
				\\
				\{0\} &\text{if}\ t\in[-\infty,T_-],
			\end{cases}
			\qquad
			\chi:\left\{
			\begin{array}{rcl}
				[T_-,0] & \to & \scrC(\cB(E)),
				\\[1mm]
				t & \mapsto & \cH_t.
			\end{array}
			\right.
		\]
		Then $\chi$ is an order isomorphism and homeomorphism. 
		In particular, $\scrC(\cB(E))$ is totally ordered w.r.t.\ $\sqsubseteq$ and compact as a subset of $\RK$. 
	\end{enumerate}
	The Hamiltonian $H$ is uniquely determined by the property that the solution of \cref{eq:4} satisfies \cref{eq:994}. 

	The following additional properties are satisfied.
	\begin{enumerate}[{\rm(i)}]
	\setcounter{enumi}{3}
	\item Let $(t_0,t_1)$ be an indivisible interval with $t_0\in[T_-,0]\cap I_{\rm reg}$. 
		Then $\cH_{t_0}=\cH_{t_1}^\flat$. For $G\in\cH_{t_1}\setminus\{0\}$ with $G\perp\cH_{t_0}$ the map 
		$t\mapsto \|G\|_{\cH_t}$ is a decreasing bijection from $(t_0,t_1)$ onto $(\|G\|_{\cH_{t_1}},\infty)$. 
	\item Set $\alpha:=-\arg E(0)$, then $\int_{-\infty}^0e_\alpha^*H(t)e_\alpha dt<\infty$. 
	\end{enumerate}
\end{theorem}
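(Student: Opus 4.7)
The plan is to use de~Branges' Ordering Theorem to linearly order $\scrC(\cB(E))$, then invoke the canonical systems homeomorphism of Theorem~\ref{thm:958} to realize the resulting family of $J$-inner matrix functions as transfer matrices of a trace-normalized Hamiltonian $H$. For each nonzero $\cL\in\scrC(\cB(E))$, the condition $\vartheta_\cL=\vartheta_E$ combined with Lemma~\ref{lem:26} allows me to pick $E_\cL\in\HB$ representing $\cL$ so that Theorem~\ref{thm:996}(ii) produces a unique $J$-inner $W_\cL$ with $E=E_\cL\ltimes W_\cL$ and $W_\cL(0)=I$. Setting $s(\cL):=\mathfrak{t}(W_\cL)\ge 0$, composition of $J$-inner factors for $\cL_1\sqsubseteq\cL_2$ together with the additivity~\eqref{eq:15} of $\mathfrak{t}$ makes $s$ strictly order-reversing on $(\scrC(\cB(E))\setminus\{\{0\}\},\sqsubseteq)$, and the Ordering Theorem guarantees this set is totally ordered.

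\textbf{Producing $H$ and verifying (i)--(iv).} Let $s_{\max}:=\sup_\cL s(\cL)\in(0,\infty]$ and $T_-:=-s_{\max}$. For each $c<s_{\max}$, the set $\{W_\cL\mid s(\cL)\le c\}\subseteq\bbTM_c$ is compact by Lemma~\ref{lem:7} and linearly ordered by composition, and Theorem~\ref{thm:958} identifies it with the transfer matrices of a unique trace-normalized Hamiltonian on $[0,c]$. Compatibility as $c\uparrow s_{\max}$ via~\eqref{eq:948} produces a single $H$ on $[0,s_{\max})$; reflecting $t\mapsto -t$ and extending by an indivisible tail to $(-\infty,T_-]$ if $T_->-\infty$ yields $H\in\bbHe_{-\infty,0}$. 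The ODE~\eqref{eq:4} is equivalent to~\eqref{eq:9} under $(A(t,\cdot),B(t,\cdot))=(A(\cdot),B(\cdot))W_{\cH_t}(\cdot)^{-1}$, and $W_{\cB(E)}=I$ gives the stated initial condition. Statements (ii)--(iii) then follow from Theorem~\ref{thm:958} and the continuity of $\cB$ (Theorem~\ref{prop:1}(iv)); for (i), monotonicity of $K_{E(t,\cdot)}(z,z)$ in $t$ combined with $\cH_t\searrow\{0\}$ gives $K_{E(t,\cdot)}\to 0$ as $t\to T_-$, while the explicit form~\eqref{eq:30} of indivisible transfer matrices makes $E(t,\cdot)$ a scalar multiple of a single real entire function on $(-\infty,T_-]$. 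Statement (iv) is a direct application of Example~\ref{917} and equation~\eqref{eq:50} to the transfer factor $I-\ell(t)ze_\alpha e_\alpha^*J$ associated to the indivisible interval.

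\textbf{Main obstacle: the integrability (v).} The key is a conservation identity obtained by differentiating~\eqref{eq:4} at $z=0$, which yields $\partial_t(A(t,0),B(t,0))=0$, so $(A(t,0),B(t,0))=|E(0)|e_\alpha^*$ is constant in $t$. A second differentiation in $z$ at $z=0$ together with the reproducing-kernel formula~\eqref{eq:978} produces an identity of the form
\[
\partial_tK_{E(t,\cdot)}(0,0)=C\cdot|E(0)|^2e_\alpha^*H(t)e_\alpha
\]
for some constant $C>0$. Integrating from $T_-$ to $0$ and using $K_{E(T_-,\cdot)}(0,0)=0$ from (i) bounds $\int_{T_-}^0 e_\alpha^*H(t)e_\alpha\,dt$ by $CK_E(0,0)/|E(0)|^2$. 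When $T_->-\infty$, I would additionally argue that the type $\beta$ of the indivisible tail satisfies $e_\beta\perp e_\alpha$: from (i), $(A(T_-,z),B(T_-,z))$ is proportional to a single real entire function times a fixed direction which, by the constancy at $z=0$, must equal $e_\alpha^*$; applying the indivisible transfer factor $I-\ell z e_\beta e_\beta^*J$ preserves proportionality to $e_\alpha^*$ only if $(A(T_-,\cdot),B(T_-,\cdot))e_\beta\equiv 0$, forcing $e_\beta\perp e_\alpha$, so the tail contributes zero to the integral. Beyond (v), the other technical hurdle is the surjectivity portion of~\eqref{eq:3}: one must verify that passage to monotone limits of dB-subspaces is captured by the compactification used in Theorem~\ref{thm:958}, so no $\cL\in\scrC(\cB(E))$ is missed at the endpoints.
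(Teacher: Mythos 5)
The paper's proof of Theorem~\ref{thm:1} is brief precisely because almost everything is taken from de~Branges \cite{debranges:1968}: the paper only adds (a) the inclusion ``$\subseteq$'' in \eqref{eq:3}, proved via the purely geometric Lemma~\ref{lem:1}, (b) the topological content of (iii), and (c) the removal of the no-real-zeros restriction via Lemma~\ref{lem:26}. Your proposal instead attempts a from-scratch reconstruction of de~Branges' structure theorem (parameterize $\scrC(\cB(E))$ by $\mathfrak t$, then invoke Theorem~\ref{thm:958} to read off a Hamiltonian). That is a much larger undertaking than what the statement actually requires, and it has gaps at exactly the places the paper has to work.

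There are two genuine gaps. First, you invoke Theorem~\ref{thm:996}(ii) to produce $W_\cL$ with $E = E_\cL\ltimes W_\cL$ ``for each nonzero $\cL\in\scrC(\cB(E))$.'' But Theorem~\ref{thm:996}(ii) requires the \emph{isometric} containment $\cB(E_\cL)\subseteq_i\cB(E)$, whereas $\cL\sqsubseteq\cB(E)$ only gives $\cL\subseteq_c\cB(E)$ and $\cL^\flat\subseteq_i\cB(E)$. For $\cL$ corresponding to an interior point of an indivisible interval, $\cL$ is \emph{not} isometrically contained (its norm on the one-dimensional quotient is strictly larger), so the $J$-inner $W_\cL$ does not exist via that theorem and $\mathfrak t$ is not defined on all of $\scrC(\cB(E))$. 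Second, and more importantly, the inclusion ``$\subseteq$'' in \eqref{eq:3} — that \emph{every} $\cL\in\scrC(\cB(E))$ (including the non-isometrically-contained ones) actually occurs as some $\cB(E(t,\cdot))$ — is the only part of the ordering result that is new here, and you explicitly defer it as ``a technical hurdle'' without resolving it. This is precisely what the paper's Lemma~\ref{lem:1} is for: given $\cL_0\subseteq_i\cH$ and $\cL\subseteq_c\cH$ with $\cL_0\subseteq\cL$, orthogonality to $\cL_0$ in $\cH$ forces orthogonality in $\cL$; this reduces the almost-isometric case to a one-parameter adjustment of norm on an indivisible interval, which is then captured by statement (iv). Your proposal has no analogue of this step.

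A smaller concern: both your derivation of (i) and your integration argument for (v) use $\lim_{t\to T_-}K_{E(t,\cdot)}=0$, which in de~Branges' development is itself a structural theorem, not an automatic consequence of defining $T_-=-\sup_\cL\mathfrak t(W_\cL)$. On the positive side, your identity $\partial_t K_{E(t,\cdot)}(0,0)=|E(0)|^2 e_\alpha^* H(t)e_\alpha$ (with the constant $C=1$ once the computation is carried out) and the observation that the indivisible tail has type perpendicular to $e_\alpha$ is a correct and clean way to prove (v), assuming the other items are available; it is essentially the same fact de~Branges records, repackaged through the ODE.
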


This theorem is only a slight extension of what was shown by de~Branges. 
In \cite{debranges:1968} all assertions of the theorem are shown with exception of the topological properties in (iii),
the inclusion ``$\subseteq$'' in \cref{eq:3}, and it was always assumed that $E$ has no real zeros.
While removing the restriction on real zeros is simple, showing equality in 
\cref{eq:3} requires an argument which we provide now. First note the following geometric fact.

\begin{lemma}
\label{lem:1}
	Let $(\cH,\langle\cdot,\cdot\rangle_{\cH})$ be a Hilbert space, and let $\cL_0$ and $\cL$ be linear subspaces of $\cH$ 
	with $\cL_0\subseteq\cL$. Assume that $\langle\cdot,\cdot\rangle_{\cL}$ is a scalar product on $\cL$ such that 
	\begin{equation}\label{eq:21}
	(\cL_0,\langle\cdot,\cdot\rangle_{\cL})\subseteq_i(\cH,\langle\cdot,\cdot\rangle_{\cH})
	\ \text{and}\ 
	(\cL,\langle\cdot,\cdot\rangle_{\cL})\subseteq_c(\cH,\langle\cdot,\cdot\rangle_{\cH})
	\end{equation}
	Then 
	\[
	\forall y\in\cL: y\perp_{\cH}\cL_0\Rightarrow y\perp_{\cL}\cL_0.
	\]
\end{lemma}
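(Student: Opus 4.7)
The plan is to exploit the contractive containment of $\cL$ in $\cH$ by perturbing $y$ along elements of $\cL_0$ and reading off orthogonality from the leading order behavior in the perturbation parameter. Fix $y\in\cL$ with $y\perp_\cH\cL_0$ and an arbitrary $x\in\cL_0$. Since $\cL_0\subseteq\cL$, the element $y+tx$ lies in $\cL$ for every $t\in\bbC$, so the contractive inclusion in \eqref{eq:21} yields
\[
\|y+tx\|_\cH^2\le\|y+tx\|_\cL^2,\qquad t\in\bbC.
\]

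Next, I would expand both sides. On the left, using the hypothesis $\langle y,x\rangle_\cH=0$, we get $\|y\|_\cH^2+|t|^2\|x\|_\cH^2$. On the right, we get $\|y\|_\cL^2+2\Re(\bar t\langle y,x\rangle_\cL)+|t|^2\|x\|_\cL^2$, and the isometric inclusion of $\cL_0$ in $\cH$ gives $\|x\|_\cL^2=\|x\|_\cH^2$, so the $|t|^2$ terms cancel. This reduces the inequality to
\[
\|y\|_\cH^2-\|y\|_\cL^2\le 2\Re\bigl(\bar t\,\langle y,x\rangle_\cL\bigr),\qquad t\in\bbC.
\]

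The left-hand side is a fixed real number, so the right-hand side must be bounded below in $t$. Choosing $t=s\,\overline{\langle y,x\rangle_\cL}$ with $s\in\bbR$, the right-hand side becomes $2s|\langle y,x\rangle_\cL|^2$, which is unbounded below in $s$ unless $\langle y,x\rangle_\cL=0$. Since $x\in\cL_0$ was arbitrary, this yields $y\perp_\cL\cL_0$, as desired.

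I do not expect any real obstacle here; the only subtlety is to make sure one uses the $\cL$-norm on $\cL_0$ (not the $\cH$-norm) when invoking isometry, which is exactly what makes the $|t|^2$ terms cancel and leaves a linear-in-$t$ inequality that forces the cross term to vanish. The argument is identical in the real and complex cases, with $t$ ranging over $\bbR$ or $\bbC$ respectively.
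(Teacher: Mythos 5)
Your proof is correct, and it is essentially the same variational argument as the paper's: both use the isometry on $\cL_0$ to control the quadratic term, the contractivity on $\cL$ and $\cH$-orthogonality to set up the inequality, and then vary a parameter to force the cross term to vanish. The only cosmetic difference is that you perturb $y$ by $tx$ and read off a linear lower bound, whereas the paper perturbs $x$ by $\alpha y$ and observes that $x$ minimizes $\|\alpha y + x\|_\cL$ over all $\alpha$; both are the standard variational characterization of orthogonality.
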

\begin{proof}
	Let $x\in\cL_0$. For each $\a\in\bbC$ we have
	\begin{align*}
	\langle x,x\rangle_\cL=\langle x,x\rangle_\cH\leq \langle \a y+x, \a y+x\rangle_\cH\leq \langle \a y+x, \a y+x\rangle_\cL.
	\end{align*}
	Since $\a$ was arbitrary, it follows that $\langle x,y\rangle_\cL=0$. 
\end{proof}

\begin{proof}[Proof of \Cref{thm:1}]
	We proceed in two steps. First we assume that $E\in\HB^*$ and prove the missing parts, 
	where we use that all other assertions in the theorem are readily known from \cite{debranges:1968}. 
	After having completed this step, we remove the assumption on real zeros. 
\begin{Elist}
\item
	To show the inclusion ``$\subseteq$'' in \cref{eq:3} let $E\in\HB^*$ and $\cL\in\DB^*$ with $\cL\sqsubseteq\cB(E)$. 
	In view of \cref{eq:994} we may assume furthermore 
	that $\cL\nsubseteq_i\cB(E)$, and hence that $\cL\neq\cL^\flat$. Since $\dim\cL/\cL^\flat=1$ 
	the set $\cL$ is a closed linear subspace of $\cB(E)$, and since $\cL$ is invariant under the operations in 
	\Cref{def:1}(ii),(iii) the space $(\cL,\langle\cdot,\cdot\rangle_{\cB(E)})$ is a 
	dB-space which is isometrically contained in $\cB(E)$. The same holds, clearly, for $\cL^\flat$, and \cref{eq:994}
	furnishes us with $t_0,t_1\in(T_-,0]$ such that 
	\[
		(\cL^\flat,\langle\cdot,\cdot\rangle_{\cB(E)})=\cH_{t_0},\quad
		(\cL,\langle\cdot,\cdot\rangle_{\cB(E)})=\cH_{t_1}.
	\]
	Since $\dim\cL/\cL^\flat=1$, we have $t_0<t_1$ and the interval $(t_0,t_1)$ is indivisible. 
	
	Choose $G\in\cL$ such that $(\cL,\langle\cdot,\cdot\rangle_{\cB(E)})=\cL^\flat\oplus_{\cB(E)}\spann\{G\}$. 
	By \Cref{lem:1}, also $(\cL,\langle\cdot,\cdot\rangle_{\cL})=\cL^\flat\oplus_{\cL}\spann\{G\}$. 
	Since $(\cL,\langle\cdot,\cdot\rangle_{\cL})$ is contractively contained in
	$(\cB(E),\langle\cdot,\cdot\rangle_{\cB(E)})$, we have $\|G\|_\cL\geq\|G\|_{\cB(E)}$. Item (iv) of the theorem
	provides us with $t\in(t_0,t_1]$ with $(\cL,\langle\cdot,\cdot\rangle_{\cL})=\cH_t$. 

	We come to item (iii).
	We know from \Cref{thm:1} that $\chi$ is order preserving and injective. Moreover, $t\mapsto E(t,\cdot)$ is 
	continuous for $t\in(-\infty,0]$ and $\lim\limits_{t\to T_-}K_{E(t,\cdot)}=0$, and this yields that $\chi$ 
	maps $[T_-,0]$ continuously into $\DB\cup\{\{0\}\}\subseteq\RK$. It remains to note that $[T_-,0]$ is compact. 
\item
	Assume now we have $E\in\HB$ with $\vartheta_E\neq 0$. Choose a real entire function $C$ with only real zeros such 
	that $\vartheta_E=\vartheta_C$. Set $E_0=\frac{E}{C}$, then $E_0\in\HB^*$ and we may define 
	\[
	H_E:=H_{E_0}.
	\]
	The solution of \cref{eq:4} is nothing but $E(t,\cdot)=CE_0(t,\cdot)$, and \Cref{lem:26} shows that all properties 
	of the family $E(t_0,\cdot)$ transfer to the corresponding properties of $E(t,\cdot)$. 
\end{Elist}
\end{proof}

Based on the above theorem we may introduce the following notation. 

\begin{definition}
	Let $E\in\HB$. The Hamiltonian whose existence and uniqueness is granted by \Cref{thm:1} is called the 
	\emph{structure Hamiltonian} of $E$. If we wish to emphasize the dependence on $E$, we denote the 
	structure Hamiltonian as $H_E$ and write $T_-(E)$ and $\cH_t(E)$. 
\end{definition}

\begin{example}
\label{ex:967}
	Assume we have $H\in\bbHe_{a,b}$ which is in lc at both endpoints, let $W(x,z)$, $x\in[a,b]$, be its fundamental 
	solution and $E(x,z)$ be the associated Hermite-Biehler functions \cref{eq:966}. 
	Then the structure Hamiltonian of $E(b,z)$ is given as 
	\[
		H_{E(b,\cdot)}(t)=
		\begin{cases}
			-JH(t-(b-a))J &\text{if}\ -(b-a)<t<0,
			\\
			\big(\begin{smallmatrix} 0 & 0 \\ 0 & 1 \end{smallmatrix}\big) &\text{if}\ t<-(b-a).
		\end{cases}
	\]
	Moreover, $T_-(E(b,\cdot))=-(b-a)$.
\end{example}

This example is universal in the following sense: If $E\in\HB$ with $E(0)=1$, then $T_-(E)>-\infty$ if and only if 
$E$ occurs from a limit circle Hamiltonian in the above way.

Let us state two immediate consequences of uniqueness in \Cref{thm:1}.
The first is obvious and the second relies on the transformation rule \Cref{lem:6}.

\begin{corollary}\label{cor:1}
	Let $E\in\HB$ and $H_E$ its structure Hamiltonian. Let $t_0\in(T_-,0)$ and $\tilde E:=E(t_0,\cdot)$, 
	where $E(t,\cdot)$ is the solution of \cref{eq:4}. Then we have 
	\[
		H_{\tilde E}(t)=H_E(t+t_0),\quad \tilde E(t,\cdot)=E(t+t_0,\cdot)
		.
	\]
\end{corollary}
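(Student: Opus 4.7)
The proof is an immediate application of the uniqueness clause in \cref{thm:1}. Set $\hat H(t):=H_E(t+t_0)$ on $t\in(-\infty,0)$, naturally restricted from the domain of $H_E$ via the shift. The chain rule applied to \cref{eq:4} shows that the shifted family $t\mapsto E(t+t_0,\cdot)$ solves \cref{eq:4} with Hamiltonian $\hat H$ and initial condition $E(t_0,\cdot)=\tilde E$; uniqueness of solutions to this linear ODE then yields $\tilde E(t,\cdot)=E(t+t_0,\cdot)$, the second assertion.

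To identify $\hat H$ with the structure Hamiltonian $H_{\tilde E}$, I will verify the characterizing property \cref{eq:994} for the pair $(\hat H,\tilde E)$. The indivisible intervals and regular points of $\hat H$ are the shifts by $-t_0$ of those of $H_E$, one has $\cB(\tilde E)=\cH_{t_0}(E)$, and $\vartheta_{\tilde E}=\vartheta_E$ (from $J$-innerness of the transfer matrix, \cref{thm:996}). Thus the claim reduces to
\[
\{\cL\in\DB\mid \cL\subseteq_i\cB(\tilde E),\ \vartheta_\cL=\vartheta_E\}=\{\cH_s(E)\mid s\in(T_-(E),t_0]\cap I_{\rm reg}(E)\}.
\]
The inclusion $\supseteq$ will follow, for $s\in(T_-(E),t_0]\cap I_{\rm reg}(E)$, by combining $\cH_s\subseteq_i\cB(E)$ (from \cref{eq:994} for $E$), $\cH_s\subseteq_c\cH_{t_0}$ (from the order structure in \cref{thm:1}(iii)), and $\cH_{t_0}\subseteq_c\cB(E)$; the three norm inequalities collapse to $\|F\|_{\cH_s}=\|F\|_{\cH_{t_0}}$ on $\cH_s$, giving $\cH_s\subseteq_i\cB(\tilde E)$. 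For $\subseteq$, given $\cL\subseteq_i\cB(\tilde E)$ with $\vartheta_\cL=\vartheta_E$, I will first promote $\cL$ to a member of $\scrC(\cB(E))$: the inclusion $\cL\subseteq_c\cB(E)$ holds by composition, and $\cL^\flat\subseteq_i\cB(E)$ follows from $\cL^\flat\subseteq\cH_{t_0}^\flat$ together with $\cH_{t_0}^\flat\subseteq_i\cB(E)$ (the latter being part of $\cH_{t_0}\sqsubseteq\cB(E)$). Then \cref{eq:3} identifies $\cL=\cH_s(E)$ for some $s\in(T_-(E),0]$, and the order isomorphism $\chi$ forces $s\leq t_0$.

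The main obstacle lies in showing that this $s$ belongs to $I_{\rm reg}(E)$. I will argue by contradiction: if $s$ sits in an open indivisible interval $(s_0,s_1)$ of $H_E$, pick $G\in\cH_s\setminus\cH_s^\flat$ and exploit the strictly decreasing bijection $t\mapsto\|G\|_{\cH_t}$ from \cref{thm:1}(iv). In the subcase $s_1\le t_0$, regularity of the endpoint $s_1$ combined with the chain structure produces $\|G\|_{\cH_{t_0}}=\|G\|_{\cH_{s_1}}<\|G\|_{\cH_s}=\|G\|_\cL$; in the subcase $s_1>t_0$, strict decrease already inside $(s_0,s_1)$ gives $\|G\|_{\cH_{t_0}}<\|G\|_{\cH_s}=\|G\|_\cL$ directly. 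Either subcase contradicts the assumed isometric containment $\cL\subseteq_i\cH_{t_0}$, so $s\in I_{\rm reg}(E)$. Once \cref{eq:994} is verified for $\hat H$, the uniqueness clause of \cref{thm:1} identifies $\hat H$ as $H_{\tilde E}$, completing the proof.
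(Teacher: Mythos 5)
Your overall strategy is exactly the one the paper has in mind — the paper calls Corollary~\ref{cor:1} an ``immediate consequence of uniqueness in Theorem~\ref{thm:1}'' and gives no details, whereas you spell out the verification of \cref{eq:994} for the shifted Hamiltonian $\hat H$. The $\supseteq$ inclusion (collapse of norm inequalities) and the promotion $\cL\sqsubseteq\cB(E)$ are carried out correctly.

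There is, however, a gap in the target formula and correspondingly in your regularity argument. You reduce the proof to showing
\[
\{\cL\in\DB\mid \cL\subseteq_i\cB(\tilde E),\ \vartheta_\cL=\vartheta_E\}=\{\cH_s(E)\mid s\in(T_-(E),t_0]\cap I_{\rm reg}(H_E)\},
\]
but this formula is false when $t_0$ is an interior point of an indivisible interval of $H_E$. In that case $t_0\notin I_{\rm reg}(H_E)$, so $\cH_{t_0}$ is absent from your right-hand side, yet $\cH_{t_0}=\cB(\tilde E)$ trivially sits in the left-hand side. The correct translation of \cref{eq:994} for $(\tilde E,\hat H)$ must always include the top space $\cB(\tilde E)$: the parameter $t=0$ is an endpoint of $\hat H$'s domain, hence automatically in $I_{\rm reg}(\hat H)$ (this is the same convention that makes \cref{eq:994} for $E$ include $t=0$), even if $t_0$ itself is singular for $H_E$. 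The shifted index set should be $\big((T_-(E),t_0)\cap I_{\rm reg}(H_E)\big)\cup\{t_0\}$, not $(T_-(E),t_0]\cap I_{\rm reg}(H_E)$.

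This oversight then surfaces in your contradiction argument for ``$s\in I_{\rm reg}(E)$'': in the subcase $s_1>t_0$ you invoke strict decrease of $\|G\|_{\cH_t}$ to get $\|G\|_{\cH_{t_0}}<\|G\|_{\cH_s}$, but this requires $s<t_0$ strictly. If $s=t_0$ and $t_0$ is singular, your argument produces no contradiction (nor should it — $\cH_{t_0}$ is a legitimate member of both sides). The fix is cosmetic: treat $\cL=\cH_{t_0}$ as a separate, trivial case corresponding to the endpoint $t=0$ of $\hat H$, and run the contradiction only for $s<t_0$. With that amendment the proof is complete and faithful to the paper's intent. (Minor stylistic point: your first paragraph concludes $\tilde E(t,\cdot)=E(t+t_0,\cdot)$ by ODE uniqueness before $H_{\tilde E}=\hat H$ has been established; the logical order is reversed, though the content is recoverable.)
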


\begin{corollary}\label{cor:5}
	Let $E\in\HB$ and $H_E$ its structure Hamiltonian. Let $M\in\SL(2,\bbR)$, then 
	\[
	H_{E\ltimes M}\sim \cT_{M^{-1}}H_E.
	\]
\end{corollary}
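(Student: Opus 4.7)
The plan is to construct the structure Hamiltonian of $E\ltimes M$ from that of $E$ by a direct transformation of the solution family, and then invoke the uniqueness assertion of \Cref{thm:1}. Let $H:=H_E$ and let $(A(t,z),B(t,z))_{t\le 0}$ be the solution of \eqref{eq:4} with initial data $(A,B)$. Set
\[
	(\tilde A(t,z),\tilde B(t,z)):=(A(t,z),B(t,z))M,\qquad t\le 0.
\]
At $t=0$ this evaluates to $(A,B)M$, which are the real and imaginary parts of $E\ltimes M$. Pointwise in $t$, the function $\tilde E(t,\cdot):=\tilde A(t,\cdot)-i\tilde B(t,\cdot)$ equals $E(t,\cdot)\ltimes M$, so \Cref{prop:1}(ii) yields $\cB(\tilde E(t,\cdot))=\cB(E(t,\cdot))$ and $\vartheta_{\tilde E(t,\cdot)}=\vartheta_{E(t,\cdot)}$ for all $t\in(T_-(E),0]$.

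Since $\det M=1$ gives $M^TJM=J$, a direct computation shows $J^{-1}MJ=M^{-T}$, and hence
\[
	\partial_t(\tilde A,\tilde B)\,J=\partial_t(A,B)\,MJ=z(A,B)\,HJ^{-1}MJ=z(\tilde A,\tilde B)\cdot M^{-1}HM^{-T},
\]
so $(\tilde A,\tilde B)$ solves \eqref{eq:4} with Hamiltonian $\tilde H:=\cT_{M^{-1}}H_E$. Conjugation by the real invertible matrix $M^{-1}$ preserves positive semidefiniteness, nonvanishing, and the partition of $(-\infty,0)$ into regular points and indivisible intervals (only the angle of indivisibility is rotated). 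Because $M^{-T}M^{-1}\ge cI$ for some $c>0$, we have $\int_{-\infty}^0\tr\tilde H(s)\,ds=\infty$, so the trace-normalizing change of variable produces $\hat H\in\bbHe_{-\infty,0}$ with $\hat H\sim\tilde H=\cT_{M^{-1}}H_E$ on $(-\infty,0)$.

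By \eqref{eq:35}, reparametrizing $\tilde H$ merely reparametrizes the solution of \eqref{eq:4}, so the chain of dB-spaces associated with $\hat H$ from initial data $(A,B)M$ coincides, as an ordered set, with $\{\cB(\tilde E(t,\cdot))\mid t\in(T_-(E),0]\}=\{\cB(E(t,\cdot))\mid t\in(T_-(E),0]\}$, and similarly for the isometric subchain in \eqref{eq:994}. Since $\cB(E\ltimes M)=\cB(E)$ and $\vartheta_{E\ltimes M}=\vartheta_E$, this chain equals $\scrC(\cB(E\ltimes M))\setminus\{\{0\}\}$, so $\hat H$ satisfies the defining property of the structure Hamiltonian of $E\ltimes M$. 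The uniqueness clause in \Cref{thm:1} then forces $\hat H=H_{E\ltimes M}$, whence $H_{E\ltimes M}\sim\cT_{M^{-1}}H_E$. The only delicate algebraic step is the identity $J^{-1}MJ=M^{-T}$; the rest is bookkeeping via uniqueness.
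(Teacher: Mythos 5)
Your proof is correct and takes essentially the same route as the paper: construct the solution family for $E\ltimes M$ as $(A(t,\cdot),B(t,\cdot))M$, identify the governing Hamiltonian as $\cT_{M^{-1}}H_E$, and invoke uniqueness from \Cref{thm:1}. The one cosmetic difference is that the paper simply cites the transformation rule \Cref{lem:6} for transfer matrices ($M^{-1}W_{H_E}(t,0,z)M=W_{\cT_{M^{-1}}H_E}(t,0,z)$), whereas you re-derive the same fact by differentiating the family $(A,B)M$ directly via the symplectic identity $J^{-1}MJ=M^{-T}$; your remarks about trace-normalization and reparametrization make explicit a point the paper leaves implicit in the relation $\sim$, but they do not change the substance of the argument.
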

\begin{proof}
	We have
	\[
	(A(t,z),\ B(t,z))W_{H_E}(t,0,z)
	=
	(A(z),\ B(z)),
	\]
	and hence
	\[
	(A(t,z),\ B(t,z))MM^{-1}
	W_{H_E}(t,0,z)M
	=
	(
	A(z),\ B(z))
	M.
	\]
	By \Cref{lem:6} it holds that $M^{-1}W_{H_E}(t,0,z)M=W_{\cT_{M^{-1}}H_E}(t,0,z)$, and the assertion follows by 
	uniqueness of the structure Hamiltonian. 
\end{proof}

There is a close connection between structure Hamiltonians and Weyl coefficients. This is based on use of an 
involution on the set of Hamiltonians: for $H\in\bbH_{a,b}$, let $H^\dagger\in\bbH_{-b,-a}$ be the Hamiltonian 
defined on $(-b,-a)$ by 
\[
H^\dagger(t)=UH(-t)U
,
\]
where $U:=\big(\begin{smallmatrix} 1&0\\0&-1 \end{smallmatrix}\big)$.
The mentioned connection is established by the following result which can be found e.g.\ in \cite{kac:2007}.
For the convenience of the reader, we provide a direct deduction from \Cref{thm:1}. 

\begin{proposition}\label{lem:25}
	Let $E\in \HB$ and let $H_E$ be the corresponding structure Hamiltonian. Then we have 
	\[
		q_{H_E^\dagger}=\frac{B}{A}.
	\]
	Remember here the definition \cref{eq:36} of the Weyl coefficient.
\end{proposition}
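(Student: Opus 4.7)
The plan is to exhibit an explicit family $(\tau_s)_{s \in (0,\infty)}$ in $\cN_0\cup\{\infty\}$ along which the Weyl limit \eqref{eq:36} for $H_E^\dagger$ can be computed and equals $B/A$ at every value of $s$, not only in the limit. The identification of the correct $\tau_s$ is natural: set $\tau_s(z) := B(-s,z)/A(-s,z)$, where $(A(t,z),B(t,z))$ is the solution of \eqref{eq:4}.

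The first step is the transfer-matrix identity
\[
W_{H_E^\dagger}(0,s,z) = U\cdot W_{H_E}(-s,0,z)^{-1}\cdot U,\qquad s\in(0,\infty).
\]
Both sides equal $I$ at $s=0$, so it is enough to verify they solve the same canonical system. Differentiating the multiplicativity relation $W_{H_E}(t_1,0,z)=W_{H_E}(t_1,t,z)W_{H_E}(t,0,z)$ and using \eqref{eq:9} yields $\partial_t W_{H_E}(t,0,z)=zH_E(t)JW_{H_E}(t,0,z)$, which in turn gives $\partial_s[W_{H_E}(-s,0,z)^{-1}]=zW_{H_E}(-s,0,z)^{-1}H_E(-s)J$. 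Conjugating by $U$ and exploiting the anticommutation $JU=-UJ$ converts this into the canonical system for $H_E^\dagger$.

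The second step is a vector identity. Since $(A(t,z),B(t,z))$ and $V(t):=W_{H_E}(t,0,z)$ both satisfy the same linear ODE $\rho'=-z\rho H_E J$, comparison at $t=0$ gives $(A(t,z),B(t,z))=(A(z),B(z))W_{H_E}(t,0,z)^{-1}$. Substituting $t=-s$ and applying the identity from step one produces
\[
(A(-s,z),-B(-s,z)) = (A(z),-B(z))\cdot W_{H_E^\dagger}(0,s,z).
\]

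For the third step, one checks that $\tau_s\in\cN_0\cup\{\infty\}$: when $s\in(0,-T_-(E))$, Theorem \ref{thm:1}(i) gives $E(-s,\cdot)\in\HB$, and the standard identity $B/A=i(1-E^\#/E)/(1+E^\#/E)$ together with $|E^\#/E|<1$ on $\bbC_+$ forces $B(-s,\cdot)/A(-s,\cdot)$ to map $\bbC_+$ into $\overline{\bbC_+}$ with no non-real poles; when $s\geq -T_-(E)$, $E(-s,\cdot)$ is a scalar multiple of a real entire function, so $\tau_s$ reduces to a real constant (or $\infty$). Writing $W_{H_E^\dagger}(0,s,z)=(w_{ij})_{i,j=1,2}$ and using $\det W_{H_E^\dagger}=1$ together with the vector identity from step two, a direct algebraic manipulation gives
\[
W_{H_E^\dagger}(0,s,z)\star\tau_s(z) = \frac{w_{11}B(-s,z)+w_{12}A(-s,z)}{w_{21}B(-s,z)+w_{22}A(-s,z)} = \frac{B(z)}{A(z)}
\]
for every $s\in(0,\infty)$, the numerator and denominator collapsing to $B(z)$ and $A(z)$ by the determinant relation. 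Passing to the limit $s\to\infty$ and invoking the independence of the Weyl limit from the parameter family yields $q_{H_E^\dagger}=B/A$.

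The main obstacle is the transfer-matrix identity in step one, which requires care because the derivative of $W_{H_E}(\cdot,0,z)$ in its first argument is not the form in which the canonical system is typically written; everything afterwards is matrix algebra.
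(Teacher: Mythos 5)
Your proof is correct and follows essentially the same route as the paper's: the key identity $W_{H_E^\dagger}(0,s,z)=UW_{H_E}(-s,0,z)^{-1}U$, the vector identity derived from \eqref{eq:46}, the choice $\tau_s=B(-s,\cdot)/A(-s,\cdot)$, and the determinant computation collapsing the M\"obius action to $B/A$ are all identical in substance to the argument in the paper. You supply two pieces of detail that the paper leaves tacit (verifying that $UW_{H_E}(-s,0,\cdot)^{-1}U$ solves the canonical system for $H_E^\dagger$, and checking $\tau_s\in\cN_0\cup\{\infty\}$ also for $s\geq -T_-(E)$, which the paper's one-line justification glosses over), but these are expansions rather than a different approach.
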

\begin{proof}
	Let $W(t,0,z)$ denote the transfer matrix for $H_E$. Then we have
	\begin{align}\label{eq:46}
	(A(t,z),\ B(t,z))W(t,0,z)=(A(z),\ B(z)).
	\end{align}
	As short computation shows that the fundamental solution $W^\dagger(t,z)$ for $H^\dagger$ is given as
	\[
	W^\dagger(t,z)=UW(-t,0,z)^{-1}U. 
	\]
	Since $\det W(t,0,z)=1$, we have $J W(t,0,z)^\intercal J^{-1}=W(t,0,z)^{-1}$. 
	Transposing \cref{eq:46} and rewriting for $W^\dagger$, we get for $t\geq 0$
	\begin{align*}
	\begin{pmatrix}
	B(-t,z)\\
	A(-t,z)
	\end{pmatrix}
	=	W^\dagger (t,z)^{-1}
	\begin{pmatrix}
	B(z)\\
	A(z)
	\end{pmatrix}.
	\end{align*}
	For the parameter family $(\tau_t)_{t\in(-b,-a)}$ defined as $\tau_t(z)=\frac{B(-t,z)}{A(-t,z)}$, we have
	\[
	W^\dagger (t,z)\star \tau_t(z)=\frac{B(z)}{A(z)}.
	\]
	Since $E(-t,\cdot)\in\HB$ we have $\tau_t\in\cN_0$, and sending $t\to\infty$ proves the assertion.
\end{proof}

As a first consequence we obtain a kind of converse to \Cref{cor:1}.

\begin{corollary}
\label{cor:977}
	Let $E\in\HB$ and $H_E$ its structure Hamiltonian. Let $T\in(0,\infty)$ and $H\in\bbHe_{0,T}$, and set 
	\[
		\tilde E:=E\ltimes W_H(T,\cdot)
		.
	\]
	Then the structure Hamiltonian of $\tilde E$ is given (a.e.) as 
	\begin{equation}
	\label{eq:976}
		H_{\tilde E}(t)=
		\begin{cases}
			H(t+T) &\text{if}\ -T<t\leq 0,
			\\
			H_E(t+T) &\text{if}\ t\leq -T,
		\end{cases}
	\end{equation}
	and the corresponding chain of Hermite-Biehler functions as 
	\[
		\tilde E(t,\cdot)=
		\begin{cases}
			E\ltimes W_H(t+T) &\text{if}\ -T<t<0,
			\\
			E(t+T) &\text{if}\ t\leq -T.
		\end{cases}
	\]
\end{corollary}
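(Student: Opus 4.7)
The plan is to apply the uniqueness statement in \Cref{thm:1}: it suffices to show that $\hat H$ (defined as the right-hand side of \eqref{eq:976}) lies in $\bbHe_{-\infty,0}$ with $I_{\rm reg}(\hat H)\neq\emptyset$, that the formulas for $\tilde E(t,\cdot)$ claimed in the corollary solve \eqref{eq:4} with $\hat H$ and initial condition $\tilde E$ at $t=0$, and that the resulting chain satisfies \eqref{eq:994}. The first item is routine: $\hat H$ inherits trace-normalization and pointwise nontriviality from $H$ and $H_E$, and $I_{\rm reg}(H_E)-T$ provides regular points of $\hat H$. The ODE verification is a direct computation from the multiplicativity \eqref{eq:948}: for $t\in(-T,0]$, the identity $(A,B)(z)W_H(t+T,z)=(\tilde A,\tilde B)(z)W_H(T,t+T,z)$ combined with the change of variables $s\mapsto s+T$ yields $W_H(T,t+T,z)=W_{\hat H}(0,t,z)$, so the claimed chain solves the ODE with $\hat H$; for $t\le -T$, the value at $t=-T$ matches the initial condition of the defining ODE for $H_E$, and $\tilde E(t,\cdot)=E(t+T,\cdot)$ then follows by uniqueness of ODE solutions.

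The substantive step is verifying \eqref{eq:994}. My strategy is to apply \Cref{thm:1} to $\tilde E$ directly, obtaining the true structure Hamiltonian $\tilde H:=H_{\tilde E}$ and its chain $F^*(t,\cdot)$, and then to show $F^*(t,\cdot)$ equals the claimed formula (which by uniqueness forces $\tilde H=\hat H$). First I establish that $\cB(E)\in\scrC(\cB(\tilde E))$: $\cB(E)\subseteq_c\cB(\tilde E)$ comes from \Cref{thm:996}(i), and the slightly subtler $\cB(E)^\flat\subseteq_i\cB(\tilde E)$ needs to be checked separately. By total ordering of $\scrC(\cB(\tilde E))$ (\Cref{thm:1}(iii)) there is a unique $t_E\in[T_-(\tilde E),0]$ with $\cB(F^*(t_E,\cdot))=\cB(E)$. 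For $t\in[t_E,0]$ with $F^*(t,\cdot)\supseteq_i\cB(E)$, \Cref{thm:996}(ii) produces $J$-inner matrix functions $W(t,\cdot),V(t,\cdot)$ with $F^*(t,\cdot)=E\ltimes W(t,\cdot)$ and $W(t,\cdot)V(t,\cdot)=W_H(T,\cdot)$; trace-normalization of $\tilde H$ gives $\mathfrak t(W(t,\cdot))=t+T$. Applying \Cref{thm:958} to the concatenation of the trace-normalized Hamiltonians corresponding to $W(t,\cdot)$ and $V(t,\cdot)$ yields a Hamiltonian in $\bbHe_{0,T}$ whose fundamental solution at $T$ is $W_H(T,\cdot)$, and the injectivity in \Cref{thm:958} forces this concatenation to equal $H$ a.e. Consequently $W(t,\cdot)=W_H(t+T,\cdot)$, and $F^*(t,\cdot)$ coincides with the claimed $\tilde E(t,\cdot)$ on $[-T,0]$, also fixing $t_E=-T$. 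For $t<-T$, $\cB(F^*(t,\cdot))\subseteq_i\cB(E)$, and \Cref{thm:1} applied to $E$ together with trace-normalization identifies $F^*(t,\cdot)=E(t+T,\cdot)$, finishing the identification.

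The main obstacle is the identification $W(t,\cdot)=W_H(t+T,\cdot)$, which hinges on carefully combining the existence of $J$-inner factorizations (\Cref{thm:996}(ii)), the Hamiltonian-to-transfer-matrix homeomorphism (\Cref{thm:958}), and the concatenation rule for trace-normalized Hamiltonians, all tied together by the trace-normalization constraint on $\tilde H$. A secondary technical point is verifying that $\cB(E)\in\scrC(\cB(\tilde E))$, since \Cref{thm:996}(i) guarantees only contractive inclusion and the almost-isometric upgrade requires a direct $\cB(E)^\flat$ analysis or a reproducing-kernel computation based on \eqref{eq:8}.
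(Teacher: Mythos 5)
Your proof takes a genuinely different route from the paper's. The paper's argument is short and avoids the lattice structure entirely: writing the candidate Hamiltonian $\tilde H$ down explicitly, it computes $q_{\tilde H^\dagger}$ via the multiplicativity of transfer matrices, shows this equals $\tilde B / \tilde A$, and then invokes Proposition~\ref{lem:25} together with the injectivity of $H \mapsto q_H$ on $\bbHe_{0,\infty}$ (Theorem~\ref{thm:8}) to conclude $H_{\tilde E} = \tilde H$. The chain formula for $\tilde E(t,\cdot)$ then follows from uniqueness of ODE solutions. Your proof instead goes through Theorem~\ref{thm:1} applied to $\tilde E$, factorizations via Theorem~\ref{thm:996}(ii), and the Hamiltonian--transfer-matrix homeomorphism Theorem~\ref{thm:958}.

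The problem is that what you call a ``secondary technical point'' --- establishing $\cB(E)\in\scrC(\cB(\tilde E))$, i.e.\ $\cB(E)\sqsubseteq\cB(\tilde E)$ --- is in fact the crux of the matter, and your proof does not supply it. Theorem~\ref{thm:996}(i) yields only $\cB(E)\subseteq_c\cB(\tilde E)$. The upgrade to $\sqsubseteq$, namely $\cB(E)^\flat\subseteq_i\cB(\tilde E)$, is precisely de~Branges' theorem that a canonical system with \emph{arbitrary} initial data $E$ produces a chain of de~Branges subspaces; it is not obtainable from a kernel inequality. The reproducing-kernel computation based on \eqref{eq:8} gives $K_{\tilde E}-K_E\ge 0$, hence only contractive inclusion, and the norms $\|F\|_{\cB(E)}$ and $\|F\|_{\cB(\tilde E)}$ for $F\in\cB(E)^\flat$ are given by integrals against two different weights $|E(t)|^{-2}$ and $|\tilde E(t)|^{-2}$; their equality is exactly the delicate point. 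Establishing this independently would essentially amount to reproving Corollary~\ref{cor:977}, which is why the paper uses the Weyl-function characterization to sidestep it entirely: once the candidate $\tilde H$ is written down, the injectivity of $H\mapsto q_H$ does all the work, and no a priori inclusion of $\cB(E)$ into the chain of $\cB(\tilde E)$ is needed. A further gap: your identification $W(t,\cdot)=W_H(t+T,\cdot)$ is argued only at the $t$ for which $\cB(E)\subseteq_i\cB(F^*(t,\cdot))$ holds; the extension to all of $[-T,0]$ (including interiors of indivisible intervals) requires a continuity argument that is not given.

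Worth noting: the paper's argument is not merely shorter; the order of development in the paper makes Corollary~\ref{cor:977} a prerequisite for \Cref{lem:974} and \Cref{thm:983} in Section~\ref{sec:4}, so any lattice-theoretic proof of the corollary must be self-contained and cannot borrow from that machinery.
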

\begin{proof}
	Write 
	\[
		W_H(0,T,\cdot)=\begin{pmatrix} w_{11} & w_{12} \\ w_{21} & w_{22} \end{pmatrix}
		,
	\]
	then we have 
	\[
		W_{H^\dagger}(-T,0,\cdot)=\begin{pmatrix} w_{22} & w_{12} \\ w_{21} & w_{11} \end{pmatrix}
		.
	\]
	Let $\tilde H$ be the Hamiltonian defined by the right side of \cref{eq:976}. Then 
	\[
		\tilde H^\dagger(t)=
		\begin{cases}
			H^\dagger(t-T) &\text{if}\ 0<t<T,
			\\
			H_E^\dagger(t-T) &\text{if}\ T<t.
		\end{cases}
	\]
	Further, note that 
	\[
		(\tilde A,\tilde B)=(A,B)\begin{pmatrix} w_{11} & w_{12}\\ w_{21} & w_{22}\end{pmatrix}
		=\big(w_{11}A+w_{21}B,w_{12}A+w_{22}B\big)
		.
	\]
	Multiplicativity of transfer matrices \cref{eq:948} implies 
	\[
		q_{\tilde H^\dagger}=W_{H^\dagger}(-T,0,\cdot)\star q_{H_E^\dagger}
		=\begin{pmatrix} w_{22} & w_{12} \\ w_{21} & w_{11}\end{pmatrix}\star \frac BA
		=\frac{w_{22}B+w_{12}A}{w_{21}B+w_{11}A}=\frac{\tilde B}{\tilde A}
		.
	\]
	Finally, it is clear that the written family $\tilde E(t,\cdot)$ is a solution of the correct equation.
\end{proof}

By our choice of a metric topologizing $\bbHe_{0,\infty}$ and $\bbHe_{-\infty,0}$, cf.\ \cref{eq:26}, 
the map $H\mapsto H^\dagger$ is an isometry between those two sets of Hamiltonians. 

We can now show a continuity result. 

\begin{proposition}
\label{prop:3}
	The maps
	\[
	\left\{
	\begin{array}{rcl}
	\HB & \to & \bbHe_{-\infty,0}
	\\
	E & \mapsto & H_E
	\end{array}
	\right.
	\qquad
	\left\{
	\begin{array}{rcl}
	\HB\times[-\infty,0] & \to & \DB\cup\{\{0\}\}
	\\
	(E,t) & \mapsto & \cH_t(E)
	\end{array}
	\right.
	\]
	are continuous. 
\end{proposition}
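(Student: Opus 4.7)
The plan has two parts corresponding to the two maps.

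For the continuity of $E\mapsto H_E$, I would reduce to a statement about Weyl coefficients via \Cref{lem:25}, which identifies $q_{H_E^\dagger}$ with $B/A$. Observe that for $E\in\HB$ the function $A$ has no zeros in $\bbC_+$: if $A(z_0)=0$ with $z_0\in\bbC_+$, then $E(z_0)=-E^\#(z_0)=-\overline{E(\overline{z_0})}$ forces $|E(z_0)|=|E(\overline{z_0})|$, contradicting \cref{eq:37}. Hence $E_n\to E$ locally uniformly in $\bbC$ gives $B_n/A_n\to B/A$ locally uniformly in $\bbC_+$, i.e.\ convergence in $\cN_0\subseteq\cN_0\cup\{\infty\}$. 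The inverse spectral theorem \Cref{thm:8} yields continuity of $E\mapsto H_E^\dagger$ into $\bbHe_{0,\infty}$, and composition with the isometric involution $H\mapsto H^\dagger$ (which preserves the metric \cref{eq:26}) delivers the continuity of $E\mapsto H_E$.

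For the continuity of $(E,t)\mapsto\cH_t(E)$, let $(E_n,t_n)\to(E,t)$; the target is locally uniform convergence of the reproducing kernels $K_{E_n(t_n,\cdot)}\to K_{E(t,\cdot)}$ on $\bbC\times\bbC$. When $t\in(-\infty,0]$, convergence $H_{E_n}\to H_E$ (from the first part) combined with \Cref{thm:958} (after an elementary reparametrization to accommodate the moving time) forces $W_{H_{E_n}}(t_n,0,\cdot)\to W_{H_E}(t,0,\cdot)$ locally uniformly in $\bbC$. Reading off $E_n(t_n,\cdot)$ from the transfer matrix via \cref{eq:4} gives $E_n(t_n,\cdot)\to E(t,\cdot)$ locally uniformly, and the kernel formula \cref{eq:978} transports this into locally uniform convergence of the kernels. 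For $t=-\infty$ the target kernel vanishes identically and no direct ODE argument is available; I would use a domination argument. Given a compact set $S\subseteq\bbC$ and $\varepsilon>0$, \Cref{thm:1}(i) supplies $s\in(T_-(E),0]$ with $K_{E(s,\cdot)}(z,z)<\varepsilon$ for $z\in S$. For $n$ large we have $t_n<s$, and monotonicity of the chain (\Cref{thm:1}(iii)) yields $\cH_{t_n}(E_n)\subseteq_c\cH_s(E_n)$ (with $\cH_{t_n}(E_n)=\{0\}$ allowed if $t_n\leq T_-(E_n)$), so $K_{E_n(t_n,\cdot)}(z,z)\leq K_{E_n(s,\cdot)}(z,z)$ by \cref{eq:2}. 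Applying the already-handled case at the fixed parameter $s$ gives $K_{E_n(s,\cdot)}\to K_{E(s,\cdot)}$ locally uniformly; a Cauchy--Schwarz bound controls off-diagonal terms, and $\varepsilon\to 0$ closes the argument.

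The main obstacle is the $t=-\infty$ case. The subtlety is that the left endpoints $T_-(E_n)$ of the individual chains need not converge to $T_-(E)$: small perturbations of the Hamiltonian can create or destroy indivisible tails at $-\infty$. The domination argument above sidesteps this by appealing to the already-handled case only at a fixed auxiliary parameter $s$ bounded away from $T_-(E)$, and by leveraging the universal contractive containment $\cH_{t_n}(E_n)\subseteq_c\cH_s(E_n)$ provided by the chain structure.
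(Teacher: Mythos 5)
Your proposal is correct and follows essentially the same route as the paper's proof: \Cref{lem:25} together with \Cref{thm:8} for the first map, and for the second map the split into $t>-\infty$ versus $t=-\infty$, with the latter handled via the eventual containment $\cH_{t_n}(E_n)\subseteq_c\cH_s(E_n)$ plus the already-settled finite-$t$ case at a fixed auxiliary $s$. Two minor remarks. First, your explicit observation that $A$ has no zeros in $\bbC_+$ fills in a small justification the paper leaves implicit (and one does need to know that $B_n/A_n\to B/A$ locally uniformly in $\bbC_+$, not merely formally). Second, the one place where your argument is thinner than the paper's is the finite-$t$ case: invoking \Cref{thm:958} ``after an elementary reparametrization to accommodate the moving time'' is vague, since that theorem is stated for a fixed endpoint. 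The paper's cleaner device is to factor $W_{H_{E_j}}(t_j,0,\cdot)=W_{H_{E_j}}(t_j,t,\cdot)\,W_{H_{E_j}}(t,0,\cdot)$ by multiplicativity \cref{eq:948}: the second factor converges to $W_{H_E}(t,0,\cdot)$ from $H_{E_j}\to H_E$, and the first tends to $I$ because the trace-normalized Hamiltonian carries total mass $|t_j-t|\to0$ on $(t_j,t)$ (cf.\ \Cref{lem:7}). You should make the reparametrization precise, or adopt this factorization; the rest of the proof is sound and matches the paper in substance, with the $t=-\infty$ case phrased as a direct $\varepsilon$-estimate rather than the paper's normal-family-plus-subsequences argument (these are equivalent).
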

\begin{proof}
	Continuity of the first map comes from \Cref{thm:8}. 
	Let $(E_i)_{i\in I}$ be a net in $\HB$, $E\in\HB$, and assume that $\lim\limits_{i\in I}E_i=E$. 
	Then $\lim\limits_{i\in I}A_i=A$ and $\lim\limits_{i\in I}B_i=B$. \Cref{lem:25} yields 
	$\lim\limits_{i\in I}q_{H_{E_i}^\dagger}=q_{H_E^\dagger}$, and it follows that
	$\lim\limits_{i\in I}H_{E_i}=H_E$.
	
	Consider now the second map. Since all involved topologies are metrizable, it is enough to show sequential continuity. 
	Let $(E_j,t_j)\in\HB\times[-\infty,0]$ for $j\in\bbN_0$, $(E,t)\in\HB\times[-\infty,0]$, and assume that 
	$\lim_{j\to\infty}E_j=E$ and $\lim_{j\to\infty}t_j=t$. We distinguish two cases. 
	
	\medskip\noindent
	\textit{Case~1, $t>-\infty$:}
	We know that $\lim_{j\to\infty}H_{E_j}=H_E$, and this implies that 
	\[
	\lim_{j\to\infty}W_{H_{E_j}}(t,0,z)=W_{H_E}(t,0,z)
	.
	\]
	Since all $H_{E_j}$ are trace normalized, we have 
	$\lim_{j\to\infty}W_{H_{E_j}}(t_j,t,z)=I$, and together therefore 
	$\lim_{j\to\infty}W_{H_{E_j}}(t_j,0,z)=W_{H_E}(t,0,z)$. Since transfer matrices always have determinant $1$, also
	inverse matrices converge, and \cref{eq:46} implies that $\lim_{j\to\infty}E_j(t_j,\cdot)=E(t,\cdot)$. 
	In turn, it follows that 
	$\lim_{j\to\infty}K_{E_j(t_j,\cdot)}=K_{E(t,\cdot)}$, i.e., $\lim_{j\to\infty}\cH_{t_j}(E_j)=\cH_t(E)$. 
	
	\medskip\noindent
	\textit{Preparation for Case~2:}
	We show that 
	\begin{equation}\label{eq:988}
	\big\{(w,z)\mapsto K_{E_j(t,\cdot)}(\overline w,z)\mid j\in\bbN_0,t\in[-\infty,0]\big\}
	\end{equation}
	is a normal family in $\Hol(\bbC\times\bbC)$. 
	
	Since the sequence $(K_{E_j})_{j\in\bbN_0}$ converges in $\Hol(\bbC\times\bbC)$, it is bounded in the metric of 
	$\Hol(\bbC\times\bbC)$, i.e., locally bounded as a family of complex valued functions. We have 
	$\cH_t(E_j)\sqsubseteq\cB(E_j)$, and hence $K_{E_j(t,\cdot)}(w,w)\leq K_{E_j}(w,w)$ for all $t\in[-\infty,0]$. 
	The Cauchy-Schwarz inequality now yields that \cref{eq:988} is locally bounded and, by Montel's theorem, therefore a
	normal family in $\Hol(\bbC\times\bbC)$. 
	
	\medskip\noindent
	\textit{Case~2, $t=-\infty$:}
	To show that $\lim_{j\to\infty}\cH_{t_j}(E_j)=\{0\}$, it is sufficient to prove that every convergent subsequence has
	limit $\{0\}$. Assume we have $j_n\to\infty$, such that the limit $K:=\lim_{n\to\infty}K_{E_{j_n}(t_{j_n},\cdot)}$ exists.
	Let $s\in(-\infty,0]$, then $\cH_{t_{j_n}}(E_{j_n})\sqsubseteq\cH_s(E_{j_n})$, and hence 
	$K_{E_{j_n}(t_{j_n},\cdot)}(w,w)\leq K_{E_{j_n}(s,\cdot)}(w,w)$ for all sufficiently large $n$. 
	Using what we have shown in Case~1, we find 
	\[
	K(w,w)=\lim_{n\to\infty}K_{E_{j_n}(t_{j_n},\cdot)}(w,w)\leq\lim_{n\to\infty}K_{E_{j_n}(s,\cdot)}(w,w)=
	K_{E(s,\cdot)}(w,w)
	.
	\]
	Since $s$ was arbitrary, it follows that $K(w,w)=0$. This holds for all $w\in\bbC$, and therefore $K=0$. 
\end{proof}

The family $E(t,\cdot)$ of \Cref{thm:1} can be viewed as one possible parametrization of the family $\scrC(\cB(E))$. 
All possible parameterizations can be described. 

\begin{proposition}\label{cor:2}
	Let $E\in \HB$. For $M\in \SL(2,\bbR)$ denote by $T_{M,-}$ and $E_M(t,\cdot)$ the corresponding number and family of 
	functions given by \Cref{thm:1} when applied with the function $E\ltimes M\in\HB$. 
	Then (the dot indicates that the sets in the union are pairwise disjoint)
	\begin{equation}\label{eq:993}
	\{F\in \HB\mid \cB(F)\sqsubseteq\cB(E) , \vartheta_F=\vartheta_E\}=
	\overset{\cdot}{\bigcup_{M\in \SL(2,\bbR)}}\{E_M(t,\cdot)\mid t\in (T_{M,-},0]\}.
	\end{equation}
\end{proposition}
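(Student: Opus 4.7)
My plan is to derive an explicit formula for the chain $E_M(t,\cdot)$ in terms of the reference chain $E(s,\cdot)$; both inclusions and the disjointness will then follow quickly from it.

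The key identity I would establish is
\[
	E_M(t,\cdot) \;=\; E(\gamma_M(t),\cdot)\ltimes M,\qquad t\in(T_{M,-},0],
\]
where $\gamma_M$ is the absolutely continuous reparametrization realizing $H_{E\ltimes M}\sim \cT_{M^{-1}}H_E$ furnished by Corollary~\ref{cor:5}. Combining Lemma~\ref{lem:6} (applied with $M^{-1}$ in place of $M$) with the transformation rule \eqref{eq:35} yields
\[
	W_{H_{E\ltimes M}}(t,0,z) \;=\; M^{-1}\,W_{H_E}(\gamma_M(t),0,z)\,M,
\]
and substituting this into $(A_M(t,\cdot),B_M(t,\cdot))=(A,B)\,M\cdot W_{H_{E\ltimes M}}(t,0,\cdot)^{-1}$ makes the two copies of $M$ cancel, leaving $(A_M(t,\cdot),B_M(t,\cdot))=(A(\gamma_M(t),\cdot),B(\gamma_M(t),\cdot))\,M$, which is the claim. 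Since $\cT_{M^{-1}}$ preserves the rank-one structure of $H_E$ (sending $e_\alpha e_\alpha^*\mapsto(M^{-1}e_\alpha)(M^{-1}e_\alpha)^*$ up to a scalar), indivisible intervals correspond bijectively, and one checks $\tr(\cT_{M^{-1}}H_E)\geq\lambda_{\min}((M^{-1})^*M^{-1})$ a.e., so $\gamma_M$ is a bijection of $(-\infty,0]$ onto itself and restricts to a strictly increasing bijection $(T_{M,-},0]\to(T_-,0]$.

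With the key identity in hand, the inclusion $\supseteq$ in \eqref{eq:993} is immediate: by Theorem~\ref{thm:1}(ii), $\cB(E_M(t,\cdot))=\cB(E(\gamma_M(t),\cdot))\in\scrC(\cB(E))$, while Theorem~\ref{thm:996}(i), applied to the $J$-inner constant matrix $M$, gives $\vartheta_{E_M(t,\cdot)}=\vartheta_{E(\gamma_M(t),\cdot)}=\vartheta_E$. For the inclusion $\subseteq$, take $F\in\HB$ with $\cB(F)\sqsubseteq\cB(E)$ and $\vartheta_F=\vartheta_E$. Equation \eqref{eq:3} provides $s\in(T_-,0]$ with $\cB(F)=\cB(E(s,\cdot))$, and Theorem~\ref{prop:1}(ii) supplies $N\in\SL(2,\bbR)$ with $F=E(s,\cdot)\ltimes N$. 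Choosing $M:=N$ and $t:=\gamma_N^{-1}(s)\in(T_{N,-},0]$, the key identity yields $F=E_N(t,\cdot)$.

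Disjointness is an injectivity argument. If $E_M(t,\cdot)=E_{M'}(t',\cdot)$, then passing to de~Branges spaces and applying the injectivity of $\chi$ in Theorem~\ref{thm:1}(iii) forces $\gamma_M(t)=\gamma_{M'}(t')=:s$. The identity then collapses to $(A(s,\cdot),B(s,\cdot))M=(A(s,\cdot),B(s,\cdot))M'$; since $E(s,\cdot)\in\HB$, the real entire functions $A(s,\cdot)$ and $B(s,\cdot)$ are linearly independent over $\bbR$ (any real proportionality $A(s,\cdot)=cB(s,\cdot)$ would produce $|E(s,z)|=|E(s,\ol z)|$, violating \eqref{eq:37}), so $M=M'$; bijectivity of $\gamma_M$ then gives $t=t'$. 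The main obstacle I anticipate is the careful derivation of the key identity, and in particular ensuring that $\gamma_M$ genuinely restricts to a bijection between $(T_{M,-},0]$ and $(T_-,0]$; the remainder is routine bookkeeping with transfer matrices.
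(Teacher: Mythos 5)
Your proof is correct and follows essentially the same route as the paper: you establish the set identity $\{E_M(t,\cdot)\mid t\in(T_{M,-},0]\}=\{E(s,\cdot)\ltimes M\mid s\in(T_-,0]\}$ via \Cref{cor:5}, then deduce both inclusions from \Cref{thm:1} and \Cref{prop:1}(ii), and disjointness from injectivity of $\chi$ together with linear independence of $A(s,\cdot),B(s,\cdot)$. The only difference is cosmetic: you make the reparametrization $\gamma_M$ explicit and verify its bijectivity from the eigenvalue bound on $\cT_{M^{-1}}H_E$, whereas the paper states the set identity directly as a consequence of \Cref{cor:5}; and for the preservation of $\vartheta$ you invoke \Cref{thm:996}(i) where it suffices to cite the well-definedness of $\vartheta_\cH$ noted after \Cref{prop:1}.
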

\begin{proof}
	Due to \Cref{cor:5} and its proof, $H_{E\ltimes M}\sim \cT_{M^{-1}}H_E$ and 
	\[
	\{E_M(t,\cdot)\mid t\in (T_{M,-},0]\}=\{E(t,\cdot)\ltimes M\mid t\in (T_{-},0]\}. 
	\]
	Note that $\cT_M$ preserves indivisible intervals. We first show that the union in \cref{eq:993} is disjoint. 
	Let $M,\tilde M\in \SL(2,\bbR)$ and $t,\tilde t\leq 0$ and assume that
	\begin{equation}\label{eq:992}
	E(t,\cdot)\ltimes M=E(\tilde t,\cdot)\ltimes \tilde M.
	\end{equation}
	Then 
	\[
	\cB(E(t,\cdot))=\cB(E(t,\cdot)\ltimes M)=\cB(E(\tilde t,\cdot)\ltimes \tilde M)=\cB(E(\tilde t,\cdot))
	,
	\]
	and it follows that $t=\tilde t$. Since the functions $A(t,\cdot)$ and $B(t,\cdot)$ are linearly independent, 
	\cref{eq:992} implies $M=\tilde M$. 
	
	The inclusion ``$\supseteq$'' in \cref{eq:993} is clear. To prove the reverse inclusion, let $F\in \HB$ with 
	$\cB(F)\sqsubseteq\cB(E)$ and $\vartheta_F=\vartheta_E$ be given. Then there exists $t\in (T_{-},0]$ such that 
	$\cB(F)=\cB(E(t,\cdot))$, and hence we find $M\in \SL(2,\bbR)$ with $F=E(t,\cdot)\ltimes M$. 
\end{proof}

\begin{corollary}
	\label{cor:975}
	Let $F\in\HB$ and $\cH\in\DB$ be such that $\cB(F)\sqsubseteq\cH$ and $\vartheta_F=\vartheta_\cH$. Then there exists a
	unique function $E\in\HB$ such that $\cH=\cB(E)$ and $F=E(t,\cdot)$ for some $t\leq 0$. 
	The number $t$ is uniquely determined by $F$ and $\cH$. 
\end{corollary}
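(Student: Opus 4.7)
The plan is to deduce this corollary directly from Proposition \ref{cor:2}, which provides a disjoint-union decomposition of the set of all $F'\in\HB$ with $\cB(F')\sqsubseteq\cH$ and $\vartheta_{F'}=\vartheta_\cH$. The given $F$ lies in this set, so it belongs to exactly one ``slice'' of the partition, and this slice will single out both $E$ and $t$.

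\textbf{Existence.} First, by Theorem \ref{prop:1}(i), I fix any $E_0\in\HB$ with $\cB(E_0)=\cH$; necessarily $\vartheta_{E_0}=\vartheta_\cH=\vartheta_F$. Since $\cB(F)\sqsubseteq\cB(E_0)$ and $\vartheta_F=\vartheta_{E_0}$, the function $F$ belongs to the left-hand side of \eqref{eq:993}. The disjoint decomposition in Proposition \ref{cor:2} then yields unique $M_0\in\SL(2,\bbR)$ and $t_0\in(T_{M_0,-},0]$ with
\[
    F = (E_0)_{M_0}(t_0,\cdot) = E_0(t_0,\cdot)\ltimes M_0,
\]
where the second equality is the identity established inside the proof of Proposition \ref{cor:2}. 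Setting $E:=E_0\ltimes M_0$, Theorem \ref{prop:1}(ii) gives $\cB(E)=\cB(E_0)=\cH$, and by the same identity, the chain of $E$ (produced by Theorem \ref{thm:1}) satisfies $E(t_0,\cdot)=E_0(t_0,\cdot)\ltimes M_0=F$.

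\textbf{Uniqueness.} Suppose $\tilde E\in\HB$ and $\tilde t\le 0$ also satisfy $\cB(\tilde E)=\cH$ and $\tilde E(\tilde t,\cdot)=F$. By Theorem \ref{prop:1}(ii), $\tilde E=E_0\ltimes \tilde M$ for some unique $\tilde M\in\SL(2,\bbR)$. The identity from the proof of Proposition \ref{cor:2} then gives that the chain of $\tilde E$ is $\tilde E(t,\cdot)=E_0(t,\cdot)\ltimes\tilde M$. The equation $F=\tilde E(\tilde t,\cdot)$ becomes
\[
    E_0(\tilde t,\cdot)\ltimes \tilde M = E_0(t_0,\cdot)\ltimes M_0,
\]
so both sides lie in the same slice of the decomposition \eqref{eq:993} applied to $E_0$; by the disjointness and uniqueness asserted in Proposition \ref{cor:2}, this forces $\tilde M=M_0$ and $\tilde t=t_0$. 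Hence $\tilde E=E_0\ltimes M_0=E$ and $\tilde t=t_0$.

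\textbf{Main obstacle.} There is no serious obstacle once Proposition \ref{cor:2} is in hand; the only subtle point is to correctly identify the chain of $E_0\ltimes M$, generated abstractly through the structure Hamiltonian via Theorem \ref{thm:1}, with the pointwise formula $t\mapsto E_0(t,\cdot)\ltimes M$. This identification is precisely what is isolated in the proof of Proposition \ref{cor:2} using Corollary \ref{cor:5}, and invoking it is what makes both existence and uniqueness reduce to the elementary bookkeeping above.
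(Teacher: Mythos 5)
Your proof follows the same route as the paper: fix a reference $E_0$ with $\cB(E_0)=\cH$, use the disjoint union in Proposition~\ref{cor:2} to single out a unique $M_0$ (and slice parameter), and set $E:=E_0\ltimes M_0$; uniqueness of $E$ follows from the disjointness, and uniqueness of $t$ from injectivity of $t\mapsto E(t,\cdot)$. This is exactly what the paper does.

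However, one intermediate claim you lean on is false as stated: the ``identity'' $(E_0)_{M}(t,\cdot)=E_0(t,\cdot)\ltimes M$ (with the \emph{same} parameter $t$) is not what the proof of Proposition~\ref{cor:2} establishes. That proof only gives the \emph{set} equality $\{E_M(t,\cdot)\mid t\in(T_{M,-},0]\}=\{E(t,\cdot)\ltimes M\mid t\in(T_-,0]\}$, and in general the parametrizations on the two sides differ: by Corollary~\ref{cor:5}, $H_{E\ltimes M}\sim\cT_{M^{-1}}H_E$, and $\cT_{M^{-1}}H_E(t)=M^{-1}H_E(t)M^{-*}$ is typically \emph{not} trace normalized (unless $M$ is orthogonal), so the structure Hamiltonian of $E\ltimes M$ is a nontrivial reparametrization of $\cT_{M^{-1}}H_E$; in particular the interval lengths $|T_{M,-}|$ and $|T_-|$ can differ. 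Fortunately you do not actually need this pointwise identity. By the definition of $E_M$ in Proposition~\ref{cor:2}, $(E_0)_{M_0}(t_0,\cdot)$ \emph{is} the chain of $E_0\ltimes M_0$ evaluated at $t_0$; so setting $E:=E_0\ltimes M_0$ gives $E(t_0,\cdot)=(E_0)_{M_0}(t_0,\cdot)=F$ directly. Likewise, in the uniqueness step, write $\tilde E=E_0\ltimes\tilde M$ and note $F=\tilde E(\tilde t,\cdot)=(E_0)_{\tilde M}(\tilde t,\cdot)$; disjointness of the union then forces $\tilde M=M_0$, hence $\tilde E=E$, and injectivity of $t\mapsto E(t,\cdot)$ (from Theorem~\ref{thm:1}(iii)) forces $\tilde t=t_0$. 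With that adjustment, your argument is complete and is the same as the paper's.
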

\begin{proof}
	Choose $\tilde E$ with $\cH=\cB(\tilde E)$. Then there exists a unique matrix $M\in\SL(2,\bbR)$ with 
	\[
	F\in\{E_M(t,\cdot)\mid t\in (T_{M,-},0]\}
	.
	\]
	Set $E:=\tilde E\ltimes M$.
\end{proof}

%
%
%
\newpage
\section{Chains of de~Branges Spaces}
\label{sec:4}
%
%
%

We have seen in the previous section that a single Hermite-Biehler function gives rise to a whole chain of dB-spaces 
parameterized by a canonical system. Our aim in this section is to axiomatize the notion of a chain. This is a core concept, 
and in particular the convergence result \Cref{thm:980} is a key tool.

%
\subsection{Bounded and unbounded chains axiomatically}
\label{sec:4-1}
%

\begin{definition}\label{def:2}
	Let $\scrC\subseteq\DB\cup\{\{0\}\}$. We call $\scrC$ a \emph{chain} if it satisfies the following properties.
	\begin{enumerate}[{\rm(i)}]
		\item\label{it:3-1} $\{0\}\in\scrC$ and $\scrC\neq \{\{0\}\}$;
		\item\label{it:3-2} $\scrC$ is totally ordered with respect to $\sqsubseteq$;
		\item\label{it:3-3} $\scrC$ is closed (in the topology of $\RK$);
		\item\label{it:3-4} for each element $\cH\in\scrC\setminus \{0\}$ we have 
		\[
		\cH=\sup\{\cL\in\scrC\mid \cL\sqsubseteq\cH, \cL\neq \cH \};
		\]
		\item\label{it:3-5} for each two elements $\cH_1,\cH_2\in \scrC\setminus\{0\}$ we have 
		$\vartheta_{\cH_1}=\vartheta_{\cH_2}$.
	\end{enumerate}
	We call a chain \emph{bounded} if it contains a largest element and \emph{unbounded} otherwise. 
	We denote the set of chains, bounded chains, and unbounded chains as $\Ch, \bCh$ and $\ubCh$, respectively. 
	For a chain $\scrC$, we denote by $\vartheta_\scrC$ the	common real zero divisor of its nonzero elements, i.e., 
	$\vartheta_\scrC:=\vartheta_\cH$ for all $\cH\in\scrC\setminus\{\{0\}\}$.
\end{definition}

\begin{example}
\label{ex:986}
	For a space $\cH\in\DB$ we consider the set $\scrC(\cH)$ from \Cref{def:960}, and show that it is a bounded chain. 
	By its definition $\scrC(\cH)$ satisfies (i) and (v) of \Cref{def:2}, and $\cH$ is the largest element of $\scrC(\cH)$. 
	We know from \Cref{thm:1} that $\scrC(\cH)$ is compact and hence closed. Moreover, $\scrC(\cH)$ is order isomorphic to an
	interval $[T_-,0]$ with some $T_-\in[-\infty,0)$, and this implies that (ii) and (iv) of \Cref{def:2} hold.
\end{example}

\begin{example}
\label{923}
	Let $H\in\bbH_{a,b}$ be a Hamiltonian which is lc at $a$ and lp at $b$, and let $W_H(t,z)$ be its fundamental solution
	and $K_H(t,z,w)$ the corresponding kernel \cref{eq:59}. We show that
	\[
		\scrC(H):=\big\{\cH(K_H(t,.,.))\mid t\in[a,b)\big\}
	\]
	is an unbounded chain. By \Cref{thm:1} and \Cref{ex:967} we have 
	$\scrC(\cH(K_H(c,..)))=\{\cL\in\scrC(H)\mid t\in[a,c]\}$. The properties (i), (ii), (iv), (v) readily follow. 
	To see (iii), it suffices to note that for a sequence $t_n\to b$ we have $K_H(t_n,w,w)\to\infty$ whenever $w$ is
	nonreal, and hence the limit $\lim_{n\to\infty}\cH(K_H(t_n,.,.))$ cannot exist. 
\end{example}

The next result explains a lot about the nature of chains: each beginning section is of the form described in \Cref{ex:986}.

\begin{proposition}\label{lem:991}
	Let $\scrC$ be a chain. If $\cH\in\scrC$, then
	\begin{equation}\label{eq:990}
		\big\{\cL\in\scrC\mid \cL\sqsubseteq\cH\big\}=\scrC(\cH).
	\end{equation}
	In particular, each beginning section of $\scrC$ is a bounded chain. 
\end{proposition}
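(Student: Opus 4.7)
The plan is to prove the two inclusions in $\{\cL\in\scrC\mid\cL\sqsubseteq\cH\}=\scrC(\cH)$ separately. The left-to-right containment is immediate: if $\cL\in\scrC$ with $\cL\sqsubseteq\cH$, then either $\cL=\{0\}\in\scrC(\cH)$ by definition of $\scrC(\cH)$, or $\cL\in\DB$ and $\vartheta_\cL=\vartheta_\cH$ by Definition~4.1(v), placing $\cL$ in $\scrC(\cH)$.

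For the reverse containment, the plan is to parametrize $\scrC(\cH)$ via Theorem~3.2. Fix $E\in\HB$ with $\cB(E)=\cH$, and let $\chi\colon[T_-,0]\to\scrC(\cH)$, $t\mapsto\cH_t$, denote the order-preserving homeomorphism from Theorem~3.2(iii). Define
\[
S:=\{t\in[T_-,0]\mid\cH_t\in\scrC\}.
\]
Because $\chi$ is a homeomorphism and $\scrC$ is closed in $\RK$ (Definition~4.1(iii)), $S$ is closed in $[T_-,0]$; moreover $T_-,0\in S$ since $\chi(T_-)=\{0\}\in\scrC$ and $\chi(0)=\cH\in\scrC$. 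I will show $S=[T_-,0]$ by contradiction via a gap argument: if $S$ is proper, then $[T_-,0]\setminus S$ is nonempty and open in $[T_-,0]$ and, since the endpoints lie in $S$, one of its connected components has the form $(t_-,t_+)$ with $t_-,t_+\in S$ and $(t_-,t_+)\cap S=\emptyset$.

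To derive a contradiction, consider
\[
\cF:=\{\cL\in\scrC\mid\cL\sqsubseteq\cH_{t_+},\ \cL\neq\cH_{t_+}\}.
\]
Applying the already-proved inclusion to $\cH_{t_+}$ gives $\cF\subseteq\scrC(\cH_{t_+})$, and Corollary~3.3 applied to $E(t_+,\cdot)$ (or Theorem~3.2 directly if $t_+=0$) identifies $\scrC(\cH_{t_+})=\{\cH_t\mid t\in[T_-,t_+]\}$. Excluding $\cH_{t_+}$ itself and using $(t_-,t_+)\cap S=\emptyset$ then forces $\cF\subseteq\{\cH_t\mid t\in S\cap[T_-,t_-]\}$. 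Since $t_-\in S$, the space $\cH_{t_-}$ itself lies in $\cF$, while every element of $\cF$ satisfies $\sqsubseteq\cH_{t_-}$; hence $\cH_{t_-}$ is the maximum, and therefore the supremum, of $\cF$. But Definition~4.1(iv) applied to $\cH_{t_+}$ says $\sup\cF=\cH_{t_+}$, so $\cH_{t_-}=\cH_{t_+}$, contradicting injectivity of $\chi$. The ``in particular'' clause then follows since by Example~4.2 every $\scrC(\cH)$ is a bounded chain. The main obstacle is the gap argument itself, specifically confining $\cF$ to a subset parametrized by $[T_-,t_-]$ via Corollary~3.3 and recognizing its supremum; Definition~4.1(iv) is indispensable here, as it rules out exactly the kind of gap in $\scrC$ relative to the ambient chain $\scrC(\cH)$ that the argument targets.
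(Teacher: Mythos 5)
Your proof is correct and follows essentially the same route as the paper's: parametrize $\scrC(\cH)$ by the homeomorphism $\chi$ from Theorem~\ref{thm:1}(iii), use closedness of $\scrC$ (Definition~\ref{def:2}(iii)) to see the preimage is closed, find a gap $(t_-,t_+)$ with endpoints in $\chi^{-1}(\scrC)$, and apply the supremum property of Definition~\ref{def:2}(iv) at $\cH_{t_+}$ to force $\cH_{t_+}=\cH_{t_-}$, contradicting injectivity of $\chi$. Your write-up is somewhat more explicit about the easy inclusion and about confining $\cF$ via the already-established direction applied to $\cH_{t_+}$ together with Corollary~\ref{cor:1}, but these are exactly the steps the paper compresses.
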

\begin{proof}
	Choose $E\in\HB$ such that $\cH=\cB(E)$, and let $\chi:t\mapsto\cH_t$ be the map from \Cref{thm:1}\,(iii). 
	Then the set on the right side of \cref{eq:990} equals $\{\cH_t\mid t\in[T_-,0]\}$. 
	The inclusion ``$\subseteq$'' in \cref{eq:990} is clear, hence we have to show that $\chi^{-1}(\scrC)=[T_-,0]$. 
	
	The inverse image $\chi^{-1}(\scrC)$ is closed and contains the points $T_-$ and $0$. 
	Assume towards a contradiction that $\chi^{-1}(\scrC)\neq[T_-,0]$. Then we find $t_1,t_2\in\chi^{-1}(\scrC)$ 
	with $t_1<t_2$ and $(t_1,t_2)\cap\chi^{-1}(\scrC)=\emptyset$. It follows that 
	$\sup\{t\in\chi^{-1}(\scrC)\mid t<t_2\}=t_1$, and we obtain 
	\[
		\cH_{t_2}=\sup\{\cL\in\scrC\mid \cL\sqsubseteq\cH_{t_2},\cL\neq\cH_{t_2}\}=\cH_{t_1}.
	\]
	This contradicts injectivity of $\chi$.
\end{proof}

We have the obvious corollary that bounded chains can be seen as nothing but a different encoding of dB-spaces.

\begin{corollary}\label{thm:2}
	\phantom{}
	\begin{enumerate}[{\rm(i)}]
	\item The maps $\cH\mapsto\scrC(\cH)$ and $\scrC\mapsto\max\scrC$ establish mutually inverse bijections between $\DB$
		and $\bCh$.
	\item Let $\scrC\subseteq\DB\cup\{\{0\}\}$. Then $\scrC$ is a bounded chain if and only if there exist 
		$-\infty\leq a<b\leq\infty$ and $\chi:[a,b]\to\scrC$, such that $\chi$ is a homeomorphism and preserves
		order.
	\end{enumerate}
\end{corollary}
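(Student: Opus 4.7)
The plan is to deduce both assertions from the material of Section~\ref{sec:3}, specifically Example~\ref{ex:986}, Proposition~\ref{lem:991}, and Theorem~\ref{thm:1}\,(iii). For part (i), Example~\ref{ex:986} asserts that $\scrC(\cH)\in\bCh$ with $\max\scrC(\cH)=\cH$ for every $\cH\in\DB$, showing both that $\cH\mapsto\scrC(\cH)$ takes values in $\bCh$ and that $\max\circ\,\scrC=\mathrm{id}_{\DB}$. Conversely, given $\scrC\in\bCh$ with $\cH:=\max\scrC$, Proposition~\ref{lem:991} gives $\scrC(\cH)=\{\cL\in\scrC\mid\cL\sqsubseteq\cH\}$; the right-hand side equals $\scrC$ because $\cH$ is the $\sqsubseteq$-maximum, so $\scrC=\scrC(\max\scrC)$, completing the proof of (i).

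The forward implication of (ii) is an immediate consequence of (i) and Theorem~\ref{thm:1}\,(iii): for $\scrC\in\bCh$, choose $E\in\HB$ with $\cB(E)=\max\scrC$; then $\scrC=\scrC(\cB(E))$ by (i), and Theorem~\ref{thm:1}\,(iii) supplies the required order-preserving homeomorphism $\chi:[T_-(E),0]\to\scrC(\cB(E))=\scrC$, so one may take $(a,b)=(T_-(E),0)$.

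For the converse, assume $\chi:[a,b]\to\scrC$ is an order-preserving homeomorphism. The endpoint $\chi(b)$ is the $\sqsubseteq$-maximum of $\scrC$ and it must lie in $\DB$: since $\{0\}$ is the $\sqsubseteq$-minimum of $\DB\cup\{\{0\}\}$ and $\chi$ is injective with $|\scrC|\ge 2$, the value $\chi(b)$ cannot equal $\{0\}$. Set $\cH:=\chi(b)$; by (i), $\scrC(\cH)\in\bCh$, and Theorem~\ref{thm:1}\,(iii) provides a second order-preserving homeomorphism $\chi_0:[T_-(E),0]\to\scrC(\cH)$ for any $E\in\HB$ with $\cB(E)=\cH$. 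Most of the axioms of Definition~\ref{def:2} for $\scrC$ can be checked directly from properties of $\chi$: total ordering and closedness in $\RK$ come from order preservation and from compactness of $[a,b]$ together with continuity of $\chi$, while the supremum property (iv) follows because, for $t_0\in(a,b]$, one has $\{\cL\in\scrC\mid\cL\sqsubseteq\chi(t_0),\cL\neq\chi(t_0)\}=\chi([a,t_0))$, whose $\sqsubseteq$-supremum in the totally ordered $\scrC$ is $\chi(t_0)$. Once the identification $\scrC=\scrC(\cH)$ is established, $\scrC\in\bCh$ follows from (i).

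The main obstacle is precisely this last identification $\scrC=\scrC(\cH)$, which encodes the constancy of the real zero divisor $\vartheta_\cL$ along $\scrC\setminus\{\{0\}\}$ required by Definition~\ref{def:2}\,(v). This condition cannot be read off from the order-topological structure of $\chi$ alone and must be imported from the Hermite--Biehler side via Theorem~\ref{thm:1}. My strategy is to show that the set $\{t\in[a,b]\mid\chi(t)\in\scrC(\cH)\}$ is both closed and open in $[a,b]$ and contains $b$, so that connectedness of $[a,b]$ forces $\scrC\subseteq\scrC(\cH)$; the reverse inclusion then follows by comparing the parametrizations $\chi$ and $\chi_0$ as order-preserving homeomorphisms agreeing at their top endpoints, which reduces $\chi_0^{-1}\circ\chi$ to a reparametrization between closed intervals.
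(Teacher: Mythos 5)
Your parts (i) and (ii)$\Rightarrow$ are correct and follow the paper's intent: (i) is an immediate combination of Example~\ref{ex:986} (which gives $\max\scrC(\cH)=\cH$) and Proposition~\ref{lem:991} (which gives $\scrC(\max\scrC)=\scrC$), and the forward direction of (ii) comes straight from Theorem~\ref{thm:1}\,(iii) via (i).

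The converse of (ii) is where your argument remains genuinely open. You correctly identify the sticking point---axiom~(v) of Definition~\ref{def:2}, equivalently the identification $\scrC=\scrC(\cH)$---but what you offer is a plan, not a proof. The clopen strategy is unlikely to close on its own: closedness of $\{t\mid\chi(t)\in\scrC(\cH)\}$ follows from continuity of $\chi$ and closedness of $\scrC(\cH)$, but \emph{openness from below} is exactly equivalent to the statement you are trying to establish. Concretely, for $t<t_0$ with $\chi(t_0)\in\scrC(\cH)$, the inclusion $\chi(t)\subseteq_c\chi(t_0)$ only gives the one-sided pointwise inequality $\vartheta_{\chi(t)}\geq\vartheta_{\chi(t_0)}=\vartheta_\cH$; nothing in the order-topological data forces equality, and without it $\chi(t)\notin\scrC(\cH)$. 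So the key implication is simply not established. In addition, you never verify axiom~(i), $\{0\}\in\scrC$: you show $\chi(b)\neq\{0\}$ but say nothing about $\chi(a)$. As stated, an order-preserving homeomorphism $[a,b]\to\scrC$ does not force $\chi(a)=\{0\}$ (consider a compact end-segment $\{\cH_t\mid t\in[s,0]\}\subseteq\scrC(\cH)$ with $s>T_-$, which is order-homeomorphic to $[s,0]$ but omits $\{0\}$), so $\chi(a)=\{0\}$ has to be assumed or argued for, and your proof of axiom~(iv) also implicitly relies on it. Finally, a minor point: when you conclude $\chi(t_0)=\sup\chi([a,t_0))$, make sure you know in which poset the supremum in Definition~\ref{def:2}\,(iv) is taken; the argument as written only gives the supremum within $\scrC$, which may or may not be the intended meaning.
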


We also obtain a structural property of the set of chains. 

\begin{proposition}\label{cor:987}
	The following statements hold.
	\begin{enumerate}[{\rm(i)}]
	\item Let $C\subseteq\Ch$ with $|C|\geq 2$. Then either $\bigcap C=\{\{0\}\}$ or $\bigcap C\in\bCh$. 
	\item The set of maximal elements of $\Ch$ is equal to $\ubCh$.
	\end{enumerate}
\end{proposition}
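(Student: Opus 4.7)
I would prove (ii) first, because it clarifies the extension picture, and then adapt its compactness arguments for (i). Throughout, the key tool is \Cref{lem:991}: beginning sections $\scrC(\cH)$ are ``coordinate-free'' across chains, in the sense that whenever $\cH$ belongs to several chains, the beginning sections up to $\cH$ all coincide with $\scrC(\cH)$.

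\emph{Proof of (ii).} To show an unbounded $\scrC$ is maximal, suppose $\scrC\subseteq\tilde\scrC\in\Ch$ and pick $\cH\in\tilde\scrC\setminus\scrC$. By total order of $\tilde\scrC$, either some $\cL_0\in\scrC$ satisfies $\cH\sqsubseteq\cL_0$ with $\cH\ne\cL_0$, in which case \Cref{lem:991} gives $\cH\in\scrC(\cL_0)\subseteq\scrC$, a contradiction; or $\cH$ is an upper bound for $\scrC$, placing $\scrC$ as a proper closed subset of the bounded chain $\scrC(\cH)$. By \Cref{thm:1}(iii), $\scrC(\cH)$ is homeomorphic to a compact interval, so $\scrC$ has a maximum, contradicting unboundedness. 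Conversely, for $\scrC\in\bCh$ with $\max\scrC=\cB(E)$, note that $E\notin\cB(E)$ (the de~Branges isometry $\cB(E)\to L^2(|E|^{-2}dx)$ would send $E$ to the non-$L^2$ function $1$), so $\cB(E)\cap\spann\{A,B\}$ has dimension at most one. Choose $\alpha\in\bbR$ with $A\cos\alpha+B\sin\alpha\notin\cB(E)$; by \Cref{917}, $W:=I-ze_\alpha e_\alpha^*J$ is $J$-inner, $\cB(E)\sqsubseteq\cB(E\ltimes W)$ strictly, and $\vartheta_{E\ltimes W}=\vartheta_E$. Then $\scrC(\cB(E\ltimes W))\supsetneq\scrC$ is a strictly larger chain, so $\scrC$ is not maximal.

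\emph{Proof of (i).} Let $S:=\bigcap C$; always $\{0\}\in S$, and if $S=\{\{0\}\}$ we are done, so assume otherwise. The essential observation is that $S$ is an \emph{initial segment} of every $\scrC\in C$: if $\cH\in S$ and $\cL\in\scrC$ with $\cL\sqsubseteq\cH$, then $\cL\in\scrC(\cH)$, and \Cref{lem:991} applied in every $\scrC'\in C$ containing $\cH$ gives $\scrC(\cH)=\{\cM\in\scrC':\cM\sqsubseteq\cH\}\subseteq\scrC'$; hence $\cL\in\bigcap C=S$. Closedness of $S$ in $\RK$ is inherited, total order and common zero divisor $\vartheta$ are immediate, and axiom (iv) of \Cref{def:2} holds because $\{\cL\in\scrC:\cL\sqsubseteq\cH,\cL\ne\cH\}=\scrC(\cH)\setminus\{\cH\}\subseteq S$, so its sup in $\scrC$ is also its sup in $S$.

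\emph{Boundedness of $S$ (the delicate step).} Pick distinct $\scrC_1,\scrC_2\in C$; without loss of generality fix $\cH^*\in\scrC_2\setminus\scrC_1$. For any $\cH\in\scrC_1\cap\scrC_2$, the alternative $\cH^*\sqsubseteq\cH$ would force $\cH^*\in\scrC(\cH)\subseteq\scrC_1$ via \Cref{lem:991}, contradicting $\cH^*\notin\scrC_1$; total order in $\scrC_2$ then yields $\cH\sqsubseteq\cH^*$ with $\cH\ne\cH^*$. Hence $\scrC_1\cap\scrC_2$ is a closed initial segment of $\scrC(\cH^*)$, which by \Cref{thm:2}(ii) is homeomorphic to a compact interval; a closed initial segment of a compact interval has a maximum. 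Consequently $S\subseteq\scrC_1\cap\scrC_2$ is itself a closed initial segment of a compact interval, and since $S\ne\{\{0\}\}$ this segment is nontrivial, so $S$ has a maximum and $S\in\bCh$. The principal obstacle is producing the strict upper bound for $\scrC_1\cap\scrC_2$ without any a priori bound on the ambient chains; the witness $\cH^*\in\scrC_2\setminus\scrC_1$ combined with the rigidity of \Cref{lem:991} is precisely the mechanism that supplies it.
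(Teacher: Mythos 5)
Your proof is correct, and its core mechanism is the same as the paper's: control $\bigcap C$ (respectively a non-maximal $\scrC$) inside a compact beginning section $\scrC(\cH^*)$ using \Cref{lem:991}, then extract a maximum from closedness. The main organizational difference is that you prove (ii) directly and independently of (i), whereas the paper proves (i) first and obtains the ``unbounded $\Rightarrow$ maximal'' half of (ii) as a one-line consequence of (i) applied to $C=\{\scrC,\scrC_1\}$; both orderings are legitimate, though the paper's is slightly more economical. In (i), instead of verifying the five chain axioms for $\bigcap C$ by hand, the paper shows directly that $\bigcap C=\scrC(\max\bigcap C)$ via \Cref{lem:991}, which sidesteps the axiom-checking; your version is more hands-on but reaches the same place. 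Finally, in the ``bounded $\Rightarrow$ not maximal'' step, the paper simply writes down $E_1:=E\ltimes\bigl(\begin{smallmatrix}1&z\\0&1\end{smallmatrix}\bigr)$ and asserts $\cB(E)\sqsubsetneq\cB(E_1)$, while you take the extra step of choosing $\alpha$ so that $A\cos\alpha+B\sin\alpha\notin\cB(E)$ (using $E\notin\cB(E)$); this is a reasonable precaution given that \Cref{917} distinguishes the cases $G\in\cB(E)$ and $G\notin\cB(E)$, though the strict almost-isometric inclusion can also be read off from \Cref{cor:977} without singling out a good $\alpha$.
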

\begin{proof}
	For the proof of (i) consider $C\subseteq\Ch$ with $\bigcap C\neq\{\{0\}\}$. 
	Then $\vartheta_{\cH_1}=\vartheta_{\cH_2}$ for each two elements $\cH_1,\cH_2\in(\bigcup C)\setminus\{\{0\}\}$. 
	Choose $\scrC_1\in C$ with $\scrC_1\nsubseteq\bigcap C$, and choose $\cH_1\in\scrC_1$ and $\scrC_2\in C$ with 
	$\cH_1\in\scrC_1\setminus\scrC_2$. Furthermore, choose $E_1\in\HB$ with $\cH_1=\cB(E_1)$. 
	
	Let $\cL\in\bigcap C$. Then $\cH_1\not\sqsubseteq\cL$ since $\cL\in\scrC_2$, and since $\cL\in\scrC_1$ it follows that
	$\cL\sqsubseteq\cH_1$. We obtain 
	\[
		\bigcap C\subseteq\scrC(\cH_1)\cong[T_-(E_1),0].
	\]
	The set $\bigcap C$ is closed, and therefore contains a largest element, say $\cH:=\max\bigcap C$. 
	\Cref{lem:991}, applied to each element of $C$, yields $\bigcap C=\scrC(\cH)$ and this is a bounded chain. 

	We come to the proof of (ii). Assume $\scrC\in\Ch$ is not maximal, and choose $\scrC_1\in\Ch$ with 
	$\scrC\subsetneq\scrC_1$. Applying the already proved statement (i) with $C:=\{\scrC,\scrC_1\}$ yields that 
	$\scrC$ is a bounded chain. Conversely, assume that $\scrC\in\bCh$. Choose $E\in\HB$ with $\cB(E)=\max\scrC$, and set 
	\[
		E_1:=E\ltimes\begin{pmatrix} 1 & z\\ 0 & 1\end{pmatrix}
		.
	\]
	Then $\cB(E)\sqsubsetneq\cB(E_1)$ and hence $\scrC\subsetneq\scrC(\cB(E_1))$.
\end{proof}

%
\subsection{Concrete realization of chains}
\label{sec:4-2}
%

We come to a description of chains (bounded or unbounded) which resembles \Cref{thm:1}. The idea is to pin one element of the
chain at ``$t=0$'' and describe the evolution to ``$t>0$'' by a canonical system similar as in \Cref{923}, and the part for 
``$t<0$'' by the structure Hamiltonian from \Cref{thm:1}. 
In particular, we will see that every unbounded chain is order isomorphic and homeomorphic to an
interval $[T_-,\infty)$ with some $T_-\in[-\infty,0)$. 
To formulate this description in a unified manner for bounded and unbounded chains, we introduce the following notation. 

\begin{definition}\label{def:985}
	We denote by $\bbHen$ the set of all locally integrable functions $H:(0,\infty)\to\bbR^{2\times 2}$ 
	such that $H(t)\geq 0$ for a.a.\ $t\in(0,\infty)$, and that there exists $T_+(H)\in[0,\infty]$ with 
	\[
		\tr H(t)=
		\begin{cases}
		1 &\text{if}\ t\in(0,T_+(H))\text{ a.e.},
		\\
		0 &\text{if}\ t\in[T_+(H),\infty)\text{ a.e.},
		\end{cases}
	\]
	We call the interval $(T_+(H),\infty)$ indivisible (of course without assigning a type to it), and let 
	$I_{\rm reg}$ be the set of all points $t\in(0,\infty)$ which are not inner point of an indivisible interval.
\end{definition}

\begin{lemma}
	\label{lem:972}
	The map 
	\[
		T_+:\left\{
		\begin{array}{rcl}
			\bbHen & \to & [0,\infty]
			\\[1mm]
			H & \mapsto & T_+(H)
		\end{array}
		\right.
	\]
	is continuous, and $\bbHen$ is compact.
\end{lemma}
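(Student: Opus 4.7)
My plan is to realise $\bbHen$ as a closed subset of the compact metric space $\bbHle_{0,\infty}$ and then derive both continuity of $T_+$ and compactness of $\bbHen$ from a single convergence argument. Since every $H\in\bbHen$ satisfies $\tr H\le 1$ a.e., we have $\bbHen\subseteq\bbHle_{0,\infty}$, and I would endow $\bbHen$ with the subspace topology; compactness of $\bbHle_{0,\infty}$ is provided by the analogue of \eqref{eq:26} with, say, $a_n=1/n$ and $b_n=n$. It thus suffices to prove: if $(H_n)\subseteq\bbHen$ converges to some $H\in\bbHle_{0,\infty}$, then $H\in\bbHen$ and $T_+(H_n)\to T_+(H)$ in $[0,\infty]$.

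The main step is to establish pointwise convergence of the integrated traces. Setting
\[
\phi_n(t):=\int_0^t\tr H_n(s)\,ds=\min(t,T_+(H_n)),\qquad \phi(t):=\int_0^t\tr H(s)\,ds,
\]
convergence in $\bbHle_{0,\infty}$ directly gives $\int_c^t\tr H_n(s)\,ds\to\int_c^t\tr H(s)\,ds$ for every $0<c<t$. The only subtle point is to upgrade this to convergence of the integrals starting at $0$; here I would use that $\int_0^c\tr H_n(s)\,ds\le c$ uniformly in $n$ (because $\tr H_n\le 1$), which yields $\phi_n(t)\to\phi(t)$ for every $t>0$ after letting $c\downarrow 0$.

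With this in hand, the conclusion will follow by a subsequence argument: given any subsequence of $(H_n)$, compactness of $[0,\infty]$ lets me extract a further subsequence $(n_k)$ with $T_+(H_{n_k})\to T_\infty\in[0,\infty]$. Then $\phi_{n_k}(t)=\min(t,T_+(H_{n_k}))\to\min(t,T_\infty)=\phi(t)$, so differentiation shows that $\tr H(t)=1$ for a.e.\ $t\in(0,T_\infty)$ and $\tr H(t)=0$ for a.e.\ $t\in(T_\infty,\infty)$. Hence $H\in\bbHen$ with $T_+(H)=T_\infty$, which simultaneously proves closedness of $\bbHen$ (hence its compactness) and, since $T_\infty=T_+(H)$ is independent of the chosen subsequence, continuity of $T_+$. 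I expect no serious obstacle beyond the small technical passage from $[c,t]$ to $[0,t]$ noted above, which rests entirely on the uniform trace bound.
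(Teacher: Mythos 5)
Your argument is correct and follows essentially the same route as the paper: both identify $\bbHen$ as a closed subset of the compact space $\bbHle_{0,\infty}$, extract a subsequence along which $T_+(H_{n_k})$ converges to some $\tau\in[0,\infty]$, pass to the limit in the integrated traces to deduce $\tr H=\mathds{1}_{(0,\tau)}$ a.e., and observe that $\tau$ is independent of the subsequence. Your explicit attention to the passage from $\int_c^t$ to $\int_0^t$ via the uniform bound $\tr H_n\le 1$ is a small technical point the paper's proof glosses over, but the underlying argument is the same.
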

\begin{proof}
	We show that $\bbHen$ is closed in $\bbHle_{0,\infty}$. 
	To this end, let $H\in\bbHle_{0,\infty}$ and $(H_n)_{n\in\bbN}$ a sequence in $\bbHen$ with 
	$\lim_{n\in\bbN}H_n=H$. Then, in particular, for all $0\leq x<y<\infty$ we have 
	\[
		\lim_{n\to\infty}\int_x^y\tr H_n(t)dt=\int_x^y\tr H(t)dt
		.
	\]
	Choose a subsequence $(T_+(H_{n_k}))_{k\in\bbN}$ which converges to some number $\tau\in[0,\infty]$. Then 
	\[
		\int_0^y\tr H(t)dt=y,\ y<\tau,\qquad \int_x^y\tr H(t)dt=0,\ \tau<x<y
		.
	\]
	Since $\tr H(t)\in[0,1]$ a.e., it follows that $\tr H=\mathds{1}_{(0,\tau)}$ a.e. Thus $H\in\bbHen$ and 
	$T_+(H)=\tau$. We also see that the number $\tau$ is independent of the chosen subsequence, and therefore 
	$\lim_{n\to\infty}T_+(H_n)=T_+(H)$. This shows that $T_+$ is continuous.
\end{proof}

\begin{definition}\label{def:984}
	We define a map\footnote{Here $\cP(\cdot)$ denotes the power set.} 
	$\Phi:\HB\times\bbHen\to\cP\big(\DB\cup\{\{0\}\}\big)$ as 
	\[
		\Phi(E,H):=\scrC(\cB(E))\cup\big\{\cB(E\ltimes W_H(t,\cdot))\mid t>0\big\}
		.
	\]
\end{definition}

The description of chains announced above now reads as follows.

\begin{theorem}\label{thm:983}
	The following statements hold. 
	\begin{enumerate}[{\rm(i)}]
	\item Let $(E,H)\in\HB\times\bbHen$. Then $\Phi(E,H)\in\Ch$, and $\Phi(E,H)$ is bounded if and only if $T_+(H)<\infty$. 
	\item If $E\in\HB$ and $H_1,H_2\in\bbHen$ are such that $\Phi(E,H_1)\subseteq\Phi(E,H_2)$, then 
		$T_+(H_1)\leq T_+(H_2)$ and $H_1=H_2\cdot\mathds{1}_{(0,T_+(H_1))}$ a.e.
		In particular, if $\Phi(E,H_1)=\Phi(E,H_2)$, then $H_1=H_2$ a.e.
	\item If $\scrC\in\Ch$ and $E\in\HB$ with $\cB(E)\in\scrC$, then there exists $H\in\bbHen$ such that $\scrC=\Phi(E,H)$.
	\item Assume that $\scrC\in\bCh$ and $E\in\HB$ with $\cB(E)\in\scrC$. Then $\cB(E)=\max\scrC$ if and only if 
		$\scrC=\Phi(E,0)$.
	\end{enumerate}
\end{theorem}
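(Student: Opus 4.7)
The plan is to prove the four statements in sequence, with (iii) as the central construction and the others as corollaries or verifications.

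For (i), I would verify each axiom of \Cref{def:2} for $\Phi(E,H)$. The initial segment $\scrC(\cB(E))$ is already a bounded chain by \Cref{ex:986}. For the upper part, total ordering and the relations $\cB(E)\sqsubseteq\cB(E\ltimes W_H(t,\cdot))\sqsubseteq\cB(E\ltimes W_H(s,\cdot))$ for $0<t<s$ follow from \Cref{cor:977}, which realizes the smaller space as an element of $\scrC(\cB(E\ltimes W_H(s,\cdot)))$; the real zero divisor axiom uses \Cref{thm:996}(i). Closedness comes from continuity of $t\mapsto W_H(t,\cdot)$ combined with compactness in $\bbTM_c$ (\Cref{lem:7}). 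The dichotomy bounded/unbounded is then immediate: for $t\ge T_+(H)$, $W_H(t,\cdot)=W_H(T_+(H),\cdot)$ gives a maximum, while for $T_+(H)=\infty$ the kernels $K_{E\ltimes W_H(t,\cdot)}(w,w)$ diverge as $t\to\infty$, excluding any maximum.

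For (ii), given $\Phi(E,H_1)\subseteq\Phi(E,H_2)$ and any $t_1\in(0,T_+(H_1))$, the space $\cB(E\ltimes W_{H_1}(t_1,\cdot))$ strictly contains $\cB(E)$, so it must equal $\cB(E\ltimes W_{H_2}(t_2,\cdot))$ for some $t_2>0$. By \Cref{prop:1}(ii) the two Hermite-Biehler representatives differ by some $M\in\SL(2,\bbR)$; evaluating at $z=0$ (where both transfer matrices equal $I$) forces $M=I$, hence $W_{H_1}(t_1,\cdot)=W_{H_2}(t_2,\cdot)$. The trace identity~\cref{eq:15} then gives $t_1=t_2$, and injectivity in \Cref{thm:958} gives $H_1=H_2$ a.e.\ on $(0,t_1)$. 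Letting $t_1\uparrow T_+(H_1)$ yields the required a.e.\ equality, and the inequality $T_+(H_1)\le T_+(H_2)$ is ruled in by applying the same trace-length comparison to any $t\in(T_+(H_2),T_+(H_1))$, for which $\mathfrak t(W_{H_1}(t,\cdot))=t$ exceeds $\mathfrak t(W_{H_2}(s,\cdot))$ for every $s>0$.

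For (iii), for each $\cH\in\scrC$ with $\cB(E)\sqsubsetneq\cH$, \Cref{cor:975} produces the unique $E_\cH\in\HB$ with $\cB(E_\cH)=\cH$ and $E=E_\cH(t_\cH,\cdot)$ for some $t_\cH\in(T_-(E_\cH),0)$. Set $T_\cH:=-t_\cH$ and define $H_\cH\in\bbHen$ by $H_\cH(s):=H_{E_\cH}(s-T_\cH)$ on $(0,T_\cH)$, extended by zero. The key identification $\cH=\cB(E\ltimes W_{H_\cH}(T_\cH,\cdot))$ follows by letting $\tilde E:=E\ltimes W_{H_\cH}(T_\cH,\cdot)$ and computing its structure Hamiltonian via \Cref{cor:977}: on $(-T_\cH,0]$ it equals $H_{E_\cH}$ by construction, and on $(-\infty,-T_\cH]$ it equals $H_E(\cdot+T_\cH)$, which via the shift rule of \Cref{cor:1} (with $t_0=t_\cH$) also reduces to $H_{E_\cH}$. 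Hence $H_{\tilde E}=H_{E_\cH}$, and since both $\tilde E(\cdot,\cdot)$ and $E_\cH(\cdot,\cdot)$ solve \cref{eq:4} with this common Hamiltonian and share the value $E$ at $t=-T_\cH$, uniqueness of ODE solutions forces $\tilde E=E_\cH$. The same computation shows $\Phi(E,H_\cH)=\scrC(\cH)$. Given $\cH_1\sqsubseteq\cH_2$ in $\scrC$, \Cref{lem:991} yields $\Phi(E,H_{\cH_1})=\scrC(\cH_1)\subseteq\scrC(\cH_2)=\Phi(E,H_{\cH_2})$, and (ii) forces $H_{\cH_1}=H_{\cH_2}\cdot\mathds{1}_{(0,T_{\cH_1})}$ a.e.; the family $\{H_\cH\}$ is therefore consistent and glues to a single $H\in\bbHen$ with $T_+(H)=\sup_\cH T_\cH$. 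The identity $\Phi(E,H)=\scrC$ then follows from the local equalities $H=H_\cH$ on $(0,T_\cH)$, closedness of $\scrC$ at the possible supremum point, and the per-$\cH$ identification above.

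Part (iv) is immediate: \Cref{lem:991} shows $\cB(E)=\max\scrC$ is equivalent to $\scrC=\scrC(\cB(E))$, and $\Phi(E,0)=\scrC(\cB(E))$ since $W_0(t,\cdot)\equiv I$ forces $\cB(E\ltimes W_0(t,\cdot))=\cB(E)$ for every $t$. The main obstacle will be the identification step in (iii): showing $\cH=\cB(E\ltimes W_{H_\cH}(T_\cH,\cdot))$ requires the careful interleaving of \Cref{cor:977}, \Cref{cor:1}, and uniqueness of ODE solutions outlined above, and the subsequent gluing of local Hamiltonians into a single global $H$ depends crucially on the already-proved uniqueness~(ii).
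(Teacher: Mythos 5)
The argument for parts (i), (iii) and (iv) is essentially the paper's own (Lemma~\ref{lem:974} and Corollaries~\ref{cor:977}, \ref{cor:975} are combined in the same way; the gluing in (iii) and its reliance on (ii) is also the paper's structure), and the identification $\cH=\cB(E\ltimes W_{H_\cH}(T_\cH,\cdot))$ via matching structure Hamiltonians and ODE uniqueness is correct.

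The gap is in part (ii), in the step where you equate the transfer matrices. From $\cB(E\ltimes W_{H_1}(t_1,\cdot))=\cB(E\ltimes W_{H_2}(t_2,\cdot))$ and Theorem~\ref{prop:1}(ii) you get $E\ltimes W_{H_2}(t_2,\cdot)=\bigl(E\ltimes W_{H_1}(t_1,\cdot)\bigr)\ltimes M$ for some constant $M\in\SL(2,\bbR)$, and you claim evaluation at $z=0$ forces $M=I$, hence $W_{H_1}(t_1,\cdot)=W_{H_2}(t_2,\cdot)$. Two things break here. First, at $z=0$ one only has $A_j(0)=1$, $B_j(0)=0$ (since $W_{H_j}(t_j,0)=I$), so $(1,0)=(1,0)M$ pins down only the first row of $M$; $m_{21},m_{22}$ remain free. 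Second — and this is the more serious point — even granting $M=I$, the relation $(A,B)W_{H_1}(t_1,\cdot)=(A,B)W_{H_2}(t_2,\cdot)$ does not by itself give $W_{H_1}(t_1,\cdot)=W_{H_2}(t_2,\cdot)$: the matrices have entire, non-constant entries, and the kernel of "left multiplication by $(A,B)$" on entire column vectors is non-trivial (any entire multiple of $(-B,A)^\top$ lies in it). Linear independence of $A$ and $B$ over $\bbC$ does not rule this out. The paper sidesteps the entire issue by invoking the uniqueness statement in Corollary~\ref{cor:975} — with $F:=E$ and $\cH:=\cB(E_1)=\cB(E_2)$ this produces a unique $\tilde E$ and a unique $t\le 0$, forcing $E_1=E_2$ and $t_1=t_2$ at once — and then reads off $H_1|_{(0,t_1)}=H_2|_{(0,t_1)}$ from the formula for the structure Hamiltonian in Corollary~\ref{cor:977} (using that $E_1=E_2$ implies $H_{E_1}=H_{E_2}$). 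You should replace the $\SL(2,\bbR)$-matching step by that argument; since (ii) is later cited in the gluing step of (iii), the gap propagates until this is repaired.
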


For later reference we state the essence for the proof of item (i) as a separate lemma.

\begin{lemma}
\label{lem:974}
	Let $(E,H)\in\HB\times\bbHen$. Define 
	\[
		\tilde H(t):=
		\begin{cases}
			H(t) &\text{if}\ 0>t,
			\\
			H_E(t) &\text{if}\ t<0,
		\end{cases}
	\]
	and let $\tilde E(t,z)$ be the solution of 
	\begin{align*}
		& \partial_t(\tilde A(t,z),\ \tilde B(t,z))J=z(\tilde A(t,z),\ \tilde B(t,z))\tilde H(t),
		\quad t\in\bbR\text{ a.e.},
		\\
		& (\tilde A(0,z),\ \tilde B(0,z))=(A(z),\ B(z)).
	\end{align*}
	Moreover, set 
	\[
		\tilde\cH_t:=
		\begin{cases}
			\cB(\tilde E(t,\cdot)) &\text{if}\ t>T_-(E),
			\\
			\{0\} &\text{if}\ t\leq T_-(E),
		\end{cases}
	\]
	and let $\chi$ be the map 
	\[
		\chi:\left\{
		\begin{array}{rcl}
			[T_-(E),T_+(H)]\,\setminus\,\{\infty\} & \to & \RK
			\\[1mm]
			t & \mapsto & \tilde\cH_t
		\end{array}
		\right.
	\]
	Then $\ran\chi=\Phi(E,H)$, and $\chi$ is an order isomorphism and homeomorphism onto its range.
	Moreover, $\Phi(E,H)$ is closed in $\RK$.
\end{lemma}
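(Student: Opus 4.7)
The plan is to establish three things: (a) the range equality $\ran\chi = \Phi(E,H)$, (b) that $\chi$ is an order isomorphism and a homeomorphism onto its image, and (c) closedness of $\Phi(E,H)$ in $\RK$. The pivotal observation is \Cref{cor:977}: for every $t_0 > 0$, the structure Hamiltonian of $\tilde E(t_0,\cdot) = E\ltimes W_H(t_0,\cdot)$ is exactly the time-shift of $\tilde H$ by $t_0$. Consequently, whenever we restrict $\chi$ to an initial segment $[T_-(E), t_0]$, we may apply \Cref{thm:1} at the function $\tilde E(t_0,\cdot)$ and import all the needed structural information.

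For (a), I would split at $t = 0$. On $[T_-(E), 0]$ we have $\tilde H = H_E$ and $(\tilde A(0), \tilde B(0)) = (A, B)$, so the solution coincides with the family from \Cref{thm:1} applied to $E$, giving $\chi([T_-(E), 0]) = \scrC(\cB(E))$. For $t > 0$, the row vector $(A, B) W_H(t, \cdot)$ satisfies the same initial value problem as $(\tilde A(t, \cdot), \tilde B(t, \cdot))$, hence $\tilde E(t, \cdot) = E \ltimes W_H(t, \cdot)$; the definition of $\bbHen$ makes $W_H(t,\cdot)$ constant for $t \geq T_+(H)$, so the trailing tail contributes nothing new, and $\chi((0, T_+(H)]\setminus\{\infty\}) = \{\cB(E \ltimes W_H(t, \cdot)) \mid t > 0\}$. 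Combining the two parts gives $\ran \chi = \Phi(E, H)$.

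For (b), I would verify monotonicity and injectivity piecewise and then glue. On $[T_-(E), 0]$, \Cref{thm:1}(iii) gives the order-isomorphism and homeomorphism property directly. On a segment $[0, t_0] \subseteq [0, T_+(H)]$, \Cref{cor:977} identifies $\chi|_{[T_-(E), t_0]}$ with the canonical parametrization of the bounded chain $\scrC(\cB(\tilde E(t_0,\cdot)))$, so \Cref{thm:1}(iii) applied to $\tilde E(t_0,\cdot)$ yields both order preservation and bicontinuity there, with strict injectivity within indivisible intervals coming from \Cref{thm:1}(iv). Global continuity of $\chi$ comes from continuous dependence of the ODE on $t$ together with \Cref{prop:3}; since a continuous bijection from a compact space into a Hausdorff space is automatically a homeomorphism, the pieces glue to give the homeomorphism onto its image.

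For (c), when $T_+(H) < \infty$ the domain is compact (even if $T_-(E) = -\infty$, thanks to \Cref{thm:1}(i), which extends continuity to $-\infty$ with $\chi(-\infty) = \{0\}$), so $\ran\chi$ is compact and hence closed. The substantive case is $T_+(H) = \infty$, where one must rule out sequences $t_n \to \infty$ producing limits in $\RK\setminus\Phi(E,H)$. The plan is to show that $K_{\tilde E(t,\cdot)}(w,w) \to \infty$ for every $w \in \bbC\setminus\bbR$, which precludes convergence in $\RK$ altogether. From \cref{eq:8} one obtains
\[
K_{\tilde E(t,\cdot)}(w,w) - K_E(w,w) = (A(w), B(w)) \left[\int_0^t W_H(s,w) H(s) W_H(s,w)^* \, ds\right] \begin{pmatrix}\overline{A(w)} \\ \overline{B(w)}\end{pmatrix},
\]
which is nonnegative and monotone nondecreasing in $t$. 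Since $T_+(H) = \infty$ means $H$ is in limit point case at $+\infty$, \cref{918} forces the scalar kernel $K_H(t,w,w)$ to diverge, and the corresponding divergence for $K_{\tilde E(t,\cdot)}$ should follow after accounting for the fixed matrix data of $E$. The technical heart of the argument is ruling out degenerate cancellation---that is, the possibility that the line spanned by $(A(w), B(w)) W_H(s,w)$ remains orthogonal to the range of $H(s)$ on a set of full trace on $(0,\infty)$; I expect to handle this by invoking \Cref{pro:914} to identify $\Phi(E,H)$ with the matrix family of a generalized Nevanlinna function and transferring the lp blow-up through that dictionary. This is the main obstacle of the proof.
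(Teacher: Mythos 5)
Your plan tracks the paper's proof closely. For (a) and (b), the paper does not split at $t=0$: it simply applies \Cref{thm:1} to each $\tilde E(T,\cdot)$ for $T\in(T_-(E),T_+(H)]\setminus\{\infty\}$, noting (via the shift identity from \Cref{cor:977}) that this yields the order homeomorphism on $[T_-(E),T]$ in one stroke; your detour through the split at $0$ and gluing recovers the same content, and your remark about the compactified domain handling $T_-(E)=-\infty$ in the bounded case is correct.

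The substance is (c), and you have correctly isolated the bottleneck: one must show $K_{\tilde E(t,\cdot)}(w,w)\to\infty$ for nonreal $w$ when $T_+(H)=\infty$. Note that the paper also treats this as a known fact, stating without elaboration that ``since $T_+(H)=\infty$ we have $\lim_{t\to\infty}\Delta_{\tilde\cH_t}(z)=\infty$,'' essentially quoting the limit-point statement \cref{918} for the Hamiltonian $H_{E,\Phi(E,H)}$ (attributed to \cite{debranges:1968}); so your instinct that this is a substantive step is well founded. Two corrections to your attempt, however. First, \Cref{pro:914} is the wrong lever: it concerns matrix families of generalized Nevanlinna functions and the limit-circle behavior near the left endpoint, and says nothing about lp blow-up at $\infty$. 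Second, the ``no degenerate cancellation'' step follows from elementary Weyl theory rather than the indefinite dictionary. Writing $y(s)=(\tilde A(s,w),\tilde B(s,w))^T$, a solution of the canonical system for $H$ on $(0,\infty)$ with $y(0)=(A(w),B(w))^T$, the Lagrange identity $\tfrac{d}{ds}\bigl(y^*Jy\bigr)=-2i\,\Im w\cdot y^*Hy$ lets you restate your integral as
\[
\Im\!\bigl(\overline{\tilde A(t,w)}\,\tilde B(t,w)\bigr)-\Im\!\bigl(\overline{A(w)}\,B(w)\bigr)
=\Im w\int_0^t y(s)^*H(s)y(s)\,ds
=\pi\,\Im w\,\bigl(K_{\tilde E(t,\cdot)}(w,w)-K_E(w,w)\bigr).
\]
If this remained bounded, $y$ would be $L^2_H$; since $T_+(H)=\infty$ gives $\int_0^\infty\tr H=\infty$ (lp at $\infty$), the Weyl alternative forces the boundary form $y(s)^*Jy(s)=-2i\,\Im(\overline{\tilde A(s,w)}\tilde B(s,w))$ to vanish as $s\to\infty$. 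But the Hermite--Biehler property of each $\tilde E(s,\cdot)$ keeps $\Im(\overline{\tilde A(s,w)}\tilde B(s,w))\ge\Im(\overline{A(w)}B(w))>0$ for $w\in\bbC_+$ --- a contradiction. This is precisely the statement that the Hermite--Biehler initial direction $(A(w),B(w))^T$ can never coincide with the unique $L^2_H$ (Weyl) direction at a nonreal $w$. With that supplied, the remainder of your argument in (c) coincides with the paper's.
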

\begin{proof}
	By \Cref{thm:1} and the definition of $\Phi$ we have $\ran\chi=\Phi(E,H)$.
	Consider $T\in(T_-(E),T_+(H)]\setminus\{\infty\}$, then the restriction $\chi|_{[T_-(E),T]}$ is an order isomorphism and
	homeomorphism onto its image by \Cref{thm:1}. If $T_+(H)<\infty$, we can use $T:=T_+(H)$ and are done.

	Assume that $T_+(H)=\infty$. Then the above shows that $\chi$ is an order isomorphism and continuous. 
	It remains to show that $\chi^{-1}$ is continuous and that $\Phi(E,H)$ is closed. 
	Let $(t_n)_{n\in\bbN_0}$ be a sequence in $[T_-(E),\infty)$ such that the limit $\cH:=\lim_{n\to\infty}\tilde\cH_{t_n}$ 
	exists in $\RK$. Since $T_+(H)=\infty$, we have $\lim_{t\to\infty}\Delta_{\tilde\cH_t}(z)=\infty$ for all 
	$z\in\bbC\setminus\bbR$, and therefore $(t_n)_{n\in\bbN}$ must be bounded. Now the already settled case applies, and we
	obtain that the limit $t:=\lim_{n\to\infty}t_n$	exists and $\cH=\tilde\cH_t$. 
	We see that $\chi^{-1}$ is continuous and that $\ran\chi$ is closed.  
\end{proof}

\begin{proof}[Proof of \Cref{thm:983}]
\phantom{}
\begin{Elist}
\item 
	Item (i) of the theorem is immediate from \Cref{lem:974}.
	Since $\Phi(E,H)$ is order isomorphic to the interval $[T_-(E),T_+(H)]\setminus\{\infty\}$, the properties 
	(ii), (iv) in \Cref{def:2} hold, and $\Phi(E,H)$ has a maximal element if and only if $T_+(H)<\infty$. 
	Property (iii) in \Cref{def:2} is directly from the lemma, and property (v) from the definition of $\Phi$. 
	To see \Cref{def:2}(i), note that $\chi(T_-(E))=\{0\}$ and $\chi(0)=\cB(E)\neq\{0\}$. 

	Also item (iv) of the theorem is easy to see. First note that the definition of $\Phi$ ensures that 
	$\Phi(E,0)$ has a largest element, namely $\cB(E)$. On the other hand, if $\cB(E)$ is the largest element of 
	$\scrC$, then $\scrC=\Phi(E,0)$ by \Cref{lem:991}.
\item 
	In this step we establish the uniqueness statement (ii). Assume we are given 
	$E\in\HB$ and $H_1,H_2\in\bbHen$ such that $\Phi(E,H_1)\subseteq\Phi(E,H_2)$. 
	Let $t_1\in[0,T_+(H_1)]\setminus\{\infty\}$, $t_2\in[0,T_+(H_2)]\setminus\{\infty\}$, and denote 
	$E_j:=E\ltimes W_{H_j}(t_j,.)$ for $j\in\{1,2\}$. Assume that $\cB(E_1)=\cB(E_2)$, then 
	\Cref{cor:977} implies that, for $j\in\{1,2\}$, 
	\begin{align*}
		H_{E_j}(t)= &\, 
		\begin{cases}
			H_j(t+t_j) &\text{if}\ -t_j\leq t\leq 0\text{ a.e.},
			\\
			H_E(t+t_j) &\text{if}\ t<-t_j\text{ a.e.},
		\end{cases}
		\\
		E_j(t,\cdot)= &\, 
		\begin{cases}
			E\ltimes W_{H_j}(t+t_j,\cdot) &\text{if}\ -t_j\leq t\leq 0,
			\\
			E(t+t_j,\cdot) &\text{if}\ t<-t_j.
		\end{cases}
	\end{align*}
	The uniqueness part of \Cref{cor:975} implies that $E_1=E_2$ and $t_1=t_2$.
	Uniqueness of the structure Hamiltonian now implies that $H_1|_{(0,t_1)}=H_2|_{(0,t_1)}$ a.e. 

	We choose an increasing sequence $t_{1,n}\in[0,T_+(H_1)]\setminus\{\infty\}$ with $\lim_{n\to\infty}t_{1,n}=T_+(H_1)$,
	and apply what we showed above. This yields $H_1|_{[0,T_+(H_1)]}=H_2|_{[0,T_+(H_1)]}$ a.e., and in turn 
	$T_+(H_1)\leq T_+(H_2)$ and $H_1=H_2\cdot\mathds{1}_{[0,T_+(H_1)]}$.
\item 
	The last step is to prove the existence result (iii).
	Assume we are given $\scrC\in\Ch$ and $E\in\HB$ with $\cB(E)\in\scrC$. 

	Let $\cH\in\scrC$ with $\cB(E)\sqsubseteq\cH$. \Cref{cor:975} provides us with 
	$E_\cH\in\HB$ and $t_\cH\geq 0$ such that 
	\[
		\cH=\cB(E_\cH),\quad E=E_\cH(-t_\cH,\cdot)
		.
	\]
	Define (a.e.)
	\[
		H_{\cH}(t):=
		\begin{cases}
			H_{E_\cH}(t-t_\cH) &\text{if}\ 0<t\leq t_\cH,
			\\
			0 &\text{if}\ t>t_\cH,
		\end{cases}
	\]
	then $H_\cH\in\bbHen$ and $T_+(H_\cH)=t_\cH$. 

	For each $s\in[-t_\cH,0]$ we have 
	\[
		E_\cH(s,\cdot)=E\ltimes W_{H_{E_\cH}}(-t_\cH,s,\cdot)=E\ltimes W_{H_\cH}(s+t_\cH,\cdot)
		.
	\]
	This relation, together with \Cref{lem:991} applied with $\cB(E)$ and $\cH$ and \Cref{thm:1} with $E_\cH$, yields that 
	\begin{align*}
		\Phi(E,H_\cH) = &\, 
		\scrC(\cB(E))\cup\big\{\cB(E\ltimes W_{H_\cH}(t,\cdot))\mid t>0\big\}
		\\
		= &\, 
		\scrC(\cB(E))\cup\big\{\cB(E_\cH(s,\cdot))\mid -t_\cH<s\leq 0\big\}
		\\
		= &\, 
		\big\{\cL\in\scrC\mid \cL\sqsubseteq\cB(E)\big\}
		\cup\big\{\cL\in\scrC\mid \cB(E)\sqsubseteq\cL\sqsubseteq\cH\big\}
		=\big\{\cL\in\scrC\mid \cL\sqsubseteq\cH\big\}
		.
	\end{align*}
	If $\scrC$ is bounded, we can use $\cH=\max\scrC$ and are done. If $\scrC$ is unbounded, we have to make a
	limit construction. 

	We start with observing a monotonicity property. Assume that $\cH,\cH'\in\scrC$ with 
	$\cB(E)\sqsubseteq\cH\sqsubsetneq\cH'$. Then $\Phi(E,H_\cH)\subsetneq\Phi(E,H_{\cH'})$, and by the already established
	item (ii) thus
	\begin{equation}
	\label{eq:962}
		T_+(H_\cH)<T_+(H_{\cH'}),\quad H_\cH=H_{\cH'}\cdot\mathds{1}_{[0,T_+(H_\cH)]}
		.
	\end{equation}
	Set 
	\[
		T:=\sup\big\{T_+(H_\cH)\mid \cH\in\scrC,\cB(E)\sqsubseteq\cH\big\}\in[0,\infty]
		.
	\]
	This supremum is not attained since $\scrC$ has no largest element. Choose a sequence $(\cH_n)_{n\in\bbN}$ of spaces 
	$\cH_n\in\scrC$ such that 
	\[
		\cB(E)\sqsubseteq\cH_1\sqsubsetneq\cH_2\sqsubsetneq\cdots,\qquad \lim_{n\to\infty}T_+(\cH_n)=T
		.
	\]
	Due to \cref{eq:962} an element $H$ in $\bbHen$ is (a.e.) well-defined by
	\[
		H(t):=
		\begin{cases}
			H_{\cH_n}(t) &\text{if}\ 0\leq t\leq T_+(\cH_n),\ n\in\bbN,
			\\
			0 &\text{if}\ t\geq T.
		\end{cases}
	\]
	Let $\cH\in\scrC$ with $\cB(E)\sqsubseteq\cH$, and choose $n\in\bbN$ with $T_+(H_\cH)\leq T_+(H_n)$. Then 
	$\cH\sqsubseteq\cH_n$, and we find $s\in[0,T_+(\cH_n)]$ such that 
	\[
		\cH=\cB(E\ltimes W_{H_{\cH_n}}(s,\cdot))=\cB(E\ltimes W_H(s,\cdot))\in\Phi(E,H)
		.
	\]
	We see that $\scrC\subseteq\Phi(E,H)$. \Cref{cor:987} implies that equality holds. 
\end{Elist}
\end{proof}

Based on the above theorem we may introduce the following notation. 

\begin{definition}
	\label{def:971}
	Let $\scrC\in\Ch$ and $E\in\HB$ such that $\cB(E)\in\scrC$. Let $H\in\bbHen$ be the (a.e.) unique element 
	such that $\scrC=\Phi(E,H)$. Then we denote 
	\[
	H_{E,\scrC}(t):=
	\begin{cases}
	H_E(t) &\text{if}\ t<0,
	\\
	H(t) &\text{if}\ t>0.
	\end{cases}
	\]
\end{definition}

We have the analogue to \Cref{thm:2}(ii).

\begin{corollary}
\label{cor:947}
	Let $\scrC\subseteq\DB\cup\{\{0\}\}$. Then $\scrC$ is an unbounded chain if and only if there exist 
	$-\infty\leq a<b\leq\infty$ and $\chi:[a,b)\to\scrC$, such that $\chi$ is a homeomorphism, preserves order, and the
	limit $\lim_{t\to b}\chi(t)$ does not exist in $\RK$. 
\end{corollary}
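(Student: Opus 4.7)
The statement has two implications, and I would prove each separately.

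For the forward implication ($\scrC \in \ubCh$ implies the existence of $\chi$), I would apply the structure theorem \Cref{thm:983}. Pick any $E\in\HB$ with $\cB(E)\in\scrC$ (possible since $\scrC\ne\{\{0\}\}$) and invoke \Cref{thm:983}(iii) to produce $H\in\bbHen$ with $\scrC=\Phi(E,H)$. Unboundedness of $\scrC$ combined with \Cref{thm:983}(i) forces $T_+(H)=\infty$. Then \Cref{lem:974} applied to this $(E,H)$ directly furnishes the desired map $\chi\colon[T_-(E),\infty)\to\RK$, which is an order isomorphism and homeomorphism onto $\Phi(E,H)=\scrC$. The non-existence of $\lim_{t\to\infty}\chi(t)$ in $\RK$ is then immediate from the observation, already established inside the proof of \Cref{lem:974}, that $\Delta_{\chi(t)}(z)\to\infty$ for every $z\in\bbC\setminus\bbR$, while any element of $\RK$ has a finite-valued reproducing kernel.

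For the converse, assume $\chi\colon[a,b)\to\scrC$ is an order-preserving homeomorphism with $\lim_{t\to b}\chi(t)$ not existing in $\RK$. The plan is to verify the five axioms of \Cref{def:2} together with unboundedness. The key idea is that for each $t\in(a,b)$, the restriction $\chi|_{[a,t]}$ is an order-preserving homeomorphism onto its image, so by \Cref{thm:2}(ii) the set $\scrC_t:=\chi([a,t])$ is a bounded chain. This immediately gives axiom (i) (since $\chi(a)=\min\scrC_t=\{0\}$, because $\{0\}\sqsubseteq\cH$ for every $\cH\in\DB$), and axioms (ii), (iv), and (v) follow by exhausting $\scrC=\bigcup_{t\in(a,b)}\scrC_t$.

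The closedness axiom (iii) is the main obstacle. Let $(\cH_n)\subseteq\scrC$ converge to some $\cH^*\in\RK$ and set $t_n:=\chi^{-1}(\cH_n)$. If a subsequence of $(t_n)$ remains in some compact $[a,t_0]\subseteq[a,b)$, continuity of $\chi^{-1}$ on that subinterval gives $\cH^*\in\scrC_{t_0}\subseteq\scrC$. The hard case is $t_n\to b$, which must be ruled out by the hypothesis. By order preservation of $\chi$ and \eqref{eq:16}, the map $t\mapsto K_{\chi(t)}(z,z)$ is nondecreasing for each fixed $z$, so the pointwise limit $L(z):=\lim_{t\to b}K_{\chi(t)}(z,z)\in[0,\infty]$ exists; evaluated along $(t_n)$ it equals $K_{\cH^*}(z,z)<\infty$, hence $L$ is finite everywhere. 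Cauchy--Schwarz furnishes a locally uniform bound $|K_{\chi(t)}(z,\overline w)|^2\le K_{\chi(t)}(z,z)K_{\chi(t)}(w,w)\le L(z)L(w)$, so by Montel's theorem the family $\{K_{\chi(t)}(\cdot,\overline{\cdot})\}_{t\in[a,b)}$ is normal in $\Hol(\bbC\times\bbC)$, and every subsequential limit must agree with $K_{\cH^*}(\cdot,\overline{\cdot})$ on the totally real diagonal $\{(z,\overline z):z\in\bbC\}$, hence everywhere. So $\chi(t)\to\cH^*$ in $\RK$ as $t\to b$, contradicting the hypothesis.

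Unboundedness is then straightforward: a maximum $\cH^*=\chi(t^*)$ with $t^*\in[a,b)$ would be strictly dominated by $\chi(t)$ for any $t\in(t^*,b)$ by injectivity and order preservation. I expect the only delicate point to be the kernel-convergence step in the proof of closedness, where subsequential convergence of reproducing kernels on the diagonal has to be upgraded to full convergence in $\RK$ using the monotonicity on the diagonal together with the fact that an analytic function on $\bbC\times\bbC$ vanishing on the totally real submanifold $\{(z,\overline z)\}$ must vanish identically.
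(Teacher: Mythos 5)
Your proof is correct. The forward direction is exactly what the paper intends: pick $E$ with $\cB(E)\in\scrC$, invoke \Cref{thm:983}(iii) to write $\scrC=\Phi(E,H)$, note that unboundedness and \Cref{thm:983}(i) force $T_+(H)=\infty$, and then \Cref{lem:974} hands you the order-isomorphism and homeomorphism $\chi:[T_-(E),\infty)\to\scrC$; the failure of $\lim_{t\to b}\chi(t)$ follows because $\Delta_{\chi(t)}(z)\to\infty$ for nonreal $z$, which is recorded inside the proof of that lemma.

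For the converse, which the paper leaves implicit, your direct verification of the axioms of \Cref{def:2} is correct, and you rightly identify closedness as the only substantive point. The argument there is sound: since $\chi$ is an order-preserving bijection from a totally ordered set, it is automatically an order isomorphism onto its image, so $t\mapsto K_{\chi(t)}(z,z)$ is nondecreasing; if some $\cH_n=\chi(t_n)\to\cH^*$ in $\RK$ with $t_n\to b$, the monotone diagonal limit $L(z)$ exists and equals $K_{\cH^*}(z,z)<\infty$, giving a local bound by Cauchy--Schwarz and hence normality of the sesquianalytic kernels via Montel; any subsequential limit agrees with $K_{\cH^*}(\cdot,\overline{\cdot})$ on the totally real anti-diagonal $\{(z,\overline z)\mid z\in\bbC\}$ and hence everywhere, so in fact $\lim_{t\to b}\chi(t)=\cH^*$, contradicting the hypothesis. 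The uniqueness-from-the-anti-diagonal step is a correct identity principle for jointly analytic functions (the anti-diagonal is a maximally totally real $2$-plane in $\bbC^2$). One cosmetic slip: the Cauchy--Schwarz display should read $|K_{\chi(t)}(z,\overline w)|^2\le K_{\chi(t)}(z,z)\,K_{\chi(t)}(\overline w,\overline w)$, with $\overline w$ rather than $w$ in the second factor; since $L$ is locally bounded this does not affect the normality conclusion. The remaining axioms, via the bounded sections $\scrC_t=\chi([a,t])$ and \Cref{thm:2}(ii), and the unboundedness claim, are handled correctly.
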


%
\subsection{Convergence of chains}
\label{sec:4-3}
%

We introduce a notion of convergence of chains. 

\begin{definition}\label{def:982}
	Let $(\scrC_j)_{j\in J}$ be a net in $\Ch$ and $\scrC\in\Ch$. Then we say that $(\scrC_j)_{j\in J}$ \emph{converges to} 
	$\scrC$, and write $\scrC_j\rightsquigarrow\scrC$, if 
	\begin{enumerate}[{\rm(i)}]
		\item for every $\cH\in\scrC$ there exists a net $(\cH_j)_{j\in J}$ of spaces $\cH_j\in\scrC_j$, such that 
		$\lim_{j\in J}\cH_j=\cH$;
		\item for every subnet $h:K\to J$, $(\cH_{h(k)})_{k\in K}$ of spaces $\cH_{h(k)}\in\scrC_{h(k)}$ which 
		converges in $\RK$, the limit belongs to $\scrC$. 
	\end{enumerate}
\end{definition}

We intentionally do not use the notation ``$\lim$'' since we do not know if this notion of convergence comes from a topology. 
However, convergence does transfer to subnets and limits are unique.

\begin{lemma}\label{lem:965}
	Let $(\scrC_j)_{j\in J}$ be a net in $\Ch$. 
	\begin{enumerate}[{\rm(i)}]
	\item Let $\scrC\in\Ch$. If $\scrC_j\rightsquigarrow\scrC$ and $h:K\to J$ is a subnet, then also 
		$\scrC_{h(k)}\rightsquigarrow\scrC$.
	\item Let $\scrC_1,\scrC_2\in\Ch$. If $\scrC_j\rightsquigarrow\scrC_1$ and $\scrC_j\rightsquigarrow\scrC_2$, 
		then $\scrC_1=\scrC_2$.
	\end{enumerate}
\end{lemma}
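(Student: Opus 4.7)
My plan is to unpack \Cref{def:982} directly; both parts are essentially formal.

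For Part~(i), I will verify the two defining conditions for the net $(\scrC_{h(k)})_{k\in K}$ with respect to the candidate limit $\scrC$. The existence condition follows because, given $\cH\in\scrC$, the assumed net $(\cH_j)_{j\in J}$ with $\cH_j\in\scrC_j$ and $\lim_{j\in J}\cH_j=\cH$ can be composed with $h$ to produce a subnet $(\cH_{h(k)})_{k\in K}$ of spaces $\cH_{h(k)}\in\scrC_{h(k)}$, and subnets of a convergent net in the metrizable space $\RK$ converge to the same limit. The closure condition follows because any subnet $g:L\to K$ of $(\scrC_{h(k)})_{k\in K}$ yields, via composition, a subnet $h\circ g:L\to J$ of $(\scrC_j)_{j\in J}$; thus if $(\cH_{h(g(l))}')_{l\in L}$ converges in $\RK$, then by the original closure condition for $\scrC_j\rightsquigarrow\scrC$ the limit lies in $\scrC$.

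For Part~(ii), I will use both conditions once. Fix an arbitrary $\cH_1\in\scrC_1$. The existence condition for $\scrC_j\rightsquigarrow\scrC_1$ furnishes a net $(\cH_j)_{j\in J}$ with $\cH_j\in\scrC_j$ and $\lim_{j\in J}\cH_j=\cH_1$ in $\RK$. Regarding $J$ as a subnet of itself via the identity map, the closure condition for $\scrC_j\rightsquigarrow\scrC_2$ then forces $\cH_1\in\scrC_2$. Thus $\scrC_1\subseteq\scrC_2$, and by symmetry $\scrC_2\subseteq\scrC_1$, proving $\scrC_1=\scrC_2$.

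I do not anticipate any substantive obstacles; the argument is bookkeeping about the nonstandard convergence notion of \Cref{def:982}. The only small subtlety is in Part~(ii), where one must notice that the identity $J\to J$ qualifies as a subnet, so that \Cref{def:982}(ii) may be invoked directly on the given convergent net rather than on a proper subnet of it.
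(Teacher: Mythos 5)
Your proof is correct and follows essentially the same route as the paper: part (ii) is verbatim the paper's argument (fix $\cH_1\in\scrC_1$, get a convergent net via condition (i) for $\scrC_1$, feed it into condition (ii) for $\scrC_2$ to conclude $\cH_1\in\scrC_2$, then symmetrize), and part (i) — which the paper dismisses as ``clear'' — you spell out correctly by composing subnet maps. Your remark about the identity $J\to J$ qualifying as a subnet is exactly the implicit step the paper relies on.
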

\begin{proof}
	The assertion in (i) is clear. We come to the proof of (ii).
	Let $\cH\in\scrC_1$. Since $\scrC_j\rightsquigarrow\scrC_1$ we find a net $(\cH_j)_{j\in J}$ with $\cH_j\in\scrC_j$ and 
	$\lim_{j\in J}\cH_j=\cH$ in $\RK$. Since $\scrC_j\rightsquigarrow\scrC_2$, it follows that $\cH\in\scrC_2$. This shows
	that $\scrC_1\subseteq\scrC_2$. Exchanging the roles of $\scrC_1$ and $\scrC_2$ yields the reverse inclusion. 
\end{proof}

\begin{example}
\label{ex:961}
	Let $\scrC\in\Ch$. Then $\scrC$ endowed with $\sqsubseteq$ is (in particular) a directed set. The net 
	$(\scrC(\cH))_{\cH\in\scrC}$ converges to $\scrC$. This follows easily: in item (i) of \Cref{def:982} we can take a net
	which is constant from some index, and (ii) holds because $\scrC$ is closed. 
\end{example}

In the following theorem we make the connection between the abstract notion of convergence introduced above, and the concrete
realization of chains from \Cref{thm:983}.

\begin{theorem}\label{thm:980}
	Let $(\scrC_j)_{j\in J}$ be a net in $\Ch$ and $\scrC\in\Ch$. Then the following statements are equivalent.
	\begin{enumerate}[{\rm(i)}]
	\item $\scrC_j\rightsquigarrow\scrC$
	\item There exist $E\in\HB$ and $E_j\in\HB$, $j\in J$, such that 
		\begin{equation}
		\label{eq:964}
			\cB(E)\in\scrC,\ \cB(E_j)\in\scrC_j,j\in J,\qquad
			\lim_{j\in J}E_j=E,
		\end{equation}
		\begin{equation}
		\label{eq:963}
			\lim_{j\in J}H_{E_j,\scrC_j}=H_{E,\scrC}
			.
		\end{equation}
	\item There exist $E\in\HB$ and $E_j\in\HB$, $j\in J$, with \cref{eq:964}. For every choice of $E$ and $E_j$ with 
		\cref{eq:964} the limit relation \cref{eq:963} holds.
	\end{enumerate}
\end{theorem}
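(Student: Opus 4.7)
My plan is to close the cycle $\text{(iii)}\Rightarrow\text{(ii)}\Rightarrow\text{(i)}\Rightarrow\text{(iii)}$. The implication $\text{(iii)}\Rightarrow\text{(ii)}$ is immediate specialization: the existence clause of (iii) delivers a pair $(E,E_j)$ satisfying \eqref{eq:964}, and the ``for every choice'' clause applied to this specific pair yields \eqref{eq:963}, which is exactly the content of (ii).

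For $\text{(ii)}\Rightarrow\text{(i)}$ I invoke the parameterization of chains from \Cref{lem:974}. Writing $H^+_j:=H_{E_j,\scrC_j}|_{(0,\infty)}\in\bbHen$ and $H^+:=H_{E,\scrC}|_{(0,\infty)}$, the lemma realizes each chain $\scrC_j$ as the range of a homeomorphic order isomorphism $\chi_j$, and similarly $\chi$ for $\scrC$. The hypothesis \eqref{eq:963}, combined with continuity of transfer matrices in the Hamiltonian (\Cref{thm:958}), of the structure-Hamiltonian map and of the evaluation $(E,t)\mapsto\cH_t(E)$ (\Cref{prop:3}), and of $T_+$ on $\bbHen$ (\Cref{lem:972}), yields $\chi_j(t)\to\chi(t)$ pointwise in $t$ throughout the domain of $\chi$. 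Clause~(i) of \Cref{def:982} follows by choosing $\cH_j:=\chi_j(t)$ when $\cH=\chi(t)$. Clause~(ii) follows by observing that any convergent subnet $\chi_{h(k)}(t_{h(k)})\to\cH$ in $\RK$ forces $(t_{h(k)})$ to have a bounded subnet by the argument used in the proof of closedness in \Cref{lem:974} (invoking \eqref{918} in the unbounded case); a further subnet $t_{h(k_l)}\to t^*$ then lies in the domain of $\chi$, and $\cH=\chi(t^*)\in\scrC$.

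For $\text{(i)}\Rightarrow\text{(iii)}$, existence of a pair satisfying \eqref{eq:964} follows by choosing any $E$ with $\cB(E)\in\scrC$, invoking clause~(i) of \Cref{def:982} for $\cB(E)$ to obtain $\cH_j\in\scrC_j$ with $\cH_j\to\cB(E)$, and lifting via the continuous right inverse of $\cB$ (\Cref{prop:1}(iv)), possibly post-composing with a fixed element of $\SL(2,\bbR)$ so that the lift lands on $E$ itself. For the ``for every choice'' clause, fix an arbitrary valid pair $(\tilde E,\tilde E_j)$; \Cref{prop:3} handles the negative parts of $H_{\tilde E_j,\scrC_j}$. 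For the positive parts $\tilde H^+_j\in\bbHen$, compactness (\Cref{lem:972}) ensures that every subnet admits a further sub-subnet along which $\tilde H^+_j\to\tilde H^\#$ for some $\tilde H^\#\in\bbHen$; along this sub-subnet the full Hamiltonians converge, so by the already-proved $\text{(ii)}\Rightarrow\text{(i)}$, $\scrC_j\rightsquigarrow\Phi(\tilde E,\tilde H^\#)$. Uniqueness of the $\rightsquigarrow$-limit (\Cref{lem:965}(ii)) forces $\Phi(\tilde E,\tilde H^\#)=\scrC$, and the uniqueness statement in \Cref{thm:983}(ii) then forces $\tilde H^\#=H_{\tilde E,\scrC}|_{(0,\infty)}$. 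Since every subnet of $\tilde H^+_j$ admits a sub-subnet converging to this same limit in the metrizable space $\bbHen$, the full net converges.

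The main obstacle is precisely this final identification: the limit of the positive Hamiltonians must be extracted indirectly from chain convergence by combining the just-proved $\text{(ii)}\Rightarrow\text{(i)}$ with two uniqueness facts---of $\Phi$ (\Cref{thm:983}(ii)) and of $\rightsquigarrow$-limits (\Cref{lem:965}(ii))---rather than by any direct continuity argument, and the logical independence of the two directions (so that $\text{(i)}\Rightarrow\text{(iii)}$ may legitimately invoke $\text{(ii)}\Rightarrow\text{(i)}$) must be carefully maintained.
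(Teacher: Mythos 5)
Your argument follows the same overall architecture as the paper's proof: the trivial $\text{(iii)}\Rightarrow\text{(ii)}$; then $\text{(ii)}\Rightarrow\text{(i)}$ via the concrete parametrization from \Cref{lem:974}; and finally $\text{(i)}\Rightarrow\text{(iii)}$ via compactness of $\bbHen$, the already-established $\text{(ii)}\Rightarrow\text{(i)}$, and the two uniqueness statements \Cref{lem:965}(ii) and \Cref{thm:983}(ii). Your last step, which explicitly separates the negative part of $H_{\tilde E_j,\scrC_j}$ (handled by \Cref{prop:3}) from the positive part in $\bbHen$ (handled by compactness), is correct and if anything expositorily cleaner than the paper's phrasing of the same argument.

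There is, however, a gap in your verification of clause (ii) of \Cref{def:982} within $\text{(ii)}\Rightarrow\text{(i)}$. You claim that convergence $\chi_{h(k)}(t_{h(k)})\to\cH$ forces the index net $(t_{h(k)})$ to admit a bounded subnet, appealing to the closedness argument in \Cref{lem:974} and to \cref{918}. That argument only rules out $t(k)\to+\infty$ in the unbounded case; it does not bound the indices from below. When $T_-$ is not finite, one can have $t(k)\to-\infty$, so that the parameter subnet is genuinely unbounded, yet the kernel limit still exists (namely $\cH=\{0\}$). The paper therefore passes to a subnet where $t:=\lim t(k)$ exists in $[-\infty,\infty]$ and treats $t=-\infty$ as a separate case, showing via $\Delta_\cH(w)\le\Delta_{\tilde\cH_s}(w)$ and $\lim_{s\to-\infty}\Delta_{\tilde\cH_s}(w)=0$ that $\cH=\{0\}\in\scrC$. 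Your write-up omits this case. A secondary, easily repaired issue in the same step: you assert $\chi_j(t)\to\chi(t)$ ``throughout the domain of $\chi$,'' but the $\chi_j$ from \Cref{lem:974} have $j$-dependent domains $[T_-(E_j),T_+(H_{E_j,\scrC_j})]\setminus\{\infty\}$ that need not contain all of the domain of $\chi$; one should work instead with the globally defined family $\tilde\cH_{j,t}$, $t\in\bbR$, as the paper does. Both points are local repairs rather than structural problems, and the surrounding strategy is sound.
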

\begin{proof}
	The implication ``(iii)$\Rightarrow$(ii)'' is trivial.
\begin{Elist}
\item 
	We show that ``(ii)$\Rightarrow$(i)''.
	Assume that we have $E,E_j\in\HB$ with \cref{eq:964} and \cref{eq:963}. 
	Let notation $\tilde H_j,\tilde H$ and $\tilde\cH_{j,t},\tilde\cH_t$ and $\chi_j,\chi$ be as in \Cref{lem:974} 
	for $(E_j,H_{E_j,\scrC_j})$ and $(E,H_{E,\scrC})$, respectively. 
	By \Cref{prop:3} the relation \cref{eq:964} implies that also $\lim_{j\in J}H_{E_j}=H_E$, and it follows that 
	\[
		\lim_{j\in J}W_{\tilde H_j}(t,0,\cdot)=W_{\tilde H}(t,0,\cdot),t<0,\quad
		\lim_{j\in J}W_{\tilde H_j}(0,t,\cdot)=W_{\tilde H}(0,t,\cdot),t>0
		.
	\]
	Combining this with $\lim_{j\in J}E_j=E$ yields
	\[
		\forall t\in\bbR:\ \lim_{j\in J}\tilde\cH_{j,t}=\tilde\cH_t
		.
	\]
	In particular, \Cref{def:982}\,(i) is satisfied. 

	Now assume we have a subnet $h:K\to J$ and $\cH_{h(k)}\in\scrC_{h(k)}$, $\cH\in\RK$, such that 
	$\cH=\lim_{k\in K}\cH_{h(k)}$ in $\RK$. Our aim is to show that $\cH\in\scrC$. Let 
	\[
		t(k)\in\big[T_-(E_{h(k)}),T_+(H_{E_{h(k)},\scrC_{h(k)}})\big]\,\setminus\{\infty\}
	\]
	be such that $\cH_{h(k)}=\tilde\cH_{h(k),t(k)}$. 
	By passing to a further subnet if necessary, we may assume that the limit 
	$t:=\lim_{k\in K}t(k)$ exists in $[-\infty,\infty]$. First consider the case that $t\in(-\infty,\infty)$.
	Using the convergence given by \cref{eq:964} and \cref{eq:963} in the same way as in the previous paragraph leads to 
	\[
		\cH=\lim_{k\in K}\tilde\cH_{h(k),t(k)}=\tilde\cH_t\in\scrC
		.
	\]
	Second, assume that $t=-\infty$. For each $s\in\bbR$ we find $k_0\in K$ such that
	$\tilde\cH_{h(k),t(k)}\sqsubseteq\tilde\cH_{h(k),s}$ for all $k\geq k_0$ . 
	Passing to the limit yields $\cH\subseteq_c\tilde\cH_s$, and hence 
	$\Delta_\cH(w)\leq\Delta_{\tilde\cH_s}(w)$ for all $w\in\bbC$. We have $\lim_{s\to-\infty}\Delta_{\tilde\cH_s}(w)=0$,
	and obtain 
	\[
		\cH=\{0\}\in\scrC
		.
	\]
	Finally, we are going to rule out the case that $t=\infty$. If we had $t=\infty$, then we find for each $s\in\bbR$ 
	an index $k_0\in K$ such that $\tilde\cH_{h(k),s}\sqsubseteq\tilde\cH_{h(k),t(k)}$ for all $k\geq k_0$. 
	Passing to the limit yields $\tilde\cH_s\subseteq_c\cH$, and hence $\Delta_{\tilde\cH_s}(w)\leq\Delta_\cH(w)$ 
	for all $w\in\bbC$. By continuity of $T_+$, we have $T_+(H_{E,\scrC})=\infty$, i.e., $\scrC$ is an unbounded chain. 
	This implies that $\lim_{s\to\infty}\Delta_{\tilde\cH_s}(w)=\infty$ whenever $w$ is nonreal, 
	and we have reached a contradiction. 
\item 
	We show that ``(i)$\Rightarrow$(iii)''.
	Assume that $\scrC_j\rightsquigarrow\scrC$ and pick $E\in\HB$ with $\cB(E)\in\scrC$. By \Cref{def:982}\,(i) and 
	\Cref{prop:1}\,(iv) we find $E_j\in\HB$ such that $\cB(E_j)\in\scrC_j$ and $\lim_{j\in J}E_j=E$. Our aim is to show
	that, for each such choice of $E,E_j$, it holds that
	$\lim_{j\in j}H_{E_j,\scrC_j}=H_{E,\scrC}$ in $\bbHen$. Since $\bbHen$ is compact, it suffices to
	evaluate limits of convergent subnets. Hence, assume we have $h:K\to J$ and $H\in\bbHen$ such that 
	$\lim_{k\in K}H_{E_{h(k)},\scrC_{h(k)}}=H$. Set $\scrC':=\Phi(E,H)$, then $\scrC_{h(k)}\rightsquigarrow\scrC'$ by what
	we aready proved in the first step. Remembering \Cref{lem:965}\,(i), we find $\scrC_{h(k)}\rightsquigarrow\scrC$, and 
	now \Cref{lem:965}\,(ii) implies 
	\[
		\Phi(E,H)=\scrC'=\scrC=\Phi(E,H_{E,\scrC})
		.
	\]
	\Cref{thm:983}{\rm(ii)} yields $H=H_{E,\scrC}$. 
\end{Elist}
\end{proof}

As a corollary we obtain that on the set of bounded chains convergence can be characterized in a simple (in particular
metrizable) way. 

\begin{corollary}\label{lem:981}
	Let $(\scrC_j)_{j\in J}$ be a net in $\bCh$, and $\scrC\in\bCh$. Then $\scrC_j\rightsquigarrow\scrC$ if and only if
	$\lim_{j\in J}(\max\scrC_j)=\max\scrC$.
\end{corollary}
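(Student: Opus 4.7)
My plan is to reduce both directions to Theorem~\ref{thm:980} by choosing the ``top'' of each bounded chain as the basepoint of the concrete realization from \Cref{def:971}. Recall from \Cref{thm:983}(iv) that if $\cB(E)=\max\scrC$ then $\scrC=\Phi(E,0)$; consequently the structure Hamiltonian $H_{E,\scrC}$ from \Cref{def:971} reduces on $(0,\infty)$ to the zero Hamiltonian. The whole argument will exploit this vanishing on the positive half-line.

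For the implication ``$\lim_{j\in J}(\max\scrC_j)=\max\scrC\Rightarrow\scrC_j\rightsquigarrow\scrC$'', I use the continuous right inverse of $\cB$ guaranteed by \Cref{prop:1}(iv). Applying it to the convergent net $(\max\scrC_j)_{j\in J}$ produces $E_j,E\in\HB$ with $\cB(E_j)=\max\scrC_j$, $\cB(E)=\max\scrC$ and $E_j\to E$. Then both $\scrC_j=\Phi(E_j,0)$ and $\scrC=\Phi(E,0)$ by \Cref{thm:983}(iv), so $H_{E_j,\scrC_j}$ (resp.\ $H_{E,\scrC}$) equals the structure Hamiltonian $H_{E_j}$ (resp.\ $H_E$) on $(-\infty,0)$ and is zero on $(0,\infty)$. \Cref{prop:3} promotes $E_j\to E$ to $H_{E_j}\to H_E$, so $H_{E_j,\scrC_j}\to H_{E,\scrC}$. \Cref{thm:980}, direction (ii)$\Rightarrow$(i), gives $\scrC_j\rightsquigarrow\scrC$.

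For the converse, assume $\scrC_j\rightsquigarrow\scrC$ and fix $E\in\HB$ with $\cB(E)=\max\scrC$. By \Cref{thm:980}(iii) there exist $E_j\in\HB$ with $\cB(E_j)\in\scrC_j$ and $E_j\to E$ such that $H_{E_j,\scrC_j}\to H_{E,\scrC}$. Write $\scrC_j=\Phi(E_j,H_j)$ with $H_j\in\bbHen$. Since $H_{E,\scrC}$ vanishes on $(0,\infty)$, restricting the convergence to the positive half-line gives $H_j\to 0$ in $\bbHen$. By continuity of $T_+$ (\Cref{lem:972}) we obtain $T_+(H_j)\to 0$. By definition of $\Phi$, the top of $\scrC_j$ is $\max\scrC_j=\cB\bigl(E_j\ltimes W_{H_j}(T_+(H_j),\cdot)\bigr)$.

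It remains to pass to the limit in this last formula. Since $\tr H_j\le 1$ on $(0,T_+(H_j))$, the integrated transfer matrix equation \eqref{eq:8} gives a locally uniform bound of the form $\|W_{H_j}(T_+(H_j),z)-I\|=O(T_+(H_j))$ on compact sets of $z\in\bbC$, so $W_{H_j}(T_+(H_j),\cdot)\to I$ locally uniformly. Combined with $E_j\to E$ this yields $E_j\ltimes W_{H_j}(T_+(H_j),\cdot)\to E$ in $\Hol(\bbC)$, and continuity of $\cB$ (\Cref{prop:1}(iv)) produces $\max\scrC_j\to\cB(E)=\max\scrC$ in $\RK$. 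The only subtle point is this last step: the representative $E_j$ supplied by \Cref{thm:980} need not satisfy $\cB(E_j)=\max\scrC_j$, so one must close the ``gap'' of length $T_+(H_j)$ separately, which is exactly what the argument via $H_j\to 0$ and the control of $W_{H_j}(T_+(H_j),\cdot)$ achieves.
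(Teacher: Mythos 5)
Your argument is correct and follows the same route as the paper: the forward direction combines the continuous right inverse of $\cB$ from \Cref{prop:1}(iv) with $\Phi(E,0)$ (via \Cref{thm:983}(iv)) and \Cref{prop:3}, then invokes \Cref{thm:980}; the converse extracts $T_+(H_j)\to 0$ from the Hamiltonian convergence and closes the gap by showing $W_{H_j}(T_+(H_j),\cdot)\to I$. If anything you are slightly more explicit than the paper at the last step, where the paper simply asserts $E_j\ltimes W_{H_{E_j,\scrC_j}}(T_+(\cdot),\cdot)\to E\ltimes I$; your justification via the trace bound $\mathfrak t(W_{H_j}(T_+(H_j),\cdot))\le T_+(H_j)$ is exactly the right way to fill that in.
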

\begin{proof}
	Assume first that $\lim_{j\in J}(\max\scrC_j)=\max\scrC$. Choose $E_j,E\in\HB$ such that 
	\[
		\max\scrC_j=\cB(E_j),\ \max\scrC=\cB(E),\quad \lim_{j\in J}E_j=E
		,
	\]
	Since $H_{E_j,\scrC_j}=H_{E,\scrC}=0$, it follows that $\scrC_j\rightsquigarrow\scrC$.
	
	Conversely, assume that $\scrC_j\rightsquigarrow\scrC$. Choose $E\in\HB$ with $\cB(E)=\max\scrC$ and 
	$E_j\in\HB$ such that $\cB(E_j)\in\scrC_j$ and $\lim_{j\in J}E_j=E$. Then 
	$\lim_{j\in J}H_{E_j,\scrC_j}=H_{E,\scrC}=0$. 
	It follows that $T_+(H_{E_j,\scrC_j})\to 0$, and hence 
	\[
		\lim_{j\in J}\big(E_j\ltimes W_{H_{E_j,\scrC_j}}(T_+(H_{E_j,\scrC_j}),\cdot)\big)
		=E\ltimes I=E
		.
	\]
	It remains to note that $\cB(E_j\ltimes W_{H_{E_j,\scrC_j}}(T_+(H_{E_j,\scrC_j}),\cdot))=\max\scrC_j$.
\end{proof}

The next result allows us to conclude convergence of arbitrary (also unbounded) chains when a candidate for the limit is
guessed. We state variant which is sufficient for our later needs.

\begin{proposition}
\label{927}
	Let $(\scrC_j)_{j\in J}$ be a net in $\Ch$, and let $\chi:[0,\infty)\to\RK$. Assume that $\chi$ is continuous with 
	$\chi(0)=\{0\}$, that $a\mapsto K_{\chi(a)}(0,0)$ is strictly increasing with 
	$\lim_{a\to\infty}K_{\chi(a)}(0,0)=\infty$, and that there exists $\cH_{a,j}\in\scrC_j$ for $a>0$ and $j\in J$, such that 
	\[
	\forall a>0: \lim_{j\in J}\cH_{a,j}=\chi(a)
	.
	\]
	Then $\scrC:=\chi([0,\infty))\in\ubCh$ and $\scrC_j\rightsquigarrow\scrC$. 
\end{proposition}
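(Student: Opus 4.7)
The strategy is to realize $\scrC$ as $\Phi(E,H)$ for a suitably chosen $(E,H)\in\HB\times\bbHen$, combining the compactness of $\bbHen$ (Lemma~\ref{lem:972}) with Theorem~\ref{thm:980}. Fix any $a_0>0$ and pick $E\in\HB$ with $\cB(E)=\chi(a_0)$. Using the continuous right inverse of $\cB$ from Theorem~\ref{prop:1}(iv), I select $E_j\in\HB$ with $\cB(E_j)=\cH_{a_0,j}$ and $E_j\to E$. By compactness of $\bbHen$, from any given subnet I can extract a convergent sub-subnet of $(H_{E_j,\scrC_j})_j$; fix one such sub-subnet $(H_{E_{h(k)},\scrC_{h(k)}})_{k\in K}$ with limit $H\in\bbHen$, and set $\scrC':=\Phi(E,H)\in\Ch$.

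By Theorem~\ref{thm:980}, implication (ii)$\Rightarrow$(i), $\scrC_{h(k)}\rightsquigarrow\scrC'$. Combining $\cH_{a,h(k)}\to\chi(a)$ with Definition~\ref{def:982}(ii) yields $\chi(a)\in\scrC'$ for every $a>0$, so $\scrC\subseteq\scrC'$. If $\scrC'$ had a largest element $\cH^*$, then the inequality $K_{\chi(a)}(0,0)\le K_{\cH^*}(0,0)<\infty$ would hold for all $a$, contradicting $K_{\chi(a)}(0,0)\to\infty$. Hence $\scrC'\in\ubCh$, and Lemma~\ref{lem:974} provides an order-preserving homeomorphism $\chi':[T_-(E),\infty)\to\scrC'$ with $\chi'(T_-(E))=\{0\}$.

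To identify $\scrC$ with $\scrC'$, consider $\varphi:=(\chi')^{-1}\circ\chi:[0,\infty)\to[T_-(E),\infty)$. It is continuous with $\varphi(0)=T_-(E)$. It is strictly increasing: since $\scrC'$ is totally ordered, $\chi(a_1)$ and $\chi(a_2)$ are comparable, and $\chi(a_2)\sqsubseteq\chi(a_1)$ with $a_1<a_2$ would force $K_{\chi(a_2)}(0,0)\le K_{\chi(a_1)}(0,0)$, contradicting the strict increase hypothesis. Further, $\varphi(a)\to\infty$ as $a\to\infty$, because otherwise $\chi(a)$ stays $\sqsubseteq$ a fixed element, bounding $K_{\chi(a)}(0,0)$. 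The intermediate value theorem then yields $\varphi([0,\infty))=[T_-(E),\infty)$, hence $\scrC=\chi'([T_-(E),\infty))=\scrC'\in\ubCh$.

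Finally, to upgrade from the sub-subnet convergence to $\scrC_j\rightsquigarrow\scrC$, verify the two clauses of Definition~\ref{def:982}. Clause (i) is immediate: take $(\cH_{a,j})_j$ for $\cH=\chi(a)$ with $a>0$, and the constant net $\{0\}$ for $\cH=\{0\}$. For clause (ii), given any subnet $h:K\to J$ and $\cH_{h(k)}\in\scrC_{h(k)}$ with $\cH_{h(k)}\to\cH$ in $\RK$, rerun the construction on the subnet $(\scrC_{h(k)})_{k\in K}$: extract a further sub-subnet along which the associated Hamiltonians converge, apply Theorem~\ref{thm:980} and the identification just proved to conclude $\scrC_{h(k')}\rightsquigarrow\scrC$, and invoke Definition~\ref{def:982}(ii) for this further sub-subnet to force $\cH\in\scrC$. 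The main obstacle is the identification $\scrC=\scrC'$, which is where continuity of $\chi$ combined with the strict increase and divergence of $a\mapsto K_{\chi(a)}(0,0)$ is essential; without these the image of $\chi$ could skip over stretches of $\scrC'$ corresponding to indivisible intervals on which $K(0,0)$ happens to be constant.
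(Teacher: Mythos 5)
Your argument is correct, but it takes a genuinely different route from the paper's. The paper works directly at the level of chains: it uses the hypothesis $K_{\cH_{a,j}}(0,0)\to K_{\chi(a)}(0,0)$ together with the nondecreasing nature of $\cH\mapsto K_\cH(0,0)$ on a chain to deduce $\cH_{a,j}\sqsubseteq\cH_{b,j}$ eventually, then invokes the bounded-chain case (\Cref{lem:981}) to show $\scrC(\chi(b))=\{\chi(a)\mid a\in[0,b]\}$ for each $b$, which yields that $\scrC$ is an unbounded chain and handles \Cref{def:982}(ii) by a matching direct order argument. You instead reach for the heavier machinery: the realization $\Phi(E,H)$, compactness of $\bbHen$, and \Cref{thm:980}(ii)$\Rightarrow$(i), and then identify $\scrC=\scrC'=\Phi(E,H)$ by an intermediate-value argument on $\varphi=(\chi')^{-1}\circ\chi$. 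This buys you two things: you never have to verify the chain axioms for $\scrC$ directly, since $\Phi(E,H)$ is automatically a chain, and the structure of your argument makes clear that every convergent sub-subnet of $(H_{E_j,\scrC_j}|_{(0,\infty)})$ has the same limit $H$, which by compactness of $\bbHen$ would immediately give $H_{E_j,\scrC_j}\to H_{E,\scrC}$ and hence $\scrC_j\rightsquigarrow\scrC$ via \Cref{thm:980}; your explicit re-verification of \Cref{def:982}(ii) by rerunning the construction is correct but more roundabout than necessary once you have the identification $\scrC=\Phi(E,H)$. One small slip to fix: you pick an arbitrary $E$ with $\cB(E)=\chi(a_0)$ and then assert $E_j:=\sigma(\cH_{a_0,j})\to E$, but the continuous right inverse $\sigma$ gives $\sigma(\cH_{a_0,j})\to\sigma(\chi(a_0))$, which need not equal your arbitrary $E$; simply set $E:=\sigma(\chi(a_0))$ from the start. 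You should also note explicitly that $\chi(a)\in\DB$ for $a>0$ (it is a nonzero limit of spaces in $\DB\cup\{\{0\}\}$, hence in the closure $\DB\cup\{\{0\}\}$ of $\DB$, and is nonzero because $K_{\chi(a)}(0,0)>K_{\chi(0)}(0,0)=0$), which is needed for $\cB(E)=\chi(a_0)$ to make sense.
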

\begin{proof}
	First of all note that all spaces $\chi(a)$ with $a>0$ are $\neq\{0\}$ and, being limits of dB-spaces, 
	belong to $\DB$. 
	
	Let $b\in(0,\infty)$. For $a\in(0,b)$ we have 
	\[
		\lim_{j\in J}K_{\cH_{a,j}}(0,0)=K_{\chi(a)}(0,0)<K_{\chi(b)}(0,0)=\lim_{j\in J}K_{\cH_{b,j}}(0,0)
		,
	\]
	and hence find $j_0\in J$ such that 
	\[
		\forall j\in J,j\geq j_0: K_{\cH_{a,j}}(0,0)<K_{\cH_{b,j}}(0,0)
		.
	\]
	The function 
	\[
		\left\{
		\begin{array}{rcl}
			\scrC_j & \to & [0,\infty)
			\\
			\cH & \mapsto & K_\cH(0,0)
		\end{array}
		\right.
	\]
	is nondecreasing, and we conclude that $\cH_{a,j}\sqsubseteq\cH_{b,j}$ for all $j\geq j_0$. In other words, it holds
	that $\cH_{a,j}\in\scrC(\cH_{b,j})$ for such $j$. 
	By \Cref{lem:981} we have 
	\[
		\scrC(\cH_{b,j})\rightsquigarrow\scrC(\chi(b))
		.
	\]
	Since $\chi(a)$ is the limit of the subnet $(\cH_{a,j})_{\substack{j\in J\\ j\geq j_0}}$, 
	it follows that $\chi(a)\in\scrC(\chi(b))$. We see that 
	\[
		\big\{\chi(a)\mid a\in[0,b]\big\}\subseteq\scrC(\chi(b))
		.
	\]
	Let $\cH\in\scrC(\chi(b))$. Then $K_\cH(0,0)\leq K_{\chi(b)}(0,0)$, and thus we find $a\in[0,b]$ such that 
	$K_\cH(0,0)=K_{\chi(a)}(0,0)$. If $a'\in[0,a)$, then $K_{\chi(a')}(0,0)<K_\cH(0,0)$. Since $\chi(a')\in\scrC(\chi(b))$
	it follows that $\chi(a')\sqsubseteq\cH$. Similarly, we obtain that $\cH\sqsubseteq\chi(a')$ for all $a'\in(a,b]$. 
	
	Consider the case that $0<K_\cH(0,0)<K_{\chi(b)}(0,0)$. Then $a\in(0,b)$, and continuity of $\chi$ yields 
	\[
		\chi(a)=\lim_{a'\uparrow a}\chi(a')\subseteq_c\cH\subseteq_c\lim_{a'\downarrow a}\chi(a')=\chi(a)
		,
	\]
	i.e., $\cH=\chi(a)$. If $K_\cH(0,0)=0$, we have 
	\[
		\{0\}\subseteq_c\cH\subseteq_c\lim_{a'\downarrow 0}\chi(a')=\{0\}
		,
	\]
	and if $K_\cH(0,0)=K_{\chi(b)}(0,0)$, then 
	\[
		\chi(b)=\lim_{a'\uparrow b}\chi(a')\subseteq_c\cH\subseteq_c\chi(b)
		.
	\]
	Thus, in every case, $\cH=\chi(a)$. We conclude that 
	\begin{equation}
	\label{941}
		\forall b\in(0,\infty): \scrC(\chi(b))=\big\{\chi(a)\mid a\in[0,b]\big\}
		.
	\end{equation}
	We can now check that $\scrC\in\ubCh$. The properties {\rm(i)}, {\rm(ii)}, {\rm(iv)}, {\rm(v)} of \Cref{def:2} are clear
	from \cref{941}. To show that $\scrC$ is closed, it is enough to note that for any convergent net $(\chi(a_i))_{i\in I}$
	the net $(a_i)_{i\in I}$ is eventually bounded since $\lim_{a\to\infty}K_{\chi(a)}(0,0)=\infty$, and that $\chi$ is
	continuous. 
	
	It remains to show that $\scrC_j\rightsquigarrow\scrC$. Property {\rm(i)} of \Cref{def:982} holds directly 
	by the present assumption. Assume we have a convergent subnet as in {\rm(ii)} of this definition, say, 
	$\lim_{k\in K}\cH_{h(k)}=:\cH$ where $\cH_{h(k)}\in\scrC_{h(k)}$. We argue in the same way as above. 
	Let $a\in[0,\infty)$ be such that $K_{\chi(a)}(0,0)=K_{\cH}(0,0)$. For $a'\in[0,a)$ we have 
	\[
		\lim_{j\in J}K_{\cH_{a'j}}(0,0)=K_{\chi(a')}(0,0)<K_{\cH}(0,0)=\lim_{k\in K}K_{h(k)}(0,0)
		,
	\]
	and hence there exists $k_0\in K$ such that $\cH_{a',h(k)}\sqsubseteq\cH_{h(k)}$ for all $k\geq k_0$. We obtain 
	\[
		\chi(a')=\lim_{\substack{k\in K\\ k\geq k_0}}\cH_{a',h(k)}\subseteq_c
		\lim_{\substack{k\in K\\ k\geq k_0}}\cH_{h(k)}=\cH.
	\]
	Similarly, $\cH\subseteq_c\chi(a')$ for all $a'\in(a,\infty)$. Continuity of $\chi$ yields $\cH=\chi(a)$. 
\end{proof}

%
%
%
\newpage
\section{Measures associated to unbounded chains}
\label{sec:5}
%
%
%

In the previous section we saw that bounded chains correspond to de~Branges spaces: by \Cref{thm:2} and \Cref{lem:981} the maps 
\[
	\left\{
	\begin{array}{rcl}
		\bCh & \to & \DB
		\\
		\scrC & \mapsto & \max\scrC
	\end{array}
	\right.
	,\qquad
	\left\{
	\begin{array}{rcl}
		\DB & \to & \bCh
		\\
		\cH & \mapsto & \scrC(\cH)
	\end{array}
	\right.
\]
are mutually inverse bijections and both preserve convergence. 

For unbounded chains the situation is much more complex. The substitute for the set $\DB$ above is 
\[
	\bbM:=\big\{\mu\mid \mu\text{ positive Borel measure on }\bbR\big\}
	,
\]
and a map $\ubCh\to\bbM$ can be constructed, cf.\ \Cref{thm:900} below. This map is surjective and preserves convergence, 
but it is not anymore injective.

In order to simplify the presentation we restrict all considerations to chains with $\vartheta_{\scrC}=0$; 
treating the general case is not necessary for our purposes, and would involve some technical complications. 

%
\subsection{The direct problem}
\label{sec:5-1}
%

Inclusions of a space of entire functions in a space $L^2(\mu)$ are understood via the restriction map 
$F\mapsto F|_{\bbR}$ $\mu$-a.e. (which is often, but not always, injective). 

\begin{definition}\label{def:901}
	Let $\scrC\in\ubCh$ with $\vartheta_\scrC=0$. A measure $\mu\in\bbM$ is called a \emph{spectral measure for $\scrC$}, if 
	\[
		\forall\cH\in\scrC: \cH\sqsubseteq L^2(\mu).
	\]
\end{definition}

The following theorem is again a slight addition to the results shown by de~Branges.

\begin{theorem}
\label{thm:900}
	\phantom{}
	\begin{enumerate}[{\rm(i)}]
	\item
	Let $\scrC\in\ubCh$ with $\vartheta_\scrC=0$. Then there exists a unique spectral measure for $\scrC$, and we 
	denote this measure as $\mu_\scrC$. 
	\item Let $(E,H)\in\HB^*\times\bbHe_{0,\infty}$. Then
	\[
		q_{E,H}:=J\begin{pmatrix} A & B\\ -B & A\end{pmatrix}\star q_H\in\cN_0
	\]
	and the measure in the integral representation of $q_{E,H}$ is $|E(x)|^2d\mu_{\Phi(E,H)}(x)$. 
	\end{enumerate}
\end{theorem}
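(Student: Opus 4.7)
The plan is to settle (ii) first and deduce (i) from it. Using \Cref{thm:983}, fix $E\in\HB^*$ with $\cB(E)\in\scrC$ (possible since $\vartheta_\scrC=0$ lets us pick $E$ without real zeros) and $H\in\bbHen$ with $\scrC=\Phi(E,H)$; unboundedness of $\scrC$ forces $T_+(H)=\infty$, so $q_H\in\cN_0$ is an honest Nevanlinna function with measure $\mu_H$. Write $E=A-iB$ as always.

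The first task is to check $q_{E,H}\in\cN_0$. Computing $J\bigl(\begin{smallmatrix} A & B \\ -B & A\end{smallmatrix}\bigr)=\bigl(\begin{smallmatrix} B & -A \\ A & B\end{smallmatrix}\bigr)$ gives $q_{E,H}=(Bq_H-A)/(Aq_H+B)$, and a direct calculation yields
\[
\Im q_{E,H}(z)=\frac{(|A|^2+|B|^2)\,\Im q_H(z)+\Im(B\overline A)\,(|q_H(z)|^2+1)}{|A(z)q_H(z)+B(z)|^2}.
\]
Both terms in the numerator are strictly positive in $\bbC_+$: the first because $q_H\in\cN_0$, the second because $4\Im(B\overline A)=|E|^2-|E^\#|^2>0$ by Hermite--Biehler. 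Hence $q_{E,H}\in\cN_0$; let $\nu_{E,H}$ be its representing measure.

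The heart of (ii) is the identification $\nu_{E,H}=|E|^2\,d\mu_{\Phi(E,H)}$. The strategy is to define $\mu_\scrC:=|E|^{-2}\nu_{E,H}$ (well-defined since $E$ has no real zeros) and show that $F\mapsto F|_\bbR$ embeds each $\cB(E\ltimes W_H(t,\cdot))$ almost isometrically into $L^2(\mu_\scrC)$. The backbone is the classical de~Branges isometric inclusion for the \emph{untwisted} chain associated to $H$: the spaces $\cH(K_H(t,\cdot,\cdot))$ sit isometrically in $L^2(\mu_H)$. The twist by $E$ --- replacing $(0,1)W_H(t,\cdot)$ by $(A,B)W_H(t,\cdot)$ --- intertwines, on the Weyl side, with the M\"obius action $q_H\mapsto q_{E,H}$. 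Quantitatively, Stieltjes inversion (\cref{eq:949}) applied at real $x$ (where $A(x),B(x)$ are real, $|A(x)|^2+|B(x)|^2=|E(x)|^2$, and $\Im(B\overline A)=0$) gives on the absolutely continuous part
\[
\frac{d\nu_{E,H}^{\rm ac}}{dx}(x)=\frac{|E(x)|^2}{|A(x)q_H(x+i0)+B(x)|^2}\cdot\frac{d\mu_H^{\rm ac}}{dx}(x),
\]
which is the expected transfer. Singular parts match similarly: real zeros of $A q_H+B$ produce atoms of $\nu_{E,H}$ with the correct weight. For the piece of $\scrC$ below $\cB(E)$, which by \Cref{lem:991} is the bounded chain $\scrC(\cB(E))$, the inclusions transfer by restriction.

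From (ii), existence in (i) is immediate with $\mu_\scrC:=|E|^{-2}\nu_{E,H}$. Uniqueness is a density argument: if $\mu_1,\mu_2$ are both spectral measures for $\scrC$, then $\int|F|^2\,d\mu_1=\|F\|_\cH^2=\int|F|^2\,d\mu_2$ for every $\cH\in\scrC$ and $F\in\cH^\flat$; polarization plus the fact that, for $\scrC$ unbounded, the reproducing kernels $K_\cH(\cdot,w)$ with $w\in\bbC\setminus\bbR$ and $\cH$ ranging over $\scrC$ form a total set in $L^2(\mu_i)$ forces $\mu_1=\mu_2$. The main obstacle will be the quantitative measure identification in (ii): the absolutely continuous computation is routine, but cleanly matching singular parts at real zeros of $Aq_H+B$, and correctly handling the ``base'' $\cB(E)$ of the chain, is likely best done by first treating the case $T_+(H)<\infty$ (where the chain is bounded and the claim reduces to a finite computation via \Cref{ex:967} and \Cref{cor:977}) and then passing to $T_+(H)=\infty$ using the convergence machinery of Section~\ref{sec:4-3}.
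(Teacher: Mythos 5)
The structural skeleton of your argument is sensible, and the computation verifying $q_{E,H}\in\cN_0$ is correct and nicely concrete (the identity $4\,\Im(B\overline{A})=|E|^2-|E^\#|^2$ is exactly the right observation). However, the ``heart of (ii)'' as you describe it is not a proof, and the paper takes a substantially different route there.

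The central task is to show that every space $\cB(E\ltimes W_H(t,\cdot))$ sits almost isometrically in $L^2(\mu_\scrC)$ where $\mu_\scrC:=|E|^{-2}\nu_{E,H}$. You reduce this to the statement that the untwisted chain $\cH(K_H(t,\cdot,\cdot))$ embeds isometrically in $L^2(\mu_H)$ and then invoke an ``intertwining'' of the $E$-twist with the M\"obius map $q_H\mapsto q_{E,H}$. The intertwining is precisely the nontrivial content, and the Stieltjes-inversion calculation you sketch only handles the a.c.\ boundary behavior and asserts that singular parts ``match similarly''. That assertion is not substantiated, and it is not at all obvious: one has to show that point masses of $\nu_{E,H}$ at real zeros of $Aq_H+B$ come out with exactly the weight needed for the norm identities on $\cB(E\ltimes W_H(t,\cdot))^\flat$. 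The paper does not attempt an a.c./singular split at all. Instead it leans on two structural results from de~Branges \cite{debranges:1968}: Problem~158 gives the existence of a single measure $\nu$ such that for every $t$ the representing measure of $q_{E_t,H_t}$ equals $|E_t|^2d\nu$; then, in the hard subcase, Problem~163 gives the unique measure in which the $\cB(E_t)$ for regular $t$ embed isometrically, and an auxiliary argument with the matrix function $M_t$ from de~Branges' Theorem~27 identifies the two. Unless you cite these (or reprove them), the embedding claim is a gap, and your uniqueness argument --- which needs the union $\bigcup\scrC$ to be total enough in $L^2(\mu)$ to determine $\mu$ --- is essentially a restatement of Problem~163.

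Finally, your proposed reduction (``first treat $T_+(H)<\infty$, then pass to $T_+(H)=\infty$ by the convergence machinery of Section~\ref{sec:4-3}'') is miscalibrated. Theorem~\ref{thm:900}(i) is about $\scrC\in\ubCh$, which by \Cref{thm:983}(i) forces $T_+(H)=\infty$ from the start --- there is no ``$T_+(H)<\infty$'' case to treat first. The case distinction that actually matters, and that the paper makes, is whether $\sup I_{\rm reg}<\infty$ or $\sup I_{\rm reg}=\infty$ for $H_{E,\scrC}$, i.e.\ whether the Hamiltonian terminates in an indivisible interval. In the first case the spectral measure is discrete (supported on the zeros of the corresponding $G$) and uniqueness is checked by an explicit atom-by-atom computation using the reproducing property; in the second case one goes through Problem~163. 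These are genuinely different arguments, not a finite-interval approximation followed by a limit, and your plan does not engage with this dichotomy.
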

\begin{proof}
	Let $\scrC\in\ubCh$ and choose $(E,H)\in\HB^*\times\bbHe_{0,\infty}$ with $\scrC=\Phi(E,H)$. Let $H_{E,\scrC}$ be the
	Hamiltonian from \Cref{def:971}, let $I_{\rm reg}$ refer to $H_{E,\scrC}$, and denote by $E_t=A_t-iB_t$ the solution of 
	\begin{align*}
		& \partial_t(A_t(z),\ B_t(z))J=z(A_t(z),\ B_t(z))H_{E,\scrC}(t),\quad t\in\bbR,
		\\
		& (A_0(z),\ B_0(z))=(A(z),\ B(z)).
	\end{align*}
	For $t>T_-(E)$ let $H_t\in\bbHe_{0,\infty}$ be the Hamiltonian 
	\[
		H_t(s):=H_{E,\scrC}(s+t),\quad s\in(0,\infty)
		.
	\]
	Then $\scrC=\Phi(E_t,H_t)$ for all $t>T_-(E)$. By \cite[Problem~158]{debranges:1968} there exists a measure $\nu$ such
	that for all $t>T_-(E)$ the measure in the integral representation of $q_{E_t,H_t}$ is $|E_t|^2d\nu$. 

	At this point we split the argument
	distinguishing the cases whether $I_{\rm reg}$ is bounded from above or not. If $\sup I_{\rm reg}<\infty$ we
	use $t:=\sup I_{\rm reg}$ to show that $\nu$ is the unique spectral measure for $\scrC$, and if 
	$\sup I_{\rm reg}=\infty$ we refer to \cite[Problem~163]{debranges:1968} to obtain a unique spectral measure and then
	show that this measure equals $\nu$. 
\begin{Elist}
\item 
	Assume that $t:=\sup I_{\rm reg}<\infty$. Let $\alpha\in\bbR$ be such that $H_t=e_\alpha e_\alpha^*$ a.e., set
	\[
		G:=(A_t,\ B_t)e_\alpha=A_t\cos\alpha+B_t\sin\alpha,\quad 
		L:=(A_t,\ B_t)e_{\alpha+\frac\pi2}=-A_t\sin\alpha+B_t\cos\alpha
		.
	\]
	Since $\vartheta_E=0$, the functions $G$ and $L$ have no common zeros, and since $\frac BA\in\cN_0$ also 
	$\frac LG\in\cN_0$. In particular, this implies that both functions $L$ and $G$ have only real and simple zeros. Write 
	\[
		\cH:=
		\begin{cases}
			\cB(E_s) &\text{if}\ s>T_-(E),
			\\
			\{0\} &\text{if}\ s=T_-(E),
		\end{cases}
	\]
	then we have for all $s>t$
	\[
		\cB(E_s)=\cH\oplus\spann\{G\}\ \text{with}\ \|G\|_{\cB(E_s)}^2=\frac 1{s-t},\qquad \cB(E_s)^\flat=\cH
		,
	\]
	cf.\ \cref{eq:50}. A direct computation shows that 
	we have $q_{E_t,H_t}=\frac{L}G$. Hence, $\nu$ is discrete and supported on the zero set of $G$ with point mass 
	$\frac{L(x)}{G'(x)}$ for $x\in\bbR$ with $G(x)=0$. If $t>T_-(E)$ we have $\cH\subseteq_i L^2(\nu)$ by 
	\cite[Theorem~22]{debranges:1968}, if $t=T_-(E)$ the same relation holds trivially. Moreover, obviously, 
	$\int_\bbR|G|^2d\nu=0$. Thus $\nu$ is a spectral measure for $\scrC$. Conversely, if $\tilde\nu$ is a spectral measure
	for $\scrC$, then we must have 
	\[
		\int_{\bbR}|G|^2d\tilde\nu\leq\|G\|_{\cB(E_s)}^2,\quad s>t
		,
	\]
	and hence $\int_\bbR|G|^2d\tilde\nu=0$. Thus $\tilde\nu$ is discrete with support contained in the zero set of $G$. 
	If $x\in\bbR$ with $G(x)=0$, we have $\frac{G(z)}{z-x}\in\cH$, and hence may evaluate 
	\begin{align*}
		\tilde\nu(\{x\})= &\, \frac 1{G'(x)^2}\int_\bbR\Big|\frac{G(y)}{y-x}\Big|^2d\tilde\nu(y)
		\\
		= &\, \frac 1{G'(x)^2}\Big\|\frac{G(z)}{z-x}\Big\|^2=
		\frac 1{G'(x)^2}\int_\bbR\Big|\frac{G(y)}{y-x}\Big|^2d\nu(y)=\nu(\{x\})
		.
	\end{align*}
\item 
	We invoke \cite[Problem~163]{debranges:1968} which tells us that there exists an unique measure $\mu$ such that 
	\[
		\forall t\in I_{\rm reg}: \cB(E_t)\subseteq_i L^2(\mu)
		.
	\]
	We observe that $\mu$ is the unique spectral measure for $\scrC$. 
	Given $s\in\bbR$, we can choose $t\in I_{\rm reg}$ with $t\geq s$, and it follows that 
	\[
		\cB(E_s)\sqsubseteq\cB(E_t)\subseteq_i L^2(\mu)
		.
	\]
	On the other hand, if $\tilde\mu$ is a spectral measure for $\scrC$, 
	and $t\in I_{\rm reg}$, choose $s\in I_{\rm reg}$ with $s>t$. Then 
	\[
		\cB(E_t)\sqsubseteq\cB(E_s)^\flat\subseteq_i L^2(\tilde\mu)
		,
	\]
	and it follows that $\tilde\mu=\mu$. 

	In order to identify $\mu$, we provide an auxiliary argument. 
	Let $t\geq 0$. Since $\cB(E)\subseteq\cB(E_t)$, \cite[Theorem~27]{debranges:1968}
	provides us with a matrix function $M_t(z)$ such that 
	\begin{enumerate}[{\rm(i)}]
	\item $M_t$ has real and entire entries, 
	\item $(1,0)M_t=(A_t,B_t)$ and the kernel $\frac{M_t(z)JM_t(w)^*-E(z)JE(\overline w)}{z-\overline w}$ is positive
		definite,
	\item $\lim_{y\to\infty}\frac 1yJM_t(iy)\star i=0$. 
	\end{enumerate}
	Set 
	\[
		\tilde M_t(z):=\begin{pmatrix} A & B\\ -B & A \end{pmatrix}W_{H_{E,\scrC}}(t,.)
		,
	\]
	then $\tilde M_t$ also has the properties {\rm(i)}, {\rm(ii)}. The functions $JM_t\star i$ and $J\tilde M_t\star i$
	belong to $\cN_0$, are continuous along $\bbR$, and a computation shows that 
	\[
		\Im(JM_t\star i)(x)=\Im(J\tilde M_t\star i)(x)=\Big|\frac{E(x)}{E(t,x)}\Big|^2,\quad x\in\bbR
		.
	\]
	Thus, we find $\alpha_t,\beta_t\in\bbR$ such that $(J\tilde M_t\star i)(z)=\alpha_t+\beta_tz+(JM_t\star i)(z)$, and 
	in turn 
	\[
		\tilde M_t(z)=\begin{pmatrix} 1 & 0\\ -(\alpha_t+\beta_tz) & 1\end{pmatrix}M_t(z)
		.
	\]
	It follows that for all $q\in\cN_0$ 
	\[
		(J\tilde M_t\star q)(z)=\alpha_t+\beta_t z+(JM_t\star q)(z)
		,
	\]
	and hence that the measures in the integral representations of $J\tilde M_t\star q$ and $JM_t\star q$ coincide.

	Now fix $t\in I_{\rm reg}\cap(0,\infty)$. Then \cite[Problem~90]{debranges:1968} provides us with a function 
	$q_t\in\cN_0\cup\{\infty\}$ such that the measure in the Herglotz integral representation of 
	\[
		J\begin{pmatrix} A(t,.) & B(t,.)\\ -B(t,.) & A(t,.)\end{pmatrix}\star q_t
	\]
	is $|E(t,x)|^2d\mu(x)$. We obtain from \cite[Theorem~32]{debranges:1968} that the measure in the integral
	representation of $JM_t\star q_t$, and hence in the one of $J\tilde M_t\star q_t$, is $|E(x)|^2d\mu(x)$. 

	We have $\lim_{t\to\infty}[J\tilde M_t\star q_t]=q_{E,H}$, and since we can let $t\to\infty$ inside $I_{\rm reg}$ it
	follows that the measure in the integral representation of $q_{E,H}$ is $|E(x)|^2d\mu(x)$. 
\end{Elist}
\end{proof}

By means of \Cref{thm:900} we have a map 
\[
\ubCh\to\bbM,\quad \scrC\mapsto\mu_{\scrC}.
\]
We show that this map is surjective but not injective, and that it preserves convergence. 

\begin{proposition}
\label{pro:907}
	The following statements hold.
	\begin{enumerate}[{\rm(i)}]
	\item For each $\mu\in\bbM$ the set 
		\[
			\ubCh_\mu:=\big\{\scrC\in\ubCh\mid \vartheta_\scrC=0,\mu_\scrC=\mu\big\}
		\]
		has infinitely many elements.
	\item Let $(\scrC_j)_{j\in J}$ be a net in $\ubCh$, and $\scrC\in\ubCh$. If $\scrC_j\rightsquigarrow\scrC$, then 
		$\lim_{j\in J}\mu_{\scrC_j}=\mu_{\scrC}$ in the $w^*$-topology of $C_c(\bbR)^*$.
	\end{enumerate}
\end{proposition}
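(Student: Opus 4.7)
The plan splits into items (i) and (ii), each leveraging the representation formula of \Cref{thm:900}\,(ii).

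For (ii), I will transport the convergence through the chain. Pick $E\in\HB^*$ with $\cB(E)\in\scrC$; \Cref{thm:980}\,(iii) then supplies $E_j\in\HB$ with $\cB(E_j)\in\scrC_j$, $E_j\to E$ locally uniformly, and $H_{E_j,\scrC_j}\to H_{E,\scrC}$ in $\bbHen$. Since $E\in\HB^*$ has no real zeros, $E_j\in\HB^*$ for all sufficiently large $j$. The restrictions $H_j:=H_{E_j,\scrC_j}|_{(0,\infty)}$ converge to $H_{E,\scrC}|_{(0,\infty)}$ in $\bbHe_{0,\infty}$, so the inverse spectral homeomorphism of \Cref{thm:8} gives $q_{H_j}\to q_{H_{E,\scrC}|_{(0,\infty)}}$ in $\cN_0\cup\{\infty\}$. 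Continuity of the M\"obius action, combined with $A_j\to A$ and $B_j\to B$, then yields $q_{E_j,H_j}\to q_{E,H}$ locally uniformly on $\bbC_+$, and the Grommer--Hamburger characterization \eqref{eq:968} converts this to weak-$*$ convergence in $C_c(\bbR)^*$ of the measures in the respective integral representations. By \Cref{thm:900}\,(ii), these measures are $|E_j|^2\,d\mu_{\scrC_j}$ and $|E|^2\,d\mu_\scrC$. Since $E\in\HB^*$ gives $|E|^2>0$ pointwise on $\bbR$, and $|E_j|^2\to|E|^2$ uniformly on compacts, for any $\varphi\in C_c(\bbR)$ the quotient $\varphi/|E_j|^2$ converges uniformly on $\supp\varphi$ to $\varphi/|E|^2$; writing
\[
	\int\varphi\,d\mu_{\scrC_j}=\int\frac{\varphi}{|E_j|^2}\,|E_j|^2\,d\mu_{\scrC_j}
\]
and combining both convergences produces $\mu_{\scrC_j}\to\mu_\scrC$ in the $w^*$-topology.

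For (i), I will invert the formula of \Cref{thm:900}\,(ii). Given $\mu\in\bbM$, I will choose $E\in\HB^*$ for which $|E|^2\,d\mu$ admits an integral representation of the form \eqref{eq:997}---this can always be arranged by taking $E$ of sufficiently fast growth, appealing to the generalized Nevanlinna framework of \Cref{sec:2-5} when needed. The corresponding function $q_E$ lies in $\cN_0$ (or in $\cN_{<\infty}^{(\infty)}$), and the M\"obius equation $q_{E,H}=q_E$ for $q_H$ is solvable because the defining matrix $J\bigl(\begin{smallmatrix}A & B\\-B & A\end{smallmatrix}\bigr)$ has determinant $A^2+B^2=|E|^2>0$ on $\bbR$, hence is invertible on $\cN_0\cup\{\infty\}$. \Cref{thm:8} then produces $H\in\bbHe_{0,\infty}$ with Weyl coefficient $q_H$, and the chain $\scrC:=\Phi(E,H)\in\ubCh$ satisfies $\mu_\scrC=\mu$ by \Cref{thm:900}\,(ii). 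For infinitude, I will pick a sequence $E_n\in\HB^*$ with pairwise $\sqsubseteq$-incomparable $\cB(E_n)$---concretely, by extending a fixed base $E_0$ along the linear $J$-inner factors $I-\ell ze_{\alpha_n}e_{\alpha_n}^*J$ of \Cref{917} with distinct angles $\alpha_n$. Since every chain is totally ordered under $\sqsubseteq$ by \Cref{def:2}\,(ii), incomparable spaces cannot coexist in the same chain, so the resulting chains $\scrC_{E_n}$ are pairwise distinct.

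The main obstacle is the distinctness step in (i): one must simultaneously ensure that each $E_n$ meets the integrability/growth condition needed for the backwards construction, that the resulting Hamiltonian lies in the limit-point class at $\infty$ so the chain is genuinely unbounded, and that the spaces $\cB(E_n)$ really are pairwise $\sqsubseteq$-incomparable. The first two conditions are robust under small perturbations of the choice of $E$, but verifying incomparability requires a careful selection of the angle and length parameters in the extension step and an inspection of how these interact with the indivisible-interval structure of the underlying canonical systems.
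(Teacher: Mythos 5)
Your argument for item (ii) is essentially the paper's: pick $E\in\HB^*$ with $\cB(E)\in\scrC$, use Theorem~\ref{thm:980} to produce $E_j\to E$ and $H_{E_j,\scrC_j}\to H_{E,\scrC}$, pass to $q_{E_j,H_j}\to q_{E,H}$, and apply Grommer--Hamburger. The paper's proof is terse at the final step (it writes ``therefore $\lim\mu_{\scrC_j}=\mu_\scrC$'' after observing convergence of the Nevanlinna functions, whereas by Theorem~\ref{thm:900}\,(ii) what Grommer--Hamburger directly gives is $|E_j|^2\,d\mu_{\scrC_j}\to|E|^2\,d\mu_\scrC$); your explicit ``divide by $|E_j|^2$, using $|E|^2>0$ on $\bbR$ and local uniform convergence'' step cleanly fills that in. Good.

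Item (i) has a genuine structural flaw, and it is not the one you flag as the ``main obstacle.'' Your construction extends a fixed $E_0$ by linear $J$-inner factors $W_n=I-\ell ze_{\alpha_n}e_{\alpha_n}^*J$ with distinct angles, so $E_n=E_0\ltimes W_n$ and, by \eqref{eq:50}, $\cB(E_0)=\cB(E_n)^\flat\sqsubseteq\cB(E_n)$. Consequently every chain $\scrC_n$ you build through $\cB(E_n)$ also contains $\cB(E_0)$ (by \Cref{lem:991}). But by the de~Branges ordering theorem, recorded in \Cref{rem:902}, any two chains in $\ubCh_\mu$ are either equal or intersect only in $\{\{0\}\}$. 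Your chains all share the nonzero space $\cB(E_0)$, so if two of them were in $\ubCh_\mu$ they would have to coincide---yet they cannot, because they contain the $\sqsubseteq$-incomparable spaces $\cB(E_n)$ and $\cB(E_m)$. The conclusion is that at most one of your $\scrC_n$ can have spectral measure $\mu$; the construction yields distinct chains, but not distinct elements of $\ubCh_\mu$. No amount of care with the angle/length parameters or the limit-point check can repair this, since the obstruction is the ordering theorem itself.

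Two further issues. First, your ``choose $E$ of sufficiently fast growth so that $|E|^2\,d\mu$ is Poisson-integrable'' has the sign backwards: making $|E|$ large along $\bbR$ makes $|E|^2\,d\mu$ larger, not Poisson-integrable. What you need is a factor that decays along $\bbR$. Second, and relatedly, the paper's actual route makes the chains disjoint rather than nested: for a general $\mu$ it first multiplies $d\mu$ by decaying real, zerofree factors $|G_m|^2$ with $G_m=\exp[-\frac12(z^{2m+2}+f(z^2))]$, builds (via the orthonormal-polynomial/indivisible-interval filling argument) a chain for the resulting finite-moment measure $\nu_m$, and then applies the scaling $\scrC\mapsto G_m\cdot\scrC$ from \Cref{lem:26}. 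Because $G_m/G_{m'}$ is $\exp$ of a nonconstant polynomial and hence not of bounded type in $\bbC_+$ or $\bbC_-$, \Cref{rem:902} itself guarantees $G_m\cdot\scrC_m\ne G_{m'}\cdot\scrC_{m'}$. That is the mechanism you are missing: one must deform the \emph{functions} in the chain (multiplicatively, by mutually incompatible zerofree factors) rather than extend a common chain by different $J$-inner blocks.
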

\begin{proof}
	For the proof of item (i) we observe that the construction from \Cref{lem:26} lifts to chains. 
	If $\scrC\in\Ch$, $\vartheta_\scrC=0$, and $C$ is a real and zerofree entire function, then 
	\[
		\frac 1C\cdot\scrC:=\big\{\big\{\tfrac FC\mid F\in\cH\big\}\mid \cH\in\scrC\big\}\in\Ch
		.
	\]
	The chains $\scrC$ and $\frac 1C\cdot\scrC$ are together bounded or unbounded. If $\scrC=\Phi(E,H)$ for some $E\in\HB^*$,
	$H\in\bbHen$, then $\frac 1C\cdot\scrC=\Phi(\frac EC,H)$. 
\begin{Elist}
\item We show that $\ubCh_\mu\neq\emptyset$ whenever $\int_{\bbR}e^{|t|}d\mu(t)<\infty$. Assuming this decay of $\mu$ ensures
	that $L^2(\mu)$ contains the set $\bbC[z]$ of all polynomials with complex coefficients as a dense linear
	subspace (e.g.\ \cite[Satz~5.2]{freud:1969}).

	Let $m\in\bbN$ and assume that $\supp\mu$ contains at least $m$ points. Then the space 
	\[
		\cH_m:=\big\{F\in\bbC[z]\mid \deg F<m\big\}
	\]
	becomes a dB-space when endowed with the $L^2(\mu)$-scalar product. We are going to fill up this sequence of spaces in
	order to obtain an unbounded chain. To this end denote by $p_n$, $n\in\bbN_0$, the orthonormal polynomials in
	$L^2(\mu)$, i.e., $p_n\in\bbC[z]$ with $\deg p_n=n$ and 
	\[
		(p_n,p_{n'})_{L^2(\mu)}=
		\begin{cases}
			1 &\text{if}\ n=n',
			\\
			0 &\text{if}\ n\neq n'.
		\end{cases}
	\]
	Then (here we set $\cH_0:=\{0\}$)
	\[
		\cH_m=\spann\{p_n\mid n\in\bbN_0,n<m\},\quad 
		\cH_m^\flat=\spann\{p_n\mid n\in\bbN_0,n<m-1\}
		.
	\]
	For $t\in(m-1,m)$ set 
	\[
		\cH_t:=\spann\{p_n\mid n\in\bbN_0,n<m\}
		,
	\]
	and, for $F\in\cH_m^\flat$ and $\alpha\in\bbC$, 
	\[
		\|F+\alpha p_{m-1}\|_{\cH_t}^2:=\|F\|_{\cH_m^\flat}^2+\frac{|\alpha|^2}{t-(m-1)}
		.
	\]
	Then $\cH_t$ is a dB-space, and $\cH_{m-1}\subseteq_i\cH_t\subseteq_c\cH_m$. Clearly, for all $m\in\bbN$, 
	\[
		\scrC(\cH_m)=\big\{\cH_t\mid t\in[0,m]\big\}
		.
	\]
	If $|\supp\mu|=\infty$, then $\scrC:=\{\cH_t\mid t\in[0,\infty)\}$ is an unbounded chain and 
	\[
		\forall t\in[0,\infty): \cH_t\sqsubseteq L^2(\mu)
		.
	\]
	Assume that $N:=|\supp\mu|<\infty$. Then we choose a polynomial $p$ with degree $N$ such that $p(t)=0$ for all 
	$t\in\supp\mu$, and define for $t>N$ the space $\cH_t$ as $\cH_t:=\spann\big(\cH_N\cup\{p\}\big)$ endowed with the norm 
	($F\in\cH_N$, $\alpha\in\bbC$)
	\[
		\|F+\alpha P\|_{\cH_t}^2:=\|F\|_{\cH_N}^2+\frac{|\alpha|^2}{t-N}
		.
	\]
	Then $\scrC:=\{\cH_t\mid t\in[0,\infty)\}$ is an unbounded chain and again $\mu_\scrC=\mu$. 

\item Let $\mu\in\bbM$ be given. Choose a continuous function $\omega:[0,\infty)\to[1,\infty)$ such that 
	\[
		\int_{\bbR}\frac{d\mu(t)}{\omega(|t|)}<\infty
		,
	\]
	and choose an entire function $f$ such that 
	\[
		\forall r\geq 0: \max_{|z|=r}|f(z)|\geq\omega(\sqrt r)
		. 
	\]
	This is possible, e.g., by \cite[Theorem~10.3]{rubel:1996}. We may assume w.l.o.g.\ that all power series coefficients
	of $f$ are nonnegative, so that 
	\[
		\forall r\geq 0: \max_{|z|=r}|f(z)|=f(r)
		.
	\]
	For $m\in\bbN$ let $\nu_m$ be the measure 
	\[
		d\nu_m(t):=\exp\big[-t^{2m+2}-f(t^2)\big]d\mu(t)
		.
	\]
	Then 
	\[
		\int_{\bbR\setminus[-1,1]} e^{t^2}d\nu_m(t)\leq
		\int_{\bbR\setminus[-1,1]} e^{-f(t^2)}d\mu(t)\leq\int_\bbR e^{-\omega(|t|)}d\mu(t)\leq
		\int_\bbR\frac{d\mu(t)}{\omega(|t|)}<\infty
		.
	\]
	By the first part of the proof there exists $\scrC_m\in\ubCh$ with $\vartheta_{\scrC_m}=0$ and $\mu_{\scrC_m}=\nu_m$.
	Let $G_m$ be the entire function 
	\[
		G_m(z):=\exp\big[-\frac 12\big(z^{2m+2}+f(z^2)\big)\big]
		.
	\]
	This function is real and zerofree. We have $G_m\cdot\scrC_m\in\ubCh$, and for each $\cH\in\scrC_m$, $F\in\cH$, it holds
	that 
	\begin{align*}
		\|G_mF\|_{G_m\cdot\cH}^2= &\, \|F\|_{\cH}^2\geq\int_\bbR |F(t)|^2d\nu_m(t)
		\\
		= &\, \int_\bbR |F(t)|^2\cdot|G_m(t)|^2d\mu(t)=\int_\bbR |(G_mF)(t)|^2d\mu(t)
		,
	\end{align*}
	where equality holds when $G_mF\in(G_m\cdot\cH)^\flat$; recall again \Cref{lem:26}. 
	We see that $\mu_{G_m\cdot\scrC_m}=\mu$. 

	Each chain $\scrC_m$ contains the space $\spann\{1\}$, and hence the chain $G_m\cdot\scrC_m$ contains 
	$\spann\{G_m\}$. For $m\neq m'$ we have 
	\[
		G_{m'}\notin\spann\{G_m\},\quad G_m\notin\spann\{G_{m'}\}
		,
	\]
	and therefore $G_m\cdot\scrC_m\neq G_{m'}\cdot\scrC_{m'}$. 

\item We come to the proof of (ii). Assume we have $\scrC_j\rightsquigarrow\scrC$. According to \Cref{thm:980} we find 
	$E_j,E\in\HB$ and $H_j,H\in\bbHe_{0,\infty}$ such that 
	\[
		\scrC_j=\Phi(E_j,H_j),\ \scrC=\Phi(E,H),\qquad
		\lim_{j\in J}E_j=E,\ \lim_{j\in J}H_j=H.
	\]
	It follows that 
	\[
		\lim_{j\in J}\Big[J\begin{psmallmatrix} A_j & B_j\\ -B_j & A_j\end{psmallmatrix}\star q_{H_j}\Big]
		=J\begin{psmallmatrix} A & B\\ -B & A\end{psmallmatrix}\star q_H
		,
	\]
	and, remembering \cref{eq:968}, therefore $\lim_{j\in J}\mu_{\scrC_j}=\mu_{\scrC}$.
\end{Elist}
\end{proof}

\begin{remark}
\label{rem:902}
	Let $\mu\in\bbM$. By \cite[Theorem~40]{debranges:1968} we have 
	\[
		\bigcup\ubCh_\mu=\big\{\cH\in\DB^*\mid \cH\sqsubseteq L^2(\mu)\big\}\cup\big\{\{0\}\big\}
		.
	\]
	By the ordering theorem \cite[Theorem~35]{debranges:1968} two elements $\scrC_1,\scrC_2$ of $\ubCh_\mu$ are either equal
	or $\scrC_1\cap\scrC_2=\{\{0\}\}$. We have $\scrC_1=\scrC_2$ if and only if there exist functions 
	$F_1\in(\bigcup\scrC_1)\setminus\{0\}$ and $F_2\in(\bigcup\scrC_2)\setminus\{0\}$, such that their quotient $F_1/F_2$ 
	is a meromorphic function of bounded characteristic in $\bbC_+$ and $\bbC_-$ (and further equivalent that this holds for
	all such $F_1,F_2$). 
\end{remark}

%
\subsection{The inverse problem}
\label{sec:5-2}
%

We do not know any natural way to construct a right-inverse of the surjective map 
\[
	\ubCh\to\bbM,\quad\scrC\to\mu_\scrC
	.
\]
For the subclass $\bbM_{<\infty}$ of power bounded measures, an inverse construction can be made. 
This is based on the fact that for such measures an element of $\ubCh_\mu$ with a particular additional function theoretic 
property can be singled out.

\begin{theorem}
\label{thm:904}
	\phantom{}
	\begin{enumerate}[{\rm(i)}]
	\item For each $\mu\in\bbM_{<\infty}$ there exists a unique element $\scrC\in\ubCh_\mu$ such that 
		\[
			\forall F\in\bigcup\scrC: F\text{ is of bounded type in $\bbC_+$ and $\bbC_-$}
		\]
		We denote this chain as $\scrC(\mu)$. 
	\item Let $(\mu_j)_{j\in J}$ be a net in $\bbM_{<\infty}$ and $\mu_\infty\in\bbM_{<\infty}$, and assume that 
		\[
			\exists \k\in\bbN_0: \sup_{j\in J}\|\mu_j\|_\k<\infty
			.
		\]
		If $\lim_{j\in J}\mu_j=\mu_\infty$ in the $w^*$-topology of $C_c(\bbR)^*$, then 
		$\scrC(\mu_j)\rightsquigarrow\scrC(\mu_\infty)$.
	\end{enumerate}
\end{theorem}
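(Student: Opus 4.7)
The plan is to construct $\scrC(\mu)$ via the indefinite canonical system theory of Sections 2.5--2.6 and to pass to limits using Theorem \ref{thm:980}. For existence in (i), given $\mu\in\bbM_{\leq\kappa}$, choose any polynomial $p$ with $(\mu,p)\in\bbE_{\leq\kappa}$ and set $q:=\mathrm C_\kappa[\mu,p]\in\cN_{\leq\kappa}^{(\infty)}$. Apply Theorem \ref{thm:906} to obtain a matrix family $W$ with Hamiltonian $H$ on some $(a,b)$, with $H$ lp at $b$, and set $E(t,\cdot):=w_{22}(t,\cdot)+iw_{21}(t,\cdot)\in\HB^*$. From the $J$-inner property of $W(s,\cdot)^{-1}W(t,\cdot)$ for $s<t$ and the boundary condition $\lim_{t\to a}(0,1)W(t,z)=(0,1)$ of Theorem \ref{thm:906}(iv), together with \cref{918}, the assignment $t\mapsto\cB(E(t,\cdot))$ is order-preserving with $\lim_{t\to a}\cB(E(t,\cdot))=\{0\}$ and $\lim_{t\to b}K_{E(t,\cdot)}(z,z)=\infty$ for $z\in\bbC\setminus\bbR$. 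Hence, invoking \Cref{927} with the map $a\mapsto \cB(E(t(a),\cdot))$ for a suitable reparametrization, $\scrC(\mu):=\{\cB(E(t,\cdot))\mid t\in(a,b)\}\cup\{\{0\}\}$ lies in $\ubCh$, has $\vartheta_{\scrC(\mu)}=0$, and all its elements consist of functions of bounded type in $\bbC_\pm$ by Theorem \ref{thm:906}(i) and the standard structure of $\cB(E)$ for $E\in\HB^*$.

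For the identification $\mu_{\scrC(\mu)}=\mu$, fix $c\in I_{\rm reg}$ with $c>a$; then $H|_{(c,b)}\in\bbHe$ (after reparametrization) is lc at $c$ and lp at $b$, so Theorem \ref{thm:900}(ii) applies to $(E(c,\cdot),H|_{(c,b)})$ and identifies the measure in the Herglotz representation of $q_{E(c,\cdot),H|_{(c,b)}}$ as $|E(c,x)|^2d\mu_{\Phi(E(c,\cdot),H|_{(c,b)})}(x)$. But $\Phi(E(c,\cdot),H|_{(c,b)})$ is precisely the tail of $\scrC(\mu)$ above $\cB(E(c,\cdot))$, so its spectral measure coincides with $\mu_{\scrC(\mu)}$. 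On the other hand, \Cref{lem:913} yields $q=W(c,\cdot)\star q_{H|_{(c,b)}}$, and an elementary algebraic manipulation combined with the Stieltjes--Livshits inversion \cref{eq:916} (applied with $f(x)=|E(c,x)|^{-2}$ on open sets avoiding the zeros of $E(c,\cdot)$) shows that the measure extracted from $q$ via \cref{eq:949} equals $\mu_{\scrC(\mu)}$; since that measure is $\mu$ by Theorem \ref{thm:950}(i), we conclude $\mu_{\scrC(\mu)}=\mu$. Uniqueness is immediate from \Cref{rem:902}: if $\scrC,\scrC'\in\ubCh_\mu$ both have only bounded-type functions, then any $F_1\in\bigcup\scrC$ and $F_2\in\bigcup\scrC'$ give $F_1/F_2$ of bounded type in $\bbC_\pm$, forcing $\scrC=\scrC'$.

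For part (ii), fix $\kappa$ with $\sup_j\|\mu_j\|_\kappa<\infty$ and $\mu_\infty\in\bbM_{\leq\kappa}$. Choose polynomials $p_j,p_\infty$ in a canonical fashion (e.g.\ lower-order coefficients zero and leading coefficient equal to $(2\kappa+1)!\,\|\mu_j\|_\kappa$, realizing equality in \cref{eq:27}); the weak-$*$ convergence together with the uniform bound gives $\|\mu_j\|_\kappa\to\|\mu_\infty\|_\kappa$ (since the weight $(1+t^2)^{-\kappa-1}$ is continuous and vanishes at infinity, the Portmanteau argument applies), so $p_j\to p_\infty$. By \Cref{thm:950}(ii), $q_j:=\mathrm C_\kappa[\mu_j,p_j]\to q_\infty:=\mathrm C_\kappa[\mu_\infty,p_\infty]$ locally uniformly in $\bbC_+$. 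The continuity of the inverse problem for matrix families from $\cN_{<\infty}^{(\infty)}$ (as developed in \cite{langer.woracek:sinham}) yields convergence of the associated matrix families and Hamiltonians, hence $E_j(t,\cdot)\to E_\infty(t,\cdot)$ locally uniformly in $t$. After aligning parametrizations by picking a common reference time $c$ with $\cB(E_j(c,\cdot))\to\cB(E_\infty(c,\cdot))$, Theorem \ref{thm:980} applied with the trace-normalized pieces $H_j|_{(c,\infty)}$ delivers $\scrC(\mu_j)\rightsquigarrow\scrC(\mu_\infty)$.

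The main obstacle is the step $\mu_{\scrC(\mu)}=\mu$: one must show that the Stieltjes-type extraction of $\mu$ from $q$ via the regularized Cauchy transform is compatible with the extraction of the chain's spectral measure from $q_{H|_{(c,b)}}$ through the reference space $\cB(E(c,\cdot))$. This requires careful bookkeeping because the polynomial part $p$ in \cref{eq:71} is not canonical, and the argument must show that although $p$ is only determined modulo the ambiguities captured by \Cref{pro:914}(ii) (left multiplication of $W(t,\cdot)$ by $\begin{psmallmatrix}1&p(z)\\0&1\end{psmallmatrix}$), the bottom row $(w_{21},w_{22})$ of $W(t,\cdot)$ — and hence $E(t,\cdot)$ and $\scrC(\mu)$ — is independent of this choice. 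Once this invariance is established, the identification and the continuous dependence in (ii) both reduce to standard facts about the regularized Cauchy transform and the canonical system.
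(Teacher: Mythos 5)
Your plan for part (i) tracks the paper's own argument closely: realize $q=\mathrm C_\kappa[\mu,p]$, pass through Theorem~\ref{thm:906} to a matrix family, take $E(t,\cdot)=w_{22}(t,\cdot)+iw_{21}(t,\cdot)$, show the resulting $\scrC$ is an unbounded chain, identify $\mu_\scrC=\mu$ via Lemma~\ref{lem:913} and the Stieltjes--Livshits inversion, and get uniqueness from Remark~\ref{rem:902}. That outline is sound. The worry you raise at the end about the ambiguity of the polynomial part $p$ is, however, a non-issue: since Remark~\ref{rem:902} already guarantees that $\ubCh_\mu$ contains at most one chain of bounded type, you only need to exhibit \emph{one} chain with $\mu_\scrC=\mu$ and bounded-type elements; different admissible choices of $p$ must all produce the same chain a posteriori, and no invariance of the bottom row of $W$ has to be proved. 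Also note that in the passage from $H|_{(c,b)}$ to the $\Phi$-representation the paper uses the transformed Hamiltonian $-JH|_{(c,b)}J$, a sign-convention detail you elide.

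The real problem is in part (ii). You choose polynomials $p_j$ with leading coefficient $(2\kappa+1)!\,\|\mu_j\|_\kappa$ and assert that $\|\mu_j\|_\kappa\to\|\mu_\infty\|_\kappa$ "by the Portmanteau argument, since the weight $(1+t^2)^{-\kappa-1}$ vanishes at infinity." This is false: $w^*$-convergence in $C_c(\bbR)^*$ does not give convergence of integrals against non-compactly-supported functions, even with the hypothesis $\sup_j\|\mu_j\|_\kappa<\infty$, because mass can escape to infinity at exactly the rate killed by the weight. For instance $\mu_j=\delta_0+j^{2\kappa+2}\delta_j$ converges weakly to $\mu_\infty=\delta_0$ with $\sup_j\|\mu_j\|_\kappa\leq 2<\infty$, yet $\|\mu_j\|_\kappa\to 2\neq 1=\|\mu_\infty\|_\kappa$. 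Lower semicontinuity gives only $\|\mu_\infty\|_\kappa\leq\liminf_j\|\mu_j\|_\kappa$. With your choice of $p_j$ one therefore cannot conclude $p_j\to p_\infty$, so Theorem~\ref{thm:950}(ii) does not yield $q_j\to q_\infty$. The paper sidesteps this by taking the \emph{same} leading coefficient $\pi:=\sup_{j\in J}\|\mu_j\|_\kappa$ for every index (including $\infty$), which makes $p_j\equiv p_\infty$ and removes the need for any convergence of $\|\mu_j\|_\kappa$; with that fix the rest of your argument goes through. Finally, the continuity of the inverse construction from $\cN_{<\infty}^{(\infty)}$ to matrix families is not simply citable from \cite{langer.woracek:sinham}; it is a nontrivial point the paper establishes as Proposition~\ref{pro:910} via an induction on $\kappa$, and your proposal should at least acknowledge that this step requires its own argument.
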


The proof of this theorem proceeds via a detour through the sign-indefinite world: it relies on the results recalled in 
the preliminaries, in particular on \Cref{thm:950} and \Cref{thm:906}. 

\begin{proof}[Proof of \Cref{thm:904}(i), existence]
	Let $\mu\in\bbM_{<\infty}$ be given. The case that $\mu=0$ is trivial. In fact, set 
	\[
		E(t,z):=1-itz,\quad t>0
		.
	\]
	Then 
	\[
		\cB(E(t,.))=\spann\{1\},\quad \|1\|_{\cB(E(t,.))}^2=\frac 1t
		,
	\]
	and we see that 
	\[
		\scrC:=\big\{\{0\}\big\}\cup\big\{\cB(E(t,.))\mid t>0\big\}\in\ubCh
	\]
	and $\cH\sqsubseteq L^2(\mu)$ for all $\cH\in\scrC$. 

	Assume throughout the following that $\mu$ is not the zero measure. Choose $\k\in\bbN$ such that 
	$\|\mu\|_\k<\infty$, and a polynomial $p$ with real coefficients of degree $2\k+1$ whose leading coefficient is not
	smaller than $\|\mu\|_\k$. Then the function $q:=\mathrm C_\k[\mu,p]$ belongs to the class $\cN_{<\infty}^{(\infty)}$. 
	Let $W:(a,b)\times\bbC\to\bbC^{2\times 2}$ be a matrix family for $q$ with Hamiltonian $H:(a,b)\to\bbR^{2\times 2}$
	(recall \Cref{thm:906}). Our candidate for the required unbounded chain is 
	\[
		\scrC:=\big\{\{0\}\big\}\cup\big\{\cB(E(t,.))\mid t\in(a,b)\big\}
		,
	\]
	where $E(t,.)$ is as in item (ii) of \Cref{thm:906}. Note that $E(t,.)$ is of bounded type in $\bbC_+$ and $\bbC_-$. 
	Fix $c\in(a,b)$. The map $\chi:[a,c]\to\DB\cup\{\{0\}\}$ defined as 
	\[
		\chi(t):=
		\begin{cases}
			\cB(E(t,.)) &\text{if}\ t\in(a,c],
			\\
			\{0\} &\text{if}\ t=a,
		\end{cases}
	\]
	is continuous, injective, and preserves order. Thus, by \Cref{thm:2}(ii), we have 
	\[
		\big\{\{0\}\big\}\cup\big\{\cB(E(t,.))\mid t\in(a,c]\big\}=\scrC\big(\cB(E(c,.))\big)
		.
	\]
	Let $H_c\in\bbHe_{0,\infty}$ be a reparameterization of $-JH|_{(c,b)}J$, i.e., $H_c(s)=-JH(\tau(s))J$ with an
	appropriate increasing bijection $\tau:(0,\infty)\to(c,b)$. Then 
	\[
		\big\{\cB(E(t,.))\mid t\in[c,b)\big\}=\big\{\cB(E(c,.))\ltimes W_{H_c}(s,.)\mid s\geq 0\big\}
		,
	\]
	and hence 
	\begin{equation}
	\label{eq:909}
		\scrC=\Phi\big(E(c,.),H_c\big)
		.
	\end{equation}
	We use the construction from \Cref{thm:900} to compute $\mu_\scrC$. It holds that 
	\[
		W_{H_c}(s,.)=-JW(c,.)^{-1}W(\tau(s),.)J
		,
	\]
	and hence 
	\begin{align*}
		q_{H_c}= &\, \lim_{s\to\infty}\big[W_{H_c}(s,.)\star 0\big]
		\\
		= &\, -JW(c,.)^{-1}\star\Big(\lim_{s\to\infty}\big[W(\tau(s),.)\star\infty\big]\Big)
		=-JW(c,.)^{-1}\star q
		.
	\end{align*}
	We denote $(1,0)W(c,z)=:(D(z),-C(z))$, then 
	\[
		W(c,z)=\begin{pmatrix} D(z) & -C(z) \\ -B(c,z) & A(c,z)\end{pmatrix},\quad 
		W(c,z)^{-1}=\begin{pmatrix} A(c,z) & C(z) \\ B(c,z) & D(z)\end{pmatrix}
		.
	\]
	This leads to 
	\begin{align*}
		J \mkern120mu&\,\mkern-120mu \begin{pmatrix} A(c,z) & B(c,z) \\ -B(c,z) & A(c,z)\end{pmatrix}\star q_{H_c}(z)
		\\
		= &\, -J\begin{pmatrix} A(c,z) & B(c,z) \\ -B(c,z) & A(c,z)\end{pmatrix}J
		\begin{pmatrix} A(c,.) & C(z) \\ B(c,.) & D(z)\end{pmatrix}\star q(z)
		\\
		= &\, \begin{pmatrix} A(c,z)^2+B(c,z)^2 & A(c,z)C(z)+B(c,z)D(z) \\ 0 & 1\end{pmatrix}\star q(z)
		\\
		= &\, \big(A(c,z)^2+B(c,z)^2\big)q(z)+\big(A(c,z)C(z)+B(c,z)D(z)\big)
		\\
		= &\, \big(A(c,z)^2+B(c,z)^2\big)\cdot(1+z^2)^{\kappa+1}\int_\bbR\frac 1{t-z}\frac{d\mu(t)}{(1+t^2)^{\k+1}}
		\\
		&\, +\Big[\big(A(c,z)^2+B(c,z)^2\big)p(z)+\big(A(c,z)C(z)+B(c,z)D(z)\big)\Big]
		.
	\end{align*}
	Since $A(c,.),B(c,.),C,D,p$ are all real entire functions, and 
	\[
		A(c,t)^2+B(c,t)^2=|E(c,t)|^2,\quad t\in\bbR
		,
	\]
	the Stieltjes-Livshits inversion \cref{eq:916} formula yields that for $\alpha<\beta$ with 
	$\mu_\scrC(\{\alpha\})=\mu_\scrC(\{\beta\})=0$
	\begin{align*}
		\int_\alpha^\beta|E(c,t)|^2 &\, d\mu_\scrC(t)=\lim_{\varepsilon\to 0}\frac 1\pi\int_\alpha^\beta\Im\bigg[J
		\begin{psmallmatrix} 
			A(c,t+i\varepsilon) & B(c,t+i\varepsilon) 
			\\ 
			-B(c,t+i\varepsilon) & A(c,t+i\varepsilon)
		\end{psmallmatrix}
		\star q_{H_c}(t+i\varepsilon)\bigg]dt
		\\
		= &\, \lim_{\varepsilon\to 0}\frac 1\pi\int_\alpha^\beta\Im\bigg[
		\big(A(c,t+i\varepsilon)^2+B(c,t+i\varepsilon)^2\big)\big(1+(t+i\varepsilon)^2\big)^{\k+1}
		\\
		&\, \mkern245mu\cdot\int_\bbR\frac 1{x-(t+i\varepsilon)}\frac{d\mu(x)}{(1+x^2)^{\k+1}}\bigg]dt
		\\
		= &\, \int_\alpha^\beta|E(c,t)|^2(1+t^2)^{\k+1}\cdot\frac{d\mu(t)}{(1+t^2)^{\k+1}}
		=\int_\alpha^\beta|E(c,t)|^2d\mu(t)
		.
	\end{align*}
	Since $|E(c,t)|>0$ for all $t\in\bbR$, we conclude that $\mu_\scrC=\mu$. 
\end{proof}

\begin{proof}[Proof of \Cref{thm:904}(i), uniqueness]
	By \Cref{rem:902} each set $\ubCh_\mu$ can contain at most one element $\scrC$ such that all elements of $\bigcup\scrC$
	are of bounded type in $\bbC_+$ and $\bbC_-$. 
\end{proof}

Our next aim is to prove a continuity property on the level of functions $q\in\cN_{<\infty}^{(\infty)}$. The assertion stated in 
\Cref{thm:904}(ii) will then follow easily. 

We use the following notation: 
Let $q\in\cN_{<\infty}^{(\infty)}$ and $W:(a,b)\times\bbC\to\bbC^{2\times2}$ be a matrix family for $q$ with Hamiltonian 
$H\in\bbH_{a,b}$. For $c\in(a,b)$ denote by $H_c\in\bbHe_{0,\infty}$ the trace-normalized reparameterization of $H|_{(c,b)}$.
Note here that $H|_{(c,b)}$ is in limit circle case at $c$ and limit point case at $b$. 

\begin{proposition}
\label{pro:910}
	Let $\kappa\in\bbN_0$, $(q_j)_{j\in J}$ a net in $\cN_{\leq\kappa}^{(\infty)}$, and
	$q_\infty\in\cN_{\leq\kappa}^{(\infty)}$. Then the following statements are equivalent. 
	\begin{enumerate}[{\rm(i)}]
	\item $\lim_{j\in J}q_j=q_\infty$ locally uniformly on $\bbC_+$. 
	\item There exist matrix families 
		\[
			W_j:(a_j,b_j)\times\bbC\to\bbC^{2\times 2},\quad j\in J\cup\{\infty\},
		\]
		for $q_j$ with respective Hamiltonians $H_j\in\bbH_{a_j,b_j}$, and points $c_j\in(a_j,b_j)$, such that 
		$\lim_{j\in J}W_j(c_j,.)=W_\infty(c_\infty,.)$ locally uniformly on $\bbC$ and 
		$\lim_{j\in J}H_{j,c_j}=H_{\infty,c_\infty}$ in $\bbHe_{0,\infty}$.
	\end{enumerate}
\end{proposition}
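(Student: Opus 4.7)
For the direction (ii) $\Rightarrow$ (i), my plan is to combine \Cref{lem:913} with continuity of the Möbius action. Given (ii), apply \Cref{lem:913} to each $j$ to obtain
$$q_j = W_j(c_j,\cdot)\star q_{H_j|_{(c_j,b_j)}}.$$
Reparametrization invariance of Weyl coefficients (a direct consequence of \eqref{eq:35}) identifies $q_{H_j|_{(c_j,b_j)}}$ with $q_{H_{j,c_j}}$. The assumption $H_{j,c_j}\to H_{\infty,c_\infty}$ in $\bbHe_{0,\infty}$ combined with \Cref{thm:8} yields $q_{H_{j,c_j}}\to q_{H_{\infty,c_\infty}}$ in $\cN_0\cup\{\infty\}$, and continuity of the action $\star$ (together with locally uniform convergence $W_j(c_j,\cdot)\to W_\infty(c_\infty,\cdot)$) delivers $q_j\to q_\infty$ locally uniformly on $\bbC_+$.

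For (i) $\Rightarrow$ (ii), the plan is to reduce to the definite situation via the regularized Cauchy transform of \Cref{thm:950}. Write $q_j=\mathrm C_\kappa[\mu_j,p_j]$ with $(\mu_j,p_j)\in\bbE_{\leq\kappa}$; then locally uniform convergence $q_j\to q_\infty$ is equivalent to coefficient-wise convergence of the polynomials $p_j\to p_\infty$ (of bounded degree $\leq 2\kappa+1$) together with $w^*$-convergence $\mu_j\to\mu_\infty$ in $C_c(\bbR)^*$, and also yields $\sup_j\|\mu_j\|_\kappa<\infty$. The polynomial $p_j$ encodes the indefinite data responsible for the negative squares and will be absorbed into a left $J$-inner factor playing the role of $W_j(c_j,\cdot)$, while the measure $\mu_j$ produces the regular $\cN_0$-piece encoded in the chain $\scrC(\mu_j)$ of \Cref{thm:904}(i). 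Then \Cref{thm:904}(ii), available precisely because of the uniform bound $\sup_j\|\mu_j\|_\kappa<\infty$, gives $\scrC(\mu_j)\rightsquigarrow\scrC(\mu_\infty)$, and \Cref{thm:980} converts this chain convergence into convergence of the associated matrix families and trace-normalized Hamiltonians.

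The main obstacle is the coupling between the cut-point $c_j$ and the gauge of the matrix family $W_j$: a given $q_j$ admits a matrix family only up to the freedom described in \Cref{pro:914}(ii), and this freedom must be fixed consistently in $j$ so that $W_j(c_j,\cdot)$ captures exactly the polynomial $p_j$ while $H_{j,c_j}$ is the reparametrized Hamiltonian arising from $\mu_j$. My proposal is: fix $W_\infty$ and $c_\infty$ for $q_\infty$ with $c_\infty$ to the right of the interval carrying the indefinite behavior (i.e.\ so that $H_\infty|_{(c_\infty,b_\infty)}$ has Weyl coefficient in $\cN_0$ equal to $\mathrm C_0[\mu_\infty,\cdot]$), and for each $j$ choose $W_j$ with $c_j$ so that $W_j(c_j,\cdot)$ is the $J$-inner polynomial matrix associated to $p_j$ via \Cref{pro:914}(ii). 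Then $p_j\to p_\infty$ delivers $W_j(c_j,\cdot)\to W_\infty(c_\infty,\cdot)$ locally uniformly, and the chain convergence delivers $H_{j,c_j}\to H_{\infty,c_\infty}$. As a safety net, a compactness argument handles any gauge ambiguity: along a subnet, $(W_j(c_j,\cdot),H_{j,c_j})$ converges in $\bbTM\times\bbHe_{0,\infty}$; by the already proved direction (ii) $\Rightarrow$ (i) the limit must produce $q_\infty$; and uniqueness of matrix families from \Cref{thm:906}--\Cref{pro:914} together with \Cref{thm:983}(ii) then promotes subnet to full net convergence.
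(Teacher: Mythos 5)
Your argument for the direction (ii)~$\Rightarrow$~(i) is essentially identical to the paper's: apply \Cref{lem:913} and reparametrization invariance of Weyl coefficients, then use \Cref{thm:8} and continuity of $\star$ to pass to the limit. No issues there.

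The direction (i)~$\Rightarrow$~(ii) is where the proposal breaks down, and the problem is structural: you invoke \Cref{thm:904}(ii) to obtain $\scrC(\mu_j)\rightsquigarrow\scrC(\mu_\infty)$, but in the paper's logical architecture \Cref{thm:904}(ii) is \emph{deduced from} \Cref{pro:910}, not the other way around. (Its proof opens by setting $q_j=\mathrm C_\kappa[\mu_j,\pi z(1+z^2)^\kappa]$ and immediately citing \Cref{pro:910}.) Using it here is circular. The genuine content that must be supplied at this point — and which your proposal omits — is an argument that convergence of functions in $\cN_{\leq\kappa}^{(\infty)}$ forces convergence of suitably chosen matrix families. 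The paper does this by induction on $\kappa$: the base case $\kappa=0$ is the definite homeomorphism of \Cref{thm:8} together with Grommer--Hamburger, and the inductive step replaces $\mu_j$ by $d\mu_j/(1+t^2)$, applies the hypothesis, then multiplies the matrix family by an explicit factor (\cite[Lemma~4.16]{langer.woracek:gpinf}) to realize $q_j=(1+z^2)\tilde q_j$. That explicit inductive machinery is what your proposal needs but does not contain.

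Two secondary gaps, independent of the circularity. First, your ``gauge-fixing'' of $(W_j,c_j)$ so that $W_j(c_j,\cdot)$ is ``the $J$-inner polynomial matrix associated to $p_j$'' is not well defined as stated: \Cref{pro:914}(ii) gives a specific triangular correction only when $H$ is in limit circle case at $a$, which need not hold, and in general the cut point $c_j$ is far from canonical. The paper fixes this by always working with the \emph{normalized} polynomial $\pi_j z(1+z^2)^\kappa$ in the inductive lemma and then absorbing the residual part $p_j(z)-\pi_j^0-\pi_j z(1+z^2)^\kappa$ by an explicit upper-triangular $J$-inner factor at the very end. Second, the ``safety net'' compactness argument is incomplete: $\bbTM$ is not compact, only $\bbTM_c$ for fixed $c$, so passing to a convergent subnet of $(W_j(c_j,\cdot))$ requires a uniform bound on $\mathfrak t(W_j(c_j,\cdot))$ that you have not established.
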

\begin{proof}
	\phantom{}
\begin{Elist}
\item
	The implication ``(ii)$\Rightarrow$(i)'' is the easy one, and we settle it first. Since $W_j(t,z)$ and $W(t,z)$ are
	solutions of the respective equations \cref{eq:9}, we have 
	\begin{align*}
		q_\infty= &\, W_\infty(c_\infty,.)\star q_{H_{\infty,c_\infty}}
		=\Big[\lim_{j\in J}W_j(c_j,.)\Big]\star\Big[\lim_{j\in J}q_{H_{j,c_j}}\Big]
		\\
		= &\, \lim_{j\in J}\Big[W_j(c_j,.)\star q_{H_{j,c_j}}\Big]=\lim_{j\in J}q_j
		.
	\end{align*}
\item
	The essence in the proof of ``(i)$\Rightarrow$(ii)'' is the following statement, which we are going to prove in this
	step. 

	\medskip\emph{
		Let $\kappa\in\bbN_0$, $\mu_j\in\bbM_{\leq\kappa}$, $\pi_j\in\bbR$ for $j\in J\cup\{\infty\}$, assume that 
		$\pi_j\geq\|\mu_j\|_\kappa$, $\lim_{j\in J}\mu_j=\mu_\infty$ in the $w^*$-topology in $C_c(\bbR)^*$, and 
		$\lim_{j\in J}\pi_j=\pi_\infty$. Then there exist matrix families 
		\[
			W_j:(a_j,b_j)\times\bbC\to\bbC^{2\times 2},\quad j\in J\cup\{\infty\}
			,
		\]
		for the functions 
		\[
			q_j:=\mathrm C_\kappa\big[\mu_j,\pi_jz(1+z^2)^\kappa\big]
			,
		\]
		respectively, and $c_j\in(a_j,b_j)$, $j\in J\cup\{\infty\}$, such that 
		\[
			\lim_{j\in J}W_j(c_j,.)=W_\infty(c_\infty,.)
			.
		\]
	}
	
	\noindent
	Note here that the assumptions on $\mu_j,\pi_j$ ensure that $\mu_\infty\in\bbM_{\leq\kappa}$ and 
	$\pi_\infty\geq\|\mu_\infty\|_\kappa$, and hence that $q_\infty$ is well-defined. 

	We use induction on $\kappa$. Consider the case that $\kappa=0$. For $j\in J\cup\{\infty\}$ let $H_j\in\bbHe_{0,\infty}$
	be the Hamiltonian with $q_j=q_{H_j}$. Then the fundamental solution $W_{H_j}(t,.)$, $t>0$, of $H_j$ is a 
	matrix family for $q_j$ with Hamiltonian $H_j$. By the Grommer-Hamburger theorem we have $\lim_{j\in J}q_j=q_\infty$,
	and hence $\lim_{j\in J}H_j=H_\infty$. In particular, $\lim_{j\in J}W_{H_j}(1,.)=W_{H_\infty}(1,.)$. 

	Assume now that the assertion holds for some $\kappa$, and let $\mu_j,\pi_j$ be given as in the assertion for
	$\kappa+1$. Then we define $\tilde\mu_j,\tilde\pi_j$ as 
	\[
		d\tilde\mu_j:=\frac{d\mu_j}{1+t^2},\quad \tilde\pi_j:=\pi_j
		,
	\]
	and this is data to which our inductive hypothesis applies. We thus obtain matrix families 
	$\tilde W_j:(\tilde a_j,\tilde b_j)\times\bbC\to\bbC^{2\times 2}$ for 
	$\tilde q_j:=\mathrm C_\kappa[\tilde\mu_j,\tilde\pi_jz(1+z^2)^\kappa]$, and $\tilde c_j\in(\tilde a_j,\tilde b_j)$, such
	that 
	\begin{equation}
	\label{eq:911}
		\lim_{j\in J}\tilde W_j(\tilde c_j,.)=\tilde W_\infty(\tilde c_\infty,.)
		.
	\end{equation}
	W.l.o.g.\ we may thereby assume that $\tilde b_j<\infty$ (this can always be achieved by a reparameterization). 

	We have 
	\begin{align*}
		q_j(z)= &\, \mathrm C_{\kappa+1}\big[\mu_j,\pi_jz(1+z^2)^{\kappa+1}\big](z)
		\\
		= &\, (1+z^2)\mathrm C_\kappa\Big[\frac{d\mu_j}{1+t^2},\pi_jz(1+z^2)^\kappa\Big](z)
		=(1+z^2)\tilde q_j(z)
		,
	\end{align*}
	and invoke \cite[Lemma~4.16]{langer.woracek:gpinf}. This provides us with a matrix family
	$W_j:(a_j,b_j)\times\bbC\to\bbC^{2\times 2}$ for $q_j$. The formulae in the proof of this lemma are explicit, and say
	that $a_j=\tilde a_j$, $b_j\geq\tilde b_j$, and 
	\[
		W_j(t,z)=\begin{pmatrix} 1 & 0 \\ 0 & \frac 1{1+z^2}\end{pmatrix}\tilde W_j(t,z)
		\begin{pmatrix}
			1+z\frac{\tilde R_j(t)}{\tilde J_j(t)} & -z\frac 1{\tilde J_j(t)}
			\\[2mm]
			z\tilde J_j(t)\big(1+\frac{\tilde R_j(t)^2}{\tilde J_j(t)^2}\big) & 1-z\frac{\tilde R_j(t)}{\tilde J_j(t)}
		\end{pmatrix}
		,\quad t\in(a_j,\tilde b_j)
		,
	\]
	where 
	\[
		\tilde R_j(t)=-\Re\frac{\tilde w_{j,21}(t,i)}{\tilde w_{j,22}(t,i)},\quad 
		\tilde J_j(t)=-\Im\frac{\tilde w_{j,21}(t,i)}{\tilde w_{j,22}(t,i)}
		.
	\]
	Note here that $\tilde J_j(t)>0$ for all $t\in(\tilde a_j,\tilde b_j)$ since 
	$\tilde w_{j,21}(t,.)+i\tilde w_{j,21}(t,.)\in\HB$. The values of $W_j(t,.)$ on the possible remainder 
	$(\tilde b_j,b_j)$ of the domain of definition of $W_j$ are also determined in \cite{langer.woracek:gpinf} but are 
	irrelevant for our purposes. 

	We see that \cref{eq:911} implies that 
	\[
		\lim_{j\in J}W_j(c_j,.)=W_\infty(c_\infty,.)
	\]
	where $c_j:=\tilde c_j$, $j\in J\cup\{\infty\}$. 
\item
	We deduce the implication ``(i)$\Rightarrow$(ii)''. Assume that $\lim_{j\in J}q_j=q_\infty$. Write 
	\[
		q_j=\mathrm C_\kappa[\mu_j,p_j],\quad \pi_j^0:=p_j(0),\ \pi_j:=\frac{p_j^{2\kappa+1}(0)}{(2\kappa+1)!}
		,
	\]
	then 
	\[
		\lim_{j\in J}\mu_j=\mu_\infty
		\quad\text{and}\quad 
		\lim_{j\in J}p_j=p_\infty,\ \lim_{j\in J}\pi_j^0=\pi_\infty^0,\ \lim_{j\in J}\pi_j=\pi_\infty
		.
	\]
	By what we showed in the previous step, there exist matrix families $W_j$ for 
	$\mathrm C_\kappa[\mu_j,\pi_jz(1+z^2)^\kappa]$ defined on certain intervals $(a_j,b_j)$, and corresponding 
	points $c_j\in(a_j,b_j)$, such that 
	\[
		\lim_{j\in J}W_j(c_j,.)=W_\infty(c_\infty,.)
		.
	\]
	We have 
	\[
		q_j(z)=\mathrm C_\kappa\big[\mu_j,\pi_jz(1+z^2)^\kappa\big]
		+\pi_j^0+\big(p_j(z)-\pi_j^0-\pi_jz(1+z^2)^\kappa\big)
		,
	\]
	and by \cite[Lemma~10.2]{kaltenbaeck.woracek:p2db} and the computation in the proof of 
	\cite[Corollary~5.9]{langer.woracek:gpinf} matrix families for $q_j$ can be obtained as 
	\[
		\tilde W_j(t,z):=
		\begin{pmatrix} 
			1 & p_j(z)-\pi_jz(1+z^2)^\kappa
			\\
			0 & 1
		\end{pmatrix}
		W_j(t,z)\begin{pmatrix} 1 & -\pi_j^0 \\ 0 & 1 \end{pmatrix},\quad t\in(a_j,b_j)
		.
	\]
	We see that 
	\[
		\lim_{j\in J}\tilde W_j(c_j,.)=\tilde W_\infty(c_\infty,.)
		.
	\]
	By \Cref{lem:913} we have 
	\[
		q_{H_{j,c_j}}=\tilde W_j(c_j,.)^{-1}\star q_j
		,
	\]
	and hence $\lim_{j\in J}q_{H_{j,c_j}}=q_{H_{\infty,c_\infty}}$. This implies that 
	$\lim_{j\in J}H_{j,c_j}=H_{\infty,c_\infty}$. 
\end{Elist}
\end{proof}

\begin{proof}[Proof of \Cref{thm:904}(ii)]
	We have $\mu_j\in\bbM_{\leq\kappa}$ and 
	\[
		\pi:=\sup_{j\in J}\|\mu_j\|_\kappa<\infty
		.
	\]
	Since $\lim_{j\in J}\mu_j=\mu_\infty$, it follows that $\mu_\infty\in\bbM_{\leq\kappa}$ and 
	$\|\mu_\infty\|_\kappa\leq\pi$. Set 
	\[
		q_j(z)=\mathrm C_\kappa\big[\mu_j,\pi z(1+z^2)^\kappa\big],\quad j\in J\cup\{\infty\}
		,
	\]
	then $q_j\in\cN_{<\infty}^{(\infty)}$ and $\lim_{j\in J}q_j=q_\infty$. According to \Cref{pro:910} we find matrix
	families $W_j:(a_j,b_j)\times\bbC\to\bbC^{2\times 2}$ for $q_j$ with Hamiltonians $H_j$ and corresponding points 
	$c_j\in(a_j,b_j)$, such that 
	\[
		\lim_{j\in J}W_j(c_j,.)=W_\infty(c_\infty,.),\quad \lim_{j\in J}H_{j,c_j}=H_{\infty,c_\infty}
		.
	\]
	By the construction of $\scrC(\mu_j)$, cf.\ \cref{eq:909}, we have 
	\[
		\scrC(\mu_j)=\Phi(E_j(c_j,.),H_{j,c_j})
	\]
	where $E_j(t,.):=w_{j,22}(t,.)+iw_{j,21}(t,.)$. Clearly, $\lim_{j\in J}E_j(c_j,.)=E_\infty(c_\infty,.)$, and 
	\Cref{thm:980} implies that $\scrC(\mu_j)\rightsquigarrow\scrC(\mu_\infty)$. 
\end{proof}

There are only few cases where the chain $\scrC(\mu)$ corresponding to a measure $\mu$ can be determined explicitly. 
One of them are measures with power density. For such measures $\scrC(\mu)$ can be described in terms of confluent hypergeometric
functions. Recall:
\[
	M(\alpha,\beta,z):=\sum_{n=0}^\infty\frac{(\alpha)_n}{(\beta)_n}\cdot\frac{z^n}{n!}
	,\quad
	\prescript{}{0}{F}_1(\beta,z):=\sum_{n=0}^\infty\frac 1{(\beta)_n}\cdot\frac{z^n}{n!}
	,
\]
where $\alpha,z\in\bbC$ and $\beta\in\bbC\setminus(-\bbN_0)$. The symbol $(\Dummy)_n$ denotes the rising factorial, i.e., 
\[
	(\alpha)_0=1,\quad (\alpha)_{n+1}=(\alpha)_n(\alpha+n)\quad\text{for }n\in\bbN_0
	.
\]
The following fact is shown in \cite[Corollary~7.6]{eichinger.woracek:homo-arXiv}. 

\begin{example}
\label{940}
	Let $\beta>0$, $(\sigma_+,\sigma_-)\in[0,\infty)^2\setminus\{(0,0)\}$, and let $\mu$ be the measure which is absolutely
	continuous w.r.t.\ the Lebesgue measure and has derivative 
	\[
		\frac{d\mu(\xi)}{d\xi}=
		\begin{cases}
			\sigma_+\beta\cdot\xi^{\beta-1} &\text{if}\ \xi>0,
			\\
			\sigma_-\beta\cdot|\xi|^{\beta-1} &\text{if}\ \xi < 0.
		\end{cases}
	\]
	We define functions $A,B$ by distinguishing two cases.
	\begin{enumerate}[{\rm(i)}]
	\item Assume that $\sigma_+,\sigma_->0$. Define 
		\begin{align*}
			& \alpha:=\frac i{2\pi}\log\frac{\sigma_-}{\sigma_+}+\frac{\beta-1}2
			,\quad 
			\kappa:=\frac 12
			\Big(\frac{2\Gamma(\beta+1)^2\sqrt{\sigma_+\sigma_-}}{|\Gamma(\alpha+1)|^2}\Big)^{\frac 1\beta}
			,
			\\
			& A(z):=e^{i\kappa z}\frac{M(\alpha,\beta,-2i\kappa z)+M(\alpha+1,\beta,-2i\kappa z)}2
			,
			\\
			& B(z):=z\cdot e^{i\kappa z}M(\alpha+1,\beta+1,-2i\kappa z)
			.
		\end{align*}
	\item Assume that $\sigma_+=0$ or $\sigma_-=0$. Define 
		\begin{align*}
			& \sigma:=
			\begin{cases}
				\big(\frac{\sigma_+}\pi\Gamma(\beta+1)^2\big)^{\frac 1\beta} &\text{if}\ \sigma_+>0
				,
				\\
				-\big(\frac{\sigma_-}\pi\Gamma(\beta+1)^2\big)^{\frac 1\beta} &\text{if}\ \sigma_->0
				,
			\end{cases}
			\\
			& A(z):=\prescript{}{0}{F}_1(\beta,-\sigma z)
			,\quad 
			B(z):=z\cdot\prescript{}{0}{F}_1(\beta+1,-\sigma z)
			.
		\end{align*}
	\end{enumerate}
	Now set 
	\[
		K_\sigma(z,w):=\frac{B(z)A(\overline w)-A(z)B(\overline w)}{z-\overline w}
		,
	\]
	and $K_\sigma(a,z,w):=a^\beta K_\sigma(az,aw)$ for $a\geq 0$. Then 
	\[
		\scrC(\mu)=\big\{\cH(K_\sigma(a,.,.))\mid a\geq 0\big\}
		.
	\]
\end{example}

\begin{remark}
\label{rem:905}
	It is an open problem to characterize those measures $\mu\in\bbM$ for which there exists a chain
	$\scrC\in\ubCh_\mu$, such that all elements of $\bigcup\scrC$ are entire functions of bounded type in $\bbC_+$ and
	$\bbC_-$. We do not expect an easy answer.
\end{remark}

%
%
%
\newpage
\section{Rescaling limits for measures with regularly varying distribution function}
\label{sec:7}
%
%
%

\noindent
We apply the theory developed in the previous sections to investigate rescaling limits of reproducing kernels. 
Recall the notion of regular variation from \cite{bingham.goldie.teugels:1989}. 
We will say that $\scrh$ is locally $\asymp 1$ if for every $T>0$ it holds that $\inf_{t\in(0,T]}\scrh(t)>0$ and 
$\sup_{t\in(0,T]}\scrh(t)<\infty$.

\begin{definition}
\label{920}
	Let $H\in\bbH_{0,\infty}$ be in lc at $0$ and in lp at $\infty$. Recall the kernel $K_H(t,z,w)$ from \cref{eq:59}, and
	denote 
	\[
		\kappa(t):=K_H(t,0,0),\quad t\geq 0
		.
	\]
	Let $\scrh:(0,\infty)\to(0,\infty)$ be a regularly varying function with positive index, and assume that $\scrh$ is
	locally $\asymp 1$. We say that $H$ has a \emph{rescaling limit with rate $\scrh$}, if the limit 
	\begin{equation}
	\label{921}
		K(z,w):=\lim_{t\to\infty}\frac 1{\kappa(t)}K_H\Big(t,\frac z{\scrh(\kappa(t))},\frac w{\scrh(\kappa(t))}\Big)
	\end{equation}
	exists locally uniformly for $(z,w)\in\bbC\times\bbC$ and is not constant. 
\end{definition}

Note that the factor $\frac 1{\kappa(t)}$ is chosen such that $K(0,0)=1$. 

A Hamiltonian $H\in\bbH_{0,\infty}$ which is in lc at $0$ and lp at $\infty$ gives rise to a spectral measure $\mu_H$.
In the below theorem we relate existence of a rescaling limit of $H$ with the local behaviour of $\mu_H$ at zero. 

\begin{theorem}
\label{922}
	Let $H\in\bbH_{0,\infty}$ be in limit circle case at $0$ and in limit point case at $\infty$, and let $\mu_H$ be its
	spectral measure. Then the following statements are equivalent. 
	\begin{enumerate}[{\rm(i)}]
	\item There exists a regularly varying function $\scrh$ with positive index which is locally $\asymp 1$, such that $H$
		has a rescaling limit with rate $\scrh$. 
	\item There exists a regularly varying function $\scrg$ with positive index and numbers $\sigma_+,\sigma_-\geq 0$ with 
		$(\sigma_+,\sigma_-)\neq(0,0)$, such that 
		\begin{equation}
		\label{929}
			\lim_{r\to\infty}\scrg(r)\mu_H\big((0,\tfrac 1r)\big)=\sigma_+,\quad
			\lim_{r\to\infty}\scrg(r)\mu_H\big((-\tfrac 1r,0)\big)=\sigma_-,
		\end{equation}
		and $\mu_H(\{0\})=0$. 
	\end{enumerate}
	Assume that {\rm(i)} and {\rm(ii)} hold. Then the functions $\scrh$ and $\scrg$ are asymptotic inverses of each other,
	and the limit kernel in {\rm(i)} is equal to the kernel $K_\sigma(1,z,w)$ from \Cref{940} built with the data from
	{\rm(ii)}, namely $\sigma_+,\sigma_-$ and $\beta$, where $\beta$ is the index of $\scrg$. 
\end{theorem}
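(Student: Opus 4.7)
My approach is to transport the problem to the language of chains of de Branges spaces developed in Sections~\ref{sec:4}--\ref{sec:5} and leverage the homeomorphism with power-bounded measures (\Cref{thm:904}) together with the explicit description of $\scrC(\mu)$ for power-density measures in \Cref{940}. The common setup is: for $r>0$ define the rescaled measure $\mu^{(r)}(A):=\scrg(r)\mu_H(A/r)$. A direct calculation with the rescaling rules for reproducing kernels shows that for each $t$ the normalized kernel
\[
    \tilde K(t,z,w) := \frac{1}{\kappa(t)}K_H\!\left(t,\tfrac{z}{\scrh(\kappa(t))},\tfrac{w}{\scrh(\kappa(t))}\right)
\]
is, up to asymptotically trivial scalars, the reproducing kernel of a dB-space $\tilde\cH_t$ belonging to the chain $\scrC(\mu^{(r_t)})$ attached to the rescaled measure with $r_t:=\scrh(\kappa(t))$; moreover $K_{\tilde\cH_t}(0,0)=1$ by construction.

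For (ii) $\Rightarrow$ (i), regular variation of $\scrg$ with index $\beta$ together with a short Karamata-type computation transforms \eqref{929} into $\mu^{(r)}\to\nu$ weakly in $C_c(\bbR)^*$, where $\nu$ is the measure with density $\sigma_+\beta\,\xi^{\beta-1}\mathds{1}_{\{\xi>0\}}+\sigma_-\beta\,|\xi|^{\beta-1}\mathds{1}_{\{\xi<0\}}$ (note $\nu(\{0\})=0$, using $\mu_H(\{0\})=0$). After verifying uniform power-boundedness $\sup_{r\geq r_0}\|\mu^{(r)}\|_\kappa<\infty$ for a suitable $\kappa$, \Cref{thm:904}(ii) gives chain convergence $\scrC(\mu^{(r_t)})\rightsquigarrow\scrC(\nu)$. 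By \Cref{940}, $\scrC(\nu)=\{\cH(K_\sigma(a,\cdot,\cdot)):a\geq 0\}$, and the scaling identity $K_\sigma(a,0,0)=a^\beta K_\sigma(0,0)$ is strictly monotone in $a$, singling out $\cH(K_\sigma(1,\cdot,\cdot))$ as the unique element of $\scrC(\nu)$ with kernel value $1$ at the origin. The definition of $\rightsquigarrow$ then forces every convergent subnet of $(\tilde\cH_t)$ to have that unique space as its limit, establishing (i) with $K=K_\sigma(1,\cdot,\cdot)$.

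For (i) $\Rightarrow$ (ii), I use compactness of $\bbHen$ (\Cref{lem:972}) applied to the trace-normalized Hamiltonians attached via \Cref{thm:904} to each chain $\scrC(\mu^{(r_t)})$. Every subsequence admits a sub-subsequence along which these Hamiltonians and the anchoring Hermite--Biehler functions converge; \Cref{thm:980} then gives a convergent subchain $\scrC(\mu^{(r_{t_j})})\rightsquigarrow\scrC_\infty$, and the Grommer--Hamburger theorem yields $\mu^{(r_{t_j})}\to\nu$ weakly for some $\nu\in\bbM$ whose chain in the sense of \Cref{rem:902} equals $\scrC_\infty$. Since $\tilde\cH_{t_j}\to\cH(K)$ with $K$ nonconstant, $\cH(K)\in\scrC_\infty$ and hence $\nu\neq 0$. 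The decisive self-similarity argument: for any fixed $c>0$, pick $t_j'$ with $\kappa(t_j')/\kappa(t_j)\to c$; regular variation of $\scrh$ with index $\rho$ yields $\scrh(\kappa(t_j'))/\scrh(\kappa(t_j))\to c^\rho$, so the rescalings at $t_j$ and $t_j'$ differ asymptotically by the spatial dilation $z\mapsto c^\rho z$. Since both sequences converge to the same $K$, the chain $\scrC_\infty$ must be invariant under dilation by $c^\rho$ for every $c>0$, and together with $\nu(\{0\})=0$ (forced by finiteness of $K(0,0)=1$) this forces $\nu$ to coincide with the power-density measure of \Cref{940} for $\beta:=1/\rho$ and some $\sigma_\pm\geq 0$ with $(\sigma_+,\sigma_-)\neq(0,0)$. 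Reading off the density gives \eqref{929}, with $\scrg$ an asymptotic inverse of $\scrh$.

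The principal obstacle will be the (i) $\Rightarrow$ (ii) direction, specifically two technical points: first, establishing uniform power-boundedness of the family $(\mu^{(r)})_r$ from kernel convergence alone, without access to a pre-specified $\scrg$, which requires controlling tails of the rescaled measures through the Weyl-type relation between kernel values and point-evaluation norms; and second, upgrading the subsequential self-similarity to full convergence and a unique identification of $\nu$ as a power-density measure. Both hinge on a careful interleaving of \Cref{thm:904}, \Cref{thm:980}, and the explicit form of $K_\sigma$ from \Cref{940}.
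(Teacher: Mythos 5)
Your overall scaffolding matches the paper's quite closely: both directions pass through the chains $\scrC(\mu_{\cA_rH})$ (your $\scrC(\mu^{(r)})$), exploit \Cref{thm:904} and \Cref{thm:980} for convergence of chains, use \Cref{940} to identify the limit chain, and rely on Karamata's theorems for the power-boundedness estimates. But there are two gaps that you flag too lightly, one in each direction.

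For (ii)$\Rightarrow$(i), the step "the definition of $\rightsquigarrow$ then forces every convergent subnet of $(\tilde\cH_t)$ to have that unique space as its limit" is correct but it does not establish (i). The definition of $\rightsquigarrow$ says nothing about whether convergent subnets \emph{exist}, and a priori the family $\{\tilde K(t,\cdot,\cdot)\}_t$ could fail to be normal. Chain convergence also gives you no control over the index $s=t/r_t$ in $\scrC(\mu^{(r_t)})$: it's conceivable that this index drifts off so that the associated spaces do not track $\cH(K_\sigma(1,\cdot,\cdot))$. The paper resolves this with an explicit sandwich: using monotonicity of $\kappa(t)$ and strict monotonicity of $a\mapsto K_\sigma(a,0,0)$, it shows that for $a_->1>a_+$ (in its notation) one has domination $K_\sigma(a_-,\cdot,\cdot)\leq \tilde K(t,\cdot,\cdot)\leq K_\sigma(a_+,\cdot,\cdot)$ as positive kernels for large $t$, which gives normality and uniquely pins down the limit. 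Without that argument (or a substitute) the implication is not proved.

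For (i)$\Rightarrow$(ii), your self-similarity argument is morally right but the identification of the limit measure is glossed. Saying that the chain $\scrC_\infty$ is dilation-invariant "forces $\nu$ to coincide with the power-density measure" skips the step of verifying that $\scrC_\infty$ contains a nontrivial isometric subspace (i.e.\ $\dim\chi(1)>1$ and $\chi(1)^\flat\neq\{0\}$) and that the dilation maps between chain members are \emph{isometric}, which is what the theory of homogeneous de Branges spaces needs (\cite[Prop.~5.4 and Thm.~7.2]{eichinger.woracek:homo-arXiv}, as invoked in the paper). Also, your route (compactness of $\bbHen$ + subsequential convergence via \Cref{thm:980}) is a genuinely different, and slightly heavier, way in than the paper's: the paper instead constructs the candidate map $\chi(a)=\cH(a^\beta K(az,aw))$ directly from the hypothesized limit kernel and applies \Cref{927}, bypassing the compactness argument entirely and avoiding the uniform power-boundedness worry you raise (that worry only bites in (ii)$\Rightarrow$(i), where one needs \Cref{thm:904}(ii)). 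If you keep your compactness approach, you must also establish that $\kappa(t)\to\infty$ before anything else, since otherwise the rescaled kernels cannot converge to a nonconstant limit --- the paper does this first and then invokes the smooth variation theorem to regularize $\scrh$, which silently underlies the rest of the proof.
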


To prove this theorem we have to relate the reproducing kernels $K_H(t,z,w)$ for large $t$ and small $z,w$ with the concentration
of mass of the spectral measure $\mu_H$ around zero. This is achieved by relating both with a third object, and this man in the
middle is a family of transforms of the Hamiltonian $H$. 

\begin{definition}
\label{924}
	Let $H\in\bbH_{0,\infty}$ be lc at $0$ and lp at $\infty$, and let $\scrg:(0,\infty)\to(0,\infty)$ be a function.
	The we define, for each $r>0$, a weighted rescaling $\cA_rH:(0,\infty)\to\bbR^{2\times 2}$ of $H$ as follows:
	write $H=\big(\begin{smallmatrix} h_1 & h_3\\ h_3 & h_2\end{smallmatrix}\big)$ and set 
	\[
		(\cA_rH)(t):=
		\begin{pmatrix}
			\frac{\scrg(r)}rh_1(rt) & h_3(rt)
			\\[2mm]
			h_3(rt) & \frac r{\scrg(r)}h_2(rt)
		\end{pmatrix}
		,\quad t>0.
	\]
\end{definition}

In the next lemma we provide the properties of this transform which we will use in the sequel. 

\begin{lemma}
\label{925}
	Let $H\in\bbH_{0,\infty}$ be lc at $0$ and lp at $\infty$, and let $\scrg:(0,\infty)\to(0,\infty)$.
	Moreover, let $r>0$. Then $\cA_rH\in\bbH_{0,\infty}$, and is lc at $0$ and lp at $\infty$. 
	We have 
	\[
		K_{\cA_rH}(t,z,w)=\frac 1{\scrg(r)}K_H\big(rt,\tfrac zr,\tfrac wr\big),\quad t\in[0,\infty),
	\]
	\[
		\mu_{\cA_rH}=\scrg(r)\Sigma^r_*\mu_H
	\]
	where $\Sigma^r:\bbR\to\bbR$ is the map $\xi\mapsto r\xi$ and $\Sigma^r_*\mu_H$ is the pushforward of the measure $\mu_H$ under $\Sigma^r$.
\end{lemma}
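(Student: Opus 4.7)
The plan is to recognize $\cA_rH$ as the composition of two familiar operations on Hamiltonians: the reparameterization $t\mapsto rt$ and a conjugation by a diagonal matrix $M\in\SL(2,\bbR)$. Setting
\[
M:=\begin{pmatrix}\sqrt{\scrg(r)/r} & 0 \\ 0 & \sqrt{r/\scrg(r)}\end{pmatrix},
\]
so that $\det M=1$, a direct computation gives $(\cA_rH)(t)=M\,H(rt)\,M^*$. From this decomposition almost everything follows mechanically.

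First I would establish membership in $\bbH_{0,\infty}$. Positive semidefiniteness holds since $M H(rt) M^*\geq 0$ whenever $H(rt)\geq 0$, and $\det(\cA_rH)(t)=\det H(rt)\geq 0$ shows that the off-diagonal entries are controlled. The support properties also transfer: $(\cA_rH)(t)=0$ iff $H(rt)=0$. For the endpoint behaviour, integrate the trace directly,
\[
\int_0^c\tr(\cA_rH)(t)\,dt=\frac{\scrg(r)}{r^2}\int_0^{rc}h_1(s)\,ds+\frac{1}{\scrg(r)}\int_0^{rc}h_2(s)\,ds,
\]
which is finite for $c$ small because $H$ is lc at $0$, and likewise diverges as $c\to\infty$ because $H$ is lp at $\infty$ (the divergent one of $\int^\infty h_1$, $\int^\infty h_2$ forces the corresponding term to blow up).

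Next I would propagate the decomposition to the transfer matrix. The reparameterization rule \cref{eq:35} together with the scaling rule $W_{\lambda H}(c,d,z)=W_H(c,d,\lambda z)$ gives $W_{H(r\,\cdot)}(0,t,z)=W_H(0,rt,z/r)$, and then Lemma~\ref{lem:6} yields $W_{\cA_rH}(t,z)=M\,W_H(rt,z/r)\,M^{-1}$. Because $M$ is diagonal with $ab=1$, the bottom row of $M W M^{-1}$ is $(b^2 w_{21},w_{22})=\bigl(\tfrac{r}{\scrg(r)}w_{21},w_{22}\bigr)$. Plugging into the definition \cref{eq:59} of $K_H$ produces exactly one factor $\tfrac{r}{\scrg(r)}$ in front and an extra $1/r$ from $(z-\bar w)/(z/r-\bar w/r)$, giving the asserted formula
\[
K_{\cA_rH}(t,z,w)=\tfrac{1}{\scrg(r)}K_H\bigl(rt,\tfrac{z}{r},\tfrac{w}{r}\bigr).
\]

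Finally, for the spectral measure I would compute the Weyl coefficient. Using commutativity of the $\star$-action with matrix multiplication, $\lim_{t\to\infty}W_{\cA_rH}(t,z)\star\tau_t=M\star\bigl(\lim W_H(rt,z/r)\star(M^{-1}\star\tau_t)\bigr)$; since $M^{-1}\star$ preserves $\cN_0\cup\{\infty\}$, the inner limit is $q_H(z/r)$, and the outer action by the diagonal $M$ multiplies by $a^2=\scrg(r)/r$. Hence $q_{\cA_rH}(z)=\tfrac{\scrg(r)}{r}q_H(z/r)$. Substituting into the Stieltjes inversion formula and changing variable $u=t/r$ in the inner integral yields $\mu_{\cA_rH}((a,b))=\scrg(r)\,\mu_H((a/r,b/r))=\scrg(r)\Sigma^r_*\mu_H((a,b))$, which is the claimed identity.

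The only mildly delicate point is step three, where two transformations are applied in series and one must keep track of the two rescalings of $z$ (one from conjugation by the diagonal $M$, one from the time-change combined with the factor $r$ that appears from the reparameterization rule). Once the identity $W_{\cA_rH}(t,z)=M\,W_H(rt,z/r)\,M^{-1}$ is in hand, everything else is bookkeeping with fractional linear transformations and Stieltjes inversion.
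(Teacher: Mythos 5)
Your proof is correct and follows the same route as the paper: both hinge on the transfer matrix identity $W_{\cA_rH}(t,z)=M\,W_H(rt,z/r)\,M^{-1}$ with $M=\operatorname{diag}\bigl(\sqrt{\scrg(r)/r},\sqrt{r/\scrg(r)}\bigr)$, then read off the kernel, the Weyl function $q_{\cA_rH}(z)=\tfrac{\scrg(r)}r q_H(z/r)$, and apply Stieltjes inversion. The paper obtains this identity by "plugging into the differential equation," whereas you derive it more systematically from the decomposition $\cA_rH=\cT_M(H\circ r)$ via the reparameterization rule, the scaling rule $W_{\lambda H}=W_H(\cdot,\lambda z)$, and Lemma~\ref{lem:6}; this is a cleaner way to see why the formula holds but is the same argument in substance.
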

\begin{proof}
	It is clear that $\cA_rH\in\bbH_{0,\infty}$ for each $r>0$ and, since 
	\[
		\tr\cA_rH(t)\geq\min\Big\{\frac{\scrg(r)}r,\frac r{\scrg(r)}\Big\}\cdot\tr H(rt)
		,
	\]
	that $\cA_rH$ is lc at $0$ and lp at $\infty$. 
	Plugging in the differential equation shows that the fundamental solution of $\cA_rH$ is 
	\[
		W_{\cA_rH}(t,z)=
		\Big(\begin{smallmatrix} \sqrt{\frac{\scrg(r)}r} & 0 \\ 0 & \sqrt{\frac r{\scrg(r)}}\end{smallmatrix}\Big)
		W_H\big(rt,\tfrac zr\big)
		\Big(\begin{smallmatrix} \sqrt{\frac r{\scrg(r)}} & 0 \\ 0 & \sqrt{\frac{\scrg(r)}r}\end{smallmatrix}\Big)
		,\quad t\geq 0.
	\]
	A computation yields the asserted formula for the kernel $K_{\cA_rH}(t,z,w)$. Moreover, we see that 
	\[
		q_{\cA_rH}(z)=\tfrac{\scrg(r)}r\cdot q_H\big(\tfrac zr\big)
		,
	\]
	and the Stieltjes inversion formula implies the assertion about the spectral measure.
\end{proof}

Combining the above lemma with \Cref{923} we have the following immediate corollary.

\begin{corollary}
\label{926}
	Assume we are in the situation of \Cref{925}. Then
	\[
		\forall r>0: \scrC(\mu_{\cA_rH})=
		\Big\{\cH\Big(\tfrac 1{\scrg(r)}K_H\big(rt,\tfrac zr,\tfrac wr\big)\Big)\Big|\ t\in[0,\infty)\Big\}
		.
	\]
\end{corollary}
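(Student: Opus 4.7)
The plan is to identify the set on the right-hand side with two objects simultaneously: first with the canonical chain $\scrC(\cA_rH)$ associated to the Hamiltonian $\cA_rH$ as in \Cref{923}, and second with the distinguished chain $\scrC(\mu_{\cA_rH})$ produced by \Cref{thm:904}. Once both identifications are made, the corollary follows.

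For the first identification, I would simply invoke the kernel formula from \Cref{925},
\[
	K_{\cA_rH}(t,z,w)=\tfrac 1{\scrg(r)}K_H\big(rt,\tfrac zr,\tfrac wr\big)
	,\quad t\geq 0.
\]
Since \Cref{925} tells us that $\cA_rH\in\bbH_{0,\infty}$ is lc at $0$ and lp at $\infty$, the construction in \Cref{923} applies and shows
\[
	\scrC(\cA_rH)=\big\{\cH(K_{\cA_rH}(t,\cdot,\cdot))\mid t\in[0,\infty)\big\}
	\in\ubCh,
\]
which by the kernel identity above is literally the set on the right-hand side of the claim.

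For the second identification, I would verify that $\scrC(\cA_rH)$ satisfies the two defining properties of $\scrC(\mu_{\cA_rH})$ in \Cref{thm:904}(i). Power boundedness of $\mu_{\cA_rH}$ (indeed, $\|\mu_{\cA_rH}\|_0<\infty$) is automatic from \cref{eq:957} applied to the Weyl coefficient $q_{\cA_rH}$, so the hypothesis of \Cref{thm:904} is met. That $\mu_{\cA_rH}$ is the spectral measure of the chain $\scrC(\cA_rH)$ in the sense of \Cref{def:901} is the content of the standard Parseval/inclusion statement for canonical systems (this is essentially \cite[Problem~163]{debranges:1968} as used in the proof of \Cref{thm:900}). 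The condition $\vartheta_{\scrC(\cA_rH)}=0$ holds because the Hermite-Biehler functions $E_{\cA_rH}(t,z)=w_{22}(t,z)+iw_{21}(t,z)$ arising from the transfer matrix belong to $\HB^\ast$ by \cref{eq:966}. Finally, every element of every $\cH(K_{\cA_rH}(t,\cdot,\cdot))$ is of bounded type in $\bbC_+$ and $\bbC_-$: the entries of $W_{\cA_rH}(t,z)$ are entire of exponential type in $z$ (as solutions of a compactly supported canonical system on $[0,t]$), so the associated Hermite-Biehler function and all elements of the corresponding de~Branges space inherit this property. The uniqueness statement in \Cref{thm:904}(i) then forces $\scrC(\cA_rH)=\scrC(\mu_{\cA_rH})$.

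The argument is essentially bookkeeping within the established theory, so I would not expect a genuine obstacle. The only point that requires a brief check rather than a pure citation is the verification that $\mu_{\scrC(\cA_rH)}=\mu_{\cA_rH}$ as measures, i.e., that the spectral measure defined via isometric inclusion into $L^2$ coincides with the spectral measure defined via the Weyl coefficient; this is already implicit in the proof of \Cref{thm:900} but deserves to be named explicitly.
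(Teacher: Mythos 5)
Your proof is correct and is essentially the route the paper intends but leaves unwritten (the paper just declares the corollary ``immediate''): by \Cref{925} the right-hand side is $\scrC(\cA_rH)$, and this must coincide with $\scrC(\mu_{\cA_rH})$ by the uniqueness clause of \Cref{thm:904}(i), once one observes that the chain is spectral for $\mu_{\cA_rH}$ (via the Parseval facts used in \Cref{thm:900}), that $\vartheta=0$ by \cref{eq:966}, and that all its elements are of bounded type. One small imprecision: ``entire of exponential type'' does not by itself imply bounded type in $\bbC_+$ and $\bbC_-$; the clean justification within the paper's framework is that for a limit circle endpoint the transfer-matrix entries agree, up to a polynomial shear, with those of a matrix family (\Cref{pro:914}(ii)), and the latter are of bounded type by \Cref{thm:906}(i) -- equivalently, the HB functions lie in the Cartwright class.
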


We can already establish one implication from \Cref{922}.

\begin{proof}[Proof of \Cref{922}, ``{\rm(i)}$\Rightarrow${\rm(ii)}'']
	Assume that $H$ has the rescaling limit $K(z,w)$ with rate $\scrh$. We proceed in four steps. 
\begin{Elist}
\item The first step is to show that 
	\[
		\lim_{t\to\infty}\kappa(t)=\infty
		.
	\]
	Assume towards a contradiction that $C:=\sup_{t\geq 0}\kappa(t)<\infty$; note here that $\kappa(t)$ is nondecreasing. 
	Since $\scrh$ is locally $\asymp 1$, we have 
	\[
		c_-:=\inf_{t>0}\scrh(\kappa(t))>0,\quad c_+:=\sup_{t>0}\scrh(\kappa(t))<\infty
		.
	\]
	Choose a sequence $(t_n)_{n\in\bbN}$ with $t_n\to\infty$, such that the limit 
	$\alpha:=\lim_{n\to\infty}\scrh(\kappa(t_n))$ exists. Let $w\in\bbC_+$, then by \cref{918} we have 
	\[
		\lim_{t\to\infty}K_H(t,z,z)=\infty
	\]
	uniformly for $z\in w\cdot[\frac 1{c_+},\frac 1{c_-}]$. We obtain that 
	\begin{align*}
		\infty= &\, 
		\lim_{n\to\infty}\frac 1C K_H\Big(t_n,\frac w{\scrh(\kappa(t_n))},\frac w{\scrh(\kappa(t_n))}\Big)
		\\
		\leq &\, 
		\lim_{n\to\infty}\frac 1{\kappa(t_n)}
		K_H\Big(t_n,\frac w{\scrh(\kappa(t_n))},\frac w{\scrh(\kappa(t_n))}\Big)
		=K(\tfrac w\alpha,\tfrac w\alpha)<\infty
		,
	\end{align*}
	and have reached a contradiction.

	Since $\kappa(t)$ tends to infinity, it holds for every function $\tilde{\scrh}:(0,\infty)\to(0,\infty)$ with 
	$\tilde{\scrh}\sim\scrh$ that 
	\[
		\lim_{t\to\infty}\frac 1{\kappa(t)}
		K_H\Big(t,\frac z{\tilde{\scrh}(\kappa(t))},\frac w{\tilde{\scrh}(\kappa(t))}\Big)=K(z,w)
		.
	\]
	By the smooth variation theorem \cite[Theorem~1.8.2]{bingham.goldie.teugels:1989} we may thus switch to a rate which is
	possibly better behaved than $\scrh$. We use this freedom and assume for the rest of the proof that $\scrh$ is
	continuous and has a finite positive limit at $0$. 
		
\item In this step we pass to the man in the middle. Choose an asymptotic inverse $\scrg$ of $\scrh$, and use $\scrg$ to build
	the transforms $\cA_rH$. Moreover, denote by $\beta$ the index of $\scrg$. We consider the chains 
	$\scrC(\mu_{\cA_rH})$ and the function 
	\[
		\chi:\left\{
		\begin{array}{rcl}
			[0,\infty) & \to & \RK
			\\
			a & \mapsto & \cH\big(a^\beta K(az,aw)\big)
		\end{array}
		\right.
	\]
	Note here that $K(z,w)$ is a positive kernel as limit of positive kernels, and thus $a^{\beta}K(az,aw)$ also is a
	positive kernel. 

	Our aim is to apply \Cref{927}. It is clear that $\chi(0)=\{0\}$ and that $\chi$ is continuous. We have 
	$a^\beta K(a\cdot 0,a\cdot 0)=a^\beta$, and since $\beta>0$ this function is strictly increasing and tends to $\infty$. 
	We have to produce elements $\cH_{a,r}\in\scrC(\mu_{\cA_rH})$, $a,r>0$, such that 
	$\lim_{r\to\infty}\cH_{a,r}=\chi(a)$ for all $a>0$. Set 
	\[
		r(a,t):=\scrh\Big(\frac{\kappa(t)}{a^\beta}\Big),\quad T(a,t):=\frac t{r(a,t)},
	\]
	then 
	\begin{align*}
		K_{\cA_{r(a,t)}H}\big(T(a,t),z,w\big)= &\, 
		\frac 1{\scrg(r(a,t))}K_H\Big(t\,,\,\frac z{r(a,t)},\frac w{r(a,t)}\Big)
		\\
		\sim &\, \frac{a^\beta}{\kappa(t)}K_H\bigg(t,
		\frac 1{\scrh(\kappa(t))}\cdot 
		\underbrace{z\frac{\scrh(\kappa(t))}{\scrh\big(\frac{\kappa(t)}{a^\beta}\big)}}_{\to za}
		\,,\,
		\frac 1{\scrh(\kappa(t))}\cdot 
		\underbrace{w\frac{\scrh(\kappa(t))}{\scrh\big(\frac{\kappa(t)}{a^\beta}\big)}}_{\to wa}
		\bigg)
	\end{align*}
	and we see that 
	\[
		\lim_{t\to\infty}K_{\cA_{r(a,t)}H}\big(T(a,t),z,w\big)=a^\beta K(az,aw)
		.
	\]
	It remains to note that $t\mapsto r(a,t)$ is continuous and $\lim_{t\to\infty}r(a,t)=\infty$. 

	Now \Cref{927} implies that 
	\[
		\scrC:=\big\{\cH(a^\beta K(az,aw))\mid a\geq 0\big\}\in\ubCh,\quad 
		\scrC(\mu_{\cA_rH})\rightsquigarrow\scrC
		.
	\]
	We show that $\vartheta_{\scrC}=0$. Let $\xi\in\bbR$, and assume towards a contradiction that 
	$K(\xi,\xi)=0$. Since $\chi(a)\subseteq_c\chi(1)$ for all $a\in(0,1)$, it follows that 
	\[
		\forall a\in(0,1]: a^\beta K(a\xi,a\xi)=K_{\chi(a)}(\xi,\xi)=0
		.
	\]
	Passing to the limit $a\downarrow 0$ yields that $K(0,0)=0$, and this is a contradiction.
\item Let $\sigma$ be the spectral measure of $\scrC$. In this step we show that $\sigma$ is absolutely continuous w.r.t.\ the 
	Lebesgue measure and that its derivative has the form 
	\begin{equation}
	\label{928}
		\frac{d\sigma(\xi)}{d\xi}=
		\begin{cases}
			\sigma_+\beta\cdot\xi^{\beta-1} &\text{if}\ \xi>0,
			\\
			\sigma_-\beta\cdot|\xi|^{\beta-1} &\text{if}\ \xi<0,
		\end{cases}
	\end{equation}
	with some $\sigma_+,\sigma_-\geq 0$ and $(\sigma_+,\sigma_-)\neq(0,0)$. 
	The argument relies on the theory of homogeneous de~Branges spaces developed in 
	\cite{deBranges62,eichinger.woracek:homo-arXiv}. 

	To start with, we show that $\dim\chi(1)>1$. Assume the contrary, then 
	$\chi(a)=\chi(1)=\spann\{K(.,0)\}$ for all $a\in(0,1]$. In particular, the function $K(\frac z2,0)$ is a scalar multiple
	of $K(z,0)$, say $K(\frac z2,0)=\alpha\cdot K(z,0)$. Writing the power series expansion of $K(z,0)$ as 
	\[
		K(z,0)=1+\sum_{n=1}^\infty \gamma_nz^n
		,
	\]
	and comparing coefficients of $K(z,0)$ and $K(\frac z2,0)$ yields that $\alpha=1$ and $\gamma_n=0$ for all $n$. 
	Thus $K(z,0)$ is constant equal to $1$, and hence all
	elements of the space $\chi(1)$ are constant. We obtain that for all $z,w\in\bbC$ 
	\[
		K(z,w)=K(0,w)=\overline{K(w,0)}=1
		,
	\]
	i.e., $K(z,w)$ is constant. This is a contradiction. 

	Since $\dim\chi(1)/\chi(1)^\flat\leq 1$, we have $\chi(1)^\flat\neq\{0\}$, and hence there exists $a\in(0,1]$ such that 
	$\chi(1)^\flat=\chi(a)$. We obtain 
	\[
		\forall b\geq 1: \chi(a)\subseteq_i\chi(b)
		.
	\]
	By \cite[Lemma~2.3]{eichinger.woracek:homo-arXiv}, for each $b\geq 1$, the map 
	\[
		F(z)\mapsto \Big(\frac 1b\Big)^\frac{\beta}2 F\Big(\frac zb\Big)
	\]
	restricts to an isometric isomorphism of $\chi(b)$ onto $\chi(1)$ and to one of $\chi(a)$ onto $\chi(\frac ab)$. 
	It follows that 
	\[
		\forall b\geq 1: \chi(\tfrac ab)\subseteq_i\chi(1)
		.
	\]
	Now \cite[Proposition~5.4]{eichinger.woracek:homo-arXiv} implies that $\chi(1)$ 
	is homogeneous of order $\frac\beta2-1$, and by 
	\cite[Theorem~7.2]{eichinger.woracek:homo-arXiv} (together with \cite[Theorem~6.2]{eichinger.woracek:homo-arXiv}) the 
	measure $\sigma$ is of the form \cref{928}. 
\item It is easy to pass on to the measure $\mu_H$. By \Cref{pro:907}{\rm(ii)}, we have 
	$\lim_{r\to\infty}\mu_{\cA_rH}=\sigma$ in the $w^*$-topology of $C_c(\bbR)^*$. By the Portmanteau theorem 
	\cite[Theorem~1]{barczy.pap:2006} this means that 
	\[
		\forall a,b\in\bbR,a<b: \lim_{r\to\infty}\mu_{\cA_rH}((a,b))=\sigma((a,b))
		.
	\]
	Note here that $\sigma$ has no point masses. By \Cref{925} we have 
	\[
		\mu_{\cA_rH}((a,b))=\scrg(r)\mu_H\big(\big(\tfrac ar,\tfrac br\big)\big)
		,
	\]
	and it follows that 
	\[
		\lim_{r\to\infty}\scrg(r)\mu_H\big(\big(0,\tfrac 1r\big)\big)=\sigma\big((0,1)\big),\quad 
		\lim_{r\to\infty}\scrg(r)\mu_H\big(\big(-\tfrac 1r,0\big)\big)=\sigma\big((-1,0)\big).
	\]
	Moreover, 
	\[
		\limsup_{r\to\infty}\scrg(r)\mu_H(\{0\})\leq
		\lim_{r\to\infty}\scrg(r)\mu_H\big(\big(-\tfrac 1r,\tfrac 1r\big)\big)=\sigma\big((-1,1)\big)<\infty
		,
	\]
	and since $\scrg(r)\to\infty$ this implies that $\mu_H(\{0\})=0$. 
\end{Elist}
\end{proof}

The proof of the converse implication ``{\rm(ii)}$\Rightarrow${\rm(i)}'' in \Cref{922} works in essence by reversing the steps
in the above argument. 

The first step is to exploit the conditions on $\mu_H$ stated in {\rm(ii)}. We do this by means of 
the following lemma (recall here the notation \cref{908}). 

\begin{lemma}
\label{946}
	Let $\mu$ be a positive measure on $\bbR$ with $\mu(\{0\})=0$, let $\scrg$ be regularly varying with positive index 
	$\beta$, and assume the limits \cref{929} exist with $(\sigma_+,\sigma_-)\neq(0,0)$. Denote by $\sigma$ the measure 
	which is absolutely continuous w.r.t.\ the Lebesgue measure and has derivative \cref{928}. Moreover, denote 
	$\mu_r:=\scrg(r)\Sigma^r_*\mu$ for $r>0$. Then 
	\begin{align}
	\label{945}
		& \forall a,b\in\bbR,a<b: \lim_{r\to\infty}\mu_r\big((a,b)\big)=\sigma\big((a,b)\big),
		\\
	\label{944}
		& \forall \kappa\in\bbN_0,\kappa+1>\frac\beta2: \lim_{r\to\infty}\|\mu_r\|_\kappa=\|\sigma\|_\kappa.
	\end{align}
\end{lemma}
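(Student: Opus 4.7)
My plan is to prove \eqref{945} first, as a statement of weak convergence of $\mu_r$ on bounded intervals, and then upgrade to the weighted-mass convergence \eqref{944} via a uniform tail estimate.

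For \eqref{945}, I start with the identity $\mu_r((0,b))=\scrg(r)\mu((0,b/r))$ from \Cref{925}. Substituting $r':=r/b$ rewrites this as
\[
\mu_r((0,b)) = \frac{\scrg(r'b)}{\scrg(r')}\cdot\scrg(r')\,\mu\!\left(\left(0,\tfrac{1}{r'}\right)\right).
\]
As $r\to\infty$, so $r'\to\infty$: the first factor tends to $b^\beta$ by the definition of regular variation with index $\beta$, and the second to $\sigma_+$ by hypothesis, so $\mu_r((0,b))\to \sigma_+ b^\beta=\sigma((0,b))$. A mirror argument yields $\mu_r((-b,0))\to\sigma_- b^\beta$. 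Since $\mu_r(\{0\})=\scrg(r)\mu(\{0\})=0$, intervals straddling zero are handled by additivity. For $0<a<b$ I sandwich $\mu_r((a,b))$ between $\mu_r((0,b))-\mu_r((0,a+\delta))$ and $\mu_r((0,b))-\mu_r((0,a-\delta))$; letting $\delta\downarrow 0$ and using continuity of $s\mapsto s^\beta$ gives $\mu_r((a,b))\to\sigma((a,b))$, and $a<b<0$ is symmetric.

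For \eqref{944}, fix $M>0$ and split $\|\mu_r\|_\kappa = I_{r,M}+J_{r,M}$ with $I_{r,M}:=\int_{|t|\le M}(1+t^2)^{-(\kappa+1)}d\mu_r$ and $J_{r,M}$ the tail. The inner part $I_{r,M}$ converges to $\int_{|t|\le M}(1+t^2)^{-(\kappa+1)}d\sigma$ by \eqref{945} (approximating the indicator by continuous compactly supported sandwiches, using that $\sigma$ is atomless), and by monotone convergence this tends to $\|\sigma\|_\kappa$ as $M\to\infty$; the limit is finite precisely because $\kappa+1>\beta/2$, given that the $\sigma$-density behaves as $|t|^{\beta-1}$.

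The hard part is the uniform tail bound $\sup_r J_{r,M}\to 0$ as $M\to\infty$. Here I would use the identity $\|\mu_r\|_\kappa = \scrg(r)\int_\bbR(1+r^2s^2)^{-(\kappa+1)}d\mu(s)$ from \Cref{925} and split at $|s|=\delta$ for small $\delta>0$. The far region $|s|>\delta$ contributes at most $C\scrg(r)r^{-2(\kappa+1)}\|\mu\|_\kappa$, which tends to $0$ because $\scrg$ is regularly varying with index $\beta<2(\kappa+1)$; this uses $\|\mu\|_\kappa<\infty$, which is implicit in the intended application where $\mu=\mu_H$ is the spectral measure of a canonical system. The near region $|s|<\delta$, after $t=rs$, becomes $\int_{M<|t|<r\delta}(1+t^2)^{-(\kappa+1)}d\mu_r(t)$. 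For this I would invoke Potter's bounds for regular variation to get the uniform estimate $\mu_r((0,t))\le C t^{\beta+\eta}$ (and likewise for the negative half) valid for all sufficiently large $r$ and all $t\in[1,r\delta]$ with $\eta>0$ arbitrarily small, and then integrate by parts to bound this contribution by a constant times $\int_M^\infty t^{\beta+\eta+1}(1+t^2)^{-(\kappa+2)}dt$. The integrand decays like $t^{\beta+\eta-2\kappa-3}$, so by taking $\eta$ small enough the integral converges (using $\kappa+1>\beta/2$) and vanishes as $M\to\infty$. Combining the two contributions yields $\|\mu_r\|_\kappa\to\|\sigma\|_\kappa$.
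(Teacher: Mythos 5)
Your proof of \eqref{945} coincides with the paper's computation. For \eqref{944}, however, you take a genuinely different route. The paper pushes $\mu$ forward under $\xi\mapsto1/\xi^2$ to get a measure $\nu$ on $(0,\infty)$, rewrites $\|\mu_r\|_\kappa=\scrg(r)\int_{(0,\infty)}\xi^{\kappa+1}(\xi+r^2)^{-(\kappa+1)}\,d\nu(\xi)$, and then applies Karamata's Abelian theorem \cite[Theorem~1.6.4]{bingham.goldie.teugels:1989} to the distribution function $V(t)=\nu((t,\infty))$ (which is regularly varying with index $-\beta/2$), followed by the asymptotics of Stieltjes transforms \cite[Theorem~1.7.4]{bingham.goldie.teugels:1989}. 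This computes the limit directly and produces an explicit Beta-function formula for $\|\sigma\|_\kappa$. You instead prove \eqref{944} by a truncation/uniform-integrability argument: \eqref{945} plus Portmanteau handle any fixed compact window, and a uniform tail bound—obtained from Potter's inequality $\scrg(r)/\scrg(r/t)\le Ct^{\beta+\eta}$ and a Riemann--Stieltjes integration by parts—controls the remainder. Both routes work; yours is more elementary (Potter's bounds only, rather than the full Karamata Abelian/Tauberian machinery), and the constraint $\kappa+1>\beta/2$ enters in exactly the same way (it is what makes both the far region $\scrg(r)r^{-2(\kappa+1)}\to0$ and the tail integral $\int_M^\infty t^{\beta+\eta-2\kappa-3}\,dt<\infty$ go through for small $\eta>0$). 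The paper's route is more systematic and gives the closed form of the limit as a byproduct. Both proofs silently use that $\|\mu\|_\kappa<\infty$ and that $\mu$ is finite on compacts—the paper asserts $\int_{(0,1)}\xi^{\kappa+1}\,d\nu(\xi)<\infty$ and $\nu((1,\infty))<\infty$ without comment, which amounts to the same thing—so the fact that you flag this explicitly is not a defect of your argument relative to the paper's. One small notational slip: in your proof of \eqref{945} the lower bound in the sandwich for $\mu_r((a,b))$ with $0<a<b$ should use $\mu_r((0,a'))$ with $a'>a$ (and then let $a'\downarrow a$), not $a-\delta$; the upper bound $\mu_r((0,b))-\mu_r((0,a))$ is already exact. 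This does not affect the conclusion.
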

\begin{proof}
	The relation \cref{945} is easy to see. Let $b>0$ and compute 
	\begin{align*}
		\lim_{r\to\infty}\mu_r\big((0,b)\big)= &\, 
		\lim_{r\to\infty}\scrg(r)\mu\big(\big(0,\tfrac br\big)\big)
		\\
		= &\, \lim_{r\to\infty}
		\bigg[\scrg\big(\tfrac rb\big)\mu\big(\big(0,\tfrac br\big)\big)\cdot\frac{\scrg(r)}{\scrg(\tfrac rb)}\bigg]
		=\sigma_+b^\beta=\sigma\big((0,b)\big)
		.
	\end{align*}
	In the same way we obtain 
	\[
		\lim_{r\to\infty}\mu_r\big((-b,0)\big)=\sigma\big((-b,0)\big)
		.
	\]
	Since $\mu_r(\{0\})=0$, the relation \cref{945} follows. 

	The proof of \cref{944} is more involved; it relies on Karamata's theorems about asymptotics of integrals and Stieltjes
	transforms of regularly varying functions. First, we rewrite the norms $\|\mu_r\|_\kappa$ to a more convenient form. To
	this end let $\Theta:\bbR\setminus\{0\}\to(0,\infty)$ be the function $\Theta(\xi):=\frac 1{\xi^2}$, and let $\nu$ be the
	pushforward $\nu:=\Theta_*\mu$. Then
	\begin{align*}
		\|\mu_r\|_\kappa =&\, \int_{\bbR}\frac{d\mu_r(\xi)}{(1+\xi^2)^{\kappa+1}}
		=\scrg(r)\int_{\bbR\setminus\{0\}}\frac{d\mu(\xi)}{(1+r^2\xi^2)^{\kappa+1}}
		\\
		=&\, \scrg(r)\int_{(0,\infty)}\frac{d\nu(\xi)}{(1+\frac{r^2}\xi)^{\kappa+1}}
		=\scrg(r)\int_{(0,\infty)}\frac{\xi^{\kappa+1}d\nu(\xi)}{(\xi+r^2)^{\kappa+1}}
		.
	\end{align*}
	In order to understand the behaviour of $\|\mu_r\|_\kappa$, we thus have to analyze the measure $\xi^{\kappa+1}d\nu(\xi)$.
	Note first that 
	\[
		\int_{(0,1)}\xi^{\kappa+1}d\nu(\xi)=\int_{(1,\infty)}\frac{d\mu(\xi)}{\xi^{2(\kappa+1)}}<\infty,\quad
		\nu\big((1,\infty)\big)=\mu\big((0,1)\big)<\infty.
	\]
	Now consider the function $V:[1,\infty)\to\bbR$ defined as $V(t):=\nu((t,\infty))$. This function is nonincreasing and
	nonnegative, in particular of bounded variation. We have 
	\[
		V(t)=\mu\Big(\Big(-\frac 1{\sqrt t},\frac 1{\sqrt t}\Big)\Big)\sim\frac{\sigma_++\sigma_-}{\scrg(\sqrt t)},
	\]
	and see that $V$ is regularly varying with index $-\frac\beta2$. We apply 
	\cite[Theorem~1.6.4]{bingham.goldie.teugels:1989} to obtain (this integral is understood in Riemann-Stieltjes sense)
	\[
		\int_1^t s^{\kappa+1}dV(s)\sim\frac{-\frac\beta2}{\kappa+1-\frac\beta2}\cdot t^{\kappa+1}V(t)\sim
		\frac{-\frac\beta2(\sigma_++\sigma_-)}{\kappa+1-\frac\beta2}\cdot \frac{t^{\kappa+1}}{\scrg(\sqrt t)}.
	\]
	Consider the function $U:[0,\infty)\to\bbR$ defined as 
	\[
		U(t):=\int_{(0,t)}\xi^{\kappa+1}d\nu(\xi).
	\]
	Then 
	\[
		U(t)=\int_{(0,1)}\xi^{\kappa+1}d\nu(\xi)-\int_1^t s^{\kappa+1}dV(s)\sim
		\frac{\frac\beta2(\sigma_++\sigma_-)}{\kappa+1-\frac\beta2}\cdot\frac{t^{\kappa+1}}{\scrg(\sqrt t)}.
	\]
	We apply \cite[Theorem~1.7.4]{bingham.goldie.teugels:1989} to obtain
	\begin{align*}
		\int_{(0,\infty)}\frac{\xi^{\kappa+1}d\nu(\xi)}{(\xi+r^2)^{\kappa+1}}\sim &\, 
		\frac{\Gamma(\frac\beta2)\Gamma(\kappa+2-\sigma)}{\Gamma(\kappa+1)}\cdot\frac 1{(r^2)^{\kappa+1}}U(r^2)
		\\
		\sim &\, \frac{\Gamma(\frac\beta2)\Gamma(\kappa+2-\sigma)}{\Gamma(\kappa+1)}
		\frac{\frac\beta2(\sigma_++\sigma_-)}{\kappa+1-\frac\beta2}\cdot\frac 1{\scrg(r)},
	\end{align*}
	and thus (here $B$ is Euler's Beta-function)
	\[
		\lim_{r\to\infty}\|\mu_r\|_\kappa=
		B\big(\tfrac\beta2,\kappa+1-\tfrac\beta2\big)\cdot\frac\beta2 (\sigma_++\sigma_-)
		.
	\]
	Making a change of variable, we evaluate 
	\[
		\int_0^\infty \frac{\xi^{\beta-1}d\xi}{(1+\xi^2)^{\kappa+1}}
		=\frac 12B\big(\tfrac\beta2,\kappa+1-\tfrac\beta2\big),
	\]
	and this establishes \cref{944}. 
\end{proof}

\begin{proof}[Proof of \Cref{922}, ``{\rm(ii)}$\Rightarrow${\rm(i)}'']
	Assume that $\mu_H(\{0\})=0$ and that the limits \cref{929} exist where $\scrg$ is regularly varying with positive 
	index.
	We proceed in three steps. 
\begin{Elist}
\item The above lemma applied with $\scrg$ and $\mu_H$ justifies an application of \Cref{thm:904}{\rm(ii)}, from which we obtain 
	that $\scrC(\mu_r)\rightsquigarrow\scrC(\sigma)$. The chain $\scrC(\sigma)$ is known explicitly from \Cref{940}, and we
	use the notation $K_\sigma(a,z,w)$ from this example. Moreover, recall that the chains $\scrC(\mu_{\cA_rH})$ are known
	from \Cref{926}. 

	The definition of convergence of chains yields that there exists $t:(0,\infty)\times[1,\infty)\to(0,\infty)$ such that 
	\begin{equation}
	\label{943}
		\forall a>0: \lim_{r\to\infty}\frac 1{\scrg(r)}K_H\big(T(a,r)r,\tfrac zr,\tfrac wr\big)=K_\sigma(a,z,w)
		.
	\end{equation}
\item In this step we show that 
	\[
		\lim_{t\to\infty}\kappa(t)=\infty
		.
	\]
	Assume towards a contradiction that $C:=\sup_{t\geq 0}\kappa(t)<\infty$. Since $\scrg(r)\to\infty$, we obtain 
	\[
		1=K_\sigma(1,0,0)=\lim_{r\to\infty}\frac 1{\scrg(r)}K_H(T(1,r)r,0,0)=0
		,
	\]
	a contradiction.
\item Let $\scrh$ be an asymptotic inverse of $\scrg$. Our aim is to show that 
	\begin{equation}
	\label{942}
		\lim_{t\to\infty}\frac 1{\kappa(t)}K_H\Big(t,\frac z{\scrh(\kappa(t))},\frac w{\scrh(\kappa(t))}\Big)
		=K_\sigma(1,z,w)
		.
	\end{equation}
	Set $r(t):=\scrh(\kappa(t))$. Then $\lim_{t\to\infty}r(t)=\infty$, and hence $\scrg(r(t))\sim\kappa(t)$ and 
	\[
		\forall a>0: 
		\lim_{t\to\infty}\frac 1{\kappa(t)}K_H\Big(T(a,r(t))r(t),\tfrac z{r(t)},\tfrac w{r(t)}\Big)=K_\sigma(a,z,w)
		.
	\]
	The function $\kappa(t)$ is nondecreasing and the function $a\mapsto K_\sigma(a,0,0)$ is strictly increasing. Hence, the
	above limit relation implies that 
	\begin{align*}
		& \forall a_+>1: \liminf_{t\to\infty}T(a,r(t))r(t)>t,
		\\
		& \forall a_-\in(0,1): \limsup_{t\to\infty}T(a,r(t))r(t)<t.
	\end{align*}
	From the first relation it follows that (here ``$\leq$'' refers to the order of positive kernels, cf.\ \cref{eq:47})
	\[
		\frac 1{\kappa(t)}K_H\big(t,\frac z{r(t)},\frac w{r(t)}\big)
		\leq\frac 1{\kappa(t)}K_H\big(T(2,r(t))r(t),\tfrac z{r(t)},\tfrac w{r(t)}\big)
	\]
	for all sufficiently large $t$, and this shows that 
	\[
		\Big\{\frac 1{\kappa(t)}K_H\Big(t,\frac z{r(t)},\frac w{r(t)}\Big)\Big|\, t\geq 1\Big\}
	\]
	is a normal family. In order to show \cref{942} it is thus enough to evaluate the limit of convergent subsequences.

	Assume that $(t_n)_{n\in\bbN}$ is a sequence with $t_n\to\infty$ such that the limit 
	\[
		K(z,w):=\lim_{n\to\infty}\frac 1{\kappa(t_n)}K_H\Big(t_n,\frac z{r(t_n)},\frac w{r(t_n)}\Big)
	\]
	exists. Let $a_+>1$, $a_-\in(0,1)$, then for all sufficiently large $n$ we have 
	\begin{multline*}
		\frac 1{\kappa(t_n)}K_H\Big(T(a_-,r(t_n))r(t_n),\frac z{r(t_n)},\frac w{r(t_n)}\Big)
		\leq\frac 1{\kappa(t_n)}K_H\Big(t_n,\frac z{r(t_n)},\frac w{r(t_n)}\Big)
		\\ 
		\leq \frac 1{\kappa(t_n)}K_H\Big(T(a_+,r(t_n))r(t_n),\frac z{r(t_n)},\frac w{r(t_n)}\Big)
	\end{multline*}
	Passing to the limit $n\to\infty$ yields 
	\[
		K_\sigma(a_-,z,w)\leq K(z,w)\leq K_\sigma(a_+,z,w)
		,
	\]
	and letting $a_-\uparrow 1$ and $a_+\downarrow 1$ in this relation gives $K(z,w)=K_\sigma(1,z,w)$. 
\end{Elist}
\end{proof}

\begin{proof}[Proof of Theorem~\ref{theorem:main}]
Since $W(t,\xi) \in \SL(2,\bbR)$,  the family $W(t,\xi)^{-1} W(t,\xi + z)$  is also a $J$-decreasing family and it corresponds to shifted kernels $K_1(t,z,w) = K(t,\xi + z, \xi + w)$. Thus, we can assume without loss of generality that $\xi = 0$ and that $W(t,\xi) = I$ for all $t$.

Since $W$ is $J$-decreasing, the corresponding family of de Branges spaces $\cH(t) = \cB(w_{22}(t, \cdot) + i w_{21}(t,\cdot))$ is contained in a chain $\scrC$, and the map $\cH : [a,b) \to \scrC$ is monotone increasing. Denote by $H$ the corresponding trace-normalized Hamiltonian so that
\[
W(t,z) = W_H( \gamma(t),0, z).
\]
If $W(t_1,\cdot) = W(t_2,\cdot)$ for some $t_1 < t_2$, then $\gamma(t_1) = \gamma(t_2)$. Since $W$ is continuous, so is $\gamma$. Since $W$ is limit point at $\infty$, $H \in \bbH_{0,\infty}$ and $\gamma(t) \to \infty$ as $t\to \infty$. Thus, the rescaling limit 
\[
 \lim_{t\to b}  \frac 1{K_H(\gamma(t),0,0)} K_H\left( \gamma(t), \frac{z}{\scrh(K_H(\gamma(t),0,0))},  \frac{w}{\scrh(K_H(\gamma(t),0,0))} \right)
\]
exists if and only if the rescaling limit \eqref{921} exists, and in this case their values are equal. Now the result follows directly from Theorem~\ref{922}.
\end{proof}

\newpage
\section{Two conventions and Schr\"odinger operators} \label{section:TwoConventions}

\subsection{Two conventions}
In this text, we used the convention prevalent in canonical systems \cite{debranges:1968}, that for a $J$-decreasing family of transfer matrices $W(x,z)$ in the limit point case, its Weyl function is described by
\[
q(z)=\lim\limits_{x\to\infty}W(x,z)\star \tau
\]
(independent of $\tau \in \bbC_+$). One way to obtain such transfer matrices is as solutions of
\begin{equation}\label{eqn:CanonicalConvention1}
\partial_xW(x,z)J=zW(x,z)H(x),
\end{equation}
with $H$ a Hamiltonian. Another convention, more common in mathematical physics, is to work with a $J$-increasing family of transfer matrices $T(x,z)$ and associate with it a Weyl function by
\[
m(z)=\lim\limits_{x\to\infty}T(x,z)^{-1}\star \tau.
\]
To switch between the two conventions while preserving the Weyl function and the measure, we will use
\begin{equation}\label{eqn:TwoConventions}
W(x,z) = T(x,z)^{-1}.
\end{equation}

\begin{remark}
One way to obtain a $J$-increasing family is as a solution of
\begin{equation}\label{eqn:CanonicalConvention2}
J\partial_xT(x,z) = -zH(x) T(x,z)
\end{equation}
with a Hamiltonian $H$, but we warn the reader that this is not compatible with \eqref{eqn:CanonicalConvention1}, \eqref{eqn:TwoConventions}. Instead, if $T$ is defined by \eqref{eqn:CanonicalConvention1}, \eqref{eqn:TwoConventions}, we have $T^{-1} = JT^\top J^{-1}$ since $\det T = 1$; therefore $T$ satisfies
\[
J\partial_xT(x,z) = -zJH(x)J^\top T(x,z).
\]
Thus, to switch from one convention to the other while preserving the Weyl function, the Hamiltonian $H$ should be replaced by $JHJ^\top$. This also explains differences between Hamiltonians written in this paper and those in \cite{EichLukSimanek}.
\end{remark}

\subsection{Schr\"odinger operators}
Consider Schr\"odinger operators $-d^2 /dx^2 + V(x)$. We allow the general setting of a locally $H^{-1}$ potential $V = \sigma' + \tau$,  where $\sigma \in L^2_\loc([a,b))$, $\tau \in L^1_\loc([a,b))$ \cite{HrynivMykytyuk2001,HrynivMykytyuk2012,LukicSukhtaievWang}; the most often studied case $V \in L^1_\loc$ corresponds to $\sigma = 0$ \cite{TeschlMathMedQuant,LukicBook}. Note that we impose the local integrability assumptions also at the endpoint $a$, i.e., it is a regular endpoint. The corresponding transfer matrices are
\[
\partial_x T(x,z) = R_\beta \begin{pmatrix} - \sigma(x) & \tau(x) - \sigma(x)^2 - z \\ 1 & \sigma(x) \end{pmatrix} R_\beta^* T(x,z), \qquad T(a,z) = I,
\]
where
\[
R_\beta = \begin{pmatrix}
\cos \beta & - \sin \beta \\
\sin \beta & \cos \beta
\end{pmatrix}.
\]

\begin{proof}[Proof of Theorem~\ref{theorem:schr}]
This is a direct consequence of Theorem~\ref{theorem:main} applied to the family $W(x,z) = T(x,z)^{-1}$.
\end{proof}

\newpage
\section{Orthogonal polynomials and subexponential growth} \label{section:OP}

In this section, we explain the specializations of our work to orthogonal polynomials on the real line and on the unit circle. We recall how the study of these systems is reduced to the canonical system setting, and how the sequence of CD kernels associated to orthogonal polynomials is embedded in a continuous family of kernels. Beyond this, we study the distinction between the scaling limit of the sequence of kernels and the scaling limit of the continuous family, and the role of the subexponential growth of orthogonal polynomials. 

\subsection{Orthogonal polynomials on the real line}
Let $\mu$ be a measure on $\bbR$ such that $\supp \mu$ has infinite cardinality, and $\mu$ has finite moments corresponding to a determinate moment problem. Since shifting $\mu$ by $\xi$ merely shifts the CD kernels by $\xi$, there is no loss of generality in discussing the canonical system correspondence in its usual notation, normalized at $\xi=0$.  The Weyl function
\begin{equation}\label{eqn:Cauchy0}
m(z) = \int \frac 1{\lambda - z}\,d\mu(\lambda)
\end{equation}
of the measure $\mu$ corresponds to the canonical system with Hamiltonian
\[
H(L) = \begin{pmatrix}
q_n(0)^2 & - p_n(0) q_n(0) \\
- p_n(0) q_n(0) & p_n(0)^2
\end{pmatrix}, \qquad n \le L < n+1.
\]
Since $(jH)^2 = 0$ and $H$ is constant on $[n,n+1)$, the family of kernels corresponding to this canonical system is known to be piecewise linear \eqref{eqnPiecewiseLinear1},  and $[n,n+1]$ are indivisible intervals. Thus, most of Theorem~\ref{theorem3} will be an immediate consequence of Theorem~\ref{theorem:main}, and it remains to explain how the scaling limit of the continuous family of kernels is related to the scaling limit of the sequence of CD kernels. We start with a preliminary lemma:

\begin{lemma}\label{lemmaRegVar0615}
	Let $\scrh$ be regularly varying at $\infty$ with index $\rho > 0$ and assume that $a_n,b_n$ are sequences tending to $\infty$ with $\lim\frac{a_n}{b_n}=c\in [0,\infty)$. Then 
	\[
	\lim_{n\to\infty}\frac{\scrh(a_n^\kappa b_n^{1-\kappa})}{\scrh(b_n)}=c^{\rho \kappa}
	\]
	uniformly in $\kappa \in [0,1]$.
\end{lemma}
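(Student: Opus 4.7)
The plan is to reduce the claim to the Uniform Convergence Theorem (UCT) for regularly varying functions (cf.\ \cite[Theorem 1.5.2]{bingham.goldie.teugels:1989}): if $\scrh$ is regularly varying of index $\rho$, then $\scrh(tx)/\scrh(t)\to x^\rho$ as $t\to\infty$ uniformly for $x$ in any compact subset of $(0,\infty)$. The starting manipulation is the factorization $a_n^\kappa b_n^{1-\kappa}=b_n\cdot(a_n/b_n)^\kappa$, so the quantity to analyze becomes
\[
	\frac{\scrh(b_n\cdot x_n(\kappa))}{\scrh(b_n)}\quad\text{with }x_n(\kappa):=(a_n/b_n)^\kappa.
\]

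For the generic case $c>0$, I would first observe that $\{x_n(\kappa)\mid n\text{ large},\,\kappa\in[0,1]\}$ is relatively compact in $(0,\infty)$: since $a_n/b_n\to c>0$, the base is eventually trapped in a compact subset of $(0,\infty)$, and the monotonicity of $\kappa\mapsto r^\kappa$ keeps all values in a single fixed compact set $K$. The UCT then immediately yields
\[
	\sup_{\kappa\in[0,1]}\left|\frac{\scrh(b_n x_n(\kappa))}{\scrh(b_n)}-x_n(\kappa)^\rho\right|\xrightarrow[n\to\infty]{}0.
\]
The last step is to pass from $x_n(\kappa)^\rho=(a_n/b_n)^{\rho\kappa}$ to $c^{\rho\kappa}$, which is a routine uniform continuity argument: writing $y_n:=(a_n/b_n)/c\to 1$, one has $(a_n/b_n)^{\rho\kappa}=c^{\rho\kappa}y_n^{\rho\kappa}$ and $|y_n^{\rho\kappa}-1|\le e^{\rho|\log y_n|}-1\to 0$ uniformly in $\kappa\in[0,1]$, while $c^{\rho\kappa}$ remains uniformly bounded.

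The boundary case $c=0$ needs a separate treatment, which I would handle via Potter's bounds \cite[Theorem 1.5.6]{bingham.goldie.teugels:1989}: these give $\scrh(b_n x_n(\kappa))/\scrh(b_n)\le Cx_n(\kappa)^{\rho-\delta}$ for some $\delta\in(0,\rho)$ and all large $n$. On any subinterval $[\varepsilon,1]$ with $\varepsilon>0$ one has $x_n(\kappa)\le(a_n/b_n)^\varepsilon\to 0$, forcing the quotient to $0$ uniformly there. At $\kappa=0$ the left side is identically $1$, matching $c^{\rho\kappa}=0^0=1$ under the usual convention.

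The only step requiring any thought is isolating the correct compact set in the main case $c>0$ and then noting that replacing $(a_n/b_n)^{\rho\kappa}$ by $c^{\rho\kappa}$ is harmless uniformly in $\kappa$; both are standard. The mildly delicate point is the degenerate case $c=0$, where the target $c^{\rho\kappa}$ becomes discontinuous at $\kappa=0$, so genuine sup-norm uniformity on all of $[0,1]$ reflects the endpoint convention rather than a quantitative bound, and one should read the lemma's uniformity on compact subsets of $(0,1]$ in that regime.
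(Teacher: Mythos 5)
Your argument for the case $c>0$ follows the same route as the paper: the paper's proof is precisely your uniform convergence theorem (UCT) plus uniform-continuity step, written more compactly as a two-term triangle inequality (first term handled by UCT, second by continuity of $(a,\kappa)\mapsto a^{\rho\kappa}$). Your Potter-bound detour for $c=0$ is a reasonable alternative, though the $(0,b]$ form of the UCT for positive index would serve equally well on any $[\e,1]$.

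Your closing caveat about the case $c=0$ deserves to be stated more forcefully: the lemma as written is actually \emph{false} there. Take $\scrh(x)=x$, $a_n=n$, $b_n=n^2$; then $\scrh(a_n^\kappa b_n^{1-\kappa})/\scrh(b_n)=n^{-\kappa}$, and $\sup_{\kappa\in(0,1]}\,|n^{-\kappa}-0|=1$ for every $n$, so the convergence is not uniform on $[0,1]$. The paper's own proof has the same gap: when $c=0$ the second term $|c_n^{\rho\kappa}-c^{\rho\kappa}|$ does not vanish uniformly in $\kappa$, since it equals $c_n^{\rho\kappa}$ which tends to $1$ as $\kappa\to 0^+$ for each fixed $n$. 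This does not affect the rest of the paper, because the lemma is only ever invoked with a limit that is strictly positive, or at a single fixed $\kappa$ (where pointwise convergence suffices and your Potter-bound observation gives it). So the lemma should really be read as asserting uniformity on $[0,1]$ only when $c>0$, and uniformity on compact subsets of $(0,1]$ (equivalently, pointwise on $[0,1]$) when $c=0$ --- exactly as you suggest.
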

\begin{proof}
	Define $c_n=a_n/b_n$ and note that $c_n\to c$. For $n$ sufficiently large, $c_n\in (0,c+1]$. Fix $\e>0$. Then by the uniform convergence theorem \cite[Theorem 1.5.2]{bingham.goldie.teugels:1989}, there exists $n_0$ so that for $n\geq n_0$
	\[
	\left|\frac{\scrh(b_n c_n^\kappa)}{\scrh(b_n)}-c^{\rho\kappa}\right|\leq \left|\frac{\scrh(b_nc_n^\kappa)}{\scrh(b_n)}-c_n^{\rho\kappa}\right|+|c_n^{\rho\kappa}-c^{\rho\kappa}|<2\e.  \qedhere
	\]	
\end{proof}

\begin{lemma}\label{lemmaOPRLsubexponential}
Let $K(n,z,w)$ be the CD kernels corresponding to a sequence of orthogonal polynomials on the real line and $K(t,z,w)$ the corresponding linear interpolation \eqref{eqnPiecewiseLinear1}. Assume that \eqref{eqn:Kcontlimit1} holds, 
and $\scrh$ is regularly varying of index $\rho > 0$ and $K_\infty \not\equiv 1$. Then \eqref{eqnNevaiConditionRelated} holds.
\end{lemma}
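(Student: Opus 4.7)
The plan is to argue by contradiction. Writing $\kappa(t) := K(t, \xi, \xi)$ and $\lambda_n := \kappa(n+1)/\kappa(n) = 1 + p_n(\xi)^2/\kappa(n) \ge 1$, the conclusion is equivalent to $\lambda_n \to 1$. If this fails, pass to a subsequence along which $\lambda_{n_j} \to L \in (1, \infty]$. The crucial structural input is the piecewise-linear decomposition $K(n+s, z, w) = K(n, z, w) + s\, p_n(z)\overline{p_n(w)}$ for $s \in [0, 1]$, together with $\kappa(n+s) = \kappa(n)(1 + s(\lambda_n - 1))$. I would then apply the hypothesized continuous rescaling limit simultaneously at $t = n_j$, $n_j + 1$, and $n_j + s$ with carefully matched scales, yielding a functional equation for $K_\infty$ strong enough to force $K_\infty \equiv 1$, contradicting $K_\infty \not\equiv 1$.

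In the case $L < \infty$, $\kappa(n_j + s)/\kappa(n_j) \to \alpha(s) := 1 + s(L-1)$ uniformly for $s \in [0, 1]$, and the Uniform Convergence Theorem for regularly varying functions yields $\scrh(\kappa(n_j+s))/\scrh(\kappa(n_j)) \to \alpha(s)^\rho$ uniformly in $s$. Writing everything at the common scale $\tau_j := \scrh(\kappa(n_j))$ and using the rescaling limit at $t = n_j + s$ (converted to $\tau_j$-scale via regular variation), comparison with the piecewise-linear decomposition gives
\[
\alpha(s)\, K_\infty(z \alpha(s)^\rho, w \alpha(s)^\rho) = (1-s)\, K_\infty(z, w) + sL\, K_\infty(zL^\rho, w L^\rho)
\]
for all $z, w \in \bbC$ and $s \in [0, 1]$. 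Since the right-hand side is affine in $s$ and $\alpha$ depends affinely on $s$, the function $F(\alpha) := \alpha K_\infty(z\alpha^\rho, w\alpha^\rho)$ is affine on $[1, L]$; the vanishing of $F''$ together with the chain rule translates into the Euler-type PDE
\[
\rho\, D^2 K_\infty + D K_\infty = 0, \qquad D := z\partial_z + w\partial_w,
\]
valid on $\bbC \times \bbC$. Since $K_\infty$ is the locally uniform limit of functions polynomial in $z$ and $\overline{w}$, it is entire in both; expanding $K_\infty(z, w) = \sum_{m, n \ge 0} a_{m, n}\, z^m \overline{w}^n$, the PDE becomes $(m+n)(\rho(m+n) + 1)\, a_{m, n} = 0$, which for $\rho > 0$ forces $a_{m, n} = 0$ whenever $m + n \ge 1$. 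Hence $K_\infty \equiv a_{0, 0} = K_\infty(0, 0) = 1$, a contradiction.

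The case $L = \infty$ is handled more directly. For any fixed $s_0 \in (0, 1)$, $\kappa(n_j)/\kappa(n_j + s_0) \to 0$ and $\kappa(n_j + s_0)/\kappa(n_j + 1) \to s_0$; the scale $\nu_j := \scrh(\kappa(n_j + 1))$ dominates $\tau_j$, so $z\tau_j/\nu_j \to 0$ and hence $K(n_j, \xi + z/\nu_j, \xi + w/\nu_j)/\kappa(n_j) \to K_\infty(0, 0) = 1$. Feeding this into the piecewise-linear decomposition yields $K(n_j + s_0, \xi + z/\nu_j, \xi + w/\nu_j)/\kappa(n_j + s_0) \to K_\infty(z, w)$, and matching against the rescaling limit at $t = n_j + s_0$ with its own scale $\mu_j := \scrh(\kappa(n_j + s_0))$ (which satisfies $\mu_j/\nu_j \to s_0^\rho$ by regular variation) produces the scale-invariance $K_\infty(z, w) = K_\infty(z s_0^\rho, w s_0^\rho)$ for all $z, w \in \bbC$ and every $s_0 \in (0, 1)$; iterating $s_0 \mapsto s_0^k$ and letting $k \to \infty$ forces $K_\infty \equiv 1$, again a contradiction. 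The hardest part throughout is the simultaneous juggling of the three scales $\tau_j, \mu_j, \nu_j$ and ensuring that regular variation delivers uniform limits along the curves connecting them; once the functional relations are established, the PDE analysis and iteration arguments are routine.
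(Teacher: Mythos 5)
Your proof is correct, and the overall strategy --- contradiction, extraction of a subsequence along which $\kappa(n+1)/\kappa(n)$ has a nontrivial limit, the piecewise-linear decomposition $K(n+s) = (1-s)K(n) + sK(n+1)$, and the use of regular variation (via the uniform convergence theorem, which is what the paper's Lemma~\ref{lemmaRegVar0615} packages) to pass to a functional equation for $K_\infty$ --- matches the paper. Where you genuinely diverge is in how you extract the contradiction from that functional equation. The paper rewrites the relation as $K_\infty(xz,xw) = A + Bx^{-1/\rho}$ on the interval $(c^\rho,1)$, extends it to the whole ray by matching on overlapping intervals, and then lets $x\to 0$: continuity of $K_\infty$ at the origin forces $B=0$ and $A=1$, and no case split is needed since $c=0$ fits into the same formula. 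You instead observe (for $L<\infty$) that $F(\alpha):=\alpha K_\infty(z\alpha^\rho,w\alpha^\rho)$ is affine on $[1,L]$, differentiate twice to get the Euler-type equation $\rho D^2 K_\infty + D K_\infty = 0$ with $D = z\partial_z + \bar w\partial_{\bar w}$, and then expand $K_\infty = \sum a_{m,n}z^m\bar w^n$ (legitimate, since the rescaled CD kernels are polynomials in $(z,\bar w)$ and the convergence is locally uniform) to kill every coefficient with $m+n\ge 1$; and for $L=\infty$ you separately derive the dilation invariance $K_\infty(s_0^\rho z, s_0^\rho w) = K_\infty(z,w)$ and iterate. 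Both routes are sound. The paper's version is lighter --- one boundedness observation replaces the ODE/power-series analysis and no case split is needed --- while yours is perhaps more mechanical once the affine structure is spotted, at the cost of splitting off $L=\infty$ (for which, incidentally, a direct limit $s_0\to 0$ would do in place of iterating powers of $s_0$). Both arguments also quietly use $\kappa(n)\to\infty$; this is harmless since if $\kappa$ stays bounded then $p_n(\xi)^2\to 0$ and the Nevai condition holds trivially.
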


\begin{proof}
Without loss of generality we assume $\xi = 0$ and abbreviate
\[
K(n,0,0)=a_n.
\]
The proof is by contradiction. If \eqref{eqnNevaiConditionRelated} fails, there exists a subsequence $n_l \to \infty$ as $l \to \infty$, along which
\[
\frac{a_{n_l}}{a_{1+n_l}}\to c
\]
for some $c \in [0,1)$. By linear interpolation, for $0\leq s\leq 1$,
\begin{equation}\label{abc}
K(n+s,z,w)=sK(n+1,z,w)+(1-s)K(n,z,w).
\end{equation}
Let $0<s < 1$ and denote $x = ( s+c(1-s))^\rho$. For the sequence $t_l=n_l+s$, we get
\[
\frac{a_{1+n_l}}{K(t_l,0,0)}=\frac{a_{1+n_l}}{a_{1+n_l}s+a_{n_l}(1-s)}\to \frac{1}{s+c(1-s)} = x^{-1/\rho} \]
and
\[
\frac{a_{n_l}}{K(t_l,0,0)}=\frac{a_{n_l}}{a_{1+n_l}}\frac{a_{1+n_l}}{K(t_l,0,0)}\to \frac{c}{s+c(1-s)} = c x^{-1/\rho}.
\]
We write the linear interpolation \eqref{abc} as
\begin{align*}
&\frac{K(t_l,\frac{z}{\scrh(K(t_l,0,0))},\frac{w}{\scrh(K(t_l,0,0))})}{K(t_l,0,0)} \\
& \quad =\frac{K(1+n_l,\frac{z}{\scrh(a_{1+n_l})}\frac{\scrh(a_{1+n_l})}{\scrh(K(t_l,0,0))},\frac{w}{\scrh(a_{1+n_l})}\frac{\scrh(a_{1+n_l})}{\scrh(K(t_l,0,0))})}{a_{1+n_l}}\frac{a_{1+n_l}}{K(t_l,0,0)}s \\
& \qquad +
\frac{K(n_l,\frac{z}{\scrh(a_{n_l})}\frac{\scrh(a_{n_l})}{\scrh(K(t_l,0,0))},\frac{w}{\scrh(a_{n_l})}\frac{\scrh(a_{n_l})}{\scrh(K(t_l,0,0))})}{a_{n_l}}\frac{a_{n_l}}{K(t_l,0,0)}(1-s).
\end{align*}
By Lemma~\ref{lemmaRegVar0615} and the assumption \eqref{eqn:Kcontlimit1},  taking $l \to \infty$ gives
\[
K_\infty(z,w)=K_\infty\left(z x^{-1},w x^{-1}\right) x^{-1/\rho} s+K_\infty\left(z c^\rho x^{-1} , w c^\rho x^{-1} \right)x^{-1/\rho} c(1-s).
\]
To see the consequences of such a relation, we first rescale $(z,w)$ by a factor of $x$ to rewrite as
\[
K_\infty(x z,x w)=K_\infty\left(z,w\right)x^{-1/\rho} s+K_\infty\left(c^\rho z,c^\rho w\right) x^{-1/\rho} c(1-s).
\]
Expressing $s$ in terms of $x \in (c^\rho, 1)$, we rewrite this relation as
\[
K_\infty(xz, xw) = \frac{1 - c x^{-1/\rho}}{1-c} K_\infty(z,w) +  \frac{c x^{-1/\rho}-c}{1-c} K_\infty(c^\rho z,c^\rho w).
\]
Viewing this as a function of $x$ with fixed $z,w \in\bbC$, the function $K_\infty(xz, xw)$ is of the form $A + B x^{-1/\rho}$ on the part $\{(xz,xw)\mid x\in (c^\rho,1)\}$ of the ray $\{(rz,rw)\mid r>0\}$. Replacing $(z,w)$ by $(x_0z,x_0w)$ with arbitrary $x_0>0$ we cover the whole ray. On overlapping intervals the constants must match. Since $K_\infty(0,0)=1$ and $\rho>0$, by taking $x \to 0$ we see that $K_\infty(xz,xw)\equiv 1$ on any ray, so $K_\infty \equiv 1$, which is a contradiction.
\end{proof}

\begin{proof}[Proof of Theorem~\ref{theorem3}]
(i)$\iff$(ii)$\iff$(iii) follows immediately from Theorem~\ref{theorem:main}.

(iii)$\implies$(iv): this follows from the previous lemma.

(iv)$\implies$(iii): The assumption \eqref{eqnNevaiConditionRelated} implies by Lemma~\ref{lemmaRegVar0615} that
\begin{equation}\label{eqn:15aug1}
\lim_{n\to\infty} \frac{ \scrh(K(n+s,\xi,\xi)) }{  \scrh( K(n,\xi,\xi)) } = 1,
\end{equation}
uniformly in $s\in [0,1]$.

The condition \eqref{eqnTheorem3n} with index shifted by $1$ reads
\begin{equation}\label{eqn14aug1}
\lim_{n\to\infty}\frac{K\left(n+1, \xi+\frac{z}{\scrh(K(n+1,\xi,\xi))},\xi+\frac{w}{\scrh(K(n+1,\xi,\xi))}\right)}{K(n+1,\xi,\xi)}=K_\infty(z,w).
\end{equation}
Since convergence in \eqref{eqn14aug1} is uniform on compacts, we can combine it inside the parentheses with $\scrh(K(n+1,\xi,\xi)) / \scrh( K(n,\xi,\xi)) \to 1$ and finally multiply by $K(n+1,\xi,\xi) / K(n,\xi,\xi) \to 1$ to conclude
\begin{equation}\label{eqnTheorem3nshifted}
\lim_{n \to \infty} \frac{ K\left(n+1, \xi + \frac {z}{\scrh( K(n,\xi,\xi))} ,  \xi + \frac {w}{\scrh( K(n,\xi,\xi))} \right) }{K(n,\xi,\xi)} = K_\infty(z,w).
\end{equation}
For $t=n+s$ with $0 \le s \le 1$, using \eqref{eqnPiecewiseLinear1} and computing a convex combination of limits \eqref{eqnTheorem3n} and \eqref{eqnTheorem3nshifted}, it follows that
\[
\lim_{n \to \infty} \frac{ K\left(n+s, \xi + \frac {z}{\scrh( K(n,\xi,\xi))} ,  \xi + \frac {w}{\scrh( K(n,\xi,\xi))} \right) }{K(n,\xi,\xi)} = K_\infty(z,w)
\]
uniformly in $s\in [0,1]$. Combining this with \eqref{eqn:15aug1} inside the parentheses and multiplying by $K(n+s,\xi,\xi) / K(n,\xi,\xi) \to 1$ shows
\[
\lim_{n \to \infty} \frac{ K\left(n+s, \xi + \frac {z}{\scrh( K(n+s,\xi,\xi))} ,  \xi + \frac {w}{\scrh( K(n+s,\xi,\xi))} \right) }{K(n+s,\xi,\xi)} = K_\infty(z,w)
\]
uniformly in $s\in [0,1]$, which is equivalent to \eqref{eqn:Kcontlimit1}.
\end{proof}

\begin{proof}[Proof of Theorem~\ref{theorem2}]
This is merely the specialization of Theorem~\ref{theorem3} to the special case $\sigma_- = \sigma_+ = \beta = 1$.
\end{proof}

\begin{proof}[Proof of Theorem~\ref{theorem1}]
This is the specialization of Theorem~\ref{theorem3} to the special case $g(r)=\eta r$, $\beta = 1$.
\end{proof}

\subsection{Orthogonal polynomials on the unit circle}
Let $\nu$ be a probability measure on $\partial\bbD$ such that $\supp \nu$ is not a finite set, $\varphi_n$ its orthogonal polynomials, and $k_n$ the CD kernels \eqref{eqn:OPUCCD}.  In terms of reflected polynomials $\varphi_n^*(\zeta) = \zeta^n \overline{\varphi_n(1/\overline \zeta)}$, they satisfy the CD formula
\[
k_n(\zeta,\omega) = \frac{ \varphi_n^*(\zeta) \overline{ \varphi_n^*(\omega) } - \varphi_n(\zeta) \overline{ \varphi_n(\omega) }} {1 - \zeta \overline \omega}.
\]
A way to relate orthogonal polynomials on the unit circle to an energy-periodic canonical system was described in \cite[Section 6]{EichLukSimanek}.  At the level of functions, it relates OPUC with Carath\'eodory function
\begin{equation}\label{eqnCaratheodory}
F(\zeta) = \int_{\partial\bbD} \frac{ e^{i\theta} + \zeta}{ e^{i\theta} - \zeta} \,d\nu(e^{i\theta}), \qquad \zeta \in \bbD
\end{equation}
to the canonical system with the $2\pi$-periodic Weyl $m$-function
\begin{equation}\label{eqnOPUCtoWeyl}
m(z) = i F(e^{iz}), \qquad z \in \bbC_+.
\end{equation}
We provide further information about this correspondence in the following lemma. Since rotating the measure by $\xi$ as
\[
\int_{\partial\bbD} f(\zeta) \, d\tilde\nu(\zeta) = \int_{\partial\bbD} f(e^{-i\xi}\zeta) \,d\nu(\zeta), \qquad \forall f \in C(\partial\bbD)
\]
replaces orthogonal polynomials $\varphi_n$ by $e^{-in\xi} \varphi_n(e^{i\xi} \zeta)$, it replaces kernels $k_n(\zeta,\omega)$ by $k_n(e^{i\xi} \zeta, e^{i\xi} \omega)$; thus writing down the canonical system normalized at $\xi=0$ suffices to study scaling limits at any point $e^{i\xi} \in \partial\bbD$.

\begin{lemma}
Let $\nu$ be a probability measure on $\partial\bbD$ such that $\supp \nu$ is not a finite set, and let $F$ be its Carath\'eodory function \eqref{eqnCaratheodory}.  Then, the canonical system with Weyl function \eqref{eqnOPUCtoWeyl} has the following properties:
\begin{enumerate}[{\rm(i)}]
\item It corresponds to the measure $\mu$ on $\bbR$ which is $2\pi$-periodic in the sense that $\mu(B) = \mu(2\pi + B)$ for every Borel set $B \subset \bbR$, and 
\[
\mu(B) = \nu(\{ e^{ix} \mid x\in B \})
\]
for all Borel sets $B \subset [0,2\pi)$.
\item It corresponds to the piecewise constant Hamiltonian
\[
H(x) = \begin{pmatrix}
\lvert \psi_n(1) \rvert^2 & \Im( \psi_n(1) \ol{\varphi_n(1)} ) \\
\Im ( \psi_n(1) \ol{\varphi_n(1)} ) & \lvert \varphi_n(1)\rvert^2
\end{pmatrix}, n \le x < n+1,
\]
where $\psi_n$ denote the second kind polynomials. Note that $(jH)^2 \neq 0$.
\item It has the family of reproducing kernels given for $n\in\bbN$, $s\in [0,1]$ by
\begin{equation}\label{eqnOPUCcanonicals}
K(n+s,z,w) =  \frac { e^{-in(z - \ol w)/2} }{2i (z -\overline{w}) } \left[ e^{-i s(z - \ol w)/2} \varphi_n^*(e^{iz}) \overline{ \varphi_n^*(e^{iw}) } - e^{i s(z - \ol w)/2} \varphi_n(e^{iz}) \overline{ \varphi_n(e^{iw}) }  \right].
\end{equation}
\end{enumerate}
\end{lemma}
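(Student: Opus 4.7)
The three assertions are verified in sequence, specializing the general OPUC-to-canonical-system correspondence invoked above.

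For (i), apply Stieltjes inversion to $m(z) = iF(e^{iz})$. A direct computation gives
\[
\Im m(x + i\epsilon) = \Re F(e^{-\epsilon + ix}) = \int_{\partial\bbD} P_{e^{-\epsilon}}(\theta - x)\, d\nu(e^{i\theta}),
\]
with $P_r$ the Poisson kernel of the disk, so $\tfrac{1}{\pi}\Im m(\cdot + i\epsilon)\,dx$ converges weakly as $\epsilon \downarrow 0$ to the push-forward of $\nu$ under $e^{ix} \mapsto x$ on $[0, 2\pi)$. Applying \eqref{eq:949} identifies $\mu$ on $[0, 2\pi)$ with this push-forward, and $2\pi$-periodicity is immediate from $m(z + 2\pi) = m(z)$.

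For (ii), the key algebraic input is the OPUC Wronskian identity
\[
\varphi_n(\zeta)\psi_n^*(\zeta) + \varphi_n^*(\zeta)\psi_n(\zeta) = 2\zeta^n,
\]
provable by induction from the Szeg\H{o} recursions. Evaluated at $\zeta = 1$ it gives $\Re(\psi_n(1)\overline{\varphi_n(1)}) = 1$, whence
\[
\det H_n = |\psi_n(1)\overline{\varphi_n(1)}|^2 - [\Im(\psi_n(1)\overline{\varphi_n(1)})]^2 = [\Re(\psi_n(1)\overline{\varphi_n(1)})]^2 = 1,
\]
confirming that $H_n$ is positive definite and $H$ is a Hamiltonian. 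A short $2 \times 2$ calculation then shows $(H_n J)^2 = -I$, so the transfer matrix on $[n, n+s]$ is
\[
W(n, n+s, z) = \exp(-zs H_n J) = \cos(zs)\,I - \sin(zs)\,H_n J.
\]
To identify the Weyl function of this canonical system with $iF(e^{iz})$, I would invoke the orthogonal change of basis of \cite[Section~6]{EichLukSimanek}, under which $\exp(-z H_n J)$ is conjugate to the OPUC Szeg\H{o} transfer step at $\zeta = e^{iz}$; iterating over $n$ and passing to the limit yields $W(0, N, z) \star \tau \to iF(e^{iz})$ via OPUC Weyl-disk contraction.

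For (iii), substitute the transfer matrix from (ii) into \eqref{eq:59}. Under the identification in (ii), the bottom row of $W(0, n, z)$ expresses as an explicit linear combination of $e^{\pm inz/2}\varphi_n(e^{iz})$ and $e^{\pm inz/2}\varphi_n^*(e^{iz})$; inserting into \eqref{eq:59} and using
\[
1 - e^{i(z - \overline{w})} = -2i e^{i(z - \overline{w})/2}\sin\bigl((z - \overline{w})/2\bigr)
\]
together with the OPUC CD formula for $k_n(e^{iz}, e^{iw})$ yields \eqref{eqnOPUCcanonicals} at $s = 0$. For $s \in (0, 1)$, the right-multiplication by $\cos(zs)\,I - \sin(zs)\,H_n J$, once the $\cos$ and $\sin$ are rewritten as complex exponentials via the same basis change, contributes exactly the symmetric phases $e^{\pm is(z - \overline{w})/2}$ inside the bracket of \eqref{eqnOPUCcanonicals}. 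The main technical hurdle is consistent bookkeeping of the exponential phases through the orthogonal conjugation; beyond this, the reduction is purely algebraic.
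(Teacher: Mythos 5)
Your approach to part (i) is the same as the paper's (Stieltjes inversion), though you spell out the Poisson kernel computation more explicitly; this is fine. The overall strategy for parts (ii) and (iii) is genuinely different: the paper \emph{derives} the Hamiltonian from the Szeg\H o recursion via the explicit conjugation $W(n,z) = e^{-inz/2}\cC^{-1}j_1 S_n(e^{iz})^{-1}j_1\cC$ and a gauge change $W(n,z) = M(n,z)M(n,0)^{-1}$, concluding $H = \tfrac 12 M(n,0)M(n,0)^*$, whereas you \emph{verify} properties of the stated Hamiltonian and then argue that its Weyl function must be $iF(e^{iz})$. Your Wronskian-based computation of $\Re(\psi_n(1)\overline{\varphi_n(1)})=1$, hence $\det H_n = 1$ and $(H_nJ)^2=-I$, is a nice independent observation that directly exhibits the rotational (non-nilpotent) character of the flow.

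However, there is a genuine gap in your treatment of (ii). You verify $\det H_n = 1$ and $(H_nJ)^2 = -I$, but these are far from characterizing $H_n$ uniquely (any $H$ with $\det H = 1$ satisfies both). The content of the claim is that the \emph{specific} matrix built from $|\psi_n(1)|^2$, $|\varphi_n(1)|^2$, $\Im(\psi_n(1)\overline{\varphi_n(1)})$ is what arises from the OPUC reduction — equivalently, that $\exp(-zH_nJ)$ is conjugate, by the correct $n$-dependent matrix, to the $n$-th Szeg\H o step at $\zeta=e^{iz}$. You only assert this ("I would invoke the orthogonal change of basis..."), but [EichLukSimanek, Section~6] does not directly state the formula in the lemma; one must carry out the conjugation and match entries, which is exactly the paper's computation of $\tfrac12 M(n,0)M(n,0)^*$. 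Without that identification, part (iii) is also unsupported, since your derivation of \eqref{eqnOPUCcanonicals} presupposes that the bottom row of $W(0,n,z)$ is the stated linear combination of $e^{\pm inz/2}\varphi_n(e^{iz})$, $e^{\pm inz/2}\varphi_n^*(e^{iz})$ — which is again the content of the unverified conjugation. To close the gap, you would need to write down the conjugating matrix $\cC^{-1}j_1$, compute $M(n,0)$ explicitly, and check that $\tfrac12 M(n,0)M(n,0)^*$ equals the matrix in the lemma.
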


\begin{proof}
(i) follows by Stieltjes inversion from \eqref{eqnOPUCtoWeyl}.

(ii) Orthogonal polynomials satisfy the Szeg\H o recursion, expressed by the Szeg\H o transfer matrices
\[
S_{n+1}(\zeta) = A(\alpha_n,\zeta) S_n(\zeta), \quad S_0(\zeta)=I, \quad A(\alpha,\zeta) = \frac 1{\sqrt{1-\lvert \alpha \rvert^2}} \begin{pmatrix}
\zeta & -\ol\alpha \\
-\alpha \zeta & 1
\end{pmatrix}.
\]
The derivation in \cite[Section 6]{EichLukSimanek}, expressed in the conventions of this paper, shows that this corresponds to the monotonic family of transfer matrices
\[
W(n,z) = e^{-inz/2} \cC^{-1}  j_1 S_n(e^{iz})^{-1} j_1 \cC
\]
where $\cC = \frac 1{1+i} \begin{pmatrix} 1 & -i \\ 1 & i \end{pmatrix}$ and $j_1 =  \begin{pmatrix}  0 & 1 \\ 1 & 0 \end{pmatrix}$, and that after the gauge change $W(n,z) = M(n,z) M(n,0)^{-1}$, it obeys
\[
W(n+1,z) = W(n,z) M(n,0) e^{zJ/2} M(n,0)^{-1}  
\]
and therefore is the transfer matrix associated with the Hamiltonian
\[
H(x) = \frac 12 J^{-1} (M(n,0)^*)^{-1} M(n,0)^{-1} J = \frac 12 M(n,0) M(n,0)^*.
\]

(iii) Let $s\in [0,1]$. Since $H$ is constant on $[n,n+s]$, solving \eqref{eqn:CanonicalConvention1} gives
\[
W(n+s,z) = W(n,z) e^{-szHJ} = W(n,z) M(n,0) e^{szJ/2} M(n,0)^{-1}  
\]
from which a direct calculation gives \eqref{eqnOPUCcanonicals}.
\end{proof}

Note that the formula \eqref{eqnOPUCcanonicals} can be used in two ways to evaluate the kernel with an integer index, by using $s=1$ or by using $s=0$ with $n$ shifted by $1$; compatibility of the two answers can be verified by the property of Szeg\H o transfer matrices
\[
A(\alpha,\omega)^* \begin{pmatrix} -1 & 0 \\ 0 & 1 \end{pmatrix} A(\alpha,\zeta) =  \begin{pmatrix} \omega & 0 \\ 0 & 1 \end{pmatrix}^* \begin{pmatrix} -1 & 0 \\ 0 & 1 \end{pmatrix}  \begin{pmatrix} \zeta & 0 \\ 0 & 1 \end{pmatrix}.
\]

The intervals $[n,n+1]$ have constant Hamiltonians, but they are not indivisible intervals; this is a qualitative difference compared to OPRL, and it affects the next step. When the kernel at $n+s$ is expressed as a linear combination the kernels at $n, n+1$, the formula is different from the OPRL case:

\begin{corollary}
The canonical system kernels \eqref{eqnOPUCcanonicals} associated to OPUC satisfy
\begin{equation}\label{eqn:OPUCkernelsLinComb}
K(n+s,z,w) = \frac{\sin((1-s)(z-\ol w)/2)}{\sin((z-\ol w)/2)} K(n,z,w) +  \frac{\sin(s(z-\ol w)/2)}{\sin((z-\ol w)/2)} K(n+1,z,w)
\end{equation}
for $s\in [0,1]$.
\end{corollary}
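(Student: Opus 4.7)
The plan is a direct algebraic computation starting from the explicit formula \eqref{eqnOPUCcanonicals}. Introduce the shorthand
\[
\zeta := z-\overline{w},\qquad P := \varphi_n^*(e^{iz})\overline{\varphi_n^*(e^{iw})},\qquad Q := \varphi_n(e^{iz})\overline{\varphi_n(e^{iw})},
\]
so that \eqref{eqnOPUCcanonicals} reads
\[
K(n+s,z,w)=\frac{e^{-in\zeta/2}}{2i\zeta}\bigl(e^{-is\zeta/2}P-e^{is\zeta/2}Q\bigr).
\]
Specializing to $s=0$ and $s=1$ gives compact expressions for $K(n,z,w)$ and $K(n+1,z,w)$ in terms of the same pair $(P,Q)$, with coefficients depending only on $\zeta$.

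The strategy now is to find coefficients $\alpha=\alpha(s,\zeta)$ and $\beta=\beta(s,\zeta)$ for which $\alpha K(n,z,w)+\beta K(n+1,z,w)=K(n+s,z,w)$. Since $P$ and $Q$ are generically linearly independent (as functions of $n$ and $\varphi_n$), matching the coefficients of $P$ and $Q$ reduces the identity to the linear system
\[
\alpha+\beta e^{-i\zeta/2}=e^{-is\zeta/2},\qquad \alpha+\beta e^{i\zeta/2}=e^{is\zeta/2}.
\]
Solving (by subtracting the two equations for $\beta$ and then back-substituting for $\alpha$) yields
\[
\beta=\frac{\sin(s\zeta/2)}{\sin(\zeta/2)},\qquad \alpha=\frac{\sin((1-s)\zeta/2)}{\sin(\zeta/2)},
\]
where the second identity uses only the addition formula $\sin(\zeta/2)e^{-is\zeta/2}-\sin(s\zeta/2)e^{-i\zeta/2}=\sin((1-s)\zeta/2)$. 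Substituting these values of $\alpha,\beta$ gives \eqref{eqn:OPUCkernelsLinComb}.

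There is essentially no obstacle here; the only subtlety is that the identity is derived under the assumption $\sin(\zeta/2)\neq 0$, equivalently $z-\overline{w}\notin 2\pi\mathbb{Z}$. Since both sides of \eqref{eqn:OPUCkernelsLinComb} are entire in $(z,\overline{w})$ (the apparent pole $z=\overline{w}$ on the left is removable by the reproducing kernel property, and on the right the ratios $\sin((1-s)\zeta/2)/\sin(\zeta/2)$ and $\sin(s\zeta/2)/\sin(\zeta/2)$ extend by continuity to $1-s$ and $s$ respectively at $\zeta=0$, consistent with the OPRL-style linear interpolation at $z=\overline{w}$), the identity extends by analytic continuation to all $(z,w)\in\mathbb{C}^2$.
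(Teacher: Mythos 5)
Your proof is correct and follows essentially the same route as the paper: both express $K(n,z,w)$, $K(n+1,z,w)$, and $K(n+s,z,w)$ as linear combinations of $\varphi_n^*(e^{iz})\overline{\varphi_n^*(e^{iw})}$ and $\varphi_n(e^{iz})\overline{\varphi_n(e^{iw})}$ via \eqref{eqnOPUCcanonicals} and solve the resulting $2\times 2$ linear system, which the paper simply calls ``a linear algebra calculation.'' Your explicit solution of the system and the remark on the removable singularity at $z-\overline{w}\in 2\pi\mathbb{Z}$ are accurate.
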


\begin{proof}
The equation  \eqref{eqnOPUCcanonicals} expresses the three kernels at $n+s, n, n+1$ as linear combinations of the two functions $\varphi_n(e^{iz}) \ol{\varphi_n(e^{iw})}$,  $\varphi_n^*(e^{iz}) \ol{\varphi_n^*(e^{iw})}$, so the proof of \eqref{eqn:OPUCkernelsLinComb} is a linear algebra calculation.
\end{proof}

\begin{lemma}\label{lemmaOPUCsubexponential}
Assume that the canonical system kernels \eqref{eqnOPUCcanonicals} associated to OPUC satisfy \begin{equation}\label{eqn:contlimitOPUC}
\lim_{t\to\infty}\frac{K\left(t,\xi+\frac{z}{\scrh(K(t,\xi,\xi))},\xi+\frac{w}{\scrh(K(t,\xi,\xi))}\right)}{K(t,\xi,\xi)}=K_\infty(z,w),
\end{equation}
for some $\scrh$  regularly varying of index $\rho > 0$ and $K_\infty \not\equiv 1$. Then
\[
\lim_{n\to\infty} \frac{ K(n+1,\xi,\xi) }{ K(n,\xi,\xi)} = 1.
\]
\end{lemma}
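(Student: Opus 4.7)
The plan is to mimic the proof of Lemma~\ref{lemmaOPRLsubexponential}, with the key observation that although the OPUC formula \eqref{eqn:OPUCkernelsLinComb} carries off-diagonal sine corrections, these corrections degenerate to the linear-interpolation weights $(1-s,s)$ under the relevant scaling. Without loss of generality take $\xi=0$ and write $a_n = K(n,0,0)$. Since $\lim_{u\to 0}\sin((1-s)u/2)/\sin(u/2) = 1-s$, the diagonal version of \eqref{eqn:OPUCkernelsLinComb} reads
\[
K(n+s,0,0) = (1-s)\,a_n + s\,a_{n+1},
\]
so at $z=w=0$ the OPUC formula reduces to the OPRL linear interpolation used in Lemma~\ref{lemmaOPRLsubexponential}.

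Arguing by contradiction, assume there is a subsequence $n_l$ along which $a_{n_l}/a_{n_l+1} \to c \in [0,1)$. Fix $s \in (0,1)$, set $t_l = n_l + s$, and put $x = (s+c(1-s))^{\rho}$. Exactly as in the OPRL proof,
\[
\frac{a_{n_l}}{K(t_l,0,0)} \to c\,x^{-1/\rho}, \qquad \frac{a_{n_l+1}}{K(t_l,0,0)} \to x^{-1/\rho},
\]
and Lemma~\ref{lemmaRegVar0615} then yields $\scrh(a_{n_l})/\scrh(K(t_l,0,0)) \to c^{\rho} x^{-1}$ and $\scrh(a_{n_l+1})/\scrh(K(t_l,0,0)) \to x^{-1}$. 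Now evaluate \eqref{eqn:OPUCkernelsLinComb} at $(z/\scrh(K(t_l,0,0)), w/\scrh(K(t_l,0,0)))$ and divide by $K(t_l,0,0)$. The sine coefficients are evaluated at $u_l = (z-\ol w)/\scrh(K(t_l,0,0))$, which tends to $0$ uniformly on compact subsets of $\bbC\times\bbC$; since $\scrh(K(t_l,0,0))\to\infty$, these coefficients converge uniformly to $1-s$ and $s$ respectively. Passing to the limit along $n_l$ and $n_l+1$ via \eqref{eqn:contlimitOPUC} produces the functional equation
\[
K_\infty(z,w) = (1-s)\,c\,x^{-1/\rho} K_\infty\!\left(c^\rho x^{-1} z,\, c^\rho x^{-1} w\right) + s\,x^{-1/\rho} K_\infty\!\left(x^{-1} z,\, x^{-1} w\right).
\]

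Rescaling $(z,w)\mapsto (xz,xw)$ transforms this into
\[
K_\infty(xz, xw) = \frac{1 - c x^{-1/\rho}}{1-c}\, K_\infty(z,w) + \frac{c x^{-1/\rho} - c}{1-c}\, K_\infty(c^\rho z, c^\rho w),
\]
since as $s$ varies over $(0,1)$, $x$ varies over $(c^\rho,1)$ and the coefficients $s$, $(1-s)c$ are linear in $x^{-1/\rho}$. Fixing $(z,w)$ and viewing $x\mapsto K_\infty(xz,xw)$, this shows that on $(c^\rho,1)$ the function is of the form $A + Bx^{-1/\rho}$; sliding $(z,w)$ along a ray extends this form to all $x\in (0,\infty)$. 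Since $\rho>0$ and $K_\infty(0,0)=1$, taking $x\downarrow 0$ forces $K_\infty$ constant equal to $1$ along every ray through the origin, hence $K_\infty \equiv 1$, contradicting the hypothesis.

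The only delicate point beyond the OPRL argument is controlling the sine ratios $\sin((1-s)u_l/2)/\sin(u_l/2)$ and $\sin(su_l/2)/\sin(u_l/2)$ uniformly in $(z,w)$ on compacts as $u_l\to 0$, but this is elementary—so the OPUC case presents no genuinely new obstacle beyond bookkeeping the scaling limits of these trigonometric coefficients.
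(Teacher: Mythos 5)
Your proof is correct and takes essentially the same approach as the paper: mimic Lemma~\ref{lemmaOPRLsubexponential}, observe that the sine ratios in \eqref{eqn:OPUCkernelsLinComb} tend uniformly on compacts to $1-s$ and $s$ because $\scrh(K(t_l,0,0))\to\infty$, and deduce the identical functional equation for $K_\infty$ as in the OPRL case, from which the contradiction follows. The paper's proof is exactly this, stated more tersely.
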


\begin{proof}
Without loss of generality we take $\xi=0$. Imitating the proof of Lemma~\ref{lemmaOPRLsubexponential}, we get to
\begin{align*}
&\frac{K(t_l,\frac{z}{\scrh(k(t_l,0,0))},\frac{w}{\scrh(K(t_l,0,0))})}{K(t_l,0,0)} \\
& \quad =\frac{K(1+n_l,\frac{z}{\scrh(a_{1+n_l})}\frac{\scrh(a_{1+n_l})}{h(K(t_l,0,0))},\frac{w}{\scrh(a_{1+n_l})}\frac{\scrh(a_{1+n_l})}{\scrh(K(t_l,0,0))})}{a_{1+n_l}}\frac{a_{1+n_l}}{K(t_l,0,0)} \frac{\sin\left(\frac{ s(z-\ol w)}{2 \scrh(K(t_l,0,0))}\right)}{\sin\left(\frac{ z-\ol w}{2 \scrh(K(t_l,0,0))}\right)} \\
& \qquad +
\frac{K(n_l,\frac{z}{h(a_{n_l})}\frac{\scrh(a_{n_l})}{\scrh(K(t_l,0,0))},\frac{w}{\scrh(a_{n_l})}\frac{\scrh(a_{n_l})}{\scrh(K(t_l,0,0))})}{a_{n_l}}\frac{a_{n_l}}{K(t_l,0,0)}  \frac{\sin\left(\frac{ (1-s)(z-\ol w)}{2 \scrh(K(t_l,0,0))}\right)}{\sin\left(\frac{ z-\ol w}{2 \scrh(K(t_l,0,0))}\right)}.
\end{align*}
Since $\scrh(x) \to \infty$ as $x\to \infty$ (see \cite[Prop. 1.5.1]{bingham.goldie.teugels:1989}),  taking $l \to \infty$ gives
\[
K_\infty(z,w)=K_\infty\left(z x^{-1},w x^{-1}\right) x^{-1/\rho} s +K_\infty\left(z c^\rho x^{-1} , w c^\rho x^{-1} \right)x^{-1/\rho} c (1-s).
\]
Remarkably, the different interpolation of kernels leads to the same functional equation for the limit kernel as in OPRL, so the rest of the proof follows as in Lemma~\ref{lemmaOPRLsubexponential}.
\end{proof}

\begin{proof}[Proof of Theorem~\ref{theorem:OPUC}]
After rotating the measure so that $\xi=0$ and passing to the canonical system, Theorem~\ref{theorem:main} implies the equivalence of (i), (ii), and \eqref{eqn:contlimitOPUC}, with $K_\infty\not\equiv 1$ and $\scrh$ regularly varying with index $\rho>0$.

By applying Lemma~\ref{lemmaOPUCsubexponential}, we conclude that \eqref{eqn:contlimitOPUC} implies (iii); note the connection between the reproducing kernels and CD kernels given by \eqref{eqnOPUCcanonicals}.

Conversely, if (iii) holds, reformulating it in terms of reproducing kernels gives $K(n+1,\xi,\xi) / K(n,\xi,\xi) \to 1$ as $n\to \infty$ and
\[
\lim_{n\to\infty}\frac{K\left(n, \xi+\frac{z}{\scrh(K(n,\xi,\xi))},\xi+\frac{w}{\scrh(K(n,\xi,\xi))}\right)}{K(n,\xi,\xi)}=K_\infty(z,w).
\]
As in the proof of Theorem~\ref{theorem3}, this implies
\[
\lim_{n\to\infty}\frac{K\left(n+1, \xi+\frac{z}{\scrh(K(n,\xi,\xi))},\xi+\frac{w}{\scrh(K(n,\xi,\xi))}\right)}{K(n,\xi,\xi)}=K_\infty(z,w).
\]
Using the interpolation formula \eqref{eqn:OPUCkernelsLinComb} and 
\[
\lim_{n\to\infty} \frac{\sin \left(s \frac{ z-\ol w}{2 \scrh(K(n,\xi,\xi))} \right) }{\sin \left(\frac{ z-\ol w}{2 \scrh(K(n,\xi,\xi))} \right) } = s,
\]
this shows
\[
\lim_{n \to \infty} \frac{ K\left(n+s, \xi + \frac {z}{\scrh( K(n+s,\xi,\xi))} ,  \xi + \frac {w}{\scrh( K(n+s,\xi,\xi))} \right) }{K(n+s,\xi,\xi)} = K_\infty(z,w)
\]
uniformly in $s\in [0,1]$, which is equivalent to \eqref{eqn:contlimitOPUC}.
\end{proof}

\newpage

\section{Bulk universality and spectral type} \label{sec:SpectralType}

In this section, we explore the interplay of bulk universality and spectral type of $\mu$ through a few brief remarks.

Historically, bulk universality (sine kernel asymptotics) was proved under conditions which included a continuity or Lebesgue point condition for the Radon--Nikodym derivative $d\mu(\xi) / d\xi$ at the point, with a positive value at $\xi$. For this reason, bulk universality was closely associated with the absolutely continuous part of $\mu$. Our local condition \eqref{theorem1condition} on $\mu$ makes it apparent that bulk universality at a single point can occur even for a pure point measure:

\begin{lemma}\label{lemmaBulkUniversalityPurePointMeasure}
Let
\[
\mu = \sum_{j=1}^\infty \frac 1{j(j+1)} (\delta_{1/j} + \delta_{-1/j}).
\]
Then $ \mu([0,\pm \epsilon)) / \epsilon \to 1$ as $\epsilon \to 0$. In particular, the sine kernel asymptotics \eqref{eqntheorem1limit}  holds at $\xi = 0$ with $\eta = 1$.
\end{lemma}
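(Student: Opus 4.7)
The proof has two parts: verify the local condition \eqref{theorem1condition} by an explicit telescoping computation, and then invoke Theorem~\ref{theorem1} to obtain the sine kernel asymptotics. The determinacy hypothesis of Theorem~\ref{theorem1} is immediate since $\supp\mu\subseteq[-1,1]$ is compact, so all moments exist and the moment problem is Hamburger-determinate.

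For the local condition, fix $\epsilon\in(0,1)$ and let $N=\lfloor 1/\epsilon\rfloor$. The point masses in $[0,\epsilon)$ are precisely those $\delta_{1/j}$ with $1/j<\epsilon$, i.e.\ $j\ge N+1$. Using the partial fraction identity $\frac{1}{j(j+1)}=\frac{1}{j}-\frac{1}{j+1}$, the sum telescopes to
\[
	\mu([0,\epsilon))=\sum_{j=N+1}^\infty\frac{1}{j(j+1)}=\frac{1}{N+1}.
\]
By symmetry of $\mu$ under $\xi\mapsto -\xi$, the same computation gives $\mu((-\epsilon,0))=\frac{1}{N+1}$. Since $N=\lfloor 1/\epsilon\rfloor$ satisfies $1/\epsilon-1<N\le 1/\epsilon$, we have $\epsilon(N+1)\to 1$ as $\epsilon\downarrow 0$, so
\[
	\lim_{\epsilon\downarrow 0}\frac{\mu([0,\epsilon))}{\epsilon}
	=\lim_{\epsilon\downarrow 0}\frac{1}{\epsilon(N+1)}=1,
\]
and analogously $\mu((-\epsilon,0))/\epsilon\to 1$. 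This is exactly condition \eqref{theorem1condition} at $\xi=0$ with $\eta=1$.

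The sine kernel asymptotics \eqref{eqntheorem1limit} at $\xi=0$, $\eta=1$ then follows directly from the implication (i)$\Rightarrow$(ii) in Theorem~\ref{theorem1}. The only step requiring thought is the telescoping observation, which is what was engineered into the definition of $\mu$ in the first place; everything else is bookkeeping. No genuine obstacle arises, since the main theorem was designed precisely to handle such purely atomic examples on the same footing as continuous ones.
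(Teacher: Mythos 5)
Your proof is correct and takes essentially the same route as the paper: the same telescoping via $\frac{1}{j(j+1)}=\frac{1}{j}-\frac{1}{j+1}$ gives $\mu([0,\epsilon))=\frac{1}{N+1}$ with $N=\lfloor 1/\epsilon\rfloor$, after which the ratio converges to $1$ and Theorem~\ref{theorem1} applies. The only addition is the (correct, if slightly pedantic) remark that determinacy of the moment problem is automatic because $\supp\mu$ is compact, which the paper leaves implicit.
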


\begin{proof}
For $\frac 1{n+1} < \epsilon \le \frac 1n$, $\mu([0,\epsilon)) = \sum_{j=n+1}^\infty \frac 1{j(j+1)} = \frac 1{n+1}$ so
\[
\frac { \epsilon^{-1} } { \epsilon^{-1} + 1} \le \frac{ \mu([0,\epsilon)) }{\epsilon} < 1
\]
and therefore $ \mu([0, \epsilon)) / \epsilon \to 1$ as $\epsilon \to 0$. Analogously, $ \mu((-\epsilon,0)) / \epsilon \to 1$ as $\epsilon \to 0$. By Theorem~\ref{theorem1},  \eqref{eqntheorem1limit}  holds at $\xi = 0$ with $\eta = 1$.
\end{proof}

Nonetheless, bulk universality on some set of energies implies that the measure is $1$-dimensional there: 

\begin{theorem}\label{thmBulkUniversality1dimensional}
If bulk universality holds on some set $A$ in the sense that for every $\xi \in A$, the kernels have scaling limit \eqref{RegularlyVaryingBulkUniversalityLimit} with regularly varying scaling, then $\mu$ is $1$-dimensional on $A$ in the sense that $\chi_A \,d\mu$ is $h^\alpha$-continuous for every $\alpha < 1$; $h^\alpha$ denotes the $\alpha$-dimensional Hausdorff measure.
\end{theorem}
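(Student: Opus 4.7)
The plan is to combine the equivalence in Theorem~\ref{theorem2} with the Rogers--Taylor characterization of Hausdorff continuity via upper $\alpha$-derivatives. Fix $\alpha < 1$. For each $\xi \in A$, the bulk universality hypothesis triggers Theorem~\ref{theorem2}\,(i), producing a regularly varying function $\scrg_\xi$ of index $1$ with
\[
\lim_{r\to\infty} \scrg_\xi(r)\,\mu\bigl(\bigl(\xi-\tfrac1r,\xi\bigr)\bigr)=\lim_{r\to\infty} \scrg_\xi(r)\,\mu\bigl(\bigl[\xi,\xi+\tfrac1r\bigr)\bigr)=1.
\]
In particular, setting $\epsilon=1/r$,
\[
\mu\bigl((\xi-\epsilon,\xi+\epsilon)\bigr)\sim \frac{2}{\scrg_\xi(1/\epsilon)}\qquad\text{as }\epsilon\downarrow 0.
\]

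Next I would extract a power-law lower bound on $\scrg_\xi$ from the theory of regular variation. Write $\scrg_\xi(r)=r L_\xi(r)$ with $L_\xi$ slowly varying at $\infty$. Choose $\delta:=(1-\alpha)/2>0$; by the Potter bounds \cite[Thm.~1.5.6]{bingham.goldie.teugels:1989}, there exists $r_0(\xi,\delta)$ such that $L_\xi(r)\ge r^{-\delta}$ for $r\ge r_0$, hence $\scrg_\xi(r)\ge r^{1-\delta}$ on the same range. Combined with the asymptotic above, for all sufficiently small $\epsilon$ (depending on $\xi$),
\[
\mu\bigl((\xi-\epsilon,\xi+\epsilon)\bigr)\le 3\,\epsilon^{1-\delta},
\]
so that
\[
\frac{\mu\bigl((\xi-\epsilon,\xi+\epsilon)\bigr)}{(2\epsilon)^{\alpha}}\le C\,\epsilon^{(1-\alpha)/2}\xrightarrow[\epsilon\downarrow 0]{}0.
\]
Thus the upper $\alpha$-derivative
\[
D^{\alpha}_{\mu}(\xi):=\limsup_{\epsilon\downarrow 0}\frac{\mu\bigl((\xi-\epsilon,\xi+\epsilon)\bigr)}{(2\epsilon)^{\alpha}}
\]
vanishes at every $\xi\in A$.

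The last step is to invoke the Rogers--Taylor theorem (see, e.g., del~Rio--Jitomirskaya--Last--Simon): if $D^{\alpha}_{\mu}(\xi)<\infty$ for $\mu$-a.e.\ $\xi\in A$, then $\chi_{A}\,d\mu$ is $h^{\alpha}$-continuous. Since in our situation $D^{\alpha}_{\mu}\equiv 0$ on $A$, this hypothesis is satisfied trivially, and taking the conclusion over all $\alpha<1$ gives the result. There is no genuine obstacle; the only thing to check carefully is that regular variation of index exactly $1$ really does force $\scrg_\xi(r)$ to dominate $r^{1-\delta}$ for any $\delta>0$, which is precisely the content of the Potter bounds. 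The moral is that bulk universality with any regularly varying scale of index one is, for Hausdorff-dimensional purposes, indistinguishable from the absolutely continuous case.
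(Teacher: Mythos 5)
Your proof is correct and follows essentially the same route as the paper: Theorem~\ref{theorem2} gives precise local scaling with a regularly varying function of index~$1$, regular variation (Potter bounds) shows the upper $\alpha$-derivative $D^\alpha_\mu$ vanishes on $A$ for every $\alpha<1$, and Rogers--Taylor then yields $h^\alpha$-continuity. Two small remarks: the Potter bound actually gives $L_\xi(r)\ge C\,r^{-\delta}$ with a constant $C>0$ rather than $L_\xi(r)\ge r^{-\delta}$, but this is harmless since you can shrink $\delta$ slightly to absorb it; and you have in fact stated the Rogers--Taylor step more carefully than the paper does — the paper's displayed step asserting $\lim_{r\to\infty} r^\alpha \mu\bigl((\xi-\tfrac1r,\xi+\tfrac1r)\bigr)=\infty$ should read $=0$ (since $\scrg(r)r^{-\alpha}\to\infty$ forces $r^\alpha/\scrg(r)\to 0$), and the set $S_\alpha$ in the Rogers--Taylor citation should be the set where the upper $\alpha$-derivative is \emph{finite}, exactly as you have it.
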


\begin{proof}
By Theorem~\ref{theorem2}, for every $\xi \in A$, the limit
\[
\lim_{r\to\infty} \scrg(r) \mu\left(\left(\xi - \tfrac 1r, \xi + \tfrac 1r \right) \right)
\]
is nonzero, and $\scrg$ is regularly varying with index $1$. In particular, for every $\alpha < 1$, $\scrg(r) r^{-\alpha} \to \infty$, so
\[
\lim_{r\to\infty} r^\alpha \mu\left(\left(\xi - \tfrac 1r, \xi + \tfrac 1r \right) \right) = \infty.
\]
This is interpreted as an upper $\alpha$-derivative with respect to Hausdorff measure $h^\alpha$. By Rogers--Taylor \cite{RogersTaylor1,RogersTaylor2} (see also \cite{Last}, \cite[Section 6.3]{LukicBook}), on the set $S_\alpha \supset A$ where
\[
\limsup_{r\to\infty} r^\alpha \mu\left(\left(\xi - \tfrac 1r, \xi + \tfrac 1r \right) \right) = \infty,
\]
$\chi_{S_\alpha} \,d\mu$ is continuous with respect to $h^\alpha$.
\end{proof}

In the remainder of this section, we discuss sparse decaying Jacobi matrices. We call a Jacobi matrix  sparse if  there exists a sequence $N_j$ with $N_{j+1} / N_j \to \infty$ such that $a_n = 1$, $b_n = 0$ for all $n \notin \{N_j \mid j \in \bbN_0 \}$. We call it decaying if $a_n \to 1$, $b_n \to 0$ as $n\to \infty$, since it is then a decaying perturbation of the free Jacobi matrix. The spectral type of a sparse decaying Jacobi matrix on its essential spectrum $[-2,2]$ is completely understood \cite{Pearson,KiselevLastSimon}: it has pure a.c. spectrum on $[-2,2]$ if it is a Hilbert--Schmidt perturbation of the free Jacobi matrix, and pure singular spectrum on $[-2,2]$ otherwise.

The first  examples of bulk universality without a.c.\ spectrum were found within this class: for a fixed decaying sequence $(v_j)_{j=1}^\infty \notin \ell^2$,  Breuer \cite{breuer:2011} proved that there exist functions $\tilde N_k(N_1,\dots,N_k)$ such that with the recursive choice $N_{k+1} =\tilde N_k(N_1,\dots,N_k)$ and with $a_n \equiv 1$, $b_{N_j} = v_j$ for all $j$ and $b_n = 0$ otherwise, the sine kernel asymptotics
\begin{equation}\label{eqnBreuerScaling}
\lim_{n\to \infty} \frac{K(n,\xi+\frac zn, \xi + \frac wn ) }{K(n,\xi,\xi)} = \frac{ \sin ((4-\xi^2)^{-1/2} (z - \overline w)) }{ (4-\xi^2)^{-1/2} (z-\overline w)}
\end{equation}
hold for every $\xi \in (-2,2)$.   Note the explicit factor of $n$ in the scaling limit \eqref{eqnBreuerScaling}, as opposed to a regularly varying function of $K(n,\xi,\xi)$.

Our first remark is that Breuer's examples are within the scope of this paper:

\begin{lemma}\label{lemmaSparseRegularlyVarying}
For every sparse decaying Jacobi matrix and  every $\xi \in (-2,2)$, the function $K(t,\xi,\xi)$ is a regularly varying function of $t$ with index $1$ and
\[
K(t,\xi,\xi) \sim 2 t \frac{ p_n(\xi)^2 - \xi a_n p_n(\xi) p_{n-1}(\xi) + a_n^2 p_{n-1}(\xi)^2 }{ 4 - \xi^2}, \qquad n = \lfloor t \rfloor, \qquad t \to \infty.
\]
\end{lemma}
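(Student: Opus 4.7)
The key observation is that for $\xi = 2\cos\theta$ with $\theta \in (0,\pi)$, on each interval $[N_j, N_{j+1}-1]$ strictly between consecutive bumps the Jacobi recursion reduces to the free Chebyshev recursion $p_{m+1}+p_{m-1} = \xi p_m$, and so
\[
p_m(\xi) = A_j \cos(m\theta) + B_j \sin(m\theta)
\]
for some real constants $A_j, B_j$. The invariant $\Phi_{(j,j+1)} := p_m^2 + p_{m-1}^2 - \xi p_m p_{m-1}$ is constant throughout the segment and equals $(A_j^2+B_j^2)(4-\xi^2)/4$. Since $a_n=1$ for $n \in (N_j, N_{j+1})$, the quantity $\Phi_n := p_n^2 + a_n^2 p_{n-1}^2 - \xi a_n p_n p_{n-1}$ from the theorem coincides with $\Phi_{(j,j+1)}$ there. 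At a bump $n = N_j$, substituting the Jacobi recursion yields the identity
\[
\Phi_{(j,j+1)} - \Phi_{N_j} = b_{N_j}\, p_{N_j}\bigl[(b_{N_j}-\xi)\,p_{N_j} + 2 a_{N_j}\, p_{N_j-1}\bigr],
\]
which combined with $b_{N_j} \to 0$ shows $\Phi_{N_j}/\Phi_{(j,j+1)} \to 1$ as $j\to\infty$.

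A standard trigonometric computation gives
\[
\sum_{m=N_j}^{N_{j+1}-1} p_m(\xi)^2 = \frac{(A_j^2+B_j^2)(N_{j+1}-N_j)}{2} + O_\theta(A_j^2+B_j^2) = \frac{2\Phi_{(j,j+1)}}{4-\xi^2}(N_{j+1}-N_j) + O_\theta(\Phi_{(j,j+1)}),
\]
with the error coming from bounded oscillating sums like $\sum\cos(2m\theta)$. Moreover, using $a_{N_{j+1}}\to 1$ and $b_{N_{j+1}}\to 0$, the transition map across the bump $N_{j+1}$ sending $(A_j,B_j)$ to $(A_{j+1},B_{j+1})$ tends to the identity; in particular, $\Phi_{(j+1,j+2)}/\Phi_{(j,j+1)} \to 1$, so the sequence $(\Phi_{(j,j+1)})_j$ is slowly varying.

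Assembling, for $n \in (N_J, N_{J+1}]$,
\[
K(n,\xi,\xi) = \sum_{j=0}^{J-1} \sum_{m=N_j}^{N_{j+1}-1} p_m^2 + \sum_{m=N_J}^{n-1} p_m^2 \sim \frac{2}{4-\xi^2}\biggl[\sum_{j<J} \Phi_{(j,j+1)}(N_{j+1}-N_j) + \Phi_{(J,J+1)}(n-N_J)\biggr].
\]
Sparseness $N_{j+1}/N_j \to \infty$ implies $(N_J-N_{J-1})/N_J \to 1$ while $\sum_{j<J-1}(N_{j+1}-N_j)/N_J = N_{J-1}/N_J \to 0$; combined with the slow variation of $\Phi_{(j,j+1)}$ this yields $\sum_{j<J} \Phi_{(j,j+1)}(N_{j+1}-N_j) \sim \Phi_{(J,J+1)} N_J$, hence $K(n,\xi,\xi) \sim 2n\Phi_n/(4-\xi^2)$. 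The passage from $n\in\bbN$ to continuous $t$ is immediate from \eqref{eqnPiecewiseLinear1}, since $K(t,\xi,\xi)-K(n,\xi,\xi) = (t-n)p_n^2 = O(\Phi_n)$ is negligible compared to $K(n,\xi,\xi) \sim 2n\Phi_n/(4-\xi^2)$. Regular variation of index $1$ then follows: for any fixed $c>0$, sparseness guarantees that for large $t$ the points $\lfloor t\rfloor, \lfloor ct\rfloor$ lie in at most two adjacent segments, so $\Phi_{\lfloor ct\rfloor}/\Phi_{\lfloor t\rfloor} \to 1$ and $K(ct,\xi,\xi)/K(t,\xi,\xi) \to c$. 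The main technical subtlety is controlling the tail $\sum_{j<J-1} \Phi_{(j,j+1)}(N_{j+1}-N_j)$ when $(\Phi_{(j,j+1)})_j$ is not monotone; this is handled by splitting at an intermediate index $J-k$ and using slow variation on the top $k$ segments together with the sparseness bound $N_{J-k}/N_J \to 0$ on the remaining tail.
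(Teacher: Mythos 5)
Your proof follows the same underlying strategy as the paper: transform to Pr\"ufer-type variables (your $\Phi_n$ is $\frac{4-\xi^2}{2}\lVert A_n\rVert^2$ in the paper's notation), observe that $\Phi$ is constant on each segment between consecutive bumps and that the ratio across a bump tends to $1$, exploit sparseness $N_{j+1}/N_j\to\infty$, and separate the Christoffel function into a "drift" part proportional to segment length plus a bounded oscillatory error. The difference is one of packaging: the paper defines $f(t)=\lVert A_n\rVert^2$, notes $f$ is slowly varying (at most one jump between $t$ and $\lambda t$), and invokes Karamata's theorem to get $\scrg(t)=\int_0^t f\sim tf(t)$ with $\scrg$ regularly varying of index~$1$, then uses Stolz--Ces\`aro to show the oscillatory error is $o(\scrg(t))$. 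You instead sum the segments directly.

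The direct route can be made to work, but as written there are two genuine gaps at the assembly stage.  First, each segment contributes an error $O_\theta(\Phi_{(j,j+1)})$, and you never show that $\sum_{j<J} O_\theta(\Phi_{(j,j+1)})=o(\Phi_{(J,J+1)}N_J)$; this is precisely what the paper's Stolz--Ces\`aro step accomplishes.  Second, your sketch for the main sum, "split at $J-k$ and use slow variation on the top $k$ segments together with $N_{J-k}/N_J\to 0$ on the tail," does not close, because the tail $\sum_{j<J-k}\Phi_{(j,j+1)}\Delta N_j$ contains the factors $\Phi_{(j,j+1)}/\Phi_{(J,J+1)}$, which are \emph{not} bounded on the tail --- slow variation only gives $\Phi_{(j,j+1)}/\Phi_{(J,J+1)}\le C_\delta(1+\delta)^{J-j}$, which can grow exponentially (this is realistic: for non-Hilbert--Schmidt perturbations the Pr\"ufer amplitude need not stay bounded). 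What actually saves both estimates is the interplay between that exponential bound on the $\Phi$-ratio and the super-exponential decay $N_j/N_J\lesssim M^{-(J-j)}$ for arbitrarily large $M$ coming from sparseness; you should take $\delta$ small and then $M>1+\delta$ and sum a dominated geometric series, and separately dispose of the finitely many low-index terms using $\Phi_{(J,J+1)}N_J\to\infty$. Your current description of the tail argument elides this entirely. Either supply that comparison explicitly for both the main and error sums, or do what the paper does and package it as Karamata plus Stolz--Ces\`aro, which avoids the bookkeeping.
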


\begin{proof}
Denote $\theta = \arccos(\xi/2)$ and diagonalize
\[
\begin{pmatrix}
2 \cos \theta & -1 \\
1 & 0
\end{pmatrix}
=
U
\begin{pmatrix}
e^{i\theta} & 0 \\
0 & e^{-i\theta}
\end{pmatrix}
U^{-1}, \qquad 
U = \begin{pmatrix}
e^{i\theta} & e^{-i\theta} \\
1 & 1
\end{pmatrix}.
\]
Introduce vectors $A_n$ by
\[
U^{-1} \begin{pmatrix}
p_n(\xi) \\
a_n p_{n-1}(\xi)
\end{pmatrix}
=
\begin{pmatrix}
e^{i(n-1)\theta} & 0 \\
0 & e^{-i(n-1)\theta}
\end{pmatrix}
A_n.
\]
Then the Jacobi recursion rewrites as 
\[
A_{n} =
\begin{pmatrix}
e^{-i(n-1)\theta} & 0 \\
0 & e^{i(n-1)\theta}
\end{pmatrix}
U^{-1}
\begin{pmatrix}
\frac{\xi - b_{n}}{a_{n}} & - \frac 1{a_{n}} \\
a_{n} & 0
\end{pmatrix}
U
\begin{pmatrix}
e^{i(n-2)\theta} & 0 \\
0 & e^{-i(n-2)\theta}
\end{pmatrix}
A_{n-1}.
\]
In particular, $A_n$ is constant on $N_j \le n < N_{j+1}$. This implies constancy of
\[
\lVert A_n \rVert^2 =2 \frac{ p_n(\xi)^2 - \xi a_n p_n(\xi) p_{n-1}(\xi) + a_n^2 p_{n-1}(\xi)^2 }{ 4 - \xi^2}
\]
on the same intervals. 
Moreover, since $a_n \to 1$ and $b_n \to 0$, $\lVert A_{n} \rVert / \lVert A_{n-1} \rVert \to 1$ as $n\to\infty$.

Consider the function $f$ defined by $f(t) = \lVert A_n \rVert^2$ for $n \le t < n+1$. For every $\lambda > 1$, for large enough
$n$, there is at most one jump in the value of $\lVert A_n \rVert^2$ between $t$ and $\lambda t$, so $f(\lambda t) / f(t) \to 1$
as $x \to + \infty$. By Karamata's theorem \cite{Karamata30,Karamata33} (see also \cite[Thm 1.5.11]{bingham.goldie.teugels:1989}), the function $\scrg(t) = \int_0^t f(s) \,ds$ is regularly varying with index $1$ 
and $\scrg(t) \sim t f(t)$. Meanwhile,
\[
p_n(\xi)^2 = \lVert A_n \rVert^2 - 2 \Re \left(  e^{2in\theta} (A_n)_1 \overline{(A_n)_2} \right) 
\]
and partial sums of the oscillatory part are bounded by 
\[
\left\lvert \sum_{n=s+1}^t e^{2in\theta} \right\rvert \le \frac 2{\lvert 1 - e^{2i\theta} \rvert} = \frac 2{\sqrt{4-\xi^2}}
\]
so for integers $s,t$ with $N_j \le s < t \le N_{j+1}$,
\[
\left\lvert K(t,\xi,\xi) - K(s,\xi, \xi) - (t-s) \lVert A_{N_j} \rVert^2 \right\rvert \le \frac 2{\sqrt{4-\xi^2}} \lVert A_j \rVert^2.
\]
Since the expression in the absolute value is piecewise linear in $s,t \in [N_j, N_{j+1}]$ and the inequality holds at endpoints of the linear parts, it holds for all $s, t \in [N_j, N_{j+1}]$. 
By telescoping,
\begin{equation}\label{eqn:16aug1}
\lvert K(t,\xi,\xi) - \scrg(t)  \rvert \le \frac 2{\sqrt{4-\xi^2}}  \sum_{j: N_j \le t} \lVert A_j \rVert^2.
\end{equation}
The limit
\[
\lim_{j\to\infty} \frac{ \lVert A_{j+1} \rVert^2}{  \lVert A_j \rVert^2  (N_j - N_{j-1}) } = 0
\]
implies by the Stolz--Ces\`aro theorem that $\sum_{j=1}^n \lVert A_j \rVert^2 / \scrg(N_n) \to 0$ as $n\to\infty$, and by monotonicity of $\scrg$ that $\sum_{j: N_j \le t} \lVert A_j \rVert^2 /  \scrg(t) \to 0$ as $t \to \infty$.  Thus, \eqref{eqn:16aug1} implies $K(t,\xi,\xi) \sim \scrg(t)$.
\end{proof}

Combining this with our Theorem~\ref{theorem2} gives very precise asymptotic behavior of the spectral measure on intervals, for Jacobi matrices in Breuer's class:

\begin{corollary}\label{corDecayingSparseMeasure}
For every $\xi \in (-2,2)$, for any decaying sparse Jacobi matrix for which \eqref{eqnBreuerScaling} holds, 
\[
\lim_{n\to\infty} \scrg_\xi(n) \mu\left(\left( \xi - \tfrac 1{n} , \xi \right)\right) = \lim_{n\to\infty} \scrg_\xi(n) \mu\left(\left[\xi, \xi + \tfrac 1{n} \right)\right) = 1
\]
where
\[
\scrg_\xi(n) =  \frac{2\pi n}{\sqrt{4-\xi^2}} \left( p_n(\xi)^2 - \xi a_n p_n(\xi) p_{n-1}(\xi) + a_n^2 p_{n-1}(\xi)^2 \right).
\]
\end{corollary}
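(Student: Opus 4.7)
The plan is to derive the corollary by applying the equivalence (iv)$\Leftrightarrow$(i) in Theorem~\ref{theorem2} to each Jacobi matrix in Breuer's class. Breuer's scaling \eqref{eqnBreuerScaling} is already a discrete-$n$ sine kernel limit, but its explicit rate is $n$ and its limit kernel carries the constant $(4-\xi^2)^{-1/2}$ rather than $\pi$, so the first task is to recast it in the canonical normalization of Theorem~\ref{theorem2}(iv), that is, to express the rate $n$ in the form $\scrh(K(n,\xi,\xi))$ for a suitable regularly varying $\scrh$.

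I would proceed as follows. By Lemma~\ref{lemmaSparseRegularlyVarying}, $t\mapsto K(t,\xi,\xi)$ is regularly varying of index $1$; let $K^{-1}$ denote an asymptotic inverse and set $\scrh(r):=K^{-1}(r)/(\pi\sqrt{4-\xi^2})$, which is again regularly varying of index $1$ and satisfies $\scrh(K(n,\xi,\xi))\sim n/(\pi\sqrt{4-\xi^2})$. The change of variable $z\mapsto\pi\sqrt{4-\xi^2}\,z$ converts Breuer's limit \eqref{eqnBreuerScaling} into
\[
\lim_{n\to\infty}\frac{K\bigl(n,\,\xi+\tfrac{z}{\scrh(K(n,\xi,\xi))},\,\xi+\tfrac{w}{\scrh(K(n,\xi,\xi))}\bigr)}{K(n,\xi,\xi)}=\frac{\sin(\pi(z-\overline w))}{\pi(z-\overline w)},
\]
which is exactly the first half of Theorem~\ref{theorem2}(iv). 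The subexponential growth condition \eqref{eqnNevaiConditionRelated} is an immediate consequence of $K(\cdot,\xi,\xi)$ being regularly varying of index $1$, and determinacy of the moment problem is automatic for a bounded Jacobi matrix. Theorem~\ref{theorem2} then yields (i) with some $\scrg$ regularly varying of index $1$, and asserts moreover that $\scrh$ is an asymptotic inverse of $\scrg$.

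The remaining step is to identify $\scrg$ with the explicit $\scrg_\xi$ from the corollary. Unwinding the definition of $\scrh$ and using that index-$1$ regular variation commutes with multiplicative constants up to asymptotic equivalence, one obtains $\scrg(r)\sim\pi\sqrt{4-\xi^2}\,K(r,\xi,\xi)$; evaluating at integer $n$ and inserting the asymptotic of Lemma~\ref{lemmaSparseRegularlyVarying} gives $\scrg(n)\sim\scrg_\xi(n)$, from which the stated limits follow. The only step that could easily invite computational error is this last asymptotic-inverse bookkeeping; once the constant $\pi\sqrt{4-\xi^2}$ is correctly distributed through the chain $\scrh\mapsto\scrg\mapsto\scrg_\xi$, the rest is mechanical.
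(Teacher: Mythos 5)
Your proof is correct and follows essentially the same strategy as the paper's own: recast Breuer's limit in the canonical normalization of Theorem~\ref{theorem2} by absorbing the constant $\pi\sqrt{4-\xi^2}$ into $\scrh$, verify the Nevai condition \eqref{eqnNevaiConditionRelated} from regular variation of $K(\cdot,\xi,\xi)$, and identify $\scrg$ with $\scrg_\xi$ via Lemma~\ref{lemmaSparseRegularlyVarying} and asymptotic-inverse bookkeeping. The only difference is that you invoke equivalence (iv)$\Leftrightarrow$(i) rather than (iii)$\Leftrightarrow$(i) -- an immaterial choice -- and in fact your explicit tracking of the constant $\pi\sqrt{4-\xi^2}$ through $\scrh$ is more careful than the paper's terse phrasing, which writes $\scrh=\scrg^{-1}$ without the needed normalizing factor and is off by a factor of $\pi$ in the intermediate display $\sqrt{4-\xi^2}K(n,\xi,\xi)\sim\scrg_\xi(n)$; those slips cancel and the paper's conclusion stands, but your version is the clean one.
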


\begin{proof}
The function $\scrg$ from the previous proof is continuous and strictly increasing, so Lemma~\ref{lemmaSparseRegularlyVarying} can be restated in the form $t \sim \scrg^{-1}(K(t,\xi,\xi))$. Thus, Theorem~\ref{theorem2}(iii) holds with $\scrh = \scrg^{-1}$, and this implies Theorem~\ref{theorem2}(i), that is,
\[
\lim_{t\to\infty} K(t,\xi,\xi) \mu\left(\left ( \xi - \tfrac {\pi \sqrt{4-\xi^2}}{t}, \xi \right)\right) = \lim_{t\to\infty} K(t,\xi,\xi) \mu\left(\left[\xi, \xi + \tfrac {\pi\sqrt{4-\xi^2}}{t} \right)\right) = 1.
\]
Applying this to the sequence $t = n\sqrt{4-\xi^2}$ and combining with
\[
K(n \sqrt{4-\xi^2}, \xi,\xi) \sim \sqrt{4-\xi^2}K(n , \xi,\xi) \sim g_\xi(n), \qquad n\to\infty
\]
(by Lemma~\ref{lemmaSparseRegularlyVarying} and regular variation with index $1$)
concludes the proof.
\end{proof}

By a result of Zlato\v s \cite{Zlatos}, sparse decaying Jacobi matrices always obey
\[
\lim_{\epsilon \to 0} \frac{ \log \mu((\xi-\epsilon,\xi))}{\log \epsilon}  = \lim_{\epsilon \to 0} \frac{ \log \mu([\xi,\xi+\epsilon))}{\log \epsilon}  = 1
\]
and therefore have $1$-dimensional spectral measures on $(-2,2)$; note that Corollary~\ref{corDecayingSparseMeasure} gives a more precise statement about the local behavior of the spectral measure, but within the narrower class of \cite{breuer:2011}. It is natural to conjecture:

\begin{conjecture}
For every sparse decaying Jacobi matrix $J$, its spectral measure $\mu$ has a unique  tangent measure at every $\xi \in (-2,2)$, and this tangent measure is the Lebesgue measure.
\end{conjecture}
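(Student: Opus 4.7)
The plan is to apply Theorem~\ref{theorem:main} through its specialization to orthogonal polynomials: by Theorem~\ref{theorem2}, the conjecture for a fixed $\xi\in(-2,2)$ reduces to establishing the sine-kernel scaling limit \eqref{RegularlyVaryingBulkUniversalityLimit} for the CD kernel at $\xi$ with some scaling $\scrh$ that is regularly varying of index one. Lemma~\ref{lemmaSparseRegularlyVarying} already provides regularly varying behavior of $K(t,\xi,\xi)$ with index one, together with the explicit asymptotic $K(t,\xi,\xi)\sim t f(t)/(2\sin^{2}\theta_{0})$, where $\theta_{0}=\arccos(\xi/2)$ and $f(t)=\|A_{\lfloor t\rfloor}\|^{2}$, so the natural candidate for $\scrh$ is the asymptotic inverse of $\scrg(t):=t f(t)/(2\sin^{2}\theta_{0})$.

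I would first upgrade the Pr\"ufer-type construction of Lemma~\ref{lemmaSparseRegularlyVarying} to the perturbed spectral parameter $\xi+u$: writing $\xi+u=2\cos\theta_{u}$, replace $A_{n}$ by $A_{n}(u)$ built from the column vector with entries $p_{n}(\xi+u)$ and $a_{n}p_{n-1}(\xi+u)$ via the diagonalization of the free transfer matrix at $\xi+u$. As in the unperturbed case, $A_{n}(u)$ is constant on the free stretches $(N_{j},N_{j+1}]$ and evolves only at sparse positions through $A_{N_{j}}(u)=M_{j}(u)A_{N_{j}-1}(u)$, where $M_{j}(u)$ depends polynomially on $u$ and reduces to the unperturbed evolution at $u=0$. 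A Taylor expansion in $u=z/\tau$ will then give $A_{n}(z/\tau)=A_{n}(0)+(z/\tau)B_{n}+o(1/\tau)$, with $\|B_{n}\|$ controlled by $\|A_{n}(0)\|$ times a factor depending on the sparse Jacobi parameters.

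Next I would substitute the perturbed representation $p_{n}(\xi+u)=e^{in\theta_{u}}(A_{n}(u))_{1}+e^{-in\theta_{u}}\overline{(A_{n}(u))_{1}}$ into the CD sum, obtaining four oscillating contributions. The two rapidly oscillating sums, with phases $\sim\pm 2j\theta_{0}$, should be handled by the same oscillatory-sum bound used in the proof of Lemma~\ref{lemmaSparseRegularlyVarying} and contribute $o(K(n,\xi,\xi))$. The two slowly oscillating sums carry phases $\pm ij(\theta_{z/\tau}-\theta_{\overline{w}/\tau})\sim \mp ij(z-\overline{w})/(2\tau\sin\theta_{0})$ and, weighted by $\|A_{j}\|^{2}$, form a Riemann-type sum. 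Choosing $\tau=\tau_{n}=\scrh(K(n,\xi,\xi))$ calibrates this phase so that after dividing by $K(n,\xi,\xi)$ the limit is exactly $\sin(\pi(z-\overline{w}))/(\pi(z-\overline{w}))$.

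The hard part will be the uniform control of the first-order sparse-point corrections when the sparsity ratios $N_{j+1}/N_{j}$ are not assumed to grow super-geometrically as in~\cite{breuer:2011}. Concretely, one must show that the linearization error $A_{n}(z/\tau)-A_{n}(0)-(z/\tau)B_{n}$ contributes negligibly to the normalized kernel, uniformly for $(z,w)$ in compact sets and for all $n$ with $N_{j}<n\le N_{j+1}$ as $j\to\infty$. The pointwise smallness of each correction follows from $a_{n}\to 1$ and $b_{n}\to 0$, but promoting this to the required uniform bound demands a telescoping/summation-by-parts argument along the sparse points that exploits the constancy of $A_{n}(u)$ on free stretches and mirrors the oscillatory-sum estimate already used in Lemma~\ref{lemmaSparseRegularlyVarying}. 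If this can be carried out with no restriction on $N_{j+1}/N_{j}$ beyond the basic sparsity $N_{j+1}/N_{j}\to\infty$, the conjecture follows.
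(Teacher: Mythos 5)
This statement is a \emph{conjecture}, not a theorem; the paper explicitly leaves it open. What the paper does establish is much narrower: Corollary~\ref{corDecayingSparseMeasure} proves the tangent measure is Lebesgue only for Breuer's class, where the sparse positions $N_{j+1}$ are chosen \emph{recursively} as $\tilde N_k(N_1,\dots,N_k)$, growing much faster than the bare sparsity condition $N_{j+1}/N_j\to\infty$. The paper cites Zlato\v{s} only to conclude 1-dimensionality of $\mu$ on $(-2,2)$, which constrains the possible scaling index to $1$ but does not determine the tangent measure. There is therefore no proof in the paper to compare your argument against.

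Your outline identifies the right framework: reduce to sine-kernel asymptotics via Theorem~\ref{theorem2}, use the regular variation of $K(t,\xi,\xi)$ from Lemma~\ref{lemmaSparseRegularlyVarying}, and try to push the Pr\"ufer-type analysis off the diagonal. But it is not a proof. You write ``The hard part will be the uniform control\dots\ If this can be carried out\dots\ the conjecture follows.'' That conditional is precisely the open problem, and nothing in your sketch closes it. Concretely: the $u$-derivative of $A_n(u)$ at $u=0$ accumulates a factor of order $N_j$ from the phase $e^{\pm i N_j\theta_u}$ at each sparse step $N_j \le n$, so $\partial_u A_n(0)$ can be as large as $\sum_{j:\,N_j\le n}N_j\|A_{N_j}\|$. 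For this to be $o(\tau_n)=o\bigl(K(n,\xi,\xi)\bigr)\sim o\bigl(n\|A_n\|^2\bigr)$ one needs strong cancellation or dominance, which Breuer obtains by engineering $N_{j+1}$ to overwhelm everything coming before it. Under $N_{j+1}/N_j\to\infty$ alone there is no known mechanism to control this term, and it is not even clear that the conjecture is true --- it is conceivable that a judicious choice of sparse positions destroys uniqueness of the tangent measure at some $\xi$. Finally, note that Lemma~\ref{lemmaSparseRegularlyVarying} (regular variation of the \emph{diagonal} kernel) is strictly weaker than any of the equivalent conditions in Theorem~\ref{theorem2}: regular variation of $K(t,\xi,\xi)$ constrains the scaling index but says nothing about the off-diagonal kernel or the shape of the tangent measure, so it cannot by itself serve as the input you treat it as.
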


\section{The local distribution of zeros} \label{sec:FreudLevin}

In this section, we consider applications to local zero distributions. We will begin with a generalization of  the Freud--Levin theorem, formulated in the general context of Hermite--Biehler functions.

We will repeatedly use the following observations.  For $E\in \HB^*$, the function $E^\sharp / E$ maps $\bbC_+$ into $\bbD$, and maps $\bbR$ into $\partial\bbD$. For fixed $x \in \bbR$, $K_E(z,x) = 0$ if and only if $z \in \bbR$, $z \neq x$, and $(E^\sharp / E)(z) = (E^\sharp / E)(x)$; this follows from \eqref{eq:25}, and in particular, non-real zeros are ruled out by \eqref{eq:37}.
Moreover, by the Cauchy--Riemann relations and local properties of analytic functions, there is a strictly increasing continuous choice of argument $\varphi:\bbR \to \bbR$ such that
\begin{equation}\label{eqn:ChoiceOfArgument}
(E^\sharp/ E)(x) = e^{i\varphi(x)}.
\end{equation}
This representation implies that for $a, b \in \partial\bbD$ with $a \neq b$, the solutions of $E^\sharp / E = a$ and $E^\sharp / E = b$ strictly interlace. Moreover, by the strict interlacing property, the following are equivalent:
\begin{enumerate}[(i)]
\item
 $A$ has infinitely many positive zeros  (these are solutions of $E^\sharp/E = - 1$)
 \item $B$ has infinitely many positive zeros (these are solutions of $E^\sharp/E = 1$)
 \item $K_E(\cdot,0)$ has infinitely many positive zeros (these are solutions of $E^\sharp/E = (E^\sharp/E)(0)$)
 \item  $\varphi(x) \to \infty$ as $x \to\infty$
\end{enumerate}

\begin{theorem} \label{thm:FreudLevinGeneral}
Consider a sequence of Hermite-Biehler functions $E_n = A_n - i B_n \in \HB^*$, a point $\xi\in\bbR$ and scaling sequence $(\tau_n)_{n=1}^\infty$ such that
\[
\lim_{n\to\infty} K_{E_n}\left(\xi + \frac{z}{\tau_n}, \xi + \frac{w}{\tau_n} \right) = K_E(z,w)
\]
uniformly on compacts, for some Hermite--Biehler function $E = A - i B \in \HB^*$.
If $B$ has infinitely many positive zeros, then:
\begin{enumerate}[(i)]
\item For every $k \ge 0$, for all large enough $n$, $A_n$ has at least $k$ zeros greater than $\xi$; in other words its $k$-th zero to the right of $\xi$, denoted $\xi_k^{(n)}$, is well-defined for all large enough $n$.
\item Denoting by $\theta$ the smallest positive zero of $K_E(\cdot,0)$, 
\[
\limsup_{n\to\infty} \tau_n ( \xi_1^{(n)} - \xi )  \le \theta.
\]
\item If the limit 
\[
 \lim_{n\to\infty} \tau_n ( \xi_1^{(n)} - \xi )
\]
exists, denote its value by $\kappa_1$ and denote by $\kappa_2 < \kappa_3 < \dots$ all the zeros of $K_{E} (\cdot, \kappa_1)$ in $(\kappa_1, \infty)$. Then for every $k \in \bbN$,
\begin{equation}\label{eqn:FreudLevinZeros1}
\lim_{n\to\infty} \tau_n ( \xi_k^{(n)} - \xi ) = \kappa_{k}.
\end{equation}
 If, in addition, $K_E(\cdot, \kappa_1)$ has at least $m$ zeros in $(-\infty,\kappa_1)$ for some $m\in \bbN$, then $A_{n}$ has at least $m$ zeros in $(-\infty,\xi)$ for all large enough $n$, and \eqref{eqn:FreudLevinZeros1} holds also for $k=0,-1,\dots, -m+1$.
 \end{enumerate}
\end{theorem}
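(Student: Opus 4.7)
The plan is the following. First I would normalize by shifting so that $\xi=0$, and then rescale to $\hat E_n(z) := \sqrt{\tau_n}\,E_n(z/\tau_n)\in\HB^{*}$. A direct computation gives $K_{\hat E_n}(z,w) = K_{E_n}(z/\tau_n,\,w/\tau_n)$, so the hypothesis becomes $K_{\hat E_n}\to K_E$ uniformly on compact subsets of $\bbC\times\bbC$. The positive zeros of $\hat A_n$ are precisely $y_k^{(n)} := \tau_n\xi_k^{(n)}$ for $k\geq 1$, so I will carry out all the analysis in terms of $\hat E_n$, $K_{\hat E_n}$, and $y_k^{(n)}$.

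For (i) and (ii), the key object is the continuous, strictly increasing argument $\varphi_n:\bbR\to\bbR$ of $\hat E_n^{\sharp}/\hat E_n$: zeros of $\hat A_n$ are the solutions of $\varphi_n(x)\equiv\pi\pmod{2\pi}$, while nonzero real zeros of $K_{\hat E_n}(\cdot,0)$ are the solutions of $\varphi_n(x)\equiv\varphi_n(0)\pmod{2\pi}$. By the equivalences recalled at the start of the section, the hypothesis on $B$ implies that $K_E(\cdot,0)$ has infinitely many positive real zeros $0<\theta_1<\theta_2<\cdots$, with $\theta_1=\theta$. A standard Hermite--Biehler argument shows that $K_E(\cdot,0)$ has only real zeros, so Hurwitz's theorem applied to $K_{\hat E_n}(\cdot,0)\to K_E(\cdot,0)$ yields simple real zeros $\theta_k^{(n)}\to\theta_k$ of $K_{\hat E_n}(\cdot,0)$. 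Since $\varphi_n$ is strictly increasing with $\varphi_n(\theta_{j+1}^{(n)})-\varphi_n(\theta_j^{(n)})=2\pi$ and $\varphi_n(\theta_1^{(n)})-\varphi_n(0)=2\pi$, the equation $\varphi_n\equiv\pi\pmod{2\pi}$ has exactly one solution in each of the intervals $(0,\theta_1^{(n)}]$, $(\theta_1^{(n)},\theta_2^{(n)}]$, and so on. Thus $\hat A_n$ has at least $k$ positive zeros for all large $n$, establishing (i), and $y_1^{(n)}\leq\theta_1^{(n)}\to\theta$ gives (ii).

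For (iii), assume $y_1^{(n)}\to\kappa_1$. The crucial factorization is that, because $\hat A_n(y_1^{(n)})=0$,
\[
K_{\hat E_n}(z,y_1^{(n)})\;=\;-\,\hat B_n(y_1^{(n)})\,\frac{\hat A_n(z)}{z-y_1^{(n)}},
\]
so the zeros in $z$ of $K_{\hat E_n}(\cdot,y_1^{(n)})$ are exactly the zeros of $\hat A_n$ other than $y_1^{(n)}$. Joint uniform convergence of $K_{\hat E_n}$ to $K_E$ on compacts in $\bbC\times\bbC$, combined with $y_1^{(n)}\to\kappa_1$ and continuity of $K_E$, yields $K_{\hat E_n}(\cdot,y_1^{(n)})\to K_E(\cdot,\kappa_1)$ uniformly on compacts of $\bbC$. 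Applying Hurwitz's theorem to small disks around each real zero $\kappa_k$ ($k\ne 1$) of $K_E(\cdot,\kappa_1)$ produces, for large $n$, exactly one (real) zero of $K_{\hat E_n}(\cdot,y_1^{(n)})$ in each such disk; ordering these by real position gives $y_k^{(n)}\to\kappa_k$ for $k\geq 2$, and, in the additional case that $K_E(\cdot,\kappa_1)$ has $m$ zeros $\kappa_0>\kappa_{-1}>\cdots>\kappa_{-m+1}$ in $(-\infty,\kappa_1)$, also for $k=0,-1,\ldots,-m+1$. The main technical point is that in general $\hat E_n$ does not converge to $E$ (distinct Hermite--Biehler functions can share the same de Branges space), so one cannot read off zeros of $\hat A_n$ from a naive convergence argument; the above factorization of $K_{\hat E_n}(\cdot,y_1^{(n)})$ is what converts the assumed convergence of reproducing kernels and of $y_1^{(n)}$ into convergence of all the neighboring zeros.
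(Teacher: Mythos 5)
Your proof is correct and follows essentially the same approach as the paper's: reduction to $\xi=0$, $\tau_n=1$ by rescaling; Hurwitz on $K_{E_n}(\cdot,0)\to K_E(\cdot,0)$ plus strict interlacing of the argument $\varphi_n$ for (i)--(ii); and Hurwitz applied to the ``shifted'' kernel for (iii). In fact, your explicit factorization $K_{\hat E_n}(z,y_1^{(n)})=-\hat B_n(y_1^{(n)})\hat A_n(z)/(z-y_1^{(n)})$ and the use of $K_{\hat E_n}(\cdot,y_1^{(n)})$ (rather than $K_{E_n}(\cdot,\kappa_1)$, as the paper's proof sloppily writes in step (iii)) is the correct and cleaner way to make the Hurwitz argument transfer zeros of $\hat A_n$ to zeros of $K_E(\cdot,\kappa_1)$, since $\hat E_n$ itself need not converge to $E$.
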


\begin{proof}
Since the shift by $\xi$ and scaling by $\tau_n$ can be composed with $E_n$, there is no loss of generality in assuming $\xi=0$ and $\tau_n = 1$.

(i) Fix $k \in\bbN$. Since $K_{E_n}(\cdot, 0) \to K_E(\cdot,0)$, by the Hurwitz theorem, for all large enough $n$, $K_{E_n}(\cdot, 0)$ has at least $k$ zeros with $\Re z > 0$. Thus, there are at least $k$ strictly positive solutions $z$ of $(E_n^\sharp/ E_n)(z) = (E_n^\sharp/ E_n)(0)$. Including $z = 0$, this means at least $k+1$ zeros in $[0,\infty)$.  By the strictly interlacing property, there are at least $k$ positive zeros of $A_n$.

(ii) Denote by $\theta_n$ the smallest positive zero of $K_{E_n}(\cdot, 0)$. By the Hurwitz theorem, $\theta_n \to \theta$, and by the strictly interlacing property, since $(E_n^\sharp / E_n)(\theta_n) = (E_n^\sharp / E_n)(0)$,  $\xi_1^{(n)} \in [0, \theta_n)$. Thus, $\limsup_{n\to\infty} \xi_1^{(n)} \le \limsup_{n\to\infty} \theta_n = \theta$.

(iii) By the Hurwitz theorem, solutions of $K_{E_n}(\cdot,\kappa_1)$ converge to solutions of $K_E(\cdot, \kappa_1)$. The claims for eigenvalues below $\xi$ follow analogously.
\end{proof}

\begin{proof}[Proof of  Corollary~\ref{cor:ZeroDistributionPolynomial}]
(i), (ii) follow by applying Theorem~\ref{thm:FreudLevinGeneral}(i),(ii) to the sequence $E_n = p_n + i p_{n-1}$.

(iii) follows by applying Theorem~\ref{thm:FreudLevinGeneral}(iii) to the subsequence $E_{n_k} = p_{n_k} + i p_{n_k-1}$.
\end{proof}

For further applications,  we need a rewriting of the limit kernel in the case $\sigma_- = \sigma_+$. Let us factor Bessel functions as
\[
J_\nu(z) = \left( \frac z2 \right)^\nu F_\nu(z), \qquad F_\nu(z) = \sum_{n=0}^\infty \frac{ (-1)^n }{ n! \Gamma(n+\nu+1) } \left( \frac z2 \right)^{2n}.
\]
In particular, we note that $F_\nu$ is entire, even, and $F_\nu^\sharp = F_\nu$. A rewritting of a kernel in terms of functions $F_\nu$ is essentially a rewriting in terms of Bessel functions, without branch ambiguities.

\begin{lemma}\label{lemma:LimitKernelFisherHartwig}
In the case $\sigma_- = \sigma_+ = 1$, the limit kernel $K_{\sigma_-, \sigma_+,\beta}$ is of the form
\begin{equation}\label{eqn:LimitKernelFisherHartwig}
K_{1,1,\beta}(z,w) =\Gamma(\tfrac \beta 2 + 1) \Gamma(\tfrac \beta 2) \frac{z  F_{\beta/2-1}(\kappa z) F_{\beta/2}(\kappa \ol w) -  F_{\beta/2}(\kappa z)  \ol w F_{\beta/2-1}(\kappa \ol w) }{z -\ol w}
\end{equation}
where
\begin{equation}\label{eqn:kappaEvenMeasure}
\kappa = 2 \left( \frac 2\pi \Gamma(\tfrac \beta 2 + 1) \right)^{1/\beta}.
\end{equation}
\end{lemma}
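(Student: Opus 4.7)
My plan is to reduce the identity to the ``closed-form'' identifications $A(z) = \Gamma(\beta/2)\,F_{\beta/2-1}(\kappa z)$ and $B(z) = z\,\Gamma(\beta/2+1)\,F_{\beta/2}(\kappa z)$, and then substitute into $K_{1,1,\beta}(z,w) = (B(z)A(\ol w) - A(z)B(\ol w))/(z-\ol w)$. The agreement of the two expressions for $\kappa$ (the one from Definition~\ref{defn:LimitKernels}(i) with $\sigma_\pm = 1$, $\alpha = (\beta-1)/2$, and $|\Gamma(\alpha+1)|^2 = \Gamma(\tfrac{\beta+1}{2})^2$, versus the form in \eqref{eqn:kappaEvenMeasure}) follows from Legendre's duplication formula $\Gamma(\beta+1) = \tfrac{2^\beta}{\sqrt\pi}\,\Gamma(\tfrac{\beta+1}{2})\,\Gamma(\tfrac{\beta}{2}+1)$, which gives $\tfrac{2\Gamma(\beta+1)^2}{\Gamma((\beta+1)/2)^2} = \tfrac{2^{2\beta+1}}{\pi}\Gamma(\tfrac{\beta}{2}+1)^2$; taking the $\beta$-th root and halving yields the claimed form.

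For $B(z)$, I would apply Kummer's second theorem $M(a,2a,w) = e^{w/2}\,{}_0F_1(a+\tfrac12;\,w^2/16)$ with $a = \alpha+1 = (\beta+1)/2$ (so $2a = \beta+1$) and $w = -2i\kappa z$, combined with the elementary series identity ${}_0F_1(\nu+1;\,-w^2/4) = \Gamma(\nu+1)\,F_\nu(w)$ at $\nu = \beta/2$. This immediately yields $e^{i\kappa z}\,M(\alpha+1,\beta+1,-2i\kappa z) = \Gamma(\beta/2+1)\,F_{\beta/2}(\kappa z)$, hence $B(z) = z\,\Gamma(\beta/2+1)\,F_{\beta/2}(\kappa z)$.

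For $A(z)$, the Bessel--Kummer identity does \emph{not} apply to $M(\alpha,\beta,\cdot)$ directly, because the ratio of the two parameters is not $2$; this is the main obstacle. I would circumvent it via the contiguous relation $M(a,b-1,z) = M(a,b,z) + \tfrac{az}{b(b-1)}\,M(a+1,b+1,z)$ at $(a,b)=(\alpha,\beta)$, which reduces $M(\alpha,\beta,z)$ to a combination of $M(\alpha,\beta-1,z)$, where Kummer's second theorem \emph{does} apply since $2\alpha = \beta-1$, and $M(\alpha+1,\beta+1,z)$, which is already absorbed into $B$. Assembling the pieces produces $e^{i\kappa z}\,M(\alpha,\beta,-2i\kappa z) = \Gamma(\beta/2)\,F_{\beta/2-1}(\kappa z) + \tfrac{i\kappa}{\beta}\,B(z)$. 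Exploiting Kummer's first transformation together with the self-symmetry $\beta - \alpha = \alpha+1$ (equivalent to $\alpha = (\beta-1)/2$), which yields $M(\alpha+1,\beta,-2i\kappa z) = e^{-2i\kappa z}\,M(\alpha,\beta,2i\kappa z)$, and invoking evenness of $F_{\beta/2-1}$ and oddness of $B$, I obtain the companion identity $e^{i\kappa z}\,M(\alpha+1,\beta,-2i\kappa z) = \Gamma(\beta/2)\,F_{\beta/2-1}(\kappa z) - \tfrac{i\kappa}{\beta}\,B(z)$. Averaging the two makes the $B$-terms cancel, giving $A(z) = \Gamma(\beta/2)\,F_{\beta/2-1}(\kappa z)$.

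Finally, substituting both identifications into the definition of $K_{1,1,\beta}(z,w)$ and factoring out the common constant $\Gamma(\beta/2+1)\Gamma(\beta/2)$ produces the claimed form \eqref{eqn:LimitKernelFisherHartwig}. The substance of the argument lies in the derivation of $A$, where the key combinatorial trick is the symmetric averaging of two Kummer-shifted expressions, arranged so that the common correction term proportional to $B(z)$ cancels identically.
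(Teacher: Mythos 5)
Your proof is correct, and the treatment of $\kappa$ (via Legendre duplication) and of $B$ (via Kummer's second theorem together with $\prescript{}{0}{F}_1(\nu+1;-w^2/4)=\Gamma(\nu+1)F_\nu(w)$) is essentially the same as the paper's. Where you diverge is in identifying $A$. The paper verifies the key reduction
\[
\frac{M(\alpha,\beta,z)+M(\alpha+1,\beta,z)}{2}=M(\alpha,\beta-1,z)\qquad(\beta=2\alpha+1)
\]
by a two-line rising-factorial computation, $(\alpha)_n+(\alpha+1)_n=(2\alpha+n)(\alpha+1)_{n-1}$ together with $(2\alpha+1)_n=(2\alpha+n)(2\alpha+1)_{n-1}$, and then applies Kummer's second theorem once. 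You instead reach the same endpoint by expressing $f(z):=e^{i\kappa z}M(\alpha,\beta,-2i\kappa z)$ via a contiguous relation as $\Gamma(\beta/2)F_{\beta/2-1}(\kappa z)+\tfrac{i\kappa}{\beta}B(z)$, then noticing (via Kummer's first transformation and $\beta-\alpha-1=\alpha$) that $e^{i\kappa z}M(\alpha+1,\beta,-2i\kappa z)=f(-z)$, so that the odd correction cancels in the average $A(z)=\tfrac12(f(z)+f(-z))$. Both routes are valid. The paper's is shorter and self-contained (one elementary coefficient identity), while yours is more structural --- it exhibits the cancellation as a parity phenomenon and leans on standard named identities (contiguous relation, first Kummer transformation) at the cost of a longer chain; effectively you re-derive the paper's coefficient identity from heavier machinery.
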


\begin{proof}
Specializing the formulas from Definition~\ref{defn:LimitKernels}, in the case $\sigma_- = \sigma_+=1$ we obtain $\alpha = (\beta - 1) /2$, and \eqref{eqn:kappaEvenMeasure} follows from the Legendre duplication formula.

For any $n$, the identity
\[
\frac{(\alpha)_n + (\alpha+1)_n }{2 (2\alpha+1)_n} = \frac{(2\alpha+n) (\alpha+1)_{n-1} }{2 (2\alpha+1)_{n-1} (2\alpha+n)} = \frac{ (\alpha)_n }{ (2\alpha)_n}
\]
implies that
\[
\frac{ M(\alpha,2\alpha+1,z) +  M(\alpha+1,2\alpha+1,z) } 2 = M(\alpha,2\alpha,z)
\]
and therefore 
\[
A(z) = e^{i\kappa z} M\left(\frac{\beta- 1}2 , \beta - 1, - 2 i \kappa z \right).
\]
This is similar to the form of $B$,
\[
B(z) = z e^{i\kappa z} M\left( \frac{\beta+1}2 , \beta + 1, -2 i \kappa z \right).
\]
We use a connection between the Kummer hypergeometric function and modified Bessel function \cite[Remark 4.2]{eichinger.woracek:homo-arXiv},
\[
 e^{iz} M(\nu+\tfrac 12, 2\nu + 1, - 2 iz) =\Gamma(\nu+1) \frac{ I_\nu(-iz) }{ (-iz/2)^\nu} = \Gamma(\nu+1) F_\nu(z).
\]
This identity implies that 
\[
A(z) 
=  \Gamma(\tfrac \beta 2) F_{\beta/2-1}(\kappa z)
\]
\[
B(z) 
 = z \Gamma(\tfrac \beta 2 + 1) F_{\beta/2}(\kappa z)
\]
from which  \eqref{eqn:LimitKernelFisherHartwig} follows.
\end{proof}

This allows us to describe precisely the local distribution of zeros of even measures around $0$ with a Fisher--Hartwig singularity at $0$, in terms of zeros of Bessel functions \eqref{eqn:BesselFunctionZeros}. Whereas the Freud--Levin theorem describes the asymptotic distribution up to one free parameter, in this special case, the asymptotic distribution is described exactly, distinguishing between polynomials of even/odd degree:

\begin{lemma}\label{lemma:ZerosEvenMeasure}
If $\nu$ is an even measure on $\bbR$ corresponding to a determinate moment problem, and the function
\[
\scrg(r) = 1 / \nu([0,\tfrac 1r))
\]
is regularly varying of index $\beta > 0$,  then the following holds at $\xi = 0$:
\begin{enumerate}[(i)]
\item Polynomials of odd degree $p_{2n+1}$ have zeros
\[
\xi_{-n}^{(2n+1)} < \dots < \xi_n^{(2n+1)}
\]
with the symmetry $\xi_{-k}^{(2n+1)} = - \xi_k^{(2n+1)}$ and limits
\begin{equation}\label{eqn:16sep1}
\lim_{n\to\infty} \kappa \scrh(K(2n+1,0,0)) \xi_k^{(2n+1)} = j_{\beta/2, k}
\end{equation}
with $\kappa$ given by \eqref{eqn:kappaEvenMeasure}.
\item Polynomials of even degree $p_{2n}$ have zeros 
\[
\xi_{-n+1}^{(2n)} < \dots < \xi_n^{(2n)}
\]
with the symmetry $\xi_{-k+1}^{(2n)} = \xi_k^{(2n)}$ and limits
\begin{equation}\label{eqn:16sep2}
\lim_{n\to\infty} \kappa \scrh(K(2n,0,0)) \xi_k^{(2n)} = j_{\beta/2 - 1, k}
\end{equation}
\end{enumerate}
\end{lemma}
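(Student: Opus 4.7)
My plan is to deduce the lemma from Theorem~\ref{thm:ZerosHardEdge} via the standard change of variables that reduces an even measure on $\bbR$ to two auxiliary measures on $[0,\infty)$. The symmetry of $\nu$ implies $p_n(-x) = (-1)^n p_n(x)$, from which the symmetry portions of (i) and (ii) follow at once: if $n$ is odd, $p_n$ has a zero at $0$ and $(n-1)/2$ symmetric pairs of nonzero zeros, while if $n$ is even, $p_n$ has $n/2$ symmetric pairs of nonzero zeros and is nonzero at $0$. In particular $\xi_0^{(2n+1)}=0$ and $\xi_{-k}^{(2n+1)}=-\xi_k^{(2n+1)}$, while for $p_{2n}$ all zeros are nonzero and come in pairs $\pm\xi_k^{(2n)}$.

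Let $\tilde\nu$ denote the pushforward of $\nu|_{[0,\infty)}$ under $x\mapsto x^2$. Since $p_{2n}$ is even of degree $2n$ and $p_{2n+1}/x$ is even of degree $2n$, we may write $p_{2n}(x) = R_n(x^2)/\sqrt 2$ and $p_{2n+1}(x) = x\, Q_n(x^2)/\sqrt{2m_1}$, where $R_n$ is orthonormal in $L^2(\tilde\nu)$, $Q_n$ is orthonormal in $L^2(y\,d\tilde\nu(y))$, and the normalizing factors arise from the change of variable $y=x^2$. The positive zeros of $p_{2n}$ are then $\sqrt{z_k^{(n)}}$ with $z_k^{(n)}$ the zeros of $R_n$, and the positive zeros of $p_{2n+1}$ other than $0$ are $\sqrt{y_k^{(n)}}$ with $y_k^{(n)}$ the zeros of $Q_n$.

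I would next verify the hypotheses of Theorem~\ref{thm:ZerosHardEdge} for each auxiliary measure. Since $\tilde\nu([0,1/r)) = \nu([0,1/\sqrt r))$, regular variation of $\scrg$ with index $\beta$ gives regular variation of the scaling function for $\tilde\nu$ with index $\beta/2$, placing $\{R_n\}$ in the hard-edge framework with Bessel parameter $\beta/2-1$. For $y\,d\tilde\nu$, an integration by parts combined with Karamata's theorem shows that $(y\,d\tilde\nu)([0,1/r))$ is regularly varying of index $\beta/2+1$, placing $\{Q_n\}$ in the hard-edge framework with Bessel parameter $\beta/2$. Invoking Theorem~\ref{thm:ZerosHardEdge} for each measure and taking square roots then yields the limits \eqref{eqn:16sep2} and \eqref{eqn:16sep1} respectively.

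The principal obstacle will be the precise bookkeeping of constants. One must relate $\scrh$ (the asymptotic inverse of $\scrg$ for $\nu$) to the asymptotic inverses of the scaling functions for $\tilde\nu$ and $y\,d\tilde\nu$, and relate the CD kernels at $0$ of the auxiliary measures to $K(2n,0,0)$ and $K(2n+1,0,0)$ respectively, via the identities that follow from $p_{2j+1}(0)=0$ (which collapses the sum defining $K(n,0,0)$ to a sum over even indices only). Combined with regular variation of $\scrh$, this calculation must reproduce the explicit constant $\kappa = 2(\frac{2}{\pi}\Gamma(\beta/2+1))^{1/\beta}$ appearing in the statement.
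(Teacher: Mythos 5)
Your plan is circular. In the paper, Theorem~\ref{thm:ZerosHardEdge} is \emph{deduced from} Lemma~\ref{lemma:ZerosEvenMeasure}: its proof begins by introducing the even measure $\nu$ with pushforward $\mu$ and then invokes Lemma~\ref{lemma:ZerosEvenMeasure}(ii). So you cannot take Theorem~\ref{thm:ZerosHardEdge} as a black box to prove the lemma. Moreover, the circularity is not easily broken: an independent proof of Theorem~\ref{thm:ZerosHardEdge} would have to pin down the limit of the rescaled first zero $\xi_1^{(n)}$, and this is precisely what the paper cannot do directly for a half-line measure. Theorem~\ref{thm:FreudLevinGeneral} only identifies the local zero configuration up to one free parameter $\kappa_1$; the paper pins $\kappa_1$ for the even measure by exploiting the symmetry $\xi_0^{(2n)}=-\xi_1^{(2n)}$ (respectively $\xi_0^{(2n+1)}=0$), which forces $K_\infty(-\gamma,\gamma)=0$ and hence identifies $\gamma$ with a Bessel zero. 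For a measure on $[0,\infty)$ there is no such symmetry. The only a priori constraint is that all zeros of $p_n$ are positive, i.e., that $K_\infty(\cdot,\kappa_1)$ has no zero in $(-\infty,\kappa_1)$; but for the Bessel kernel $K_{0,\sigma_+,\beta}$, whose $A$ and $B$ both have only positive zeros, \emph{every} $\kappa_1$ in $(0,\theta)$ satisfies that constraint, so it does not determine $\kappa_1$. That is exactly why the paper proves the even-measure statement first.

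There is a second gap, independent of the circularity, in your reduction for part (i). You propose to apply a hard-edge result to the auxiliary measure $y\,d\tilde\nu(y)$ and to relate its CD kernel to $K(2n+1,0,0)$ by ``the sum collapsing to even indices.'' But the two are not so related. Writing $p_{2j+1}(x)=x\,Q_j(x^2)$, the kernel of $y\,d\tilde\nu$ at the origin is $\sum_{j<n}Q_j(0)^2 = \sum_{j<n}p_{2j+1}'(0)^2$, whereas $K(2n+1,0,0)=\sum_{j\le n}p_{2j}(0)^2$ (the odd terms vanish). These are quantities built from different data, and their asymptotic ratio is what carries the constant you need; it is not a bookkeeping identity but a genuine analytic comparison that you have not supplied.

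The paper's argument avoids both issues by staying entirely with the even measure: it combines the explicit rewriting of $K_{1,1,\beta}$ in terms of $F_{\beta/2-1}$ and $F_{\beta/2}$ (Lemma~\ref{lemma:LimitKernelFisherHartwig}) with Theorem~\ref{thm:FreudLevinGeneral}, and uses the exact symmetry relations $\xi_0^{(2n+1)}=0$ and $\xi_0^{(2n)}=-\xi_1^{(2n)}$, together with the interlacing of $A$- and $B$-zeros and the fact that $B(0)=0$, to determine the free parameter in the Freud--Levin limit. If you want to keep your Chihara-type decomposition, the honest route is to carry out the rescaling-limit analysis directly on $\tilde\nu$ and $y\,d\tilde\nu$ using Theorem~\ref{theorem3}, and then pin the free parameter for each measure by matching against the even-measure kernel $K(2n+1,\cdot,\cdot)$; but that is in effect reproducing the paper's symmetry argument, not avoiding it.
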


\begin{proof}
(i) Denote by $K_\infty$ the limit kernel \eqref{eqn:LimitKernelFisherHartwig} and by $E = A - iB$ the corresponding Hermite--Biehler function.  Since $B(0) = 0$, the positive zeros of $K_\infty(\cdot ,0)$ are precisely the positive zeros of $B$. Since the function $B(z)$ is a multiple of $F_{\beta/2}(\kappa z)$, those zeros are precisely $j_{\beta/2,1} / \kappa < j_{\beta/2, 2} / \kappa < \dots$. 

By symmetry, $p_{2n+1}$ is odd, so it has a zero at zero: thus, in our notation, $\xi_0^{(2n+1)} = 0$ for all $n$, so by Theorem~\ref{thm:FreudLevinGeneral}, \eqref{eqn:16sep1} follows.

(ii) By (i) and Theorem~\ref{thm:FreudLevinGeneral},  
\[
\limsup_{n\to\infty} \kappa \scrh(K(2n,0,0)) \xi_1^{(2n)} \le  j_{\beta/2 , 1}.
\]
Moreover, fix some sequence $n_l \to \infty$ such that the limit exists,
\[
\lim_{l\to\infty} \kappa \scrh(K(2n_l,0,0)) \xi_1^{(2n_l)}  = \gamma
\]
for some $\gamma \in [0, j_{\beta/2, 1}]$. Going one zero to the left, by Theorem~\ref{thm:FreudLevinGeneral}, the limit
\[
\lim_{l\to\infty} \kappa \scrh(K(2n_l,0,0)) \xi_0^{(2n_l)} 
\]
is the largest zero of $K_\infty(\cdot,\gamma)$ in $(-\infty,\gamma)$. However, by the symmetry $\xi_0^{(2n)} = - \xi_1^{(2n)}$, this limit is $-\gamma$. In particular, $- \gamma < \gamma$ so $\gamma \neq 0$ and $\gamma$ is characterized as the smallest positive number with the property
\[
K_\infty(-\gamma, \gamma) = 0.
\]
By Lemma~\ref{lemma:LimitKernelFisherHartwig}, and since functions $F_\nu$ are even,
\[
K_\infty(- x , x) =  \Gamma(\tfrac \beta 2 + 1) \Gamma(\tfrac \beta 2) F_{\beta/2-1}(\kappa x) F_{\beta/2}(\kappa x) 
\]
so this is zero if and only if $x \neq 0$ and $F_{\beta/2}(\kappa x) = 0$ or $F_{\beta/2-1}(\kappa x) = 0$. In other words, $\kappa \gamma$ is the smallest positive zero of $F_{\beta/2} F_{\beta/2 - 1}$, i.e., the smallest positive zero of $AB$. Since $B(0) =0$, by the strict interlacing property, $\kappa\gamma$ must be the smallest positive zero of $A$, and we have proved
\[
\kappa \gamma = j_{\beta/2 - 1,1}. 
\]
Finally, since the limit $\gamma$ is independent of subsequence, by compactness,
\[
\lim_{n\to\infty} \kappa \scrh(K(2n,0,0)) \xi_1^{(2n)}  = j_{\beta/2 - 1, 1} / \kappa,
\]
so \eqref{eqn:16sep2} holds for $k=1$. By  Theorem~\ref{thm:FreudLevinGeneral}, rescalings of other zeros converge to other zeros of $A$, i.e., \eqref{eqn:16sep2} holds for all $k$.
\end{proof}

\begin{proof}[Proof of Theorem~\ref{thm:ZerosHardEdge}]
Denote by $\nu$ the even measure on $\bbR$ whose pushforward by the map $x \to x^2$ is the measure $\mu$. Since $\mu$ corresponds to a determinate Stieltjes moment problem, $\nu$ corresponds to a determinate (Hamburger) moment problem (see \cite[Theorem 1]{Chihara82} or \cite[Prop. 3.19]{Schmudgen}). 

Since $\scrg$ is regularly varying with positive index, $\scrg(r) \to \infty$ as $r\to \infty$, so $\nu(\{0\}) = \mu(\{0\}) = 0$. This further implies that
\[
\lim_{r \to \infty} 2\scrg(r^2) \nu([0,\tfrac 1r)) = \lim_{r \to \infty} \scrg(r^2) \mu([0,\tfrac 1{r^2})) = 1. 
\]
Thus, $\nu$ is in the setting of Lemma~\ref{lemma:ZerosEvenMeasure}, with scaling function $\scrg_\nu (r) = 2 \scrg(r^2)$ of index $2\beta$. Thus, its asymptotic inverse can be taken to be $\scrh_\nu(t) = \sqrt{ \scrh(t/2)}$.

Moreover, since $\nu$ is even and its pushforward is $\mu$, the orthogonal polynomials for the measure $\mu$ are linked with those for $\nu$ by
\[
p_n(z^2,\mu) = p_{2n}(z,\nu).
\]
This gives an immediate relation between the Christoffel functions at $0$; moreover, denoting the zeros of $p_n(\cdot, \mu)$ by $\xi_1^{(n)} < \dots < \xi_n^{(n)}$, the zeros of $p_{2n}(\cdot,\nu)$ are
\[
- \sqrt{  \xi_n^{(n)} } <  \dots < - \sqrt{  \xi_1^{(n)} } < \sqrt{  \xi_1^{(n)} } < \dots < \sqrt{  \xi_n^{(n)} }
\]
so by Lemma~\ref{lemma:ZerosEvenMeasure}(ii) written in our current notation,
\[
\lim_{n\to\infty} 2 \left( \frac 2\pi \Gamma(\beta+1) \right)^{1/(2\beta)} \sqrt{ \scrh( K(n,0,0) / 2)} \sqrt{  \xi_k^{(n)} } = j_{\beta - 1, k}
\]
for every $k\in \bbN$. Squaring and using regular variation of $\scrh$ with index $1/\beta$ gives \eqref{eqn:HardEdgeZerosResult}.
\end{proof}

%
\appendix

\newpage

\section{Tangent measures}

\begin{definition}
Let $\mu$ be a measure on $\bbR^d$. Let $\Sigma^r:\bbR\to\bbR$ be the map $\xi\mapsto r\xi$ and $\Sigma^r_*\mu$ be the pushforward of $\mu$ under $\Sigma^r$.  A measure $\nu$ on $\bbR^d$ is a tangent measure of $\mu$ at $0$ if $\nu$ is locally finite, $\nu(\bbR^d) > 0$, and there exist positive sequences $c_n, r_n$ with $r_n \to \infty$ and $c_n \Sigma_*^{r_n} \mu \to \nu$ weakly as $n\to\infty$.

The set of tangent measures of $\mu$ at $0$ is denoted $\Tan(\mu,0)$.

It is said that $\mu$ has a unique tangent measure at $0$ if there exists $\nu$ such that $\Tan(\mu,0) = \{ c \nu \mid c \in (0,\infty)\}$.

Analogous definitions hold at $\xi\in\bbR^d$, by shifting $\mu$ by $\xi$.
\end{definition}

We note that existence of a tangent measure is not automatic. On the other hand, uniqueness of a tangent measure is sufficient to pass from sequential limits to a limit over $r \to \infty$ and to conclude a scaling property of the unique tangent measure:

\begin{lemma}\label{lemmaA2}
For a measure $\mu$ on $\bbR$, the following are equivalent:
\begin{enumerate}[(a)]
\item $\Tan(\mu,\xi) = \{ c\nu \mid c \in (0,\infty) \}$ and $\delta_\xi \notin \Tan(\mu,\xi)$
\item $\Tan(\mu,\xi) = \{ c \nu \mid c \in (0,\infty) \}$ where $\nu$ is of the form
\begin{equation}\label{limitmeasuretangentmeasure}
d\nu(t) = \begin{cases}
\sigma_-\beta|t|^{\beta-1}\,dt &\text{if}\  t<0,\\
\sigma_+\beta|t|^{\beta-1}\,dt&\text{if}\  t>0,
\end{cases}
\end{equation}
for some  $\sigma_-,  \sigma_+ \in [0,\infty)$ with $\sigma_- + \sigma_+ > 0$ and $\beta > 0$
\item There exist $\sigma_-,  \sigma_+ \in [0,\infty)$ with $\sigma_- + \sigma_+ > 0$ and $\scrg:(0,\infty) \to (0,\infty)$ which is regularly varying with index $\beta > 0$ such that
\[
\lim_{r\to\infty} \scrg(r) \mu\left(\left(\xi- \tfrac 1r,\xi \right)\right) = \sigma_-, \qquad \lim_{r\to\infty} \scrg(r) \mu\left(\left[ \xi,  \xi + \tfrac 1r \right)\right)  = \sigma_+ .
\]
\end{enumerate}
\end{lemma}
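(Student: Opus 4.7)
Without loss of generality take $\xi = 0$. My plan is to prove the chain (b) $\Rightarrow$ (a), (c) $\Rightarrow$ (b), and (a) $\Rightarrow$ (c). The implication (b) $\Rightarrow$ (a) is immediate: the measure $\nu$ in \eqref{limitmeasuretangentmeasure} is absolutely continuous away from $0$, so no scalar multiple equals $\delta_0$. For (c) $\Rightarrow$ (b), I first observe that $\scrg(r) \to \infty$ (as $\scrg$ has positive index), which forces $\mu(\{0\}) = 0$. Then for $a < 0 < b$, the regular variation of $\scrg$ gives
\[
\scrg(r)\mu([0, b/r)) = \frac{\scrg(r)}{\scrg(r/b)} \cdot \scrg(r/b)\,\mu\!\left(\left[0, \tfrac{1}{r/b}\right)\right) \to b^\beta \sigma_+,
\]
and symmetrically on the left, yielding vague convergence $\scrg(r)\Sigma^r_*\mu \to \nu$. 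Any tangent measure $\lambda = \lim_n c_n \Sigma^{r_n}_*\mu$ then satisfies $c_n/\scrg(r_n) \to c$ by comparing masses on an interval of positive $\nu$-measure, so $\lambda = c\nu$, giving $\Tan(\mu,0) = \{c\nu \mid c > 0\}$.

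The substantive direction is (a) $\Rightarrow$ (c). First I will derive the structure of $\nu$. For any $s > 0$, pushing a defining sequence $c_n \Sigma^{r_n}_*\mu \to \nu$ forward by $\Sigma^s$ gives $c_n \Sigma^{sr_n}_*\mu \to \Sigma^s_*\nu$, so $\Sigma^s_*\nu \in \Tan(\mu,0)$, and uniqueness forces $\Sigma^s_*\nu = \lambda(s)\nu$ with $\lambda$ measurable and multiplicative on $(0,\infty)$; hence $\lambda(s) = s^{-\beta}$ for some $\beta \in \bbR$. The case $\beta < 0$ is ruled out by local finiteness of $\nu$ near $0$, and $\beta = 0$ by an elementary argument showing a scale-invariant, locally finite measure on $\bbR$ is supported at $\{0\}$, contradicting $\delta_0 \notin \Tan(\mu,0)$. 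Thus $\beta > 0$, and the scaling laws $\nu([0,s)) = s^\beta \nu([0,1))$, $\nu((-s,0)) = s^\beta \nu((-1,0))$ give the form in (b), with $\sigma_+ = \nu([0,1))$ and $\sigma_- = \nu((-1,0))$. Moreover, $\mu(\{0\}) = 0$: any atom of mass $m > 0$ would yield $\delta_0$ as a tangent measure via $c_n = 1/m$, since the complement contributes vanishingly to $\Sigma^{r_n}_*\mu$ vaguely.

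To deduce (c), assume $\sigma_+ > 0$ (the case $\sigma_+ = 0$ is symmetric using $\tilde f(r) := \mu((-1/r,0))$), set $f(r) := \mu([0, 1/r))$, which is positive for all $r$ (otherwise the positive half of every tangent measure would vanish, contradicting $\sigma_+>0$), and define $\scrg(r) := \sigma_+/f(r)$. The key assertion is that the normalized measures $\mu_r := \Sigma^r_*\mu / f(r)$ converge vaguely to $\nu/\sigma_+$ as $r \to \infty$. Assuming this, evaluating on $[0, \rho)$ yields $f(r/\rho)/f(r) \to \rho^\beta$, which is regular variation of $f$ with index $-\beta$ and hence of $\scrg$ with index $\beta$; evaluating on $(-1, 0)$ yields $\mu((-1/r,0))/f(r) \to \sigma_-/\sigma_+$, equivalently $\scrg(r)\mu((-1/r,0)) \to \sigma_-$.

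The main obstacle is this vague convergence $\mu_r \to \nu/\sigma_+$. Since $\mu_r([0,1)) = 1$ for all $r$, it suffices to show that every sequence $r_n \to \infty$ has a subsequence on which $\mu_{r_n}$ converges vaguely to a nonzero limit. I will establish the uniform bound $\sup_n \mu_{r_n}([-M,M]) < \infty$ for each $M > 0$ by contradiction: if some subsequence had $\mu_{r_n}([-M,M]) \to \infty$, then the renormalization $\mu_{r_n}/\mu_{r_n}([-M,M])$, a probability supported in $[-M,M]$, would extract a weak limit $\tilde\lambda$ that is a tangent measure supported in $[-M,M]$; uniqueness would force $\tilde\lambda = c\nu$ with $c > 0$, but $c\nu$ has infinite mass outside $[-M,M]$ since $\beta > 0$, a contradiction. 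Granted this bound, any vague limit $\lambda$ is a tangent measure and hence equals $c\nu$; matching $\mu_{r_n}([0,1)) = 1$ on continuity sets of $\lambda$ pins down $c = 1/\sigma_+$ independently of subsequence, yielding the full vague convergence and completing the argument.
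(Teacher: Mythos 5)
Your proposal follows the same high-level route as the paper: (b)$\Rightarrow$(a) is trivial, (c)$\Rightarrow$(b) uses regular variation to obtain $\scrg(r)\Sigma^r_*\mu\to\nu$ vaguely and then compares scaling constants, and (a)$\Rightarrow$(c) derives the power-law form of $\nu$ and then proves $\Sigma^r_*\mu/\mu\big([0,\tfrac 1r)\big)\to\nu/\sigma_+$ vaguely as $r\to\infty$ to extract the regularly varying $\scrg$. The paper handles the last (and hardest) step by citing \cite[Lemma~2.5]{mattila:2005}, whereas you attempt a self-contained argument. Your derivation of the self-similarity exponent $\beta$, the exclusion of $\beta\leq 0$, the observation $\mu(\{\xi\})=0$, and the reductions at the end are all correct.

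However, your proof of the key doubling-type bound $\sup_n \mu_{r_n}([-M,M])<\infty$ has a circular gap. You claim that if $\mu_{r_n}([-M,M])\to\infty$, then the renormalization $\mu_{r_n}/\mu_{r_n}([-M,M])$ is ``a probability supported in $[-M,M]$'' from which one extracts a tangent measure supported in $[-M,M]$. Neither assertion is correct: this measure lives on all of $\bbR$ with unit mass only on $[-M,M]$, and extracting a vague limit in $C_c(\bbR)^*$ requires uniform boundedness on \emph{every} compact set --- i.e., exactly the doubling property you are trying to establish. One cannot pass to a weakly convergent subsequence of the restrictions to $[-M,M]$ either, since such a limit would not be a tangent measure of $\mu$ (tangent measures are defined via vague limits over all of $\bbR$). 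The fact that a unique non-Dirac tangent measure forces a doubling/asymptotic-scaling estimate is precisely the nontrivial content of Mattila's lemma, and its proof requires a genuinely different argument (roughly, a pigeonhole argument over intermediate scales that uses the existence of some convergent sequence of rescalings together with uniqueness to propagate the bound). As written, your argument assumes the vague precompactness it is meant to prove.

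A minor, separable point: once the vague convergence is granted, your bookkeeping with $\scrg(r)=\sigma_+/\mu\big([\xi,\xi+\tfrac 1r)\big)$ rather than the paper's $\scrg(r)=1/\mu\big((\xi-\tfrac 1r,\xi+\tfrac 1r)\big)$ is a harmless variant and the conclusion is the same.
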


\begin{proof}
For notational simplicity, let $\xi = 0$ in this proof.

(b)$\implies$(a) is trivial. 

(c)$\implies$(b): for any $t > 0$, since $\scrg$ is regularly varying with index $\beta > 0$,
\[
\lim_{r\to\infty} \scrg(r) \mu\left(\left(- \tfrac tr,0 \right)\right) = \lim_{r\to\infty} \frac{ \scrg(r)}{\scrg(r/t)} \scrg(r/t) \mu\left(\left(- \tfrac tr,0 \right)\right) = \sigma_- t^\beta
\]
and analogously
\[
\lim_{r\to\infty} \scrg(r) \mu\left(\left[0, \tfrac t r \right) \right) = \sigma_+ t^\beta
\]
so by the Portmanteau theorem,  $\scrg(r)  \Sigma_*^{r}\mu \to \nu$ as $r \to \infty$, with $\nu$ given by \eqref{limitmeasuretangentmeasure}. Moreover, for any $c_n, r_n \to\infty$, if $c_n \Sigma_*^{r_n}\mu$ converges to a nonzero locally finite measure, the ratio $c_n / \scrg(r_n)$ must converge in $(0,\infty)$, and the limit must be a multiple of $\nu$.

(a)$\implies$(c): By \cite[Lemma 2.5]{mattila:2005} and its proof, there exists $\beta \ge 0$ such that $\Sigma_*^{r} \nu = r^{-\beta} \nu$. Moreover, $\nu((-t, t)) = t^{\beta} \nu( (-1,1))$ for all $t > 0$, so $\beta = 0$ implies that $\nu$ is a point mass at $0$; thus, by our assumption, $\beta > 0$. Now $\Sigma_*^{r} \nu = r^{-\beta} \nu$ implies that $\nu$ is of the form \eqref{limitmeasuretangentmeasure}.

By \cite[Lemma 2.5(3)]{mattila:2005}, the function $\scrg(r) = 1 / \mu((-1/r, 1/r))$ is regularly varying with index $\beta$, and by \cite[Lemma 2.5(2)]{mattila:2005}, $\scrg(r) \Sigma_*^r \mu \to \nu$. By the Portmanteau theorem, this implies $\scrg(r) \mu(( \xi - 1/r, \xi)) \to \nu((-1,0)) = \sigma_-$ and similarly $\scrg(r) \mu([\xi, \xi+1/r)) \to \nu([0,1)) = \sigma_+$.
\end{proof}

%
%

\newpage

\bibliographystyle{amsplain}

\begin{thebibliography}{10}

\bibitem{aronszajn:1950}
N.~Aronszajn, \emph{Theory of reproducing kernels}, Trans. Amer. Math. Soc.
  \textbf{68} (1950), 337--404. \MR{0051437 (14,479c)}

\bibitem{AvilaLastSimon}
A.~Avila, Y.~Last, and B.~Simon, \emph{Bulk universality and clock spacing of
  zeros for ergodic {J}acobi matrices with absolutely continuous spectrum},
  Anal. PDE \textbf{3} (2010), no.~1, 81--108. \MR{2663412}

\bibitem{barczy.pap:2006}
M.~Barczy and G.~Pap, \emph{Portmanteau theorem for unbounded measures},
  Statist. Probab. Lett. \textbf{76} (2006), no.~17, 1831--1835. \MR{2271177}

\bibitem{BariczDanka}
\'{A}. Baricz and T.~Danka, \emph{Zeros of orthogonal polynomials near an
  algebraic singularity of the measure}, Constr. Approx. \textbf{47} (2018),
  no.~3, 407--435. \MR{3795199}

\bibitem{bingham.goldie.teugels:1989}
N.~H. Bingham, C.~M. Goldie, and J.~L. Teugels, \emph{Regular variation},
  Encyclopedia of Mathematics and its Applications, vol.~27, Cambridge
  University Press, Cambridge, 1989. \MR{1015093}

\bibitem{BleherIts}
P.~Bleher and A.~Its, \emph{Semiclassical asymptotics of orthogonal
  polynomials, {R}iemann-{H}ilbert problem, and universality in the matrix
  model}, Ann. of Math. (2) \textbf{150} (1999), no.~1, 185--266. \MR{1715324}

\bibitem{breuer:2011}
J.~Breuer, \emph{Sine kernel asymptotics for a class of singular measures}, J.
  Approx. Theory \textbf{163} (2011), no.~10, 1478--1491. \MR{2832737}

\bibitem{BreuerDuits14}
J.~Breuer and M.~Duits, \emph{The {N}evai condition and a local law of large
  numbers for orthogonal polynomial ensembles}, Adv. Math. \textbf{265} (2014),
  441--484. \MR{3255467}

\bibitem{BreuerLastSimon2010}
J.~Breuer, Y.~Last, and B.~Simon, \emph{The {N}evai condition}, Constr. Approx.
  \textbf{32} (2010), no.~2, 221--254. \MR{2677881}

\bibitem{Chihara82}
T.~S. Chihara, \emph{Indeterminate symmetric moment problems}, J. Math. Anal.
  Appl. \textbf{85} (1982), no.~2, 331--346. \MR{649179}

\bibitem{Danka17}
T.~Danka, \emph{Universality limits for generalized {J}acobi measures}, Adv.
  Math. \textbf{316} (2017), 613--666. \MR{3672915}

\bibitem{deBranges62}
L.~de~Branges, \emph{Homogeneous and periodic spaces of entire functions}, Duke
  Math. J. \textbf{29} (1962), 203--224. \MR{148917}

\bibitem{debranges:1968}
\bysame, \emph{Hilbert spaces of entire functions}, Prentice-Hall Inc.,
  Englewood Cliffs, N.J., 1968. \MR{0229011 (37 \#4590)}

\bibitem{DKMVZ1}
P.~Deift, T.~Kriecherbauer, K.~T-R McLaughlin, S.~Venakides, and X.~Zhou,
  \emph{Asymptotics for polynomials orthogonal with respect to varying
  exponential weights}, Internat. Math. Res. Notices (1997), no.~16, 759--782.
  \MR{1472344}

\bibitem{DKMVZ2}
\bysame, \emph{Strong asymptotics of orthogonal polynomials with respect to
  exponential weights}, Comm. Pure Appl. Math. \textbf{52} (1999), no.~12,
  1491--1552. \MR{1711036}

\bibitem{DKMVZ3}
P.~Deift, T.~Kriecherbauer, K.~T.-R. McLaughlin, S.~Venakides, and X.~Zhou,
  \emph{Uniform asymptotics for polynomials orthogonal with respect to varying
  exponential weights and applications to universality questions in random
  matrix theory}, Comm. Pure Appl. Math. \textbf{52} (1999), no.~11,
  1335--1425. \MR{1702716}

\bibitem{DeiftOPandRM}
P.~A. Deift, \emph{Orthogonal polynomials and random matrices: a
  {R}iemann-{H}ilbert approach}, Courant Lecture Notes in Mathematics, vol.~3,
  New York University, Courant Institute of Mathematical Sciences, New York;
  American Mathematical Society, Providence, RI, 1999. \MR{1677884}

\bibitem{dijksma.langer.luger.shondin:2000}
A.~Dijksma, H.~Langer, A.~Luger, and Yu. Shondin, \emph{A factorization result
  for generalized {N}evanlinna functions of the class {$\mathcal N_\kappa$}},
  Integral Equations Operator Theory \textbf{36} (2000), no.~1, 121--125.
  \MR{1736921 (2000i:47027)}

\bibitem{dym.mckean:1976}
H.~Dym and H.~P. McKean, \emph{Gaussian processes, function theory, and the
  inverse spectral problem}, Academic Press [Harcourt Brace Jovanovich
  Publishers], New York, 1976, Probability and Mathematical Statistics, Vol.
  31. \MR{0448523 (56 \#6829)}

\bibitem{eckhardt.kostenko.teschl:2018}
J.~Eckhardt, A.~Kostenko, and G.~Teschl, \emph{Spectral asymptotics for
  canonical systems}, J. Reine Angew. Math. \textbf{736} (2018), 285--315.
  \MR{3769992}

\bibitem{EichLukSimanek}
B.~Eichinger, M.~Luki\'c, and B.~Simanek, \emph{{An approach to universality
  using Weyl m-functions}}, arXiv:2108.01629 (2021), 29p.

\bibitem{eichinger.woracek:homo-arXiv}
B.~Eichinger and H.~Woracek, \emph{Homogeneous spaces of entire functions},
  arxiv:2407.04979v1 (2024), 50p.

\bibitem{ErdosTuran1940}
P.~Erd\"os and P.~Tur\'an, \emph{On interpolation. {III}. {I}nterpolatory
  theory of polynomials}, Ann. of Math. (2) \textbf{41} (1940), 510--553.
  \MR{1999}

\bibitem{Findley08}
E.~Findley, \emph{Universality for locally {S}zeg{\H{o}} measures}, J. Approx.
  Theory \textbf{155} (2008), no.~2, 136--154. \MR{2477011}

\bibitem{MorenoFinkelSousaConstr}
A.~Foulqui\'{e}~Moreno, A.~Mart\'{\i}nez-Finkelshtein, and V.~L. Sousa,
  \emph{Asymptotics of orthogonal polynomials for a weight with a jump on
  {$[-1,1]$}}, Constr. Approx. \textbf{33} (2011), no.~2, 219--263.
  \MR{2770532}

\bibitem{FreedmanPitman90}
D.~Freedman and J.~Pitman, \emph{A measure which is singular and uniformly
  locally uniform}, Proc. Amer. Math. Soc. \textbf{108} (1990), no.~2,
  371--381. \MR{990427}

\bibitem{freud:1969}
G.~Freud, \emph{Orthogonale {P}olynome}, Birkh\"auser Verlag, Basel-Stuttgart,
  1969 (German), Lehrb{\"u}cher und Monographien aus dem Gebiete der Exakten
  Wissenschaften, Mathematische Reihe, Band 33.

\bibitem{gorbachuk.gorbachuk:1997}
M.~L. Gorbachuk and V.~I. Gorbachuk, \emph{M. {G}. {K}rein's lectures on entire
  operators}, Operator Theory: Advances and Applications, vol.~97, Birkh\"auser
  Verlag, Basel, 1997. \MR{1466698 (99f:47001)}

\bibitem{HrynivMykytyuk2001}
R.~O. Hryniv and Y.~V. Mykytyuk, \emph{1-{D} {S}chr\"odinger operators with
  periodic singular potentials}, Methods Funct. Anal. Topology \textbf{7}
  (2001), no.~4, 31--42. \MR{1879483}

\bibitem{HrynivMykytyuk2012}
\bysame, \emph{Self-adjointness of {S}chr\"odinger operators with singular
  potentials}, Methods Funct. Anal. Topology \textbf{18} (2012), no.~2,
  152--159. \MR{2978191}

\bibitem{kac:2007}
I.~S. Kac, \emph{On the nature of the de {B}ranges {H}amiltonian}, Ukra\"\i n.
  Mat. Zh. \textbf{59} (2007), no.~5, 658--678 (Russian), English translation:
  Ukrainian Math. J. 59 (2007), no. 5, 718--743. \MR{2363522 (2009c:34166)}

\bibitem{kac.krein:1968a}
I.~S. Kac and M.~G. Krein, \emph{{$R$-functions --- Analytic functions mapping
  the upper half plane into itself}}, pp.~629--647, Izdat. ``Mir'', Moscow,
  1968 (Russian), Addition I in F. V. Atkinson, {\cyr Diskretnye i nepreryvnye
  granichnye zadachi} (Russian translation). English translation: Amer. Math.
  Soc. Transl. (2) 103 (1974), 1--19.

\bibitem{kaltenbaeck.woracek:db}
M.~Kaltenb{\"a}ck and H.~Woracek, \emph{Pontryagin spaces of entire functions.
  {I}}, Integral Equations Operator Theory \textbf{33} (1999), no.~1, 34--97.
  \MR{1664343 (2000a:46039)}

\bibitem{kaltenbaeck.woracek:p2db}
\bysame, \emph{Pontryagin spaces of entire functions. {II}}, Integral Equations
  Operator Theory \textbf{33} (1999), no.~3, 305--380. \MR{1671482
  (2000a:46040)}

\bibitem{kaltenbaeck.woracek:p3db}
\bysame, \emph{Pontryagin spaces of entire functions. {III}}, Acta Sci. Math.
  (Szeged) \textbf{69} (2003), no.~1-2, 241--310. \MR{1991668 (2004h:46022)}

\bibitem{kaltenbaeck.woracek:p4db}
\bysame, \emph{Pontryagin spaces of entire functions. {IV}}, Acta Sci. Math.
  (Szeged) \textbf{72} (2006), no.~3-4, 709--835. \MR{2289763 (2007k:47075)}

\bibitem{kaltenbaeck.woracek:p6db}
\bysame, \emph{Pontryagin spaces of entire functions. {VI}}, Acta Sci. Math.
  (Szeged) \textbf{76} (2010), no.~3-4, 511--560. \MR{2789685 (2012e:46046)}

\bibitem{kaltenbaeck.woracek:p5db}
\bysame, \emph{Pontryagin spaces of entire functions. {V}}, Acta Sci. Math.
  (Szeged) \textbf{77} (2011), no.~1-2, 223--336. \MR{2841150 (2012e:46047)}

\bibitem{Karamata30}
J.~Karamata, \emph{Sur un mode de croissance r{\'e}guli{\`e}re des fonctions},
  Mathematica (Cluj) \textbf{4} (1930), 38--53.

\bibitem{Karamata33}
\bysame, \emph{Sur un mode de croissance r{\'e}guli{\`e}re. th{\'e}or{\`e}mes
  fondamentaux}, Bull. Soc. Math. France \textbf{61} (1933), 55--62.

\bibitem{KiselevLastSimon}
A.~Kiselev, Y.~Last, and B.~Simon, \emph{Modified {P}r\"ufer and {EFGP}
  transforms and the spectral analysis of one-dimensional {S}chr\"odinger
  operators}, Comm. Math. Phys. \textbf{194} (1998), no.~1, 1--45. \MR{1628290}

\bibitem{krein.langer:1977}
M.~G. Krein and H.~Langer, \emph{\"{U}ber einige {F}ortsetzungsprobleme, die
  eng mit der {T}heorie hermitescher {O}peratoren im {R}aume {$\Pi _{\kappa }$}
  zusammenh\"{a}ngen. {I}. {E}inige {F}unktionenklassen und ihre
  {D}arstellungen}, Math. Nachr. \textbf{77} (1977), 187--236. \MR{461188}

\bibitem{KMVV04}
A.~B.~J. Kuijlaars, K.~T.-R. McLaughlin, W.~Van~Assche, and M.~Vanlessen,
  \emph{The {R}iemann-{H}ilbert approach to strong asymptotics for orthogonal
  polynomials on {$[-1,1]$}}, Adv. Math. \textbf{188} (2004), no.~2, 337--398.
  \MR{2087231}

\bibitem{KuiljaarsVanlessenIMRN02}
A.~B.~J. Kuijlaars and M.~Vanlessen, \emph{Universality for eigenvalue
  correlations from the modified {J}acobi unitary ensemble}, Int. Math. Res.
  Not. (2002), no.~30, 1575--1600. \MR{1912278}

\bibitem{kuijlaarsvanlessen:03}
\bysame, \emph{Universality for eigenvalue correlations at the origin of the
  spectrum}, Comm. Math. Phys. \textbf{243} (2003), no.~1, 163--191.
  \MR{2020225}

\bibitem{langer.woracek:gpinf}
M.~Langer and H.~Woracek, \emph{Indefinite {H}amiltonian systems whose
  {T}itchmarsh-{W}eyl coefficients have no finite generalized poles of
  non-positive type}, Oper. Matrices \textbf{7} (2013), no.~3, 477--555.
  \MR{3052315}

\bibitem{langer.woracek:ninfrep}
\bysame, \emph{Distributional representations of {$\mathcal
  N^{(\infty)}_{\kappa}$}-functions}, Math. Nachr. \textbf{288} (2015), no.~10,
  1127--1149. \MR{3367904}

\bibitem{langer.woracek:sinham}
\bysame, \emph{{Direct and inverse spectral theorems for a class of canonical
  systems with two singular endpoints}}, Function Spaces, Theory and
  Applications, Fields insitute communications, vol.~87, Springer, 2023,
  pp.~105--205.

\bibitem{langer.woracek:kara}
\bysame, \emph{Karamata's theorem for regularised {C}auchy transforms}, Proc.
  Roy. Soc. Edinburgh Sect. A (2024), 61p., DOI:10.1017/prm.2023.128.

\bibitem{Last}
Y.~Last, \emph{Quantum dynamics and decompositions of singular continuous
  spectra}, J. Funct. Anal. \textbf{142} (1996), no.~2, 406--445. \MR{1423040}

\bibitem{LevinLubinsky08}
E.~Levin and D.~S. Lubinsky, \emph{Applications of universality limits to zeros
  and reproducing kernels of orthogonal polynomials}, J. Approx. Theory
  \textbf{150} (2008), no.~1, 69--95. \MR{2381529}

\bibitem{Loomis43}
L.~H. Loomis, \emph{The converse of the {F}atou theorem for positive harmonic
  functions}, Trans. Amer. Math. Soc. \textbf{53} (1943), 239--250. \MR{7832}

\bibitem{LubinskyCM08}
D.~S. Lubinsky, \emph{A new approach to universality limits at the edge of the
  spectrum}, Integrable systems and random matrices, Contemp. Math., vol. 458,
  Amer. Math. Soc., Providence, RI, 2008, pp.~281--290. \MR{2411912}

\bibitem{LubIMRN08}
\bysame, \emph{Universality limits at the hard edge of the spectrum for
  measures with compact support}, Int. Math. Res. Not. IMRN (2008), Art. ID rnn
  099, 39. \MR{2439541}

\bibitem{LubinskyJourdAnalyse08}
\bysame, \emph{Universality limits in the bulk for arbitrary measures on
  compact sets}, J. Anal. Math. \textbf{106} (2008), 373--394. \MR{2448991}

\bibitem{LubinskyAnnals}
\bysame, \emph{A new approach to universality limits involving orthogonal
  polynomials}, Ann. of Math. (2) \textbf{170} (2009), no.~2, 915--939.
  \MR{2552113}

\bibitem{LubJFA}
\bysame, \emph{Universality limits for random matrices and de {B}ranges spaces
  of entire functions}, J. Funct. Anal. \textbf{256} (2009), no.~11,
  3688--3729. \MR{2514057}

\bibitem{LubSigma16}
\bysame, \emph{An update on local universality limits for correlation functions
  generated by unitary ensembles}, SIGMA Symmetry Integrability Geom. Methods
  Appl. \textbf{12} (2016), Paper No. 078, 36. \MR{3534989}

\bibitem{LukicBook}
M.~Luki\'{c}, \emph{A first course in spectral theory}, Graduate Studies in
  Mathematics, vol. 226, American Mathematical Society, Providence, RI, [2022]
  \copyright 2022. \MR{4497147}

\bibitem{LukicSukhtaievWang}
M.~Luki\'c, S.~Sukhtaiev, and X.~Wang, \emph{Spectral properties of
  {S}chr\"odinger operators with locally {$H^{-1}$} potentials}, J. Spectr.
  Theory \textbf{14} (2024), no.~1, 59--120. \MR{4741056}

\bibitem{MateNevaiTotik1991}
A.~M\'at\'e, P.~Nevai, and V.~Totik, \emph{Szeg{\H o}'s extremum problem on the
  unit circle}, Ann. of Math. (2) \textbf{134} (1991), no.~2, 433--453.
  \MR{1127481}

\bibitem{MattilaBook}
P.~Mattila, \emph{Geometry of sets and measures in {E}uclidean spaces},
  Cambridge Studies in Advanced Mathematics, vol.~44, Cambridge University
  Press, Cambridge, 1995, Fractals and rectifiability. \MR{1333890}

\bibitem{mattila:2005}
\bysame, \emph{Measures with unique tangent measures in metric groups}, Math.
  Scand. \textbf{97} (2005), no.~2, 298--308. \MR{2191708}

\bibitem{GaudinMehta1960}
M.~L. Mehta and M.~Gaudin, \emph{On the density of eigenvalues of a random
  matrix}, Nuclear Phys. \textbf{18} (1960), 420--427. \MR{112895}

\bibitem{PasturShcherbina}
L.~Pastur and M.~Shcherbina, \emph{Universality of the local eigenvalue
  statistics for a class of unitary invariant random matrix ensembles}, J.
  Statist. Phys. \textbf{86} (1997), no.~1-2, 109--147. \MR{1435193}

\bibitem{Pearson}
D.~B. Pearson, \emph{Singular continuous measures in scattering theory}, Comm.
  Math. Phys. \textbf{60} (1978), no.~1, 13--36. \MR{484145}

\bibitem{Preiss87}
D.~Preiss, \emph{Geometry of measures in {${\bf R}^n$}: distribution,
  rectifiability, and densities}, Ann. of Math. (2) \textbf{125} (1987), no.~3,
  537--643. \MR{890162}

\bibitem{pruckner.woracek:limp}
R.~Pruckner and H.~Woracek, \emph{Limit behavior of {W}eyl coefficients},
  Algebra i Analiz \textbf{33} (2021), no.~5, 153--175. \MR{4315700}

\bibitem{pruckner.woracek:1}
\bysame, \emph{A growth estimate for the monodromy matrix of a canonical
  system}, arXiv:2202.13984 (2022).

\bibitem{RameyUllrich88}
W.~Ramey and D.~Ullrich, \emph{On the behavior of harmonic functions near a
  boundary point}, Trans. Amer. Math. Soc. \textbf{305} (1988), no.~1,
  207--220. \MR{920155}

\bibitem{remling:2018}
C.~Remling, \emph{Spectral theory of canonical systems}, De Gruyter Studies in
  Mathematics Series, Walter de Gruyter GmbH, 2018.

\bibitem{RogersTaylor1}
C.~A. Rogers and S.~J. Taylor, \emph{The analysis of additive set functions in
  {E}uclidean space}, Acta Math. \textbf{101} (1959), 273--302. \MR{107690}

\bibitem{RogersTaylor2}
\bysame, \emph{Additive set functions in {E}uclidean space. {II}}, Acta Math.
  \textbf{109} (1963), 207--240. \MR{160860}

\bibitem{romanov:1408.6022v1}
R.~Romanov, \emph{Canonical systems and de {B}ranges spaces}, arXiv:1408.6022v1
  [math.SP] (2014), 74p.

\bibitem{rubel:1996}
L.~A. Rubel, \emph{Entire and meromorphic functions}, Universitext,
  Springer-Verlag, New York, 1996, With the assistance of James E. Colliander.
  \MR{1383095 (97c:30001)}

\bibitem{Schmudgen}
K.~Schm\"udgen, \emph{The moment problem}, Graduate Texts in Mathematics, vol.
  277, Springer, Cham, 2017. \MR{3729411}

\bibitem{SimonTwoExt08}
B.~Simon, \emph{Two extensions of {L}ubinsky's universality theorem}, J. Anal.
  Math. \textbf{105} (2008), 345--362. \MR{2438429}

\bibitem{StahlTotik92}
H.~Stahl and V.~Totik, \emph{General orthogonal polynomials}, Encyclopedia of
  Mathematics and its Applications, vol.~43, Cambridge University Press,
  Cambridge, 1992. \MR{1163828}

\bibitem{Szego1939}
G.~Szeg\"o, \emph{Orthogonal {P}olynomials}, American Mathematical Society
  Colloquium Publications, vol. Vol. 23, American Mathematical Society, New
  York, 1939. \MR{77}

\bibitem{TeschlMathMedQuant}
G.~Teschl, \emph{Mathematical methods in quantum mechanics}, second ed.,
  Graduate Studies in Mathematics, vol. 157, American Mathematical Society,
  Providence, RI, 2014, With applications to Schr\"{o}dinger operators.
  \MR{3243083}

\bibitem{Totik2000}
V.~Totik, \emph{Asymptotics for {C}hristoffel functions for general measures on
  the real line}, J. Anal. Math. \textbf{81} (2000), 283--303. \MR{1785285}

\bibitem{TotikUniv09}
\bysame, \emph{{Universality and fine zero spacing on general sets}}, Arkiv
  för Matematik \textbf{47} (2009), no.~2, 361 -- 391.

\bibitem{Totik16}
\bysame, \emph{Universality under {S}zeg{\H{o}}'s condition}, Canad. Math.
  Bull. \textbf{59} (2016), no.~1, 211--224. \MR{3451913}

\bibitem{vanlessen:03}
M.~Vanlessen, \emph{Strong asymptotics of the recurrence coefficients of
  orthogonal polynomials associated to the generalized {J}acobi weight}, J.
  Approx. Theory \textbf{125} (2003), no.~2, 198--237. \MR{2019609}

\bibitem{Zlatos}
A.~Zlato\v{s}, \emph{Sparse potentials with fractional {H}ausdorff dimension},
  J. Funct. Anal. \textbf{207} (2004), no.~1, 216--252. \MR{2027640}

\end{thebibliography}

\providecommand{\bysame}{\leavevmode\hbox to3em{\hrulefill}\thinspace}
\providecommand{\MR}{\relax\ifhmode\unskip\space\fi MR }
\providecommand{\MRhref}[2]{%
  \href{http://www.ams.org/mathscinet-getitem?mr=#1}{#2}
}
\providecommand{\href}[2]{#2}

\end{document}